\theoremstyle{plain}
\newtheorem{thm}{Theorem}[section]
\newtheorem*{thm*}{Theorem}
\newtheorem{prop}[thm]{Proposition}
\newtheorem{cor}[thm]{Corollary}
\newtheorem{lem}[thm]{Lemma}
\newtheorem{rmk}[thm]{Remark}
\newtheorem{examp}[thm]{Example}
\newtheorem{fact}[thm]{Fact}
\theoremstyle{definition}
\newtheorem{defn}[thm]{Definition}
\newcommand{\G}{\mathit{\mathcal{G}}}
\newcommand{\f}{\mathit{\mathbb{F}}}
\newcommand{\p}{\mathit{\mathrm{pt}}}
\newcommand{\tH}{\mathit{\tilde{H}}}
\newcommand{\ce}{\mathfrak{CE}}
\newcommand{\E}{\mathfrak{E}}
\newcommand{\CEL}{\mathfrak{CLE}}
\newcommand{\LE}{\mathfrak{LE}}
\newcommand{\bt}{\mathcal{T}^+}
\newcommand{\s}{\mathit{\mathfrak{s}}}
\newcommand{\hgc}{\mathbb{H}(\Gamma,\chi)}
\newcommand{\sucp}{\mathit{\mathrm{suc}_+}}
\newcommand{\sucm}{\mathit{\mathrm{suc}_-}}
\newcommand{\suc}{\mathit{\mathrm{suc}}}
\newcommand{\sym}{\mathit{\mathrm{sym}}}
\newcommand{\gcp}{\mathrm{\Gamma}_{\mathnormal{C_p}}}
\newcommand{\dcp}{\Delta_{C_p}}
\newcommand{\dyp}{\Delta_{Y_p}}
\newcommand{\tdyp}{\tilde{\Delta}_{Y_p}}
\newcommand{\ny}{\mathit{N_{Y_p}}}
\newcommand{\tx}{\mathit{\tilde{X}_{Y_p}}}
\newcommand{\dz}{\Delta_{Z_{p}}}
\newcommand{\dw}{\Delta_{W_{p}}}
\newcommand{\lowerromannumeral}[1]{\romannumeral#1\relax}
\newcommand{\syp}{\mathit{\tilde{S}_{Y_p}}}
\title{Manolescu Invariants of Connected Sums}
\author{Matthew Stoffregen}
\begin{document}
\begin{abstract}
We give inequalities for the Manolescu invariants $\alpha,\beta,\gamma$ under the connected sum operation.  We compute the Manolescu invariants of connected sums of some Seifert fiber spaces.  Using these same invariants, we provide a proof of Furuta's Theorem, the existence of a $\mathbb{Z}^\infty$ subgroup of the homology cobordism group.  To our knowledge, this is the first proof of Furuta's Theorem using monopoles.  We also provide information about Manolescu invariants of the connected sum of $n$ copies of a three-manifold $Y$, for large $n$.
\end{abstract}
\maketitle
\section{Introduction} \label{sec:intro}   
\subsection{Manolescu Invariants}\label{subsec:manoinvars}  Let $G=\mathrm{Pin}(2)$ be the group consisting of two copies of the complex unit circle, along with a map $j$ interchanging them, so that $ij=-ji$ and $j^2=-1$.  In \cite{ManolescuPin}, Manolescu associated to a spin three-manifold $(Y,\s)$ the $G$-equivariant Seiberg-Witten Floer homology $\mathit{SWFH}^G(Y,\mathfrak{s})$, the Borel homology of a stable homotopy type $\mathit{SWF}(Y,\s)$.  From its module structure, he defined homology cobordism invariants $\alpha, \beta, \gamma$ as analogues of the Fr{\o}yshov invariant of the usual, $S^1$-equivariant, Seiberg-Witten Floer homology \cite{Froyshov4mflds}.  The $\mathrm{Pin}(2)$-invariants all reduce to the Rokhlin invariant mod $2$, satisfy $\alpha(Y)\geq\beta(Y)\geq \gamma(Y)$, and furthermore $\beta(-Y)=-\beta(Y)$.  

The existence of such an invariant $\beta$ implies that there is no integer homology sphere $Y$ with odd Rokhlin invariant so that $Y \# Y \sim S^3$, where $\sim$ denotes homology cobordism.  Using work of Matumoto \cite{Matumoto} and Galewski-Stern \cite{GalewskiStern}, the nonexistence of such $Y$ disproves the triangulation conjecture.  That is, Manolescu \cite{ManolescuPin} shows there exist manifolds of all dimensions $n\geq 5$ that cannot be triangulated.

Since the introduction of $\mathit{SWFH}^G(Y,\s)$, other versions of Floer homologies with symmetries beyond the $S^1$-symmetry have become available.  Lin \cite{flin} constructed a $G$-equivariant refinement of monopole Floer homology in the setting of Kronheimer-Mrowka \cite{KM}.  In the setting of Heegaard Floer homology introduced by Ozsv{\'a}th-Szab{\'o} in \cite{OzSz1}, \cite{OzSz2}, Hendricks and Manolescu \cite{HendricksManolescu} defined an analogue, $\mathit{HFI}(Y,\s)$, of $\mathbb{Z}/4$-equivariant Seiberg Witten Floer homology (where we regard $\langle j \rangle =\mathbb{Z}/4\subset G$).  

In addition to the disproof of the triangulation conjecture, these theories have other applications inaccessible to $S^1$-Floer theory.  For instance, $\mathit{HFI}$ detects the non-sliceness of the figure-eight knot, while most concordance invariants coming from Floer theory and Khovanov homology fail to detect this.  Both Lin's theory \cite{flin2} and $\mathit{SWFH}^G(Y,\s)$ \cite{betaseifert} show the existence of three-manifolds not homology cobordant to any Seifert space.

Under orientation reversal, the numerical homology cobordism invariants coming from these theories are well-behaved.  For instance, the invariants $\alpha,\beta,$ and $\gamma$, and another invariant $\delta$ defined from the $S^1$ Borel homology of $\mathit{SWF}(Y,\s)$ satisfy:
\begin{equation}\label{eq:orierev}\alpha(-Y,\mathfrak{s})=-\gamma(Y,\mathfrak{s}),\; \beta(-Y,\mathfrak{s})=-\beta(Y,\mathfrak{s}),\; \gamma(-Y,\mathfrak{s})=-\alpha(Y,\mathfrak{s}),\; \delta(-Y,\s)=-\delta(Y,\s).
\end{equation}   

However, the numerical homology cobordism invariants $\alpha,\beta, \gamma$ are not homomorphisms $\theta^H_3 \rightarrow \mathbb{Z}$, unlike, for instance, the Fr{\o}yshov invariant $\delta$.  This makes them harder to compute, but does not mean they are blind to the group structure of $\theta^H_3$.  For instance, see Theorem \ref{thm:furuta} below.  

In the present paper we investigate the behavior of the Manolescu invariants, $\alpha$, $\beta$, and $\gamma$, under the connected sum operation.  In particular, we have the following theorems:

\begin{thm}\label{thm:standineq}
Let $(Y_1,\s_1),(Y_2,\s_2)$ be rational homology three-spheres with spin structure.  Then:
\begin{align}\label{eq:alph}
\alpha(Y_1,\s_1)+\gamma(Y_2,\s_2) &\leq \alpha(Y_1 \# Y_2,\s_1 \# \s_2) \leq \alpha(Y_1,\s_1)+\alpha(Y_2,\s_2),
\\  \label{eq:alphadual}
\gamma(Y_1,\s_1)+\gamma(Y_2,\s_2) &\leq \gamma(Y_1 \# Y_2,\s_1 \# \s_2) \leq \alpha(Y_1,\s_1) + \gamma(Y_2,\s_2), \\ \label{eq:beta2}
\gamma(Y_1,\s_1)+\beta(Y_2,\s_2) &\leq  \beta(Y_1 \# Y_2,\s_1 \# \s_2) \leq \alpha(Y_1,\s_1)+\beta(Y_2,\s_2),
\end{align}\begin{equation}\label{eq:beta}
\gamma(Y_1 \# Y_2,\s_1\# \s_2) \leq \beta(Y_1,\s_1) + \beta(Y_2,\s_2) \leq \alpha(Y_1 \# Y_2,\s_1 \# \s_2).
\end{equation}
\end{thm}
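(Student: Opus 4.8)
The plan is to deduce the four inequalities from (i) the behaviour of $\mathit{SWF}$ under connected sum, (ii) a short orientation-reversal reduction, and (iii) an external (Künneth) product in $\mathrm{Pin}(2)$-Borel cohomology compatible with the ``tower'' structure defining $\alpha,\beta,\gamma$; the estimates are organized using the chain local equivalence formalism $\cle$ of the later sections. For the reduction, note that by $\eqref{eq:orierev}$ and the commutativity of $\#$, the substitution $(Y_1,Y_2)\mapsto(-Y_1,-Y_2)$ carries the right-hand inequality of $\eqref{eq:alph}$ to the left-hand inequality of $\eqref{eq:alphadual}$ and vice versa, and interchanges the two bounds within each of $\eqref{eq:beta2}$ and $\eqref{eq:beta}$. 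Hence it is enough to prove the four upper bounds
\begin{align*}
\alpha(Y_1\#Y_2)&\le\alpha(Y_1)+\alpha(Y_2), & \gamma(Y_1\#Y_2)&\le\alpha(Y_1)+\gamma(Y_2),\\
\beta(Y_1\#Y_2)&\le\alpha(Y_1)+\beta(Y_2), & \gamma(Y_1\#Y_2)&\le\beta(Y_1)+\beta(Y_2),
\end{align*}
where $Y_i$ abbreviates $(Y_i,\s_i)$; the second is equivalent, via $\eqref{eq:orierev}$ and the symmetry of $\#$, to the lower bound in $\eqref{eq:alph}$.

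Next, for rational homology spheres there is a $G$-equivariant stable equivalence $\mathit{SWF}(Y_1\#Y_2,\s_1\#\s_2)\simeq\mathit{SWF}(Y_1,\s_1)\wedge\mathit{SWF}(Y_2,\s_2)$, the grading shifts being additive and handled by the normalization built into $\alpha,\beta,\gamma$. Thus the chain local equivalence class of a connected sum is the smash product of those of the summands, and it suffices to bound $\alpha,\beta,\gamma$ of a smash $X_1\wedge X_2$ of spaces of type SWF in terms of those of $X_1$ and $X_2$.

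Recall $H^*(BG;\mathbb{F}_2)\cong\mathbb{F}_2[q,v]/(q^3)$ with $|q|=1$, $|v|=4$; after inverting $v$, the reduced Borel cohomology of a space of type SWF of level $s$ is free of rank three over $\mathbb{F}_2[v,v^{-1}]$, with $\mathbb{F}_2[v]$-towers spanned by $1,q,q^2$, and $\alpha(X),\beta(X),\gamma(X)$ are (up to the additive normalizing shifts) the minimal gradings at which the $1$-, $q$- and $q^2$-towers survive in the image of $\widetilde{H}^*_G(X)$. At the chain level $\widetilde{C}^{CW}_*(X_1\wedge X_2)\cong\widetilde{C}^{CW}_*(X_1)\otimes_{\mathbb{F}_2}\widetilde{C}^{CW}_*(X_2)$ with diagonal $G$-action, and a choice of equivariant diagonal approximation $\td\colon\ccheg\to\ccheg\otimes\ccheg$ furnishes an external product $\widetilde{H}^*_G(X_1)\otimes_{H^*(BG)}\widetilde{H}^*_G(X_2)\to\widetilde{H}^*_G(X_1\wedge X_2)$. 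Restricting to $S^1$-fixed points — which are representation spheres, with $(X_1\wedge X_2)^{S^1}=X_1^{S^1}\wedge X_2^{S^1}$ — this product becomes, after localizing, multiplication of $H^*(BG)$-coefficients and of Thom classes. Consequently the external product of a $q^a$-tower generator of $X_1$ with a $q^b$-tower generator of $X_2$ is a $v$-nontorsion class in the $q^{a+b}$-tower of $X_1\wedge X_2$ whenever $a+b\le 2$, sitting in the sum of the two gradings. Taking $(a,b)=(0,0),(0,2),(0,1),(1,1)$ produces, respectively, the four upper bounds above, once the additive normalizing shifts are tracked.

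The main obstacle is that $\widetilde{H}^*_G(X)$ is not literally the direct sum of its three towers: its image in the localization is merely an $H^*(BG)$-submodule, possibly carrying higher-order terms, so one must verify that each external product above is genuinely $v$-nontorsion and genuinely of type $q^{a+b}$ — not secretly in the image of $q^{a+b+1}$ — rather than just having the expected leading term. Making this rigorous, and handling the rational grading shifts for rational homology spheres, is cleanest at the chain local equivalence level, using an explicit small model for $\ccheg$ and a fixed $\td$; there one also checks that the genuinely $\mathrm{Pin}(2)$-specific features of $\td$ — the ones that keep $\alpha,\beta,\gamma$ from being homomorphisms — do not affect the four ``low-weight'' products $q^a\times q^b$ with $a+b\le 2$ used here, though they are central to the companion estimates for $\#^n Y$.
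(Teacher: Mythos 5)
Your proposal is correct and takes a genuinely different route from the paper's. After the shared steps — identifying $[\mathit{SWF}(Y_1\#Y_2)]_{cl}$ with the smash product of $[\mathit{SWF}(Y_i)]_{cl}$ (Fact~\ref{fct:homf}) and reducing by duality to a list of one-sided bounds — the paper proceeds entirely at the chain level: for the three ``weight at most one'' bounds it observes (Lemma~\ref{lem:cord}) that $\gamma(Z_2)$ forces a chain local map from a suspension of the fixed-point complex into $Z_2$, then tensors with $Z_1$ and uses monotonicity of $\alpha,\beta,\gamma$ under the partial order $\preceq$ on $\mathfrak{CLE}$ (Theorem~\ref{thm:easyineq}, via Lemmas~\ref{lem:abcor1} and~\ref{lem:ordadd}); for the remaining ``$q\times q$'' bound it exhibits by hand an explicit tower of chains in $Z_1\otimes_\f Z_2$ built from towers in $Z_1$ and $Z_2$ (displays (\ref{eq:newbtower1})--(\ref{eq:newbtower2}) in the proof of Theorem~\ref{thm:hardineq}). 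Your version replaces both devices with a single mechanism — the cohomology external product, its naturality under restriction to $S^1$-fixed points, and the fact that $q^a q^b\neq 0$ in $H^*(BG)$ exactly when $a+b\leq 2$ — which makes uniform and transparent why all four bounds hold and why the nilpotence order of $q$ is what limits them. What the paper's version buys in exchange is self-containedness: it sidesteps constructing a $\mathrm{Pin}(2)$-equivariant diagonal approximation on $C^{CW}_*(EG)$, which (as you flag) must be set up carefully against the twisted $\G$-module structure in (\ref{eq:gtensoract}).

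To turn the proposal into a proof, the one point that must be discharged rather than flagged is the claim that the four low-weight products are ``genuinely $v$-nontorsion of type $q^{a+b}$.'' This is short once stated precisely: the kernel of $\iota^*\colon\tilde H^*_G(X)\to\tilde H^*_G(X^{S^1})$ is exactly the $v$-torsion submodule, because $\tilde H^*_G(X^{S^1})\cong H^{*-s}(BG)$ is $v$-torsion-free and the localized restriction is an isomorphism (Theorem~\ref{thm:equivlocalzn}); $\iota^*$ intertwines the external products by naturality; and on the fixed-point spheres the external product sends Thom class $\otimes$ Thom class to Thom class, so that $\iota_1^*x_1\times\iota_2^*x_2$ is a $q^{a+b}$-power times the Thom class of $X_1^{S^1}\wedge X_2^{S^1}$, nonzero precisely when $a+b\leq 2$. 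There is then no ``higher-order'' contamination to worry about, because the target of $\iota^*$ is free of rank three over $\f[v]$ and the $q^k$-type of a $v$-nontorsion class is forced by its degree mod $4$. With that in place the rest of your outline, including the orientation-reversal reduction and the handling of the rational normalization shifts, goes through as written.
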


\begin{thm}\label{thm:ineq2}
Let $(Y,\s)$ be a rational homology three-sphere with spin structure.  Then:
\begin{equation}\label{eq:ineq2}
\gamma(Y,\s) \leq \delta(Y,\s) \leq \alpha(Y,\s).
\end{equation}
\end{thm}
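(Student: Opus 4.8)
The plan is to extract $\gamma(Y,\s)\leq\delta(Y,\s)\leq\alpha(Y,\s)$ directly from the module structure of $\mathit{SWF}(Y,\s)$, comparing the $G$-equivariant Borel homology (from which $\alpha,\beta,\gamma$ are defined) with the $S^1$-equivariant Borel homology (from which $\delta$ is defined). The key observation is that the inclusion $S^1\hookrightarrow G=\mathrm{Pin}(2)$ induces a map of Borel cohomologies, and both invariants are ultimately measured by the level at which the "bottom" (or "tower") classes become nontrivial in the respective Tate/Borel constructions. So I would begin by recalling the precise definitions: $\alpha,\beta,\gamma$ are read off from the $\mathcal{R}=H^*(BG;\mathbb{F})$-module structure on $\widetilde{H}^{G}_*(\mathit{SWF}(Y,\s))$ via the images of the three canonical towers (coming from $H^*_G(\mathrm{pt})$), while $\delta$ is read off from the single $\mathbb{F}[U]$-tower in $\widetilde{H}^{S^1}_*(\mathit{SWF}(Y,\s))$, each shifted by the appropriate metric/grading correction $n(Y,\s,g)$.

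**Carrying out the comparison.** The central step is a commutative diagram. Restriction along $S^1\subset G$ gives a ring map $\mathcal{R}=\mathbb{F}[q,v]/(q^3)\to\mathbb{F}[U]$ (with $\deg q = 1$, $\deg v = 4$, $U\mapsto v$, $q\mapsto 0$) and a compatible map $\widetilde{H}^G_*(\mathit{SWF})\to\widetilde{H}^{S^1}_*(\mathit{SWF})$ of modules. I would then use the fact that, after inverting $v$ (resp. $U$), the localized Borel homology of the Seiberg–Witten Floer spectrum is a free module on $H^*(S^1\backslash G)$-many towers in the $G$-case and one tower in the $S^1$-case; the $G$-case localizes to three towers because $H^*(BG;\mathbb{F})[v^{-1}]$ is three-dimensional over $\mathbb{F}[v,v^{-1}]$. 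The invariant $\gamma$ corresponds to the lowest of these three towers and $\alpha$ to the highest; under the restriction map the $S^1$-tower maps \emph{into} the span of the three $G$-towers (or receives them), forcing the degree where the $S^1$-tower stabilizes to lie between the degrees where the lowest and highest $G$-towers stabilize. Translating degrees into the invariants — and checking that the grading shift $n(Y,\s,g)$ is the \emph{same} correction used in defining all of $\alpha,\beta,\gamma,\delta$ — yields exactly $\gamma(Y,\s)\leq\delta(Y,\s)\leq\alpha(Y,\s)$.

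**Alternative via connected-sum inequalities.** If a self-contained module argument is cumbersome, a shortcut is to deduce Theorem \ref{thm:ineq2} as a corollary of Theorem \ref{thm:standineq} together with known additivity of $\delta$. Since $\delta$ is a homomorphism $\theta^H_3\to\mathbb{Z}$ while $\alpha,\beta,\gamma$ are not, one can try to compare both sides of \eqref{eq:beta} against $\delta$: iterating the connected-sum inequalities for $n$ copies of $Y$ and dividing by $n$, the non-additive corrections are bounded, so in the limit $\alpha$ and $\gamma$ of $\#^n Y$ grow like $n\alpha$ and $n\gamma$ respectively, while $\delta$ of $\#^n Y$ is exactly $n\delta(Y)$. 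Pinning $\delta(Y)$ between the asymptotic slopes of $\alpha$ and $\gamma$ then gives $\gamma(Y)\leq\delta(Y)\leq\alpha(Y)$. However, this route is logically circular if Theorem \ref{thm:standineq}'s proof itself invokes \eqref{eq:ineq2}, so I would only use it if the dependencies permit.

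**Main obstacle.** The hard part will be bookkeeping the grading conventions: the correction term $n(Y,\s,g)$, the shift by which the reduced Borel homology of $\mathit{SWF}(Y,\s)$ relates to that of the \emph{desuspended} spectrum, and the precise normalization ($\alpha,\beta,\gamma$ in Manolescu's convention versus $\delta=\bar\delta/2$ or similar in the Frøyshov convention) all need to be aligned so that the single chain of inequalities comes out with no stray factor of $2$ or sign. Once the towers are identified and the restriction map $\widetilde{H}^G_*\to\widetilde{H}^{S^1}_*$ is shown to be surjective onto the "tower part" in all sufficiently high degrees — which follows from the localization theorem applied to the $G$-action on $\mathit{SWF}$ away from the fixed-point set $(\mathit{SWF})^{S^1}$, itself a sphere — the inequality is essentially forced.
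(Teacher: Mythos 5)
Your overall strategy — comparing the $G$-equivariant and $S^1$-equivariant Borel theories via the inclusion $S^1\subset G$ and the localization theorem — is the right one, and it is essentially the spirit of the paper's proof. But as written, the argument has a gap in the key step, and it misses the two-step structure that makes the proof clean.

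The paper's proof of Theorem \ref{thm:ineq2} does two things. First, it proves $\delta\leq\alpha$ (Proposition \ref{prop:adc}) by a concrete chain-level comparison: the single model $C^{CW}_*(EG)$ serves simultaneously as a free resolution for $G$ and for $S^1\subset G$, and there is a canonical quotient map $C^{CW}_*(EG)\otimes_{C^{CW}_*(S^1)}Z\to C^{CW}_*(EG)\otimes_{\G}Z$. Under this map, boundaries go to boundaries, and the class $e_{4A-4}\otimes f_t$ (whose non-triviality measures $a(Z)$ on the $G$-side, and which, together with the $j$-twisted classes, measures $d(Z)$ on the $S^1$-side) is sent to the same-named class; hence $d(Z)\geq 4A+t-2$ forces $a(Z)\geq 4A+t$, which gives $a(Z)\geq d(Z)$. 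Second, $\gamma\leq\delta$ is deduced from $\delta\leq\alpha$ by Spanier--Whitehead/orientation duality ($\gamma(Y)=-\alpha(-Y)$, $\delta(Y)=-\delta(-Y)$, Theorem \ref{thm:duality}). Your proposal does not mention the duality step, and instead tries to get both inequalities at once from a ``sandwich'' via the restriction map.

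This sandwich is exactly the part that is not substantiated. In cohomology the restriction $H^*(BG)\to H^*(BS^1)$ sends $q\mapsto 0$ (and $v\mapsto U^2$, not ``$U\mapsto v$'' as you wrote, which also has a degree mismatch), so a $v$-nontorsion class $x\in\tilde H^r_G(X)$ contributing to $a$, $b$, or $c$ can well have $f^*(x)=0$ in $\tilde H^r_{S^1}(X)$, and no inequality between stabilization degrees follows from this direction of the map. What \emph{does} work is the dual direction: the pushforward/quotient on homology (or equivalently the chain-level map described above), which the paper uses. You gesture at this with ``(or receives them)'' but never pin down which direction is the operative one, and the argument does not survive being made precise in the cohomological direction. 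Your flagged alternative (asymptotics via connected-sum inequalities) is indeed circular: Theorem \ref{thm:asymp} invokes Theorem \ref{thm:ineq2}, so that route is not available.
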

We note, for comparison with Heegaard Floer theory, that the invariant $\delta(Y,\s)$ should correspond to the Heegaard Floer correction term $d(Y,\s)/2$.

If we regard Theorem \ref{thm:ineq2} as a statement constraining the behavior of $\delta(Y,\s)$ in terms of the Manolescu invariants $\alpha,\beta$, and $\gamma$, then we may think of the following as a kind of converse statement, showing that $\delta(Y,\s)$ heavily constrains the behavior of the Manolescu invariants:
\begin{thm}\label{thm:asymp}
Let $(Y,\s)$ be a rational homology three-sphere with spin structure.  Then:
\begin{equation}\label{eq:preasymp}
\alpha(\#_n(Y,\s))-n\delta(Y,\s), \; \beta(\#_n(Y,\s))-n\delta(Y,\s), \; \mathrm{and}\; \gamma(\#_n(Y,\s))-n\delta(Y,\s)
\end{equation}
are bounded functions of $n$, where $\#_n (Y,\s)$ denotes the connected sum of $n$ copies of $(Y,\s)$.  In particular:
\begin{equation}\label{eq:asymp}
\lim_{n\rightarrow \infty} \frac{\alpha(\#_n(Y,\s))}{n}=\lim_{n\rightarrow \infty} \frac{\beta(\#_n(Y,\s))}{n}=\lim_{n\rightarrow \infty} \frac{\gamma(\#_n(Y,\s))}{n} = \delta(Y,\s).
\end{equation}
\end{thm}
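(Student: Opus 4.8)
\emph{Strategy and reductions.} The proof splits into a formal reduction and an equivariant estimate. First, $\delta$ is additive under connected sum, being a homomorphism $\theta^H_3\to\mathbb{Z}$ (it should agree with $d/2$, and $d$ is additive), so $\delta(\#_n(Y,\s))=n\delta(Y,\s)$. Theorem~\ref{thm:ineq2} then gives $\gamma(\#_n(Y,\s))\le n\delta(Y,\s)\le\alpha(\#_n(Y,\s))$ for all $n$, and together with $\gamma\le\beta\le\alpha$ this reduces the theorem to two statements: that $\alpha(\#_n(Y,\s))-n\delta(Y,\s)$ is bounded above, and that $\gamma(\#_n(Y,\s))-n\delta(Y,\s)$ is bounded below. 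Applying the orientation-reversal formulas \eqref{eq:orierev} to $(-Y,-\s)$ gives $\gamma(\#_n(Y,\s))-n\delta(Y,\s)=-\bigl(\alpha(\#_n(-Y,-\s))-n\delta(-Y,-\s)\bigr)$, so the second statement (for all inputs) is equivalent to the first. Thus it suffices to show: for every $(Y,\s)$ there is $C=C(Y,\s)$ with $\alpha(\#_n(Y,\s))\le n\delta(Y,\s)+C$ for all $n$. Note that Theorem~\ref{thm:standineq} alone only yields the linear bounds $n\gamma(Y,\s)\le\alpha(\#_n(Y,\s))\le n\alpha(Y,\s)$, so a genuinely equivariant argument is needed.

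\emph{The equivariant estimate.} I would work with the $\mathrm{Pin}(2)$-equivariant Seiberg--Witten Floer spectrum and the connected-sum formula $\mathit{SWF}(\#_n(Y,\s))\simeq\mathit{SWF}(Y,\s)^{\wedge n}$ (up to the standard grading shift built into the normalization of the invariants), using that $\alpha$ depends only on the $\mathrm{Pin}(2)$-local equivalence class of $\mathit{SWF}(Y,\s)$, is monotone for the partial order given by the existence of a suitable $\mathrm{Pin}(2)$-map, and that $\delta$ descends to a homomorphism on the local equivalence group. By the localization theorem, $v^{-1}\widetilde{H}^{\mathrm{Pin}(2)}_*(\mathit{SWF}(Y,\s)^{\wedge n})$ is free of rank one over $\mathbb{F}_2[q,v,v^{-1}]/(q^3)$, and the grading of the generator of its ``$1$-tower'' agrees with that of the corresponding $S^1$-tower, hence is determined by $n\delta(Y,\s)$. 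The invariant $\alpha(\#_n(Y,\s))$ records how far that $1$-tower is realized inside the non-localized Borel homology $\widetilde{H}^{\mathrm{Pin}(2)}_*(\mathit{SWF}(Y,\s)^{\wedge n})$; the only obstruction to realizing it at the $n\delta$-level is the $v$-torsion (``junk'') part, which for a single $\mathit{SWF}(Y,\s)$ is finite and lives in a bounded band of degrees.

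\emph{Main obstacle.} The heart of the matter is to bound this torsion obstruction for the $n$-fold smash \emph{uniformly in $n$}: a priori the quotient of $\mathit{SWF}(Y,\s)^{\wedge n}$ by its $S^1$-fixed set is a free $\mathrm{Pin}(2)$-complex whose dimension, and hence whose Borel homology, spreads over a range growing linearly in $n$, which would displace the $1$-tower by $\sim n$. The point is that it does not. I would prove this by replacing $\mathit{SWF}(Y,\s)$ by a carefully chosen local-equivalence representative --- a fixed-point sphere with finitely many free $\mathrm{Pin}(2)$-cells whose contribution to Borel homology is confined to a fixed band adjacent to the tower --- and then analyzing the $n$-fold smash via a K\"unneth spectral sequence: because extra ``junk'' tensor factors raise degree, and because of the nilpotence $q^3=0$, the $v$-torsion and $\mathrm{Tor}$ contributions that can obstruct the $1$-tower near its bottom (at degree $\sim n\delta(Y,\s)$) involve only boundedly many of the $n$ factors, so they occupy a band of bounded width independent of $n$; this produces $C(Y,\s)$. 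With $\alpha(\#_n(Y,\s))\le n\delta(Y,\s)+C$ in hand, the reductions above give boundedness of all three expressions in \eqref{eq:preasymp}, and \eqref{eq:asymp} follows by dividing by $n$ and letting $n\to\infty$.
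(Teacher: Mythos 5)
Your reductions are sound: $\delta$ is indeed additive (Proposition~\ref{prop:dcalc}), Theorem~\ref{thm:ineq2} squeezes $\gamma$ from above and $\alpha$ from below by $n\delta$, and the orientation-reversal identities~\eqref{eq:orierev} convert the upper bound for $\alpha$ into the lower bound for $\gamma$ and vice versa; so the whole theorem does reduce to a single one-sided estimate, exactly as in the paper. The gap is in the second half. What you describe as ``the equivariant estimate'' — replace $\mathit{SWF}(Y,\s)$ by a small representative, smash, and argue via a K\"unneth spectral sequence that the $v$-torsion obstructing the $1$-tower near degree $n\delta$ involves only boundedly many of the $n$ factors — is an assertion, not an argument. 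Two concrete problems: (1) you ask for a ``local-equivalence representative'' with free cells confined to a bounded band, but a locally equivalent complex has the same $\alpha,\beta,\gamma$, and there is no reason such a representative has small free part; what one can extract is only a complex smaller in the partial order $\preceq$. (2) The $v$-torsion of the $n$-fold smash genuinely spreads across a band of width growing linearly in $n$ (the free part of $T_D(t)^{\otimes n}$ occupies degrees roughly from $t$ up to $t+2nD$), so the claim that near-the-bottom contributions involve boundedly many factors is precisely the nontrivial content; nothing in your sketch establishes it, and it does not follow from $q^3=0$ or from degree considerations alone.

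The paper's route is different and is where the real work happens. By Fact~\ref{fct:dtows}, any suspensionlike complex of type SWF at level $t$ with $d$-invariant $2D+t$ dominates the explicit $j$-split complex $T_D(t)$ in the partial order $\preceq$; this is a subcomplex relation, not a local equivalence. One then computes $\alpha,\beta,\gamma$ of $T_D(t)^{\otimes n}$ exactly via Theorem~\ref{thm:premain} — the computation is genuinely geometric, using the join structure, homotopies of linear spheres through Grassmannians, and the Gysin sequence, and is not a formal K\"unneth/degree count — yielding $\gamma(T_D^{\otimes n}) = E((n-2)D)+nt$ up to the normalization shifts. By Lemma~\ref{lem:abcor1} and Lemma~\ref{lem:ordadd} this gives $\gamma(\#_n(Y,\s)) \geq (n-2)\delta(Y,\s) + C'$, which with $\gamma(\#_n(Y,\s)) \leq n\delta(Y,\s)$ from Theorem~\ref{thm:ineq2} gives the bounded difference for $\gamma$; duality handles $\alpha$ and then $\gamma\le\beta\le\alpha$ handles $\beta$. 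So the missing idea in your proposal is exactly the one the paper supplies: a universal $j$-split subcomplex $T_D(t)$ controlled by the $d$-invariant alone, together with the explicit smash-product computation of Theorem~\ref{thm:premain} for such complexes. Without that, ``bound the torsion'' is a restatement of the theorem rather than a proof of it.
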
 
That is, one might think of the Manolescu invariants as perturbations of the $S^1$-Fr{\o}yshov invariant.  

In order to obtain Theorem \ref{thm:asymp}, we will make an explicit calculation of the Manolescu invariants of connected sums of negative Seifert spaces \emph{of projective type}.  We call a Seifert rational homology three-sphere $Y$ \emph{negative} if the orbifold line bundle of $Y$ is of negative degree (see Section \ref{sec:gauge}).  For example, the Brieskorn sphere $\Sigma(a_1,...,a_n)$, for coprime $a_i$, is negative.  We call a negative Seifert rational homology three-sphere with spin structure $(Y,\s)$ \emph{of projective type} if, for some constants $d,n,m,a_i,m_i$ and some index set $I$, its Heegaard Floer homology is of the form
\begin{equation}\label{eq:1proj}
\mathit{HF^{+}}(Y,\mathfrak{s})=\mathcal{T}^+_{d}\oplus \mathcal{T}^{+}_{-2n+1}(m) \oplus \bigoplus_{i \in I} \mathcal{T}^{+}_{a_i}(m_i)^{\oplus 2}.
\end{equation}
We will be interested in negative Seifert spaces because their $G$-equivariant Seiberg-Witten Floer stable homotopy types, $\mathit{SWF}(Y,\s)$, admit an especially simple description: $\mathit{SWF}(Y,\s)$ is \emph{$j$-split} for $Y$ negative Seifert.  We call a $G$-space $X$ a \emph{$j$-split space} if
\[X/X^{S^1}=X_+ \vee X_-,\]
where $X^{S^1}$ is the $S^1$-fixed point set of $X$, and $j$ interchanges the $X_+$ and $X_-$ factors.  We may think of $j$-split spaces as the simplest kind of (nontrivial) $G$-spaces which may occur as the Seiberg-Witten Floer spectrum $\mathit{SWF}(Y,\s)$ of some $(Y,\s)$.  

  The \emph{projective type} condition further restricts what $X_+$ may be.  Examples of Seifert spaces of projective type include Brieskorn spheres of the form $\Sigma(p,q,pqn\pm 1)$, for relatively prime $p$ and $q$, from work of N{\'e}methi and Borodzik \cite{Nem07},\cite{BN} and Tweedy \cite{Tweedy}.  
\begin{thm}\label{thm:main}
Let $Y_1,\dots,Y_n$ be negative Seifert integral homology three-spheres of projective type.  Define $\tilde{\delta}(Z)=d(Z)/2+\bar{\mu}(Z)$, for $Z$ any Seifert fiber space, where $d$ is the Heegaard Floer correction term from \cite{OzSzgrad}, and where $\bar{\mu}$ is the Neumann-Siebenmann invariant defined in \cite{NeumannPlumbings}, \cite{Siebenmann}.  Set $\tilde{\delta}_i:=\tilde{\delta}(Y_i)$, and assume without loss of generality $\tilde{\delta}_1 \leq \dots \leq \tilde{\delta}_n$.  Then:
\begin{align} \label{eq:aadd}
\alpha(Y_1 \# \dots \# Y_{n}) &= 2\left\lfloor\frac{(\sum_{i=1}^n \tilde{\delta}_i)+1}{2}\right\rfloor-\sum_{i=1}^n\bar{\mu}(Y_i),
\\ \label{eq:badd}
\beta(Y_1 \# \dots \# Y_{n}) &= 2\left\lfloor\frac{(\sum_{i=1}^{n-1}\tilde{\delta}_i)+1}{2}\right\rfloor-\sum_{i=1}^n\bar{\mu}(Y_i),
\\ \label{eq:cadd}
\gamma(Y_1 \# \dots \# Y_{n}) &= 2\left\lfloor\frac{(\sum_{i=1}^{n-2}\tilde{\delta}_i)+1}{2}\right\rfloor-\sum_{i=1}^n\bar{\mu}(Y_i),
\end{align} and \begin{equation}\label{eq:dadd}
\delta(Y_1 \# \dots \# Y_{n}) = (d(Y_1)+\dots+d(Y_{n}))/2 = \sum_{i=1}^n \tilde{\delta}_i-\sum_{i=1}^n \bar{\mu}(Y_i).
\end{equation} 
\end{thm}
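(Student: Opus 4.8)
The plan is to reduce the connected-sum computation to a purely combinatorial statement about $G$-equivariant spectra. Since each $Y_i$ is negative Seifert of projective type, its Seiberg-Witten Floer spectrum $\mathit{SWF}(Y_i,\s_i)$ is $j$-split, and from the description of $\mathit{HF}^+$ in \eqref{eq:1proj} the positive summand $(X_i)_+$ is, up to suspension, a wedge of cells determined by the numerical data $d(Y_i),\bar\mu(Y_i)$ and the tower data. The first step is to record, from the earlier sections on negative Seifert spaces, the precise $G$-homotopy type of $\mathit{SWF}(Y_i,\s_i)$ in terms of $\tilde\delta_i$ and $\bar\mu(Y_i)$: concretely, after shifting by $2\bar\mu(Y_i)$ the $S^1$-Borel homology is a single tower starting in degree $2\tilde\delta_i - 2\bar\mu(Y_i) = d(Y_i)$, and the full $G$-module structure is that of the simplest $j$-split model with ``$\alpha$-$\beta$-$\gamma$ gap'' governed by $\tilde\delta_i$. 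The point of the projective-type hypothesis is exactly that this model has no extra cells interfering with the action of $j$.

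Next I would use the connected-sum formula for $\mathit{SWF}$, namely $\mathit{SWF}(Y_1\#\cdots\#Y_n,\s_1\#\cdots\#\s_n)\simeq \mathit{SWF}(Y_1,\s_1)\wedge_G\cdots\wedge_G \mathit{SWF}(Y_n,\s_n)$ (up to a fixed suspension coming from the reducibles), and compute the smash product of $j$-split spaces. The key algebraic fact is that the smash product of $j$-split spaces is again built from the $S^1$-fixed part together with a ``$+$'' and ``$-$'' piece, where the $+$ piece is the smash product (over $S^1$) of the individual $X_{i,+}$'s; the $\bar\mu$'s add because each suspension shift is additive, giving the uniform $-\sum \bar\mu(Y_i)$ term in \eqref{eq:aadd}–\eqref{eq:cadd}, and \eqref{eq:dadd} is then immediate from additivity of the Frøyshov invariant $\delta$ under connected sum (Theorem \ref{thm:ineq2} plus the $S^1$-theory). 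What remains is to extract $\alpha,\beta,\gamma$ from this smashed model: these are read off from the lowest degree in which the image of the $S^1$-localization (resp. the appropriate $j$-twisted localization) is nonzero in the Borel homology of the smash product.

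The combinatorial heart of the argument is therefore: given towers shifted to start at degrees $2\tilde\delta_1,\dots,2\tilde\delta_n$, determine the three relevant ``connectedness'' levels of their smash product. Here I expect the $2\lfloor((\sum \tilde\delta_i)+1)/2\rfloor$ shape to emerge because the $G$-action forces a parity correction: the $\alpha$-invariant sees the full sum $\sum_{i=1}^n\tilde\delta_i$ but rounded to the nearest even integer from the $\mathbb{Z}/2$-worth of ambiguity coming from $j^2=-1$ (hence the floor of $(\,\cdot\,+1)/2$ doubled), while $\beta$ and $\gamma$ drop the top one and top two terms $\tilde\delta_n$ and $\tilde\delta_{n-1},\tilde\delta_n$ respectively, exactly as in the local-equivalence / connected-sum inequalities of Theorem \ref{thm:standineq} — the ordering $\tilde\delta_1\leq\cdots\leq\tilde\delta_n$ makes ``drop the largest'' the sharp choice. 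I would make this rigorous by working in the category of $G$-equivariant ``local equivalence'' classes (the analogue of the $\iota$-complex formalism), where each $\mathit{SWF}(Y_i,\s_i)$ is represented by an explicit standard piece, and the smash product is computed in that category; the inequalities of Theorem \ref{thm:standineq} give both bounds, and projective type forces equality.

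The main obstacle I anticipate is controlling the smash product of the $X_{i,+}$ pieces precisely enough to pin down all three invariants simultaneously — in general the smash of two towers-with-decorations can have its $\beta$ strictly between the bounds of Theorem \ref{thm:standineq}, and it is the projective-type hypothesis (which restricts $X_+$ to the rigid model with a single ``long'' tower and only doubled short towers) that must be invoked to rule this out. Verifying that this rigidity survives iterated smash products, by induction on $n$, and that the parity bookkeeping in the floor functions is consistent at each step, is where the real work lies; the base case $n=1$ is just the known computation of $\alpha,\beta,\gamma$ for a single negative Seifert space of projective type, and the inductive step smashes that standard piece against the standard piece for $\#_{n-1}$.
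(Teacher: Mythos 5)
Your high-level reduction is the same as the paper's: apply Theorem~\ref{thm:loctyp} to identify each $\mathit{SWF}(Y_i,\s_i)$, up to chain local equivalence, with the standard $j$-split model $(\tilde\Sigma(S^{2\tilde\delta_i-1}\amalg S^{2\tilde\delta_i-1}),0,\bar\mu(Y_i)/2)$; then use Fact~\ref{fct:homf} to convert connected sum to smash/tensor; then compute $\alpha,\beta,\gamma$ of the $n$-fold smash. This matches the paper's route through Theorem~\ref{thm:premain}.

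The gap is in the claim that ``the inequalities of Theorem~\ref{thm:standineq} give both bounds, and projective type forces equality.'' That is not true for $\beta$ and $\gamma$. Take $\beta$: since each $Y_i$ has $\beta(Y_i)=\gamma(Y_i)=-\bar\mu(Y_i)$, the lower bound from Theorem~\ref{thm:standineq} is $\gamma(Y_1\#\cdots\#Y_{n-1})+\beta(Y_n)\geq -\sum_i\bar\mu(Y_i)$, which is strictly smaller than the target value $E(\sum_{i<n}\tilde\delta_i)-\sum_i\bar\mu(Y_i)$ whenever some $\tilde\delta_i\geq 1$; no choice of grouping improves this, and the chain $\gamma\leq\beta\leq\alpha$ together with the mod $2$ constraint does not close the gap either. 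The upper bound is the easy direction (split off $Y_n$, use the $\alpha$-value of the first $n-1$ factors, and use $\beta(Y_n)=-\bar\mu(Y_n)$); the real content of the theorem is the matching lower bound. The paper obtains it by viewing $\bigwedge_i\tilde\Sigma X_i$ as $\tilde\Sigma(\star_i X_i)$, constructing an explicit $S^1$-equivariant homotopy carrying the linear sphere $\star_{i<n}S_{i,0}$ to its $j$-translate inside the join (and, for $\gamma$, a homotopy between that homotopy and its $j$-translate, using simple connectivity of Grassmannians), and then feeding this $j$-symmetry into the Gysin sequence (Fact~\ref{fct:gysin1plusj}) to show the relevant $\mathbb{CP}^N$ fundamental class lies in the image of $q$, resp.\ of $q^2$, whence $q^{-1}v^{-k}$, resp.\ $q^{-2}v^{-k}$, is hit by the characteristic class map and (\ref{eq:bsusp})--(\ref{eq:csusp}) give the lower bound. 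None of this is present in your proposal. Your inductive plan on $n$ runs into a related issue: the $n$-fold smash is \emph{not} again a standard $j$-split model of the form $\tilde\Sigma(S^k\amalg S^k)$ (its $\alpha,\beta,\gamma$ are no longer determined by a single number), so an induction would have to carry a strictly more complicated invariant, and you would still need a Gysin-type argument in the inductive step.
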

To prove Theorem \ref{thm:main} we will investigate the $\mathrm{Pin}(2)$-equivariant topology of joins of $j$-split spaces.  To do so, we will make use of the Gysin sequence for $\mathrm{Pin}(2)$-spaces, which provides a relationship between the $\mathrm{Pin}(2)$-equivariant and $S^1$-equivariant homology of a $\mathrm{Pin}(2)$-space.  Lin has already used the Gysin sequence in \cite{flin2} to study $\widecheck{HS}(Y,\s)$ for $Y$ surgery on an alternating knot.  

The proof of Theorem \ref{thm:main} also relies on the equivalence of several versions of Floer homologies: we employ the equivalence of $\mathit{HF^+}$ and $\widecheck{HM}$ from Colin-Ghiggini-Honda \cite{CGH1} and Taubes \cite{Taubes}, and Kutluhan-Lee-Taubes \cite{KLT1}, and the equivalence of $\widecheck{HM}$ and $\mathit{SWFH^{S^1}}$ due to Lidman-Manolescu \cite{LM}. 

To obtain Theorem \ref{thm:asymp} from Theorem \ref{thm:main} we will use the machinery of \emph{chain local equivalence} from \cite{betaseifert}, a refinement of the Manolescu invariants.  For a rational homology three-sphere $(Y,\s)$, the chain local equivalence class $[\mathit{SWF}(Y,\s)]_{cl}$ is a homology cobordism invariant taking values in the set $\mathfrak{CLE}$ of equivariant chain complexes up to the equivalence relation of chain local equivalence.  Chain local equivalence keeps track of how $G$-equivariant CW cells are attached to the reducible critical point in $\mathit{SWF}(Y,\s)$ (see Sections \ref{sec:2} and \ref{sec:gauge}).  One advantage of $[\mathit{SWF}(Y,\s)]_{cl}$ is that it behaves well under connected sum, while the behavior of the Manolescu invariants is more complicated.  

More specifically, to obtain Theorem \ref{thm:asymp}, we will show that any CW chain complex associated to a $\mathrm{Pin}(2)$-space admits some ``large" $j$-split subcomplex (partly controlled by the $\delta$ invariant).  Here, we call a $\mathrm{Pin}(2)$-chain complex \emph{$j$-split} if it is the CW chain complex of a $j$-split space.  Using the ``large" $j$-split subcomplex inside a given $\mathrm{Pin}(2)$-complex, the calculation of Theorem \ref{thm:main} may be carried over, in part, to arbitrary rational homology three-spheres, yielding Theorem \ref{thm:asymp}.  \\

\subsection{Applications}\label{subsec:introapps}
We apply Theorem \ref{thm:main} to study homology cobordisms among Seifert spaces.  A homology cobordism between two closed, oriented three-manifolds $Y_1,Y_2$ is a smooth compact oriented four-manifold $X$ such that $\partial X=Y_1 \amalg -Y_2$, and such that the inclusions $Y_i \hookrightarrow X$ induce isomorphisms $H_*(Y_i,\mathbb{Z}) \rightarrow H_*(X;\mathbb{Z})$.  Homology cobordism is an equivalence relation on the set of closed three-manifolds.  We write $Y_1 \sim Y_2$ if $Y_1$ and $Y_2$ are homology cobordant three-manifolds.  We form the integral homology cobordism group $\theta^H_3$, consisting of homology cobordism classes of integral homology three-spheres, with addition given by connected sum.  

A corollary of Theorem \ref{thm:main} is:

\begin{thm}\label{thm:nonSeifert}
Let $Y_{1},\dots,Y_{n}$ be negative Seifert integral homology spheres of projective type, with at least two of the $Y_i$  having $\frac{d(Y_i)}{2} \geq -\bar{\mu}(Y_i)+2$.  Then $Y:=Y_1 \# \dots \# Y_n$ is not homology cobordant to any Seifert fiber space.  
\end{thm}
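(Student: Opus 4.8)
The plan is to argue by contradiction: suppose $Y = Y_1 \# \dots \# Y_n$ is homology cobordant to a Seifert fiber space $Z$. The strategy is to show that the Manolescu invariants $\alpha, \beta, \gamma$ of any Seifert fiber space are too constrained to match the values forced on $Y$ by Theorem \ref{thm:main}. The key structural input is that the $G$-equivariant Seiberg-Witten Floer stable homotopy type $\mathit{SWF}(Z,\s)$ of a Seifert space is $j$-split (as recalled in the introduction, this holds for $Y$ negative Seifert; for a general Seifert space one reduces to this case, possibly after orientation reversal, using $\alpha(-Z) = -\gamma(Z)$, $\gamma(-Z) = -\alpha(Z)$, $\beta(-Z) = -\beta(Z)$ from \eqref{eq:orierev}). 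For a $j$-split space, $\alpha, \beta, \gamma$ are determined by a small amount of data: roughly, $X_+$ contributes the $S^1$-invariant $\delta$ together with a single ``correction'' controlled by $\bar\mu$, so that two of the three quantities $\alpha(Z) + \bar\mu(Z)$, $\beta(Z) + \bar\mu(Z)$, $\gamma(Z) + \bar\mu(Z)$ must coincide, and all three lie within a bounded window of $d(Z)/2 + \bar\mu(Z) = \tilde\delta(Z)$. In other words, a homology cobordism invariant of $Z$ of ``$j$-split type'' cannot have $\alpha, \beta, \gamma$ spread out by more than a controlled amount.

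By contrast, Theorem \ref{thm:main} gives exact formulas for $\alpha(Y), \beta(Y), \gamma(Y)$ in terms of the ordered quantities $\tilde\delta_1 \le \dots \le \tilde\delta_n$, and one computes
\[
\alpha(Y) - \beta(Y) = 2\left\lfloor\frac{(\sum_{i=1}^n \tilde\delta_i)+1}{2}\right\rfloor - 2\left\lfloor\frac{(\sum_{i=1}^{n-1}\tilde\delta_i)+1}{2}\right\rfloor, \qquad \beta(Y) - \gamma(Y) = 2\left\lfloor\frac{(\sum_{i=1}^{n-1}\tilde\delta_i)+1}{2}\right\rfloor - 2\left\lfloor\frac{(\sum_{i=1}^{n-2}\tilde\delta_i)+1}{2}\right\rfloor.
\]
The hypothesis that at least two of the $Y_i$ satisfy $d(Y_i)/2 \ge -\bar\mu(Y_i) + 2$, i.e. $\tilde\delta_i \ge 2$, forces $\tilde\delta_{n-1}, \tilde\delta_n \ge 2$ (since they are the two largest), so both $\alpha(Y) - \beta(Y)$ and $\beta(Y) - \gamma(Y)$ are at least $2$; hence $\alpha(Y), \beta(Y), \gamma(Y)$ are three \emph{distinct} values, separated by at least $2$ on each side. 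This is incompatible with the $j$-split constraint above (which forces two of them to be equal, or at least forces them into a narrow band), giving the contradiction. Since homology cobordant manifolds have equal Manolescu invariants, $Y \not\sim Z$.

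I would carry out the steps in this order: (1) recall/establish precisely which numerical constraints on $(\alpha, \beta, \gamma)$ follow from $j$-splitness of $\mathit{SWF}(Z,\s)$ --- this is the heart of the matter and should be extracted from the chain local equivalence analysis used for Theorem \ref{thm:main}, packaged as: for $Z$ Seifert, at most one of the three gaps $\alpha - \beta$, $\beta - \gamma$ is nonzero, or more conservatively $\alpha(Z) = \beta(Z)$ or $\beta(Z) = \gamma(Z)$; (2) handle orientation via \eqref{eq:orierev} so the $j$-split description applies whether $Z$ or $-Z$ is negative Seifert; (3) invoke Theorem \ref{thm:main} and the hypothesis to show $\alpha(Y) > \beta(Y) > \gamma(Y)$ with both gaps $\ge 2$; (4) conclude. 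The main obstacle is step (1): making rigorous and sharp the claim that a Seifert space's Manolescu invariants cannot be ``fully spread'' --- this requires knowing that the $j$-split chain local equivalence class of $\mathit{SWF}(Z,\s)$ has $X_+$ of a restricted (essentially ``monotone subcomplex'') form, so that the reducible and the one non-$S^1$-fixed generator closest to it pin down two of $\alpha, \beta, \gamma$ simultaneously. Once this structural fact about $j$-split Seifert spaces is in hand, the rest is the elementary floor-function estimate above.
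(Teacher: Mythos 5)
Your approach is exactly the paper's: Theorem \ref{thm:main} plus the hypothesis that the two largest $\tilde\delta_i$ are $\geq 2$ yields $\alpha(Y)-\beta(Y)\geq 2$ and $\beta(Y)-\gamma(Y)\geq 2$, while any Seifert integral homology sphere $Z$ must have either $\beta(Z)=\gamma(Z)$ (if $Z$ is negative) or $\alpha(Z)=\beta(Z)$ (if $Z$ is positive, by \eqref{eq:orierev}). The ``main obstacle'' you flag --- making rigorous the collapse of two of the three invariants for an arbitrary Seifert space, not just one of projective type --- is not actually a gap: it is exactly the last displayed assertion of Theorem \ref{thm:loctyp}, which states that for \emph{any} negative Seifert integral homology sphere $\beta(Z)=\gamma(Z)=-\bar\mu(Z)$, with the projective-type hypothesis explicitly dropped. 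Once you cite that (and Theorem \ref{thm:duality} for the positive case), your argument closes and coincides with the paper's.
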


We say that an integral homology three-sphere $Y$ is \emph{$H$-split} if $\alpha(Y)=\beta(Y)=\gamma(Y)$.  Theorem \ref{thm:standineq} implies that the set $\theta_{H\text{--}\mathrm{split}} \subset \theta^H_3$ of $H$-split integral homology three-spheres is, in fact, a subgroup.  We obtain from Theorem \ref{thm:main}:

\begin{thm}\label{thm:furuta}
Let $\theta_{SFP}$ be the subgroup of $\theta^H_3$ generated by negative Seifert spaces of projective type, and let $\theta_{H\text{--}\mathrm{split},SFP}$ be the subgroup consisting of $Y \in \theta_{SFP}$ such that $\alpha(Y)=\beta(Y)=\gamma(Y)$.  Then:
\begin{equation}\label{eq:Seifertsplitting} \theta_{SFP}=\theta_{H\text{--}\,\mathrm{split},SFP}\oplus \mathbb{Z}^\infty. \end{equation}
The $\mathbb{Z}^\infty$ summand is generated by $\{Y_p=\Sigma(p,2p-1,2p+1)\mid 3\leq p, p \;\mathrm{odd}\}$.  In particular, the elements $\{Y_p\mid 3\leq p, p \; \mathrm{odd}\}$ are linearly independent in $\theta^H_3$.  
\end{thm}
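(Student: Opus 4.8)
The plan is to prove Theorem~\ref{thm:furuta} in three parts: recording the relevant invariants of the generators $Y_p$; showing the $Y_p$ are linearly independent in $\theta^H_3$; and establishing the direct sum decomposition \eqref{eq:Seifertsplitting}. For the first part I would use N\'emethi's graded-root computation of $\mathit{HF}^+$ of Seifert spaces, together with the Neumann-Siebenmann formula for $\bar{\mu}$, to check that each $Y_p=\Sigma(p,2p-1,2p+1)$ is a negative Seifert integral homology sphere of projective type (its $\mathit{HF}^+$ has the form \eqref{eq:1proj}) and that $\tilde{\delta}$ restricts on $\{Y_p:p\geq 3,\ p\text{ odd}\}$ to a bijection onto the set of even integers $\geq 2$; concretely $\tilde{\delta}(Y_p)=d(Y_p)/2+\bar{\mu}(Y_p)=p-1$. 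I also record the elementary fact that $\tilde{\delta}$ is an \emph{even} integer on every integral homology sphere, since both $\delta$ and $\bar{\mu}$ reduce mod $2$ to the Rokhlin invariant, so that $\tilde{\delta}=\delta+\bar{\mu}\equiv 0\pmod 2$; for negative Seifert spaces of projective type $\tilde{\delta}$ is moreover non-negative.

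For linear independence, suppose $\sum_p a_p Y_p\sim S^3$ with the integers $a_p$ not all zero. Reversing the orientation of the whole sum if necessary we may assume some $a_p>0$, and moving the negative terms across gives $A\sim B$, where $A=\#_{p\in P}\,a_p Y_p$ and $B=\#_{q\in N}\,b_q Y_q$ are honest connected sums of negative Seifert integral homology spheres of projective type, with $a_p,b_q>0$, $P\cap N=\emptyset$, and $P\neq\emptyset$. Theorem~\ref{thm:main} applies to $A$ and to $B$; as every $\tilde{\delta}$-value occurring is even, the floor in \eqref{eq:badd} is inert, so \eqref{eq:badd} and \eqref{eq:dadd} give $\delta(A)-\beta(A)=\max\{\tilde{\delta}(Y_p):p\in P\}$, and likewise $\delta(B)-\beta(B)=\max\{\tilde{\delta}(Y_q):q\in N\}$ when $N\neq\emptyset$. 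Since $A\sim B$ forces $\delta(A)=\delta(B)$ and $\beta(A)=\beta(B)$, the case $N=\emptyset$ is impossible (it would give $0=\delta(A)-\beta(A)\geq 2$), and otherwise the largest generator appearing in $A$ has the same $\tilde{\delta}$-value as the largest appearing in $B$; since $p\mapsto\tilde{\delta}(Y_p)=p-1$ is injective and $P\cap N=\emptyset$, this is a contradiction. Hence $\langle Y_p\rangle\subset\theta^H_3$ is free abelian of infinite rank.

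For the splitting, Theorem~\ref{thm:standineq} shows $\theta_{H\text{--}\mathrm{split}}$ is a subgroup of $\theta^H_3$ (the inequalities \eqref{eq:alph}--\eqref{eq:beta} collapse to equalities when one summand is $H$-split, and \eqref{eq:orierev} shows the condition survives orientation reversal), so $\theta_{H\text{--}\mathrm{split},SFP}$ is a subgroup of $\theta_{SFP}$. To see $\theta_{H\text{--}\mathrm{split},SFP}\cap\langle Y_p\rangle=0$, take $0\neq Z=\sum_p a_p Y_p$ and write $Z=Z_+\#(-Z_-)$ with $Z_\pm$ the honest positive sums over the disjoint supports $\{p:a_p>0\}$ and $\{p:a_p<0\}$; both are nonempty, since by \eqref{eq:aadd} and \eqref{eq:cadd} a nonempty positive sum of $Y_p$'s is never $H$-split (its two largest $\tilde{\delta}$-values are $\geq 2$). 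If $Z$ were $H$-split, then $\alpha(Z)=\gamma(Z)$, which with Theorem~\ref{thm:ineq2} also equals $\delta(Z)$, and applying \eqref{eq:beta2} to $Z_+\sim Z\#Z_-$ gives $\beta(Z_+)=\delta(Z)+\beta(Z_-)$; by Theorem~\ref{thm:main} and additivity of $\delta$ this collapses to $\max\{\tilde{\delta}(Y_p):a_p>0\}=\max\{\tilde{\delta}(Y_p):a_p<0\}$, impossible for disjoint supports. Finally, to obtain $\theta_{SFP}=\theta_{H\text{--}\mathrm{split},SFP}+\langle Y_p\rangle$ it is enough to treat a single generator $W$, a negative Seifert integral homology sphere of projective type: if $\tilde{\delta}(W)=0$ then $W$ is $H$-split by Theorem~\ref{thm:main}, while if $\tilde{\delta}(W)=s\geq 2$ we take $p=s+1$ and invoke the fact (\cite{betaseifert}, with the projective-type hypothesis) that $[\mathit{SWF}(W)]_{cl}$ depends only on $(\delta(W),\tilde{\delta}(W))$; combined with multiplicativity of $[\,\cdot\,]_{cl}$ under connected sum and the fact that $[\mathit{SWF}(-Y)]_{cl}$ is inverse to $[\mathit{SWF}(Y)]_{cl}$, this shows $[\mathit{SWF}(W\#(-Y_p))]_{cl}$ is a pure grading shift, hence $W\#(-Y_p)$ is $H$-split (the Manolescu invariants being determined by the chain local equivalence class), i.e.\ $[W]\equiv[Y_p]$ modulo $\theta_{H\text{--}\mathrm{split},SFP}$. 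Since such $W$ and their orientation reversals generate $\theta_{SFP}$, assembling the three parts gives $\theta_{SFP}=\theta_{H\text{--}\mathrm{split},SFP}\oplus\langle Y_p\rangle$ with $\langle Y_p\rangle\cong\mathbb{Z}^\infty$, which contains the stated linear independence.

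Beyond the formal juggling of the inequalities in Theorems~\ref{thm:standineq}, \ref{thm:ineq2}, and \ref{thm:main}, two steps carry real content. First, the explicit determination $\tilde{\delta}(Y_p)=p-1$ and the projective-type property, which rest on a genuine graded-root computation of $\mathit{HF}^+(Y_p)$ and of $\bar{\mu}(Y_p)$. Second --- and this I expect to be the main obstacle --- the input that chain local equivalence is calibrated exactly right: coarse enough that on a negative Seifert space of projective type it retains only the pair $(\delta,\tilde{\delta})$, yet fine enough that a trivial class forces $\alpha=\beta=\gamma$. This is the point at which the $\mathrm{Pin}(2)$-equivariant topology of joins of $j$-split spaces genuinely enters, and where the machinery underlying Theorems~\ref{thm:main} and \ref{thm:asymp} is needed.
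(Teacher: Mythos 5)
Your overall strategy---use the invariant $\tilde{\delta}$ and the chain local equivalence classification of negative Seifert spaces of projective type to read off the largest piece in a connected sum, and then peel off generators one at a time---is essentially the paper's (via Lemma~\ref{lem:deltadeterm} and Theorem~\ref{thm:minisum}). But there are two compensating arithmetic errors at the start that propagate into a real gap.

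First, the value is wrong: from Theorem~\ref{thm:maingrad} one has $d(Y_p)=p-1$ and $\bar\mu(Y_p)=-\beta(Y_p)=0$, so $\tilde\delta(Y_p)=d(Y_p)/2+\bar\mu(Y_p)=(p-1)/2$, not $p-1$. As $p$ ranges over odd integers $\geq 3$, $\tilde\delta(Y_p)$ therefore runs over \emph{all} positive integers, not just even ones. Second, the ``elementary fact'' that $\tilde\delta=\delta+\bar\mu$ is always even is false: $\delta$ does not reduce mod $2$ to the Rokhlin invariant, and indeed $\tilde\delta(\Sigma(2,3,7))=0+1=1$, and likewise $\tilde\delta(Y_3)=1$. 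Consequently the ``floor is inert'' simplification fails, and with it the identity $\delta(A)-\beta(A)=\max_i\tilde\delta(Y_i)$. A concrete counterexample: for $A=Y_3\#Y_5$ one has $\tilde\delta$-sequence $(1,2)$, $\delta(A)=3$, $\beta(A)=E(1)=2$, so $\delta(A)-\beta(A)=1\ne 2=\max$; for $B=Y_3\#Y_3\#Y_3$ one also gets $\delta(B)-\beta(B)=1$, although the maxima differ. So $\delta-\beta$ does not by itself recover the top $\tilde\delta$-value, and your linear independence and $H$-split-intersection arguments both pass through that false identity. The paper resolves exactly this parity ambiguity in Lemma~\ref{lem:deltadeterm} by comparing both $\alpha(Y)-\beta(Y)$ and $\alpha(Y\#\Sigma(2,3,11))-\beta(Y\#\Sigma(2,3,11))$, since $\Sigma(2,3,11)$ has $\tilde\delta=1$ and flips the parity of the partial sums; you need some such device.

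Your third step---appealing to the fact that for negative Seifert projective type $W$ the class $[\mathit{SWF}(W)]_{cl}$ is determined by $(\delta(W),\tilde\delta(W))$, tensoring with the inverse of a matching $Y_p$ to land in $\theta_{H\text{--}\mathrm{split},SFP}$---is correct (this is Theorem~\ref{thm:loctyp}) and is morally the same mechanism as the paper's homomorphism $\psi$ in Theorem~\ref{thm:minisum}; once the parity gap is closed and the correct formula $\tilde\delta(Y_p)=(p-1)/2$ is inserted (so that the $Y_p$ really do exhaust all positive values of $\tilde\delta$), this part goes through.
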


This implies the existence of a $\mathbb{Z}^\infty$ subgroup of $\theta^H_3$, a result originally due to Furuta \cite{Furuta} and Fintushel-Stern \cite{FSZinfty}, both building on the $R$-invariant introduced by Fintushel and Stern \cite{FintushelStern85} using instantons.  Fintushel and Stern \cite{FSZinfty} show that the collection $\{\Sigma(p,q,pqn-1)\mid n\geq 1\}$ is linearly independent in $\theta^H_3$ for any relatively prime $p,q$, and Furuta's construction of $\mathbb{Z}^\infty \subseteq \theta^H_3$ is the special case $p=2,q=3$ of Fintushel and Stern's construction.   However, we will see from Theorem \ref{thm:minisum} that the image of $\{ \Sigma(p,q,pqn-1)\mid n \geq 1\}$ in $\theta^H_3$ is contained in $\theta_{H\text{--}\mathrm{split}}\oplus \mathbb{Z}$, for any fixed $p,q$.  In particular, the $\mathbb{Z}^\infty$ subgroups that Furuta and Fintushel-Stern originally identified are not detected by $\mathrm{Pin}(2)$-techniques.  We then obtain the following corollary:  

\begin{cor}\label{cor:hsplitcor} 
The subgroup $\theta_{H\text{--}\mathrm{split}}\subset \theta^H_3$ is infinitely-generated.  
\end{cor}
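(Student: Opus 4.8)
The plan is to deduce Corollary~\ref{cor:hsplitcor} from Theorem~\ref{thm:minisum} together with the linear-independence result of Fintushel and Stern~\cite{FSZinfty}; note that Theorem~\ref{thm:furuta} alone does not suffice, since it only identifies $\theta_{H\text{--}\mathrm{split},SFP}$ with $\theta_{SFP}/\mathbb{Z}^\infty$, which a priori could be small (even trivial). First I would fix relatively prime $p,q$ (for concreteness $p=2$, $q=3$) and set $x_n:=[\Sigma(p,q,pqn-1)]\in\theta^H_3$ for $n\geq 1$. By \cite{FSZinfty} the $\{x_n\}_{n\geq 1}$ are linearly independent in $\theta^H_3$, so the subgroup $B\leq\theta^H_3$ that they generate is free abelian of countably infinite rank. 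Since each $\Sigma(p,q,pqn-1)$ is a negative Seifert space of projective type, Theorem~\ref{thm:minisum} applies and yields $B\subseteq\Theta'$, where $\Theta':=\theta_{H\text{--}\mathrm{split}}\oplus\mathbb{Z}\leq\theta^H_3$ is the internal direct sum of subgroups supplied by that theorem.

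Next I would let $\pi\colon\Theta'\to\mathbb{Z}$ be the projection onto the second summand and set $K:=\ker(\pi|_B)=B\cap\theta_{H\text{--}\mathrm{split}}$. Since $B/K$ embeds into $\mathbb{Z}$ it is free of rank $0$ or $1$, hence projective, so the short exact sequence $0\to K\to B\to B/K\to 0$ splits and $B\cong K\oplus(B/K)$. As $B$ is not finitely generated, $K$ is not finitely generated either; and since subgroups of finitely generated abelian groups are finitely generated, the presence of the infinitely generated subgroup $K\leq\theta_{H\text{--}\mathrm{split}}$ forces $\theta_{H\text{--}\mathrm{split}}$ itself to be infinitely generated. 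If one wants explicit elements, write $x_n=a_n+c_n$ with $a_n\in\theta_{H\text{--}\mathrm{split}}$ and $c_n$ in the distinguished $\mathbb{Z}$-summand of $\Theta'$; after relabeling so that $c_1\neq 0$ (if every $c_n$ vanishes the $x_n$ themselves already lie in $\theta_{H\text{--}\mathrm{split}}$ and there is nothing to prove), the elements $c_1x_n-c_nx_1=c_1a_n-c_na_1\in\theta_{H\text{--}\mathrm{split}}$ for $n\geq 2$ are nonzero and linearly independent, directly from the linear independence of the $x_n$.

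Essentially all of the difficulty sits upstream: the one substantive input is the containment $B\subseteq\theta_{H\text{--}\mathrm{split}}\oplus\mathbb{Z}$ of Theorem~\ref{thm:minisum}, which itself rests on the connected-sum computations of Theorem~\ref{thm:main} (ultimately on the formulas \eqref{eq:aadd}--\eqref{eq:cadd}, which pin down the ``non-$H$-split'' direction of such sums to be at most rank one for fixed $p,q$). Granting that, the passage to Corollary~\ref{cor:hsplitcor} is the elementary piece of abelian-group theory above, and the only bookkeeping point is the degenerate case in which all $\mathbb{Z}$-components $c_n$ vanish, which is handled immediately.
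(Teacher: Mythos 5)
Your approach is correct in outline but genuinely different from the paper's. The paper's proof is more direct: using the calculations of $d$ and $\bar{\mu}$ from \cite{ManolescuPin}, it picks out the sub-family $\Sigma(2,3,12k-7)$ (the odd-$n$ slice of $\Sigma(2,3,6n-1)$) for which $\tilde{\delta}=d/2+\bar{\mu}=0$; by Theorem~\ref{thm:loctyp} each of these is $H$-split (indeed they all share the same chain local equivalence class), and Furuta's linear independence then hands you an explicit $\mathbb{Z}^\infty$ inside $\theta_{H\text{--}\mathrm{split}}$ with no further algebra. Your route works with the full family, routes through the splitting of Theorem~\ref{thm:minisum}, and extracts an infinitely-generated subgroup by the rank-and-projection argument; that piece of abelian group theory is fine, as is your explicit family $c_1x_n-c_nx_1$. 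The one place your write-up elides a needed verification is the containment $B\subseteq\theta_{H\text{--}\mathrm{split}}\oplus\mathbb{Z}$: Theorem~\ref{thm:minisum} by itself only places $B$ inside $\theta_{H\text{--}\mathrm{split},SFP}\oplus\bigoplus_{x>0}\mathbb{Z}$, and to land in a single $\mathbb{Z}$-summand you must actually check that $\tilde{\delta}(\Sigma(p,q,pqn-1))$ takes at most one positive value as $n$ varies. For $p=2,q=3$ this is exactly what the cited $d,\bar{\mu}$ computations deliver ($\tilde{\delta}\in\{0,1\}$), so your argument closes up once that input is invoked; you do flag it as ``the one substantive input,'' so this is a matter of pinning the citation rather than a real gap. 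The trade-off: the paper's argument is shorter and self-contained given Theorem~\ref{thm:loctyp}, while yours illustrates how the global splitting of $\theta_{SFP}$ alone, plus any single infinite linearly independent Seifert family with bounded $\tilde{\delta}$, forces $\theta_{H\text{--}\mathrm{split}}$ to be large.
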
  

To our knowledge, Theorem \ref{thm:furuta} is the first proof of the existence of a $\mathbb{Z}^\infty$ subgroup of $\theta^H_3$ using either monopoles or the technology of Heegaard Floer homology.  The Fintushel-Stern $R$ invariant also shows that $Y_p$, for $p$ odd, are linearly independent in the homology cobordism group \cite{Endo}, but it does not show the splitting as in (\ref{eq:Seifertsplitting}).  

Theorem \ref{thm:furuta} follows from Theorem \ref{thm:main}, once one finds a collection of Seifert integral homology spheres $Y$ of projective type with $d(Y)/2+\bar{\mu}(Y)$ arbitrarily large:
\begin{thm}\label{thm:submain}
Let $Y_p=\Sigma(p,2p-1,2p+1)$.  For odd $p\geq 3$, $Y_p$ is of projective type and $d(Y_p)/2+\bar{\mu}(Y_p)=\frac{p-1}{2}$.  
\end{thm}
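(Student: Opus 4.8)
The plan is to compute the two invariants $d(Y_p)$ and $\bar\mu(Y_p)$ separately, and along the way verify the projective-type condition \eqref{eq:1proj}. First I would set up the plumbing description of $Y_p = \Sigma(p,2p-1,2p+1)$ as a negative Seifert fibered space over $S^2$: since $p$, $2p-1$, $2p+1$ are pairwise coprime (using that $p$ is odd), $Y_p$ is an integral homology sphere, and I would write down the associated star-shaped negative-definite plumbing graph $\Gamma$ with three legs, determined by the continued fraction expansions of the Seifert invariants. The Euler number of the orbifold line bundle is $e = -1/(p(2p-1)(2p+1)) < 0$, confirming $Y_p$ is negative Seifert in the sense of Section \ref{sec:gauge}.

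For the Neumann--Siebenmann invariant, I would use the formula $\bar\mu(Y_p) = (\sigma(\Gamma) - w\cdot w)/8$ where $w$ is the Wu class (spherical Wu vector) of the plumbing lattice, computed from the requirement $w \cdot v \equiv v\cdot v \pmod 2$ for all vertices $v$. This is a finite linear-algebra computation over the plumbing graph; the Brieskorn family $\Sigma(p,2p-1,2p+1)$ is sufficiently structured (the three multiplicities are in arithmetic progression) that the continued fractions, and hence $\sigma(\Gamma)$ and the correction $w\cdot w$, should have closed forms polynomial in $p$, yielding $\bar\mu$ as an explicit function of $p$. For $d(Y_p)$ I would invoke the algorithm of Ozsv\'ath--Szab\'o \cite{OzSzgrad} (or equivalently N\'emethi's graded roots / the computation of $HF^+$ of negative Seifert spaces via \cite{Nem07}, \cite{OzSzplumbed}): $HF^+(Y_p)$ is read off from the lattice cohomology / tau-function of the plumbing, and the bottom grading of the tower $\mathcal{T}^+_d$ gives $d(Y_p)$. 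Here I would lean on the work of N\'emethi--Borodzik \cite{BN} and Tweedy \cite{Tweedy} cited in the introduction, which handles precisely Brieskorn spheres $\Sigma(p,q,pqn\pm 1)$ — note $2p+1 = p\cdot 2 \cdot 1 + 1$ in the role $q=2$, $n=1$ is not quite the stated form, but $\Sigma(p,2p-1,2p+1)$ can be massaged into (or its $HF^+$ computed by the same graded-root technique as) the projective-type family, establishing \eqref{eq:1proj}. Combining, $\tilde\delta(Y_p) = d(Y_p)/2 + \bar\mu(Y_p)$ should collapse to $\frac{p-1}{2}$ after the polynomial expressions cancel.

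The main obstacle I expect is the bookkeeping in the $HF^+$ computation: verifying the \emph{projective-type} form \eqref{eq:1proj} (rather than just extracting $d$) requires controlling the full structure of the graded root / lattice cohomology of the three-legged plumbing, i.e. showing the reduced part splits as a single copy of $\mathcal{T}^+_{-2n+1}(m)$ plus pairs $\mathcal{T}^+_{a_i}(m_i)^{\oplus 2}$. This amounts to analyzing the tau-function $\tau(n) = \sum (1 - \lceil \cdots \rceil)$ associated to the Seifert data and showing its local minima/maxima pattern has exactly one ``unpaired'' feature — a delicate but finite combinatorial argument, and the reason the specific arithmetic progression $p, 2p-1, 2p+1$ is chosen. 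Once projective type is confirmed, the identities $d(Y_p)/2 + \bar\mu(Y_p) = \frac{p-1}{2}$ reduce to matching two explicit sequences, which I would verify by induction on $p$ or by direct evaluation of the closed forms. I would also double-check the small case $p=3$ (where $Y_3 = \Sigma(3,5,7)$) against known values of $d$ and $\bar\mu$ as a sanity check, expecting $\tilde\delta(Y_3) = 1$.
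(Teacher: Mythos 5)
Your overall plan is in the right framework (graded roots / lattice cohomology for $d$ and for the projective-type condition), but it diverges from the paper in one way that is genuinely different-but-fine, and one way that is a real gap.

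The $\bar\mu$ computation is the genuinely different route. You propose computing $\bar\mu(Y_p)$ directly from the plumbing via the Wu vector formula $\bar\mu = (\sigma(\Gamma)-w\cdot w)/8$. The paper never does this: instead it proves (Theorem~\ref{thm:maingrad}) that $d(Y_p)=p-1$ and $\beta(Y_p)=0$, and then uses the identity $\beta(Y)=-\bar\mu(Y)$ for negative Seifert integral homology spheres (Theorem~\ref{thm:loctyp}) to conclude $\bar\mu(Y_p)=0$ for free. Your direct calculation would presumably also give $\bar\mu(Y_p)=0$, but the paper's route avoids the continued-fraction and signature bookkeeping entirely; $\beta(Y_p)=0$ falls out of the same graded-root analysis that establishes projective type, so nothing extra has to be computed.

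The real gap is in your verification of the projective-type condition. You notice yourself that $\Sigma(p,2p-1,2p+1)$ is \emph{not} in the N\'emethi--Borodzik / Tweedy family $\Sigma(p,q,pqn\pm1)$ — none of the three multiplicities can be written as $pqn\pm1$ with $p,q$ the other two — and then propose to ``massage'' $Y_p$ into that family or apply ``the same technique.'' That step is exactly where the paper's actual work lives, and it is not a citation or a massage: the paper proves Lemma~\ref{lem:main}, a structural decomposition $\tilde\Delta_{Y_p}=(\Delta_{Z_p}\ast\Delta_{C_p})^{\mathrm{sym}}$ of the reduced delta sequence, where $\Delta_{C_p}$ is the explicit ``creature'' delta sequence of Definition~\ref{def:crea} and $\Delta_{Z_p}$ is \emph{sinking} (Definition~\ref{def:sink}). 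The sinking property forces the minimum of $\tau_{\Delta_{Z_p}}$ to occur at the last element (Proposition~\ref{prop:47}), which guarantees that the bottom of the graded root $\Gamma_{Y_p}$ is governed entirely by the embedded creature. Fact~\ref{fct:proje} then reads off projective type — a grading-decreasing path from the $\iota$-invariant vertex $v$ of minimal grading down to a vertex of globally minimal grading — together with $\beta(Y_p)=\chi(v)=0$. This decomposition is adapted, odd-$p$ case by hand, from the even-$p$ computation in \cite{HLK} (their Lemma 5.3 and surrounding lemmas on the semigroup $S(r_-,r_+)$). Your ``exactly one unpaired feature of the tau-function'' intuition is the right picture, but without something like Lemma~\ref{lem:main} to pin down that structure concretely, the projective-type claim is asserted rather than proved. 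Your sanity check $\tilde\delta(Y_3)=1$ is correct.
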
 
Theorem \ref{thm:submain} is proved using the technology of graded roots, introduced by N{\'e}methi \cite{Nemethigr}, and refinements of the method of graded roots for Seifert spaces in \cite{CanKarakurt},\cite{KarakurtLidman}.  The proof is essentially borrowed from the partial calculation of $\mathit{HF}^+(Y_p)$ for even $p$ by Hom, Karakurt, and Lidman \cite{HLK}.  

Other convenient choices of the generating set for $\mathbb{Z}^\infty$ in Theorem \ref{thm:furuta} are possible, such as, for example, $\{\Sigma(2,q,2q+1)\mid q \equiv 3\bmod{4}\}$.  See Theorem \ref{thm:minisum} for a more precise statement.

Using Theorem \ref{thm:furuta}, we may also obtain statements about knots.  Endo showed in \cite{Endo} that the smooth concordance group of topologically slice knots, denoted $\mathcal{C}_{TS}$, contains a $\mathbb{Z}^\infty$ subgroup, using the Fintushel-Stern $R$-invariant.  Using Theorem \ref{thm:furuta}, we are able to reproduce Endo's result: 

\begin{cor}\label{cor:knot}
The pretzel knots $K(-p,2p-1,2p+1)$, for odd $p\geq 3$, are linearly independent in $\mathcal{C}_{TS}$.
\end{cor}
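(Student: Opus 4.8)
The plan is to reduce the statement about the topologically slice concordance group $\mathcal{C}_{TS}$ to the linear independence of the Brieskorn spheres $Y_p = \Sigma(p,2p-1,2p+1)$ in $\theta^H_3$, which is exactly the content of Theorem \ref{thm:furuta} (via Theorem \ref{thm:submain}). First I would recall the standard construction that connects pretzel knots to Brieskorn spheres: the $(-p,2p-1,2p+1)$ pretzel knot $K_p := K(-p,2p-1,2p+1)$ arises as a knot in $S^3$ whose double branched cover, or more relevantly whose $+1$-surgery (equivalently, an appropriate surgery identified in the work of Fintushel-Stern and in \cite{Endo}), is the Seifert homology sphere $\pm Y_p = \pm\Sigma(p,2p-1,2p+1)$. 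The key classical fact, due to Fintushel-Stern, is that $K_p$ is topologically slice: its Alexander polynomial is trivial, so by Freedman's theorem $K_p \in \mathcal{C}_{TS}$.

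The core step is the homology-cobordism obstruction. I would use the fact that if a knot $K$ is smoothly slice, then $S^3_{1/n}(K)$ (or the relevant surgery yielding a homology sphere) bounds a homology ball, hence is homology cobordant to $S^3$; more precisely, the map sending a knot $K$ to the homology cobordism class of the corresponding surgered manifold descends to a homomorphism from a suitable concordance group to $\theta^H_3$. Applying this to a hypothetical nontrivial relation $\sum_i n_i [K_{p_i}] = 0$ in $\mathcal{C}_{TS}$, one obtains that $\#_i \, n_i \, (\pm Y_{p_i})$ is homology cobordant to $S^3$, i.e.\ $\sum_i n_i [Y_{p_i}] = 0$ in $\theta^H_3$ (up to the overall sign coming from the orientation convention of the surgery). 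By Theorem \ref{thm:furuta}, the elements $\{Y_p \mid p \geq 3 \text{ odd}\}$ are linearly independent in $\theta^H_3$, so all $n_i = 0$, contradicting nontriviality of the relation.

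The main obstacle — really the only nontrivial point beyond bookkeeping — is making precise the relationship between the pretzel knots $K(-p,2p-1,2p+1)$ and the Brieskorn spheres $\Sigma(p,2p-1,2p+1)$, and verifying that the passage ``slice knot $\Rightarrow$ surgered manifold bounds a homology ball'' produces exactly a relation among the $Y_{p_i}$ with the same coefficients. This is where I would cite Endo \cite{Endo}: Endo identifies precisely this family of pretzel knots, shows they are topologically slice, and shows that an appropriate surgery on $K(-p,2p-1,2p+1)$ is $\Sigma(p,2p-1,2p+1)$; the only new input here is that we replace Endo's use of the Fintushel-Stern $R$-invariant with Theorem \ref{thm:furuta} to detect linear independence in $\theta^H_3$. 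Since topological sliceness of $K_p$ and the surgery description are taken verbatim from \cite{Endo} and \cite{FintushelStern85}, the argument is essentially immediate once Theorem \ref{thm:furuta} is in hand; I would spell out the homomorphism property and the sign convention as the only steps requiring care.
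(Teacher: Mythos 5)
There is a genuine gap in the passage from sliceness to a relation in $\theta^H_3$. You assert that ``the map sending a knot $K$ to the homology cobordism class of the corresponding surgered manifold descends to a homomorphism from a suitable concordance group to $\theta^H_3$.'' This is false: $S^3_{1/n}(K_1 \# K_2)$ is not $S^3_{1/n}(K_1)\# S^3_{1/n}(K_2)$, so $K \mapsto [S^3_{1/n}(K)]$ is a well-defined map on concordance classes but is \emph{not} additive. Consequently, from a relation $\#_i\, n_i K_{p_i}$ slice, you only learn that $S^3_{1/n}(\#_i\, n_i K_{p_i})$ bounds a homology ball, and this does not produce the needed relation $\sum_i n_i [Y_{p_i}] = 0$ in $\theta^H_3$. (Also, $+1$-surgery on $K(-p,2p-1,2p+1)$ is not in general $\Sigma(p,2p-1,2p+1)$; that identification is not established.)

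The paper's argument avoids this by using the branched double cover $\Sigma(\,\cdot\,)$ rather than surgery: $Y_p = \Sigma(K(-p,2p-1,2p+1))$, and crucially $\Sigma(K_1\# K_2)\cong \Sigma(K_1)\#\Sigma(K_2)$, so a relation among the $K_{p_i}$ in the concordance group does yield $\#_i\, n_i Y_{p_i}$ bounding the branched double cover of a slice disk in $D^4$. You mention the double branched cover in passing but then commit to the surgery route, so as written the core step fails; replacing surgery by the branched double cover would repair it. One further point worth making explicit (which the paper also leaves implicit): the branched double cover of a slice disk is only a $\mathbb{Z}/2$-homology ball, so one is obstructing spin rational homology cobordism rather than integral homology cobordism. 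This is harmless because the Manolescu invariants underlying Theorem \ref{thm:furuta} are invariant under spin rational homology cobordisms (negative-definite spin cobordisms with $b_2=0$), but it is a subtlety your writeup, like the paper's, should acknowledge.
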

\begin{proof}
We chose the Seifert spaces $Y_p$ in Theorem \ref{thm:furuta} instead of other possible generating sets because $Y_p$ are branched double covers of pretzel knots:
\[ Y_p=\Sigma(K(-p,2p-1,2p+1))\]
where $K(-p,2p-1,2p+1)$ is the pretzel knot of type $(-p,2p-1,2p+1)$.  We note that the Alexander polynomial
\[ \Delta_K(K(-p,2p-1,2p+1))=1\]
for all odd $p$.  Thus, by \cite{Freedman}, $K(-p,2p-1,2p+1)$ are topologically slice.  By Theorem \ref{thm:furuta}, the present Corollary follows.
\end{proof}
The subgroup that Endo identifies in $\mathcal{C}_{TS}$ is identical to that of Corollary \ref{cor:knot}.   Hom \cite{Hom} much extended Endo's result, showing that $\mathcal{C}_{TS}$ has a $\mathbb{Z}^\infty$ summand, using the knot concordance invariant $\epsilon$ defined in \cite{Hom12}.  Additionally, Ozsv{\'a}th, Stipsicz, and Szab{\'o} \cite{OSS} gave another proof that $\mathcal{C}_{TS}$ has a $\mathbb{Z}^\infty$ summand using the knot concordance invariant $\Upsilon$.  

Another natural question is whether the Manolescu invariants of a pair of three-manifolds determine the Manolescu invariants of the connected sum.  This is not the case, as may be seen using Theorem \ref{thm:main}.  We take $Y=\Sigma(2,3,7)$, noting
\begin{equation}
\alpha(Y)=1,\; \beta(Y)=-1,\; \gamma(Y)=-1,\; \delta(Y)=0,\; \mathrm{and}\; \bar{\mu}(Y)=1.
\end{equation}
Then we have $\tilde{\delta}(Y)=1$, and, by Theorem \ref{thm:main}, the Manolescu invariants of $2(n+1)Y$ and $(2n+1)Y$ are independent of $n\geq 0$.  Specifically, 
\[\alpha(2(n+1)Y)=0,\;\beta(2(n+1)Y)=0,\; \gamma(2(n+1)Y)=-2, \]
\[ \alpha((2n+1)Y)=1,\;\beta((2n+1)Y)=-1,\; \gamma((2n+1)Y)=-1.\]
Then the Manolescu invariants of $2nY$ and $2mY$ agree for $n >m \geq 1$.  However, 
\[\alpha(2nY \# -2nY)=\beta(2nY\#-2nY)=\gamma(2nY \# -2nY)=0,\]
while
\[\alpha(2nY \# -2mY)=\beta(2nY\#-2mY)=0, \; \gamma(2nY\#-2mY)=-2.\]
Thus, the Manolescu invariants of $Y_1$ and $Y_2$ do not determine those of the connected sum $Y_1 \# Y_2$.

\subsection{Organization} The structure of the paper is as follows.  In Section \ref{sec:2} we review the definitions of the Manolescu invariants and local equivalence.  In Section \ref{sec:ineqs}, we provide the algebraic part of the proof of Theorems \ref{thm:standineq} and \ref{thm:ineq2}.  In Section \ref{sec:smash} we provide the algebraic-topological part of the proof of Theorem \ref{thm:main}.  In Section \ref{sec:gauge} we state the output of the constructions of \cite{ManolescuPin}, and use the results of Sections \ref{sec:ineqs} and \ref{sec:smash} to obtain Theorems \ref{thm:standineq}, \ref{thm:ineq2}, and \ref{thm:main}.  In Section \ref{sec:app} we list applications, providing proofs of Theorems \ref{thm:asymp}, \ref{thm:nonSeifert} and \ref{thm:furuta}.  In Section \ref{sec:gr} we give a brief introduction to graded roots, and in Section \ref{sec:semicre} we prove Theorem \ref{thm:submain}.  Throughout the paper all homology will be taken with $\mathbb{F}:=\mathbb{Z}/2$ coefficients, unless otherwise specified.

\section*{Acknowledgments}
The author would like to thank Ciprian Manolescu for his guidance and advice.  The author is also grateful to Cagri Karakurt, Tye Lidman, and Francesco Lin for helpful conversations.  

\section{Spaces of type SWF}\label{sec:2}
 
\subsection{$G$-CW Complexes}
	In this section we recall the definition of spaces of type SWF from \cite{ManolescuPin}, and introduce local equivalence.  Spaces of type SWF are the output of the construction of the Seiberg-Witten Floer stable homotopy type of \cite{ManolescuPin} and \cite{ManolescuK}; see Section \ref{sec:gauge}.  The present section follows Section 2 of \cite{betaseifert} prior to \S \ref{subsec:order}, which is new.
	
We briefly review here the equivariant topology we will need, and refer to Section 2 of \cite{ManolescuPin} for further details.  

 A (finite) $K$-CW decomposition of a space $(X,A)$ with (left) $K$-action is a filtration $(X_n\mid n \in \mathbb{Z}_{\geq 0})$ of $X$ such that 
	\begin{itemize}
	\item $A \subset X_0$ and $X=X_n$ for $n$ sufficiently large.
	\item The space $X_n$ is obtained from $X_{n-1}$ by attaching $K$-equivariant $n$-cells, copies of $(K/H) \times D^n$, for $H$ a closed subgroup of $K$.  
	\end{itemize}
When $A$ is a point, we call $(X,A)$ a pointed $K$-CW complex.  

Let $X$ be a pointed, finite $K$-CW complex for a compact Lie group $K$.  Let $EK$ denote the classifying space of $K$, and write $EK_+$ for $EK$ with a disjoint base point.  Further, for pointed $K$-CW complexes $X_1,X_2$, we define the equivariant smash product by
\[X_1 \wedge_K X_2 =(X_1\wedge X_2)/K.\] 
The reduced Borel homology and cohomology of $X$ are defined by:
\begin{align}\label{eq:boreldef}
\tilde{H}^K_*(X)&= \tilde{H}_*(EK_+ \wedge_K X),\\ \nonumber
\tilde{H}^*_K(X)&= \tilde{H}^*(EK_+\wedge_K X).
\end{align}
Borel homology and cohomology are invariants of $K$-equivariant homotopy equivalence.

Furthermore, the map $\pi: EK_+ \times_K X \rightarrow EK_+/K=BK_+$ given by projection onto the first factor induces a map in cohomology $\pi^*: \tilde{H}^*(BK_+)=H^*(BK)\rightarrow \tilde{H}^*_K(X)$.  The map $\pi^*$ gives $\tilde{H}^*_K(X)$ and $\tilde{H}^K_*(X)$ the structure of $H^*(BK)$-modules.

Let $G=\mathrm{Pin}(2)$ and $BG$ its classifying space.  In addition to the previous definition of $G$, one may think of $G$ as the set $S^1 \cup jS^1 \subset \mathbb{H}$, where $S^1$ is the unit circle in the $\langle 1, i \rangle$ plane, with group action on $G$ induced from the group action of the unit quaternions.  Manolescu shows in \cite{ManolescuPin} that $H^*(BG)=\f[q,v]/(q^3)$, where $\mathrm{deg} \; q =1$ and $\mathrm{deg} \; v =4$, so $H^G_*(X)$ is naturally an $\f[q,v]/(q^3)$-module for $X$ a $G$-space.  Moreover $S^\infty=S(\mathbb{H}^\infty)$ with its quaternion action is a free $G$-space.  Since $S^\infty$ is contractible, we identify $EG=S^\infty$.  

We will also need to relate $G$-Borel homology and $S^1$-Borel homology.  Consider 
\[f: \mathbb{C}P^\infty = BS^1 \rightarrow BG, \]
the map given by quotienting by the action of $j \in G$ on $BS^1$.  Recall $H^*(BS^1)=\f[U]$ where $\mathrm{deg}\; U=2$.  Then we have the following fact (for a proof, see \cite[Example 2.11]{ManolescuPin}):  

\begin{fact}\label{fct:bs1bg} 
The natural map
\[\mathrm{res}^G_{S^1}:=f^*: \f[q,v]/(q^3)=H^*(BG) \rightarrow H^*(BS^1)=\f[U] \]
is an isomorphism in degrees divisible by $4$, and zero otherwise.  In particular, $v \rightarrow U^2$.  Similarly, 
\[f_*: H_*(BS^1)\rightarrow H_*(BG)\]
has $f_*(u^{-2n})=v^{-n}$ and $f_*(u^{-2n+1})=0$, where $u^{-n}$ is the unique nonzero element of $H_*(BS^1)$ in degree $2n$, and $v^{-n}$ is the unique nonzero element of $H_*(BG)$ in degree $4n$.  
\end{fact}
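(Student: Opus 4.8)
The plan is to compute both cohomology rings directly and then chase the map $f:BS^1\to BG$ induced by quotienting by $j$. First I would fix explicit models: $BG=BS^1/\langle j\rangle$, where $j$ acts on $BS^1=\mathbb{C}P^\infty$ via the residual action coming from the conjugation action of $j$ on $S^1\subset G$ (that is, $z\mapsto \bar z$ on the fiber, hence complex conjugation on $\mathbb{C}P^\infty$). The ring $H^*(BG;\f)=\f[q,v]/(q^3)$ with $\deg q=1,\deg v=4$ is already given to us by \cite{ManolescuPin}, and $H^*(BS^1;\f)=\f[U]$ with $\deg U=2$ is standard; so the only content is identifying $f^*$. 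The key homological input is that $f$ is, up to homotopy, a double cover with deck group $\mathbb{Z}/2$ acting by complex conjugation on $\mathbb{C}P^\infty$, together with the transfer map $\mathrm{tr}:H^*(BS^1)\to H^*(BG)$ satisfying $f^*\circ\mathrm{tr}=1+c^*$ and $\mathrm{tr}\circ f^*=$ multiplication by the Euler class of the associated line bundle.

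Next I would run the Gysin (or Serre spectral sequence) argument for the sphere bundle $S^1\hookrightarrow EG\times_{S^1}(\text{pt})\to BG$ restricted to these classifying spaces; concretely, there is a fibration $BS^1\to BG\to BG/BS^1$ and the relevant cohomology long exact sequence relates $H^*(BG)$ and $H^*(BS^1)$ with connecting map given by cup product with a degree-$1$ class, which must be $q$. Tracking degrees: $H^*(BG)$ has one $\f$ in each nonnegative degree except degrees $\equiv 2\pmod 4$, where it has dimension $2$ (spanned by $q\cdot v^{k}$-type classes and the products $q^2 v^k$... actually one must just read off dimensions from $\f[q,v]/(q^3)$), while $H^*(BS^1)$ has exactly one $\f$ in each even degree and zero in odd degrees. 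The map $f^*$ lands in even degrees only, so it vanishes on odd-degree classes of $H^*(BG)$ automatically; in degree $4k$ one checks $f^*$ is injective (hence an isomorphism onto $H^{4k}(BS^1)$) by verifying $v\mapsto U^2$, which follows because $v$ restricts to the square of the generator — this is exactly the statement that the real line bundle over $BG$ pulls back appropriately, equivalently that $\mathrm{tr}(1)=q^2$ or a transfer computation showing $v$ is in the image. I would pin down $v\mapsto U^2$ either by the known computation that $BG$ has the rational (here $\f$) cohomology of $\mathbb{H}P^\infty=BS^3$ in degrees divisible by $4$ and the inclusion $S^1\hookrightarrow S^3$ sends the generator of $H^4(BS^3)$ to $U^2\in H^4(BS^1)$, or directly via the Gysin sequence.

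For the homology statement, I would simply dualize using the universal coefficient theorem over the field $\f$: $f_*:H_*(BS^1)\to H_*(BG)$ is the $\f$-linear dual of $f^*$, so it is surjective in degrees divisible by $4$ and zero in degrees $\equiv 2\pmod 4$ (and trivially zero in odd degrees since $H_{\mathrm{odd}}(BS^1)=0$). Writing $u^{-n}$ for the generator of $H_{2n}(BS^1)$ and $v^{-n}$ for the generator of $H_{4n}(BG)$, pairing against the dual statement for cohomology gives $f_*(u^{-2n})=v^{-n}$ and $f_*(u^{-2n+1})=0$, as claimed.

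The main obstacle is not the degree bookkeeping but correctly identifying the geometric model — in particular verifying that $f$ really is the quotient by the conjugation action so that the transfer/Gysin machinery applies, and nailing down the normalization $v\mapsto U^2$ rather than $v\mapsto 0$; everything else is a routine diagram chase. Fortunately this is exactly the content of \cite[Example 2.11]{ManolescuPin}, so I would present the argument at the level of "compute $f^*$ via the Gysin sequence of the circle quotient, compare graded dimensions, and dualize," citing that example for the details of the geometric setup.
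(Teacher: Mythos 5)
Your overall plan is the right one — the paper itself gives no proof of this Fact, only a pointer to \cite[Example 2.11]{ManolescuPin}, and your reconstruction (geometric identification of $f$ as the quotient by the complex-conjugation action of $j$ on $\mathbb{CP}^\infty$, followed by the Gysin sequence for the double cover $BS^1 \to BG$, followed by duality over $\mathbb{F}_2$) is the standard argument that Manolescu's example spells out. Either of your two normalizations of $v \mapsto U^2$ (reading it off in degree $4$ from the Gysin sequence, or restricting along $S^1 \subset G \subset S^3$ and using that the generator of $H^4(BS^3)$ restricts to $U^2$) is correct and closes the one real gap. The dualization step for $f_*$ is also exactly right.

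That said, two of your ``routine'' details are stated incorrectly and would need repair in a write-up. First, the dimension count for $H^*(BG;\mathbb{F}_2) = \mathbb{F}_2[q,v]/(q^3)$ is not ``dimension $2$ in degrees $\equiv 2 \pmod 4$'': the monomials $q^a v^b$ with $0 \le a \le 2$ give exactly one generator in every degree $\not\equiv 3 \pmod 4$ (namely $v^b$, $qv^b$, $q^2 v^b$) and nothing in degrees $\equiv 3$. You flagged this yourself with an ``actually one must just read off dimensions,'' but the corrected count is what makes the Gysin bookkeeping work: $\cup q\colon H^n(BG) \to H^{n+1}(BG)$ is an isomorphism for $n \equiv 0,1 \pmod 4$ and zero for $n \equiv 2,3 \pmod 4$, and exactness then forces $f^*$ to be an isomorphism in degrees $\equiv 0$ and zero elsewhere. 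Second, there is no fibration $BS^1 \to BG \to BG/BS^1$, and for a \emph{double cover} the relation $\mathrm{tr}\circ f^*$ is multiplication by $2 = 0$ over $\mathbb{F}_2$, not by an Euler class; the correct tool is precisely the Gysin sequence of the $S^0$-bundle $BS^1 \to BG$, in which cup product with $w_1 = q \in H^1(BG;\mathbb{F}_2)$ appears as the connecting map. Once those two points are fixed, the diagram chase you describe goes through verbatim.
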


For $X$ a pointed $G$-CW complex and $V$ a finite-dimensional $G$-representation, we let $V^+$ denote the one-point compactification of $V$, and we call $\Sigma^VX=V^+\wedge X$ the suspension of $X$ by $V$.  We will need to use that Borel homology behaves well with respect to suspension:
\begin{prop}[\cite{ManolescuPin} Proposition 2.2]\label{prop:susp}
Let $V$ be a finite-dimensional representation of $G$.  Then, as $\f[q,v]/(q^3)$-modules:
\begin{align}\label{eq:suspensioninvar}
\tilde{H}^*_G(\Sigma^VX)\cong \tilde{H}^{*-\mathrm{dim}\, V}_G(X)\\ \nonumber
\tilde{H}^G_*(\Sigma^VX)\cong \tilde{H}^G_{*-\mathrm{dim}\, V}(X).
\end{align} 
\end{prop}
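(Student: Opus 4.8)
The plan is to identify the Borel construction applied to $\Sigma^V X$ with a Thom space, and then to invoke the Thom isomorphism over $\f=\mathbb{Z}/2$. First I would observe that, for a based $G$-CW complex $X$ and a finite-dimensional real $G$-representation $V$, there is a rank-$(\dim V)$ real vector bundle
\[ \xi \;=\; EG \times_G (V \times X)\;\longrightarrow\; EG \times_G X, \]
namely the pullback of $\rho_V\colon EG\times_G V \to BG$ along the structure map $p\colon EG\times_G X \to BG$, and that the Borel space $EG_+\wedge_G \Sigma^V X = EG_+\wedge_G(V^+\wedge X)$ is naturally the relative Thom space $\mathrm{Thom}(\xi)/\mathrm{Thom}(\xi|_{B})$, where $B\subset EG\times_G X$ is the section determined by the basepoint of $X$. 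Indeed $EG\times_G(V^+\wedge X)$ is the fiberwise one-point compactification of $\xi$ with the section at infinity and the fibers over $B$ both collapsed; checking this is a routine point-set manipulation, easiest via the identification $EG_+\wedge_G X = (EG\times_G X)/B$.

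Next I would apply the (relative) Thom isomorphism with $\f$-coefficients, and here working over $\f$ is the essential point: $\rho_V$, hence $\xi$, is in general non-orientable --- for instance $EG\times_G\widetilde{\mathbb{R}}$ has $w_1\neq 0$ --- whereas every real vector bundle is $\f$-orientable. Thus there is a Thom class $u_\xi\in H^{\dim V}(\mathrm{Thom}(\xi);\f)$, and cap (resp.\ cup) product with $u_\xi$ yields isomorphisms
\[ \widetilde H_n\!\left(\mathrm{Thom}(\xi)/\mathrm{Thom}(\xi|_B)\right)\cong H_{n-\dim V}(EG\times_G X,B),\qquad \widetilde H^{\,n}\!\left(\mathrm{Thom}(\xi)/\mathrm{Thom}(\xi|_B)\right)\cong H^{n-\dim V}(EG\times_G X,B). \]
Since $H_*(EG\times_G X,B)=\tilde H_*(EG_+\wedge_G X)=\tilde H^G_*(X)$ and similarly in cohomology, these are precisely the asserted degree shifts $\tilde H^G_*(\Sigma^V X)\cong \tilde H^G_{*-\dim V}(X)$ and $\tilde H^*_G(\Sigma^V X)\cong \tilde H^{*-\dim V}_G(X)$.

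The step demanding the most care is verifying that these isomorphisms are $\f[q,v]/(q^3)=H^*(BG)$-linear. For this I would use that $\xi=p^*\rho_V$, so $u_\xi$ is the pullback of the Thom class $u_{\rho_V}$ under the bundle map $\mathrm{Thom}(\xi)\to\mathrm{Thom}(\rho_V)$ covering $p\colon EG\times_G X \to BG$. The $H^*(BG)$-module structures on $\tilde H^G_*(-)$ and $\tilde H^*_G(-)$ are, by definition, induced through the maps to $BG$ (the homomorphism $\pi^*$ of the excerpt), and cap/cup product with a pulled-back cohomology class commutes with the homomorphisms induced by a morphism lying over $BG$; hence the Thom isomorphism respects the $H^*(BG)$-action. (Nothing about the particular ring $H^*(BG)$ enters here beyond its acting through $BG$, so the argument is uniform in the compact Lie group.) Assembling the three steps proves the proposition. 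The only real obstacle is bookkeeping: matching the reduced/relative conventions throughout, and confirming the module compatibility survives the passage to the relative Thom space. One could alternatively induct over the $G$-CW skeleta of $X$ using the cofiber long exact sequences and the five lemma, but that reduces to computing $\tilde H^G_*(\Sigma^V(G/H)_+)$ for closed subgroups $H\leq G$, which again rests on the same Thom-isomorphism input; the direct route above seems cleanest.
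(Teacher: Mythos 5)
The paper cites this result from \cite{ManolescuPin} without reproducing the proof; your argument via the relative Thom space identification $EG_+\wedge_G\Sigma^VX\cong\mathrm{Thom}(\xi)/\mathrm{Thom}(\xi|_B)$ and the $\f$-coefficient Thom isomorphism (correctly noting that $\f$-orientability is automatic, which is exactly why the statement is asserted only for $\f$-coefficients) is the same route Manolescu takes there. Your verification of $H^*(BG)$-linearity via $\xi=p^*\rho_V$ is also the right bookkeeping, so the proof is correct and matches the standard approach.
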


We mention three irreducible representations of $G$ which we will need:
\begin{itemize}
\item The trivial one-dimensional representation $\mathbb{R}$.
\item The one-dimensional real vector space on which $j\in G$ acts by $-1$, and on which $S^1$ acts trivially, denoted $\tilde{\mathbb{R}}$.
\item The quaternionic representation $\mathbb{H}$, where $G$ acts by left multiplication.
\end{itemize}
\begin{defn}\label{thm:swfdefn}
Let $s \in \mathbb{Z}$.  A \emph{space of type SWF} at level $s$ is a pointed, finite $G$-CW complex $X$ with
\begin{itemize}
\item The $S^1$-fixed-point set $X^{S^1}$ is $G$-homotopy equivalent to $(\tilde{\mathbb{R}}^s)^+$, the one-point compactification of $\tilde{\mathbb{R}}^s$.
\item The action of $G$ on $X-X^{S^1}$ is free.  
\end{itemize}
\end{defn}	

We note that for a space $X$ of type SWF, \[\tilde{H}^G_*(X^{S^1})=\tilde{H}^G_*(\tilde{\mathbb{R}}^s)=\tilde{H}^G_{*-s}(S^0)=H_{*-s}(BG),\] and
\[\tilde{H}^*_G(X^{S^1})=H^{*-s}(BG),\]
using Proposition \ref{prop:susp}.

Associated to a space $X$ of type SWF at level $s$, we take the Borel cohomology $\tilde{H}_G^*(X)$, from which Manolescu \cite{ManolescuPin} defines $a(X),b(X),$ and $c(X)$:
\begin{align}\label{eq:adef} 
a(X) &= \mathrm{min}\{ r \equiv s \;\mathrm{mod}\; 4 \mid \exists \, x \in \tilde{H}^r_G(X), v^lx \neq 0 \; \mathrm{ for\; all} \; l \geq 0 \}, \\ \nonumber
b(X) &= \mathrm{min}\{ r \equiv s+1 \;\mathrm{mod}\; 4 \mid \exists \, x \in \tilde{H}^r_G(X), v^lx \neq 0 \; \mathrm{for\; all}\; l \geq 0 \}-1,  
\\ \nonumber
c(X) &= \mathrm{min}\{ r \equiv s+2 \;\mathrm{mod}\; 4 \mid \exists \, x \in \tilde{H}^r_G(X), v^lx \neq 0 \;\mathrm{for\; all}\; l \geq 0 \}-2. 
\end{align}
The well-definedness of $a,b$, and $c$ follows from the Equivariant Localization Theorem.  We list a version of this theorem for spaces of type SWF:

\begin{thm}[\cite{tomDieck} \S III (3.8)]\label{thm:equivlocalzn}
Let $X$ be a space of type SWF.  Then the inclusion $X^{S^1}\rightarrow X$, after inverting $v$, induces an isomorphism of $\f[q,v,v^{-1}]/(q^3)$-modules:
\[v^{-1}\tilde{H}^*_G(X^{S^1})\cong v^{-1}\tilde{H}^*_G(X).\]
\end{thm}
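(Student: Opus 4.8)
The plan is to deduce the statement from the cofiber sequence relating $X$ to its $S^1$-fixed set, exploiting that localization at $v$ annihilates the Borel cohomology of the free part. Since $X$ is a $G$-CW complex, the fixed set $X^{S^1}$ is automatically a $G$-subcomplex (a cell of type $G/H\times D^n$ contributes $(G/H)^{S^1}\times D^n$, again a union of $G$-cells), so we have an honest cofiber sequence of pointed $G$-CW complexes
\[ X^{S^1}\hookrightarrow X \longrightarrow X/X^{S^1}. \]
Applying $EG_+\wedge_G(-)$ and taking reduced cohomology yields a long exact sequence of $H^*(BG)=\f[q,v]/(q^3)$-modules
\[ \cdots \to \tilde{H}^{*}_G(X/X^{S^1}) \to \tilde{H}^{*}_G(X) \xrightarrow{\ \iota^*\ } \tilde{H}^{*}_G(X^{S^1}) \to \tilde{H}^{*+1}_G(X/X^{S^1}) \to \cdots, \]
in which $\iota^*$ is the map induced by the inclusion $\iota\colon X^{S^1}\hookrightarrow X$.

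The substantive step is to show $v^{-1}\tilde{H}^{*}_G(X/X^{S^1})=0$. By Definition \ref{thm:swfdefn} the $G$-action on $X - X^{S^1}$ is free, so $W:=X/X^{S^1}$ is a finite pointed $G$-CW complex on which $G$ acts freely away from the basepoint. For such a complex the projection $EG\times_G(W\setminus *)\to (W\setminus *)/G$ is a fibre bundle with contractible fibre, hence a homotopy equivalence, so $\tilde{H}^{*}_G(W)\cong H^{*}\big((W\setminus *)/G\big)$, which is a finite-dimensional $\f$-vector space because $(W\setminus *)/G$ is a finite CW complex. In particular $\tilde{H}^{*}_G(W)$ is bounded in degree, and since $\deg v=4$, multiplication by a sufficiently large power of $v$ is zero on it; thus inverting $v$ kills it.

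Finally I would localize the long exact sequence at $v$. The functor $M\mapsto v^{-1}M:=M\otimes_{\f[v]}\f[v,v^{-1}]$ is exact, and the nilpotent relation $q^3=0$ plays no role in it, so inverting $v$ in the sequence above gives a long exact sequence in which the $W$-terms are replaced by $0$. Hence $v^{-1}\iota^*\colon v^{-1}\tilde{H}^{*}_G(X)\to v^{-1}\tilde{H}^{*}_G(X^{S^1})$ is an isomorphism of $\f[q,v,v^{-1}]/(q^3)$-modules, which is exactly the claim (that the localized target is nonzero — so the statement has content — is visible from $\tilde{H}^{*}_G(X^{S^1})\cong H^{*-s}(BG)$, using Proposition \ref{prop:susp}). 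Everything here is formal once one has the vanishing of $v^{-1}\tilde{H}^{*}_G(W)$; that vanishing, resting on the freeness hypothesis, is the only point requiring an actual argument, and the whole statement is just the specialization of tom Dieck's general localization theorem to spaces of type SWF, so I do not anticipate a genuine obstacle.
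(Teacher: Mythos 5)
The paper does not prove this theorem; it cites it directly from tom Dieck, \S III (3.8). Your argument supplies the standard proof behind that citation: reduce to the cofiber sequence $X^{S^1}\hookrightarrow X\to X/X^{S^1}$, observe that $X/X^{S^1}$ is a finite pointed $G$-CW complex which is free away from the basepoint so that $\tilde{H}^*_G(X/X^{S^1})\cong\tilde{H}^*\big((X/X^{S^1})/G\big)$ is finite-dimensional and therefore annihilated by a power of $v$, and then localize the long exact sequence using exactness of $v^{-1}(-)$. This is correct and is exactly the specialization of tom Dieck's localization argument to the case at hand, so there is nothing to compare beyond noting that you filled in a proof where the paper was content to cite.
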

For $X$ a space of type SWF, $X$ is a finite $G$-complex and so we have that $\tilde{H}^*_G(X)$ is finitely generated as an $\f[v]$-module.  In particular, the $\f[v]$-torsion part of $\tilde{H}^*_G(X)$ is bounded above in grading.  Theorem \ref{thm:equivlocalzn} then implies:
\begin{fact}\label{fct:highgrads}
Let $X$ be a space of type SWF.  Then the inclusion $\iota:X^{S^1} \rightarrow X$ induces an isomorphism \[\iota^*:\tilde{H}^*_G(X)\rightarrow \tilde{H}^*_G(X^{S^1})\]in cohomology in sufficiently high degrees.  Dualizing, $\iota_*$ induces an isomorphism in homology in sufficiently high degrees
\end{fact}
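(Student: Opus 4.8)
The plan is to derive Fact~\ref{fct:highgrads} directly from the Equivariant Localization Theorem (Theorem~\ref{thm:equivlocalzn}), combined with the two structural facts already recorded: $\tilde{H}^*_G(X)$ is finitely generated over $\f[v]$, and $\tilde{H}^*_G(X^{S^1})\cong H^{*-s}(BG)$, which by Manolescu's computation is a shift of $\f[q,v]/(q^3)$ and in particular is $v$-torsion-free. The map $\iota^*$ becomes an isomorphism after inverting $v$ by the localization theorem; the only real content is to see that it is already an isomorphism above some grading, and the localization theorem is exactly what converts a statement about $v$-inverted modules into one about high gradings.

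First, I would analyze the localization map $\tilde{H}^*_G(X)\to v^{-1}\tilde{H}^*_G(X)$. As a finitely generated graded module over the graded principal ideal domain $\f[v]$, $\tilde{H}^*_G(X)$ decomposes into a free part and a torsion part, the torsion part being a finite-dimensional $\f$-vector space, hence concentrated in a bounded range of degrees. In any degree $d$ larger than the top degree of the torsion part \emph{and} larger than the degrees of all elements of a homogeneous basis of the free part, multiplication by $v$ identifies $\tilde{H}^d_G(X)$ with $\tilde{H}^{d+4}_G(X)$; so the colimit defining $v^{-1}\tilde{H}^*_G(X)$ has already stabilized there, and $\tilde{H}^*_G(X)\to v^{-1}\tilde{H}^*_G(X)$ is an isomorphism in all sufficiently high degrees. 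For $X^{S^1}$ the analogous statement is immediate from the explicit module $\f[q,v]/(q^3)$: it is $v$-torsion-free, so its localization map is injective, and since in each residue class mod $4$ both the module and its localization are at most one-dimensional in each degree and agree in all degrees at least $s$, the localization map $\tilde{H}^*_G(X^{S^1})\to v^{-1}\tilde{H}^*_G(X^{S^1})$ is an isomorphism in every degree $\geq s$.

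Next I would assemble the commuting square whose rows are $\iota^*$ and $v^{-1}\iota^*$ and whose columns are the two localization maps. Theorem~\ref{thm:equivlocalzn} says the bottom row is an isomorphism, and the previous step says both columns are isomorphisms in high degrees; hence $\iota^*\colon \tilde{H}^*_G(X)\to\tilde{H}^*_G(X^{S^1})$ is an isomorphism in high degrees. For the homology statement I would observe that, since $X$ is a finite $G$-CW complex and $BG$ admits a CW structure with finitely many cells in each dimension, $EG_+\wedge_G X$ has finite type, so over the field $\f$ there is a natural identification of $\tilde{H}^G_n$ with the $\f$-linear dual of $\tilde{H}^n_G$; applying this naturality to $\iota$ converts the cohomological isomorphism into the asserted isomorphism $\iota_*\colon \tilde{H}^G_*(X^{S^1})\to\tilde{H}^G_*(X)$ in high degrees.

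The only step that is not purely formal is controlling the localization map of $X$ in high degrees, i.e.\ passing from ``finitely generated over $\f[v]$ with bounded $v$-torsion'' to ``the localization map is an isomorphism above some grading''; everything else is the cited localization theorem or standard linear algebra over a field. (Alternatively, one can run the cofiber sequence $X^{S^1}\to X\to X/X^{S^1}$: since $G$ acts freely on $X-X^{S^1}$, the space $X/X^{S^1}$ is a free pointed $G$-complex, so $\tilde{H}^*_G(X/X^{S^1})$ is the ordinary cohomology of the finite complex $(X/X^{S^1})/G$ and vanishes in high degrees, whence the long exact sequence of the pair gives the claim directly.)
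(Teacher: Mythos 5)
Your main argument is the same as the paper's, only written out in more detail: the paper records finite generation of $\tilde{H}^*_G(X)$ over $\f[v]$ and boundedness of the $v$-torsion, and then immediately invokes Theorem~\ref{thm:equivlocalzn}; you supply the intermediate steps (stabilization of the colimit defining the localization, the commuting square with $v^{-1}\iota^*$, and the field-coefficient duality for the homology statement), all of which are correct. Your parenthetical alternative via the cofiber sequence $X^{S^1}\to X\to X/X^{S^1}$ is a genuinely different and somewhat cleaner route: it bypasses the structure theory of graded $\f[v]$-modules entirely by using freeness of the $G$-action off the fixed set to see that $\tilde{H}^*_G(X/X^{S^1})$ is the cohomology of a finite complex, hence vanishes in high degrees, and the long exact sequence does the rest; it also yields the homology version directly without a separate duality step.
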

We note that Fact \ref{fct:highgrads} implies 
\begin{equation}\label{eq:iotaim}\mathrm{Im}\, \iota_*=\{x\in \tH^G_*(X) \mid x \in \mathrm{Im} \, v^l \; \mathrm{for\; all} \; l \geq 0\}. \end{equation}  We also list an equivalent definition of $a,b,$ and $c$ from \cite{ManolescuPin}, using homology: 
\begin{align}\label{eq:aaltdef}
a(X) &= \mathrm{min}\; \{ r \equiv s \; \mathrm{mod}\; 4 \mid \exists \, x \in \tH^G_r(X), x \in \mathrm{Im}\, v^l \;\mathrm{for\; all}\; l \geq 0 \},
\\ \nonumber
b(X) &= \mathrm{min}\; \{ r \equiv s+1 \; \mathrm{mod}\; 4 \mid \exists \, x \in \tH^G_r(X), x \in \mathrm{Im}\, v^l \;\mathrm{for\; all}\; l \geq 0 \}-1,
\\ \nonumber
c(X) &= \mathrm{min}\; \{ r \equiv s+2 \; \mathrm{mod}\; 4 \mid \exists \, x \in \tH^G_r(X), x \in \mathrm{Im}\, v^l \;\mathrm{for\; all}\; l \geq 0 \}-2.
\end{align}
\begin{rmk}  The Manolescu invariants of \cite{ManolescuPin} are defined in terms of $a,b$, and $c$, as we will review in Section \ref{sec:gauge}. \end{rmk} 

\begin{defn}[see \cite{ManolescuK}]
Let $X$ and $X'$ be spaces of type SWF, $m,m' \in \mathbb{Z}$, and $n,n' \in \mathbb{Q}$.  We say that the triples $(X,m,n)$ and $(X',m',n')$ are \emph{stably equivalent} if $n-n' \in \mathbb{Z}$ and there exists a $G$-equivariant homotopy equivalence, for some $r \gg 0$ and some nonnegative $M \in \mathbb{Z}$ and $N \in \mathbb{Q}$:
\begin{equation}\label{eq:gstabdef} \Sigma^{r\mathbb{R}} \Sigma^{(M-m)\tilde{\mathbb{R}}}\Sigma^{(N-n)\mathbb{H}}X \rightarrow \Sigma^{r\mathbb{R}} \Sigma^{(M'-m')\tilde{\mathbb{R}}}\Sigma^{(N-n')\mathbb{H}}X'.\end{equation}
\end{defn} 
Let $\mathfrak{E}$ be the set of equivalence classes of triples $(X,m,n)$ for $X$ a space of type $\mathrm{SWF}$, $m\in \mathbb{Z}$, $n \in \mathbb{Q}$, under the equivalence relation of stable $G$-equivalence\footnote{This convention is slightly different from that of \cite{ManolescuK}.  The object $(X,m,n)$ in the set of stable equivalence classes $\mathfrak{E}$, as defined above, corresponds to $(X,\frac{m}{2},n)$ in the conventions of \cite{ManolescuK}.}.  The set $\mathfrak{E}$ may be considered as a subcategory of the $G$-equivariant Spanier-Whitehead category \cite{ManolescuPin}, by viewing $(X,m,n)$ as the formal desuspension of $X$ by $m$ copies of $\tilde{\mathbb{R}} $ and $n$ copies of $\mathbb{H}$.  For $(X,m,n),(X',m',n') \in \mathfrak{E}$, a map $(X,m,n) \rightarrow (X',m',n')$ is simply a map as in (\ref{eq:gstabdef}) that need not be a homotopy equivalence.  We define Borel homology for $(X,m,n) \in \E$ by 
\begin{equation}\label{eq:borelgeneral}
\tilde{H}^{G}_*((X,m,n))=\tilde{H}^G_*(X)[m+4n],
\end{equation}
where, for a graded module $M$, $M[s]$ is defined by $(M[s])_i=M_{i+s}$.  The well-definedness of (\ref{eq:borelgeneral}) follows from Proposition \ref{prop:susp}.

\begin{defn}\label{def:loceq}
We call $X_1,X_2 \in \E$ \emph{locally equivalent} if there exist $G$-equivariant (stable) maps
\[\phi: X_1 \rightarrow X_2, \]
\[\psi: X_2 \rightarrow X_1, \]
 which are $G$-homotopy equivalences on the $S^1$-fixed-point set.  For such $X_1,X_2$, we write $X_1 \equiv_{l} X_2$, and let $\mathfrak{LE}$ denote the set of local equivalence classes.
 
\end{defn} 
 Local equivalence is easily seen to be an equivalence relation.  The set $\mathfrak{LE}$ comes with an abelian group structure, with multiplication given by smash product, and inverses given by Spanier-Whitehead duality.  
 
We also define the invariants $\alpha,\beta,$ and $\gamma$ associated to an element of $\mathfrak{E}$.  
\begin{defn}\label{def:manolescudefn1}
For $[(X,m,n)] \in \E$, we set 
\begin{equation}\label{eq:manoldefns}
\alpha((X,m,n))=\frac{a(X)}{2}-\frac{m}{2}-2n,
\;\beta((X,m,n))=\frac{b(X)}{2}-\frac{m}{2}-2n,
\end{equation}
\begin{equation}\label{eq:manoldefns2}
\gamma((X,m,n))=\frac{c(X)}{2}-\frac{m}{2}-2n.
\end{equation}
The invariants $\alpha,\beta$ and $\gamma$ do not depend on the choice of representative of the class $[(X,m,n)]\in \mathfrak{LE}$.  
\end{defn}   
 
\subsection{Algebra and $G$-Spaces}\label{subsec:modfromg}
Throughout this section $X$ will denote a space of type SWF.  Here we will review the construction of equivariant chain complexes associated to spaces of type SWF and recall the definition of chain local equivalence.  

From the group structure of $G$, $C^{CW}_{*}(G)=:\G$ acquires an algebra structure.  The algebra structure is determined by the multiplication map $G \times G\rightarrow G$ from which we obtain a map $C^{CW}_*(G)\otimes_\f C^{CW}_*(G) \rightarrow C^{CW}_*(G)$.  Explicitly, 
\begin{equation}\label{eq:grelations}C^{CW}_{*}(G) \cong \mathbb{F}[s,j]/(sj=j^3s,s^2=0,j^4=1),\end{equation}
where $j$ is the $0$-cell that is the group element $j \in G$, and $s$ is the $1$-cell \[ s=\{e^{i\theta}\mid\theta\in (0,\pi)\}.\]  In particular, $j \in C^{CW}_0(G)$ and $s \in C^{CW}_1(G)$.
The differential on $\G$ is given by $\partial s =1+j^2$, and $\partial j=0$.  A $G$-CW structure on $X$ induces a CW structure on $X$, using the CW structure of $G$ above.  Indeed, each equivariant cell $G\times D^k$ of $X$ may be decomposed into a union of ordinary CW cells using the CW decomposition of $G$.  The relative CW chain complex $C^{CW}_*(X,\p)$ inherits the structure of a $\G$-chain complex via the map \[ G \times X \rightarrow X,\] which induces a map \[C^{CW}_*(G) \otimes_\f C^{CW}_*(X,\p) \rightarrow C^{CW}_*(X,\p).\]  That is, $C^{CW}_*(X,\p)$ is a module over $\G$, such that, for $z \in C^{CW}_*(X,\p)$, and $a\in\G$, \[\partial(az)=a\partial(z)+\partial(a)z.\]

\begin{examp}\label{ex:rsusp2}
We write down explicitly the $\G$-chain complex structure on the CW chain complex $C^{CW}_*((\tilde{\mathbb{R}}^t)^+,\p)$ associated to $(\tilde{\mathbb{R}}^t)^+$, a space of type SWF at level $t$.  Note that the representation $(\tilde{\mathbb{R}}^t)^+$ admits a $G$-CW decomposition with $0$-skeleton a copy of $S^0$ on which $G$ acts trivially, and an $i$-cell $c_i$ of the form $D^i \times \mathbb{Z}/2$ for $i$ with $ 1\leq i \leq t$.  One of the two points of the $0$-skeleton of $(\tilde{\mathbb{R}}^t)^+$ is fixed as the basepoint.  

Identifying $c_i$ with its image in $C^{CW}_*((\tilde{\mathbb{R}}^t)^+,\p)$, we have $\partial (c_0)=0, \partial (c_1)=c_0,$ and $\partial(c_i)=(1+j)c_{i-1}$ for $i\geq 2$.  The action of $\G$ is given by the relations $jc_0=c_0$, $j^2c_i=c_i$ for $i\geq 1$, and $sc_i=0$ for all $i$ (in particular, the CW cells of $((\tilde{\mathbb{R}}^t)^+,\p)$ are precisely $c_0,c_1,\dots,c_t$ and $jc_1,\dots,jc_t$, and all of these are distinct).  
\end{examp}
  
\begin{examp}\label{ex:hsusp2}We find a CW decomposition for $\mathbb{H}^+$ as a $G$-space and the corresponding $\G$-chain complex structure.  We write elements of $\mathbb{H}$ as pairs of complex numbers $(z,w)=(r_1e^{i\theta_1},r_2e^{i\theta_2})$ in polar coordinates.  The action of $j$ is then given by $j(z,w)=(-\bar{w},\bar{z})$.  Fix the point at infinity as the base point.  We let $(0,0)$ be the ($G$-invariant) $0$-cell labelled $r_0$.  We let $y_1$ be the $G$-$1$-cell given by the orbit of $\{(r_1,0) \mid r_1 > 0\}$:
\[\{(r_1e^{i\theta},r_2e^{i\theta})\mid \, r_1r_2=0\}.\]
We take $y_2$ the $G$-$2$-cell given by the orbit of $\{(r_1,r_2) \mid r_1r_2 \neq 0\}$:
\[\{(r_1e^{i\theta_1},r_2e^{i\theta_2}) \mid \, \theta_1=\theta_2 \;\mathrm{or}\; \theta_1=\theta_2+\pi \;\mathrm{mod}\; 2\pi\}.\] 
Finally, $y_3$ consists of the orbit of $\{(r_1e^{i\theta_1},r_2) \mid \theta_1\in (0,\pi),\;r_1r_2 \neq 0\}$:
\[\{(r_1e^{i\theta_1},r_2e^{i\theta_2}) \mid \, \theta_1 \neq \theta_2 \; \mathrm{mod} \; \pi\}.\]
We also find the $\G$-chain complex structure of $C^{CW}_*(\mathbb{H}^+,\p)$.  One may check that the differentials are given by 
\begin{equation}\label{eq:hdiffs}\partial(r_0)=0,\; \partial y_1 = r_0, \;\partial y_2 = (1+j)y_1, \; \mathrm{and} \;\partial y_3 = sy_1+(1+j)y_2.\end{equation}  The $\G$-action on the fixed-point set, $r_0$, is necessarily trivial.  However, elsewhere the $G$-action on $(\mathbb{H}^+,\p)$ is free, and so the submodule (not a subcomplex, however) of $C^{CW}_*(\mathbb{H}^+,\p)$ generated by $y_1,y_2,y_3$ is $\G$-free, specifying the $\G$-module structure of $C^{CW}_*(\mathbb{H}^+,\p)$.
\end{examp}
The $\G$-chain complex structure on a smash product is given by:
\begin{equation}\label{eq:smashform} C^{CW}_*(X_1 \wedge X_2, \p)=C^{CW}_*(X_1,\p) \otimes_\f C^{CW}_*(X_2,\p),\end{equation}
with $\G$-action given by:
\begin{equation}\label{eq:gtensoract}
s(a \otimes b) = sa \otimes b + j^2a \otimes sb,
\end{equation} 
\[j(a\otimes b) =ja \otimes jb.\]
  Furthermore, the CW chain complex for the $G$-smash product $X_1 \wedge_G X_2 $ is given by:
  \begin{equation}\label{eq:smashdec2} C^{CW}_*(X_1 \wedge_G X_2,\p) \simeq C^{CW}_*(X_1\wedge X_2,\p) /\G=:C^{CW}_*(X_1,\p)\otimes_\G C^{CW}_*(X_2,\p). \end{equation}
We will write elements of the latter as $x_1 \otimes_\G x_2$.  Note that Borel homology $\tilde{H}^G_*(X)$ is calculated using a $G$-smash product, and so may be computed from the following chain complex:
\begin{equation}\label{eq:defofborel}
\tilde{H}^G_*(X)=H(C^{CW}_*(EG) \otimes_\G C^{CW}_*(X,\p),\partial).
\end{equation}
In (\ref{eq:defofborel}), we choose some fixed $G$-CW decomposition of $EG$ to define $C^{CW}_*(EG)$.  Equation (\ref{eq:defofborel}) shows that we may define $G$-Borel homology for $\G$-chain complexes $Z$ by:
\begin{equation}\label{eq:chainborel} H^G_*(Z)=H(C^{CW}_*(EG) \otimes_\G Z,\partial),\end{equation}
and similarly for $S^1$-Borel homology:
\begin{equation}\label{eq:chains1borel}
H^{S^1}_*(Z)=H(C^{CW}_*(EG) \otimes_{C^{CW}_*(S^1)} Z,\partial),
\end{equation}  
where $C^{CW}_*(S^1)$ is viewed as a subcomplex of $\G$.  The definitions are compatible in that:

\begin{fact}\label{fct:chainhomologydefagrees}
If $Z=C^{CW}_*(X,\p)$ is the relative CW chain complex of a $G$-space $X$, then $H^G_*(Z)=\tilde{H}^G_*(X).$
\end{fact}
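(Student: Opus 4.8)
The plan is to recognize that both $\tilde{H}^G_*(X)$ and $H^G_*(Z)$, for $Z = C^{CW}_*(X,\p)$, are by construction the reduced singular homology, respectively the reduced cellular homology, of the single space $EG_+\wedge_G X$; so the statement amounts to the comparison of cellular and singular homology for that space. First I would fix a $G$-CW structure on $EG = S(\mathbb{H}^\infty)$ (this is the choice already tacitly made in (\ref{eq:defofborel})), which makes $EG_+$ a pointed $G$-CW complex with reduced cellular chains $C^{CW}_*(EG)$. Applying the smash-product formula (\ref{eq:smashform}) with $X_1 = EG_+$ and $X_2 = X$ then gives $EG_+\wedge X$ a pointed $G$-CW structure with $C^{CW}_*(EG_+\wedge X,\p) = C^{CW}_*(EG)\otimes_\f C^{CW}_*(X,\p)$.

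Next I would check the (immediate) fact that $G$ acts freely on $EG_+\wedge X$ away from the basepoint: a non-basepoint is represented by a pair $(e,x)$ with $e\in EG$ and $x\in X\setminus\{\p\}$, and the diagonal action is free there because it is already free on the $EG$ factor. Hence $EG_+\wedge_G X = (EG_+\wedge X)/G$ is an ordinary pointed CW complex, whose non-basepoint cells are the $G$-orbits of cells of $EG_+\wedge X$, and whose reduced cellular chain complex is $C^{CW}_*(EG_+\wedge X,\p)/\G$. By (\ref{eq:smashdec2}) this is exactly $C^{CW}_*(EG)\otimes_\G C^{CW}_*(X,\p)$, i.e. the complex $C^{CW}_*(EG)\otimes_\G Z$ appearing in (\ref{eq:chainborel}). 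Taking homology and using that reduced cellular homology computes reduced singular homology for any CW complex, one obtains $H^G_*(Z) = \tilde{H}_*(EG_+\wedge_G X) = \tilde{H}^G_*(X)$ by the definition (\ref{eq:boreldef}).

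The only real obstacle is that $EG$, and therefore $EG_+\wedge_G X$, is infinite-dimensional, so the cellular-versus-singular comparison needs a limiting argument rather than the finite-complex version. I would handle this by writing $EG$ as the increasing union of the finite free $G$-subcomplexes $S(\mathbb{H}^k)$; then each $S(\mathbb{H}^k)_+\wedge_G X$ is a finite CW complex for which the comparison is standard, while both $\tilde{H}_*(EG_+\wedge_G X)$ and $H\big(C^{CW}_*(EG)\otimes_\G Z\big)$ are the filtered colimits over $k$ of the corresponding groups at finite level (singular homology commutes with the colimit of the spaces, and $\otimes_\G$ followed by $H(-)$ commutes with the direct limit of the chain complexes). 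Passing to the colimit then gives the equality. Everything else in the argument is a direct rewriting via (\ref{eq:smashform}) and (\ref{eq:smashdec2}) together with the evident freeness of the action.
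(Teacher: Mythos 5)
The paper gives no separate proof of this Fact: it is taken as immediate from the identification $\tilde{H}^G_*(X)=H(C^{CW}_*(EG)\otimes_\G C^{CW}_*(X,\p),\partial)$ asserted in (\ref{eq:defofborel}) together with the definition (\ref{eq:chainborel}) of $H^G_*(Z)$. Your argument simply unpacks why (\ref{eq:defofborel}) holds — via (\ref{eq:smashform}), (\ref{eq:smashdec2}), freeness of the $G$-action away from the basepoint, and a colimit over $S(\mathbb{H}^k)$ to handle the infinite-dimensionality of $EG$ — so it is correct and is exactly the reasoning the paper leaves implicit.
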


We note also that $C^*_{CW}(EG)$, the dual of $C^{CW}_*(EG)$, acts on $C^{CW}_*(EG)$ by a version of the cap product (see, for instance, \cite{Brown}[\S V.3]).  In particular, $H(C^*_{CW}(EG))=\f[q,v]/(q^3)$ acts on $H(C^{CW}_*(EG)\otimes Z)$ for $Z$ a $\G$-chain complex.

\begin{defn}\label{def:typswf} 
We call a $\G$-chain complex $Z$ \emph{a suspensionlike chain complex of type SWF} at level $s$ if $Z$ is generated by
\begin{equation}\label{eq:gmoddef}  \{ c_0,c_1,c_2,\dots,c_s \} \cup \bigcup_{i \in I} \{x_i\},\end{equation}  
subject to the following conditions\footnote{In \cite{betaseifert}, we also introduced \emph{chain complexes of type SWF}, but we will not have need of this more general definition here.}.  The element $c_i$ is of degree $i$, and $\{c_0,c_1,c_2, \dots, c_s\}$ generates a subcomplex of $Z$.  The only relations are $j^2c_i=c_i,sc_i=0,jc_0=c_0$.  The differentials are given by $\partial{c_1}=c_0$, and $\partial{c_i}=(1+j)c_{i-1}$ for $2\leq i \leq s$.  Here $I$ is some index set, and the submodule generated by $\{ x_i \}_{i \in I}$ is free under the action of $\G$.  Moreover, for all $i$, $\mathrm{deg}\, x_i >s$.  We call the submodule generated by $\{c_i\}$ the \emph{fixed-point set} of $Z$.  We call $(1+j)c_s$, for $s>0$, or $c_0$ for $s=0$, the \emph{fundamental class} of $Z^{S^1}$, in analogy with the usual fundamental class of $(\mathbb{R}^{s})^+$.  Indeed, $(1+j)c_s$ (respectively, $c_0$) generates $H_s(Z)\cong H_s((\tilde{\mathbb{R}}^s)^+)$ for $s>0$ (respectively, $s=0$).
\end{defn}

Suspensionlike chain complexes of type SWF are to be thought of as reduced $G$-CW chain complexes of spaces to type SWF.  

For $Z$ a suspensionlike chain complex of type SWF we define $a(Z),b(Z),c(Z)$ as in (\ref{eq:aaltdef}).  That is,
\begin{align}\label{eq:chainaaltdef} 
a(Z)&=\mathrm{min}\; \{ r \equiv s \; \mathrm{mod}\; 4 \mid \exists \, x \in H^G_r(Z), x \in \mathrm{Im}\, v^l \;\mathrm{for\; all}\; l \geq 0 \},\\ \nonumber
b(Z)&=\mathrm{min}\; \{ r \equiv s+1 \; \mathrm{mod}\; 4 \mid \exists \, x \in H^G_r(Z), x \in \mathrm{Im}\, v^l \;\mathrm{for\; all}\; l \geq 0 \}-1,
\\ \nonumber
c(Z)&=\mathrm{min}\; \{ r \equiv s+2 \; \mathrm{mod}\; 4 \mid \exists \, x \in H^G_r(Z), x \in \mathrm{Im}\, v^l \;\mathrm{for\; all}\; l \geq 0 \}-2.
\end{align}
\begin{fact}\label{fct:abcdefsag}
If $Z=C^{CW}_*(X,\p)$ is the CW-chain complex of a $G$-space $X$, then $a(Z)=a(X),b(Z)=b(X),c(Z)=c(X)$.  
\end{fact}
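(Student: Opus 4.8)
The statement is a compatibility check: once one knows that $H^G_*(Z)$ and $\tilde H^G_*(X)$ agree \emph{as $\f[q,v]/(q^3)$-modules with the correct grading convention}, the defining formulas \eqref{eq:aaltdef} and \eqref{eq:chainaaltdef} are word-for-word identical, so $a(Z)=a(X)$, $b(Z)=b(X)$, $c(Z)=c(X)$ follows immediately. The plan is therefore to supply that identification in three steps.

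First, I would check that $Z=C^{CW}_*(X,\p)$ really is a suspensionlike chain complex of type SWF at the same level $s$ at which $X$ is a space of type SWF, so that both sides of the claimed equalities are defined and the residue classes mod $4$ appearing in \eqref{eq:aaltdef} and \eqref{eq:chainaaltdef} match. The $G$-CW cells of $X$ lying in $X^{S^1}$ contribute exactly the subcomplex described in Example \ref{ex:rsusp2} for $(\tilde{\mathbb{R}}^s)^+$ (using that $X^{S^1}$ is $G$-homotopy equivalent to $(\tilde{\mathbb{R}}^s)^+$; since $a,b,c$ are Borel-homological invariants of the $G$-homotopy type, we may take a $G$-CW structure realizing this), while the cells of $X-X^{S^1}$ are free over $G$ by the definition of a space of type SWF, hence contribute a $\G$-free submodule in degrees $>s$. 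This is precisely the structure required in Definition \ref{def:typswf}.

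Second, I would identify the module structures. By Fact \ref{fct:chainhomologydefagrees} we already have $H^G_*(Z)=\tilde H^G_*(X)$ as graded $\f$-modules, via the chain-level identification $C^{CW}_*(EG)\otimes_\G Z\simeq C^{CW}_*(EG_+\wedge_G X)$ of \eqref{eq:smashdec2} together with \eqref{eq:defofborel}. The $\f[q,v]/(q^3)$-action on the left side is, by construction, the cap-product action of $H(C^*_{CW}(EG))=\f[q,v]/(q^3)$ recalled just before Definition \ref{def:typswf}; on the right side it is the $H^*(BG)$-module structure on Borel homology coming from $\pi\colon EG_+\wedge_G X\to BG_+$. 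Naturality of the cap product (equivalently, the fact that the chain-level cap product computes the topological $H^*(BG)$-action on Borel homology, cf.\ \cite{Brown}) shows these two actions agree under the identification. Hence $H^G_*(Z)\cong \tilde H^G_*(X)$ as $\f[q,v]/(q^3)$-modules.

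Third, with this isomorphism the condition ``$x\in\mathrm{Im}\,v^l$ for all $l\geq 0$'' transports between the two sides, and the minimum over gradings $r$ in each prescribed residue class mod $4$ is the same whether computed in $H^G_*(Z)$ or in $\tilde H^G_*(X)$; comparing \eqref{eq:aaltdef} with \eqref{eq:chainaaltdef} gives the three equalities. The only genuinely non-formal point is the second step, verifying that the chain-level $v$-action (cap product with the generator of $H^4(C^*_{CW}(EG))$) corresponds to the topologically defined $v\in H^4(BG)$-action on $\tilde H^G_*(X)$; this is standard homological algebra for the Borel construction, but it is the step I would write out with care, since the entire content of the fact rests on it.
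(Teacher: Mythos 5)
The paper states Fact \ref{fct:abcdefsag} without proof, evidently regarding it as an immediate consequence of Fact \ref{fct:chainhomologydefagrees} together with the visibly identical form of the two sets of defining formulas \eqref{eq:aaltdef} and \eqref{eq:chainaaltdef}. Your three-step argument supplies exactly the details the paper leaves implicit: that $Z$ is suspensionlike at the same level $s$ (so the residue classes mod $4$ agree), that the identification $H^G_*(Z)\cong\tilde H^G_*(X)$ from Fact \ref{fct:chainhomologydefagrees} respects the $\f[q,v]/(q^3)$-module structure via naturality of the cap product, and then a literal comparison of the formulas. You also correctly flag the only delicate point, the compatibility of the chain-level cap-product $v$-action with the topological $H^*(BG)$-action, and note (in light of Remark \ref{rmk:suspensionissue}) that one may need to choose the $G$-CW structure so that $C^{CW}_*(X,\p)$ really is suspensionlike, which is harmless because $a,b,c$ are invariants of the $G$-homotopy type. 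This is a correct proof and the natural one.
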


To introduce chain local equivalence, we will consider the CW chain complexes coming from suspensions.

For $Z$ a suspensionlike chain complex of type SWF, and $V$ a $G$-representation a direct sum of copies of $\mathbb{R}$, $\tilde{\mathbb{R}}$ and $\mathbb{H}$, we define suspension by $V$ by: \[ \Sigma^VZ=C^{CW}_*(V^+,\p)\otimes_\f Z,\] with the $\G$-action as in (\ref{eq:gtensoract}).  Since suspension by $V$ applied to a $G$-space $X$ is the smash product $V \wedge X$, and since the chain complex of a smash product is as in (\ref{eq:smashform}), we see:
\begin{lem}\label{lem:cwsus}
Let $V=\mathbb{H}$, $\tilde{\mathbb{R}}$, or $\mathbb{R}$.  If $Z=C^{CW}_*(X,\p)$ for $X$ a space of type SWF, then $\Sigma^VZ = C^{CW}_*(\Sigma^VX,\p)$. 
\end{lem}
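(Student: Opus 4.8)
The plan is to unwind both sides of the claimed identity $\Sigma^V Z = C^{CW}_*(\Sigma^V X, \p)$ purely from the definitions already recorded, and verify they agree as $\G$-chain complexes. On the left, by definition $\Sigma^V Z = C^{CW}_*(V^+,\p) \otimes_\f Z$ with $\G$-action given by (\ref{eq:gtensoract}); here $Z = C^{CW}_*(X,\p)$ by hypothesis. On the right, $\Sigma^V X = V^+ \wedge X$ by the definition of suspension by a representation, so $C^{CW}_*(\Sigma^V X, \p) = C^{CW}_*(V^+ \wedge X, \p)$. The whole statement therefore reduces to the smash-product formula (\ref{eq:smashform}), namely $C^{CW}_*(X_1 \wedge X_2, \p) = C^{CW}_*(X_1,\p) \otimes_\f C^{CW}_*(X_2,\p)$, applied with $X_1 = V^+$ and $X_2 = X$, together with the fact that the $\G$-action there is exactly (\ref{eq:gtensoract}), which matches the $\G$-action defining $\Sigma^V Z$.

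The one genuine point to check is that $V^+$ actually carries a $G$-CW structure compatible with the smash-product formula, i.e.\ that $C^{CW}_*(V^+,\p)$ makes sense as a $\G$-chain complex and that (\ref{eq:smashform}) is valid in this case. Since $V$ is a direct sum of copies of $\mathbb{R}$, $\tilde{\mathbb{R}}$, and $\mathbb{H}$, I would invoke the explicit $G$-CW decompositions given in Examples \ref{ex:rsusp2} and \ref{ex:hsusp2} (trivial for $\mathbb{R}^+ = S^1$, wait — for $\mathbb{R}$ it is just $S^1$-suspension with trivial action; for $\tilde{\mathbb{R}}^t$ it is Example \ref{ex:rsusp2}; for $\mathbb{H}$ it is Example \ref{ex:hsusp2}), and note that a $G$-CW structure on a one-point compactification of a sum is obtained by smashing the $G$-CW structures of the factors, so $V^+$ is indeed a finite pointed $G$-CW complex. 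Then (\ref{eq:smashform}), which is recorded in the excerpt as holding for the relative CW chain complexes of pointed $G$-CW complexes, applies directly.

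The key steps, in order, are: (i) expand the left side using the definition of $\Sigma^V Z$ and the hypothesis $Z = C^{CW}_*(X,\p)$; (ii) expand the right side using $\Sigma^V X = V^+ \wedge X$; (iii) observe $V^+$ is a finite pointed $G$-CW complex (from Examples \ref{ex:rsusp2} and \ref{ex:hsusp2} and the compatibility of $G$-CW structures with smash products); (iv) apply (\ref{eq:smashform}) to identify the underlying $\f$-chain complexes; (v) check that the $\G$-module structures coincide, which is immediate because both are given by the same formula (\ref{eq:gtensoract}); and (vi) check the differentials coincide, which holds because the differential on a tensor product of $\G$-chain complexes is the graded Leibniz rule in both descriptions.

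There is not really a hard part here — this lemma is essentially a bookkeeping statement unpacking that ``suspension of chain complexes'' was defined precisely so as to model ``suspension of spaces.'' If anything, the only place requiring a moment's care is confirming that the chain-level $\G$-action (\ref{eq:gtensoract}) on $C^{CW}_*(V^+,\p) \otimes_\f Z$ is the one induced topologically by the diagonal-type $G$-action on $V^+ \wedge X$ under the identification (\ref{eq:smashform}); but this is exactly the content of the sentence in the excerpt stating that (\ref{eq:smashform}) carries the $\G$-action (\ref{eq:gtensoract}), so no additional work is needed. I would present the argument in two or three sentences, citing (\ref{eq:smashform}) and the definition of $\Sigma^V Z$.
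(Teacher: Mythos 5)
Your proof is correct and matches the paper's approach exactly; indeed the paper gives no separate proof, only the one-sentence justification immediately preceding the lemma ("Since suspension by $V$ applied to a $G$-space $X$ is the smash product $V^+\wedge X$, and since the chain complex of a smash product is as in (\ref{eq:smashform}), we see\dots"), which is precisely the definition-unwinding argument you spell out. Your additional remarks on the $G$-CW structure of $V^+$ via Examples \ref{ex:rsusp2} and \ref{ex:hsusp2} are a reasonable elaboration of a point the paper leaves implicit.
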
 
As an example, we have $C^{CW}_*(\mathbb{R}^+,\p)=\langle r_1 \rangle$, on which $\G$ acts by $jr_1=r_1, sr_1=0$.  Then:\begin{equation}\label{eq:rsusp}\Sigma^{\mathbb{R}}Z=Z[-1].\end{equation}
We further note that $C^{CW}_*((V \otimes_\f W)^+,\p)\simeq C^{CW}_*(V^+,\p) \otimes C^{CW}_*(W^+,\p)$, where $\simeq$ denotes chain homotopy equivalence.
 
\begin{defn}\label{def:compst}
Let $Z_i$ be suspensionlike chain complexes of type SWF, $m_i \in \mathbb{Z}, n_i \in \mathbb{Q}$, for $i=1,2$.  We call $(Z_1,m_1,n_1)$ and $(Z_2,m_2,n_2)$ \emph{chain stably equivalent} if there exist $M \in \mathbb{Z}, N \in \mathbb{Q}$ and maps
\begin{equation}\label{eq:compst}
\Sigma^{(N-n_1)\mathbb{H}}\Sigma^{(M-m_1)\tilde{\mathbb{R}}} Z_1 \rightarrow 
\Sigma^{(N-n_2)\mathbb{H}}\Sigma^{(M-m_2)\tilde{\mathbb{R}}} Z_2
\end{equation}
\begin{equation}\label{eq:compst2}
\Sigma^{(N-n_1)\mathbb{H}}\Sigma^{(M-m_1)\tilde{\mathbb{R}}} Z_1 \leftarrow 
\Sigma^{(N-n_2)\mathbb{H}}\Sigma^{(M-m_2)\tilde{\mathbb{R}}} Z_2,
\end{equation}
which are chain homotopy equivalences.  In particular, the existence of a map as in (\ref{eq:compst}) or (\ref{eq:compst2}) implies  $n_2 - n_1 \in \mathbb{Z}$.  
\end{defn}
\begin{rmk} We do not consider suspensions by $\mathbb{R}$, unlike in the case of stable equivalence for spaces, since by (\ref{eq:rsusp}), no new maps are obtained by suspending by $\mathbb{R}$.  
\end{rmk}
Chain stable equivalence is an equivalence relation, and we denote the set of chain stable equivalence classes by $\ce$.  Associated to a triple $(X,m,n) \in \E$, we obtain an element $(C^{CW}_*(X),m,n) \in \ce$, with $C^{CW}_*(X)$ a suspensionlike chain complex of type SWF.  

\begin{rmk}\label{rmk:suspensionissue}
For $X$ a space of type SWF, $C^{CW}_*(X,\p)$ need not be a suspensionlike chain complex of type SWF.  However, any class in $\E$ admits a representative $(X,m,n)$ with $C^{CW}_*(X,\p)$ a suspensionlike chain complex of type SWF.  
\end{rmk}  

\begin{defn}\label{def:compstl}
Let $Z_i$ be suspensionlike chain complexes of type SWF, $m_i \in \mathbb{Z}, n_i \in \mathbb{Q}$, for $i=1,2$.  We call $(Z_1,m_1,n_1)$ and $(Z_2,m_2,n_2)$ \emph{chain locally equivalent} if there exist $M \in \mathbb{Z}, N \in \mathbb{Q}$ and maps
\begin{equation}\label{eq:compstl}
\Sigma^{(N-n_1)\mathbb{H}}\Sigma^{(M-m_1)\tilde{\mathbb{R}}} Z_1 \rightarrow 
\Sigma^{(N-n_2)\mathbb{H}}\Sigma^{(M-m_2)\tilde{\mathbb{R}}} Z_2
\end{equation}
\begin{equation}\label{eq:compstl2}
\Sigma^{(N-n_1)\mathbb{H}}\Sigma^{(M-m_1)\tilde{\mathbb{R}}} Z_1 \leftarrow 
\Sigma^{(N-n_2)\mathbb{H}}\Sigma^{(M-m_2)\tilde{\mathbb{R}}} Z_2,
\end{equation}
which are chain homotopy equivalences on the fixed-point sets.  
\end{defn}  
We call a map as in (\ref{eq:compstl}) or (\ref{eq:compstl2}) a \emph{chain local equivalence}.  Elements $Z_1,Z_2 \in \ce$ are chain locally equivalent if and only if there are chain local equivalences $Z_1 \rightarrow Z_2$ and $Z_2 \rightarrow Z_1$.  There are pairs of chain complexes with a chain local equivalence in one direction but not the other; these are not chain locally equivalent complexes.    

Chain local equivalence is an equivalence relation, and we write $[(Z,m,n)]_{cl}$ for the chain local equivalence class of $(Z,m,n)\in \mathfrak{CE}$.  The set $\mathfrak{CLE}$ of chain local equivalence classes is naturally an abelian group, with multiplication given by the tensor product (over $\f$, with $\G$-action as above).  (This abelian group structure on $\mathfrak{CLE}$ corresponds to connected sum in the homology cobordism group; see Fact \ref{fct:homf}).  The inverse of an element $(Z,0,0)$ of $\mathfrak{CLE}$ is $(Z^*,0,0)$ where $Z^*$ denotes the chain complex dual to $Z$.  The identity element $0$ of $\mathfrak{CLE}$ is $(\f,0,0)$, where $C^{CW}_*(S^0,\p)=\f=\langle f_0\rangle$ is the $\G$-module concentrated in degree $0$ for which $jf_0=f_0$ and $sf_0=0$.

In analogy with (\ref{eq:borelgeneral}), we define Borel homology for elements of $\mathfrak{CE}$.  
\begin{defn}\label{def:cehomwelldef}
Let $(Z,m,n)\in \mathfrak{CE}$.  We define $H^G_*((Z,m,n))=H^G_*(Z)[m+4n]$.  
\end{defn}
\begin{fact}\label{fct:chainborelhomologywelldef}
For $Z\in \mathfrak{CE}$, $H^G_*(Z)$ is well-defined.  
\end{fact}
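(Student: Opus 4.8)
The assertion is that the graded $\f[q,v]/(q^3)$-module $H^G_*(Z)[m+4n]$ attached to $(Z,m,n)\in\ce$ in Definition \ref{def:cehomwelldef} depends only on the chain stable equivalence class. The plan is to first cut the relation down to two elementary moves. Since each $\Sigma^V$ is just the functor $C^{CW}_*(V^+,\p)\otimes_\f(-)$ with the diagonal $\G$-action of (\ref{eq:gtensoract}), these suspensions commute with one another up to canonical chain homotopy equivalence, so for any integers $k,l\geq 0$ the triple $(Z,m,n)$ is chain stably equivalent, via the identity, to $(\Sigma^{l\mathbb{H}}\Sigma^{k\tilde{\mathbb{R}}}Z,\,m+k,\,n+l)$. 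Conversely, a chain stable equivalence between $(Z_1,m_1,n_1)$ and $(Z_2,m_2,n_2)$ is, by Definition \ref{def:compst}, a $\G$-chain homotopy equivalence between $\Sigma^{(N-n_1)\mathbb{H}}\Sigma^{(M-m_1)\tilde{\mathbb{R}}}Z_1$ and $\Sigma^{(N-n_2)\mathbb{H}}\Sigma^{(M-m_2)\tilde{\mathbb{R}}}Z_2$ for a common pair $(M,N)$ with $M-m_i,\,N-n_i$ nonnegative integers. It therefore suffices to show that $H^G_*(Z)[m+4n]$ is unchanged under (a) replacing $Z$ by a $\G$-chain homotopy equivalent complex, keeping $m,n$ fixed, and under (b) passing from $(Z,m,n)$ to $(\Sigma^{\tilde{\mathbb{R}}}Z,m+1,n)$ or to $(\Sigma^{\mathbb{H}}Z,m,n+1)$.

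Invariance under (a) is formal: a $\G$-chain homotopy equivalence $f\colon Z_1\to Z_2$, together with its $\G$-equivariant homotopy inverse and $\G$-equivariant chain homotopies, is carried by the functor $\ccheg\otimes_\G(-)$ of (\ref{eq:chainborel}) to a chain homotopy equivalence $\ccheg\otimes_\G Z_1\to\ccheg\otimes_\G Z_2$, hence an isomorphism on homology; the $\f[q,v]/(q^3)$-module structure is preserved because the $\ccheg$ factor, on which $C^*_{CW}(EG)$ acts, is untouched. For (b) I need the chain-level suspension isomorphisms $H^G_*(\Sigma^{\tilde{\mathbb{R}}}Z)\cong H^G_*(Z)[-1]$ and $H^G_*(\Sigma^{\mathbb{H}}Z)\cong H^G_*(Z)[-4]$, i.e. the analogues for suspensionlike chain complexes of type SWF of Proposition \ref{prop:susp}. (Suspension by $\mathbb{R}$ is excluded from Definition \ref{def:compst} and in any case satisfies $\Sigma^{\mathbb{R}}Z=Z[-1]$ by (\ref{eq:rsusp}), so it causes no trouble.) Granting these and using $\dim\tilde{\mathbb{R}}=1$, $\dim\mathbb{H}=4$, both moves in (b) leave $H^G_*(Z)[m+4n]$ fixed; equivalently one may apply $H^G_*(-)$ directly to the chain homotopy equivalences (\ref{eq:compst})--(\ref{eq:compst2}), use the suspension isomorphisms to strip off the suspensions on each side, and shift both sides by $[M+4N]$ to land on $H^G_*(Z_1)[m_1+4n_1]\cong H^G_*(Z_2)[m_2+4n_2]$.

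The crux is the chain-level suspension isomorphism for $V=\tilde{\mathbb{R}}$ and $V=\mathbb{H}$. One cannot simply combine Proposition \ref{prop:susp} with Lemma \ref{lem:cwsus} here, because a general suspensionlike chain complex of type SWF need not be the reduced CW chain complex of an actual space of type SWF, and $C^{CW}_*(V^+,\p)$ is not a complex of projective $\G$-modules, so the twisted diagonal action on $C^{CW}_*(V^+,\p)\otimes_\f Z$ must be handled by hand. The plan is to run the untwisting argument behind Proposition \ref{prop:susp}: since $G$ acts freely on $EG$, $\ccheg$ is a complex of free $\G$-modules, and the diagonal action can be untwisted to identify $\ccheg\otimes_\G\bigl(C^{CW}_*(V^+,\p)\otimes_\f Z\bigr)$ with $\bigl(\ccheg\otimes_\f C^{CW}_*(V^+,\p)\bigr)\otimes_\G Z$, where $\ccheg\otimes_\f C^{CW}_*(V^+,\p)$ is again a complex of free $\G$-modules whose homology is $\f$ concentrated in degree $\dim V$; comparison with $\ccheg$ itself via the augmentation on the second factor then produces the degree shift after $\otimes_\G Z$ and passing to homology. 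This step is the main obstacle; the reduction to moves (a) and (b), the homotopy-invariance, and the bookkeeping of grading shifts are all routine.
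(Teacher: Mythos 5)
Your proof is correct and uses essentially the same argument as the paper: the crux is the chain-level suspension isomorphism, proved by untwisting $\ccheg\otimes_\G\bigl(C^{CW}_*(V^+,\p)\otimes_\f Z\bigr)\cong\bigl(\ccheg\otimes_\f C^{CW}_*(V^+,\p)\bigr)\otimes_\G Z$ and then identifying $\ccheg\otimes_\f C^{CW}_*(V^+,\p)$ with $\ccheg[-\dim V]$ as $\G$-free resolutions, exactly as in the paper's proof. You are somewhat more explicit than the paper in recording the routine reduction to the two moves (invariance under $\G$-chain homotopy equivalence and invariance under single suspensions), which the paper collapses into its opening ``It suffices to show'' sentence.
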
\begin{proof}
It suffices to show, for $Z$ a suspensionlike chain complex of type SWF, that \begin{equation}\label{eq:homtarg}H^G_*(\Sigma^VZ)=H^G_{*-\mathrm{dim}\; V}(Z).\end{equation}
By (\ref{eq:chainborel}), we need to compute
\[ H_*(C^{CW}_*(EG) \otimes_\G (C^{CW}_*(V^+,\p) \otimes_\f Z)).\]
However, we have, by (\ref{eq:smashform}),
\[ C^{CW}_*(EG) \otimes_\f (C^{CW}_*(V^+,\p)\otimes_\f Z)=(C^{CW}_*(EG) \otimes_\f C^{CW}_*(V^+,\p))\otimes_\f Z,\]
as $\G$-modules.  Recalling the definition of $\otimes_\G$ in (\ref{eq:smashdec2}) we have
\[ C^{CW}_*(EG)\otimes_\G (C^{CW}_*(V^+,\p)\otimes_\f Z)=(C^{CW}_*(EG) \otimes_\f C^{CW}_*(V^+,\p))\otimes_\G Z.\]
Then to show (\ref{eq:homtarg}) we need only show
\[ H_*( (C^{CW}_*(EG) \otimes_\f C^{CW}_*(V^+,\p)) \otimes_\G Z)=H_{*-\mathrm{dim} V}(C^{CW}_*(EG) \otimes_\G Z).\]
Indeed, $C^{CW}_*(EG) \otimes_\f C^{CW}_*(V^+,\p)$ is the relative CW chain complex of $\Sigma^VEG_+$, a free $G$-space with nonzero homology only in degree $\mathrm{dim}\; V$.  As any two $\G$-free resolutions are homotopy equivalent, we obtain $C^{CW}_*(EG) \otimes_\f C^{CW}_*(V^+,\p) \simeq C^{CW}_*(EG)[-\mathrm{dim}\; V]$.  Then we have
\[ H_*( (C^{CW}_*(EG) \otimes_\f C^{CW}_*(V^+,\p)) \otimes_\G Z)=H_*((C^{CW}_*(EG) \otimes_\G Z)[-\mathrm{dim}\; V])=H^G_{*-\mathrm{dim}\; V}(Z),\]
as needed.  
\end{proof}

\begin{defn}\label{def:chainmanoinvars}
For $[(Z,m,n)] \in \CEL$, we call 
\begin{equation}
\alpha((Z,m,n))=\frac{a(Z)}{2}-\frac{m}{2}-2n,
\;\beta((Z,m,n))=\frac{b(Z)}{2}-\frac{m}{2}-2n,
\end{equation}
\[
\gamma((Z,m,n))=\frac{c(Z)}{2}-\frac{m}{2}-2n,
\]
the \emph{Manolescu invariants} of $(Z,m,n)$.  The invariants $\alpha,\beta$ and $\gamma$ do not depend on the choice of representative of the class $[(Z,m,n)]$.  
\end{defn}  

\begin{thm}[\cite{betaseifert}]\label{thm:gcwstruc}
The map
\begin{equation}\label{eq:etoce}
\mathfrak{E} \rightarrow \mathfrak{CE}
\end{equation}
given by sending $(X,m,n) \rightarrow (C^{CW}_*(X,\p),m,n)$ descends to a homomorphism:
\begin{equation}\label{eq:letocle}
\mathfrak{LE} \rightarrow \CEL .
\end{equation}
\end{thm}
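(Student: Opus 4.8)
The result is taken from \cite{betaseifert}; here is the strategy. The plan is to check three things in turn: that $(X,m,n)\mapsto (C^{CW}_*(X,\p),m,n)$ is a well-defined map $\mathfrak{E}\to\mathfrak{CE}$, that it sends locally equivalent objects to chain locally equivalent objects (so that it descends to $\mathfrak{LE}\to\mathfrak{CLE}$), and that on the quotients it intertwines the smash-product group law with the tensor-product group law.

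First I would treat well-definedness. The subtlety, flagged in Remark \ref{rmk:suspensionissue}, is that $C^{CW}_*(X,\p)$ need not literally be a suspensionlike chain complex of type SWF; but by that remark every class in $\mathfrak{E}$ has a representative for which it is, and for such a representative the fixed-point subcomplex of $C^{CW}_*(X,\p)$ is exactly the standard $\langle c_0,\dots,c_s\rangle$ of Example \ref{ex:rsusp2}. Independence of the representative is then pure functoriality: a $G$-homotopy equivalence realizing a stable equivalence is, after equivariant cellular approximation, a $\G$-chain homotopy equivalence of cellular chain complexes; by Lemma \ref{lem:cwsus} suspension by $\mathbb{H}$ and $\tilde{\mathbb{R}}$ commutes with passage to cellular chains, while suspension by $\mathbb{R}$ is the degree shift (\ref{eq:rsusp}), so one lands in a single chain stable equivalence class in the sense of Definition \ref{def:compst}.

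The descent to $\mathfrak{LE}\to\mathfrak{CLE}$ is the heart of the matter. Given a local equivalence $X_1\equiv_{l}X_2$ witnessed by $G$-stable maps $\phi\colon X_1\to X_2$, $\psi\colon X_2\to X_1$ restricting to $G$-homotopy equivalences on $S^1$-fixed-point sets, pick suspensionlike representatives as above and apply the cellular chain functor. Here one must cellularly approximate $\phi$ (and $\psi$) relative to the $S^1$-fixed-point subcomplex, so that the approximant restricts on $X_1^{S^1}$ to a cellular approximation of $\phi|_{X_1^{S^1}}$; then the induced $\G$-chain map $\phi_*$ restricts on the fixed-point set $\langle c_0,\dots,c_s\rangle$ precisely to the chain map induced by the $G$-homotopy equivalence $\phi|_{X_1^{S^1}}$, which is therefore a $\G$-chain homotopy equivalence. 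Hence $\phi_*$ and $\psi_*$ are chain local equivalences in the sense of Definition \ref{def:compstl}, and $C^{CW}_*(X_1,\p)\equiv_{cl}C^{CW}_*(X_2,\p)$.

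Finally, for the homomorphism property one uses that the product on $\mathfrak{LE}$ is $[X_1]\cdot[X_2]=[X_1\wedge X_2]$, with $m$- and $n$-labels adding, while by (\ref{eq:smashform})--(\ref{eq:gtensoract}) one has $C^{CW}_*(X_1\wedge X_2,\p)=C^{CW}_*(X_1,\p)\otimes_\f C^{CW}_*(X_2,\p)$ as $\G$-chain complexes, which is the product on $\mathfrak{CLE}$; the unit $[S^0]$ maps to $C^{CW}_*(S^0,\p)=\f$, the unit of $\mathfrak{CLE}$, and a multiplicative map of groups automatically preserves inverses. I expect the only genuine obstacle to be the fixed-point bookkeeping in the descent step — simultaneously arranging suspensionlike representatives, approximating the stable maps rel the $S^1$-fixed subcomplexes, and verifying that ``$G$-homotopy equivalence on $X^{S^1}$'' passes to ``$\G$-chain homotopy equivalence on the fixed-point set''; everything else is formal, resting on Lemma \ref{lem:cwsus}, (\ref{eq:smashform}), and functoriality of $C^{CW}_*(-)$.
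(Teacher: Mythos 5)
The paper states this theorem with a citation to \cite{betaseifert} and does not reprove it here, so there is no in-text argument to compare against. Evaluating your proposal on its own terms: the three-step plan (well-definedness via suspensionlike representatives, descent via cellular approximation rel the $S^1$-fixed subcomplex, homomorphism property via (\ref{eq:smashform})--(\ref{eq:gtensoract})) is the natural route and is essentially correct. Your handling of the $\mathbb{R}$-suspensions via (\ref{eq:rsusp}) is exactly the point the paper flags in the remark following Definition~\ref{def:compst}, and the observation that local equivalence of spaces yields, after cellular approximation rel fixed points, chain maps that restrict to $\G$-chain homotopy equivalences on the fixed-point submodules is the core of the descent step.

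One subtlety worth being explicit about in the homomorphism step: $C^{CW}_*(X_1,\p)\otimes_\f C^{CW}_*(X_2,\p)$ is not literally a suspensionlike chain complex of type SWF in the sense of Definition~\ref{def:typswf} (its fixed-point part is a tensor product of the standard $\langle c_0,\dots,c_{s_i}\rangle$ complexes, not the standard $\langle c_0,\dots,c_{s_1+s_2}\rangle$), so one needs that this tensor product is $\G$-chain homotopy equivalent, rel fixed points, to a genuine suspensionlike complex; this follows again from Remark~\ref{rmk:suspensionissue} together with equivariant cellular approximation of the identity on $(\tilde{\mathbb{R}}^{s_1})^+\wedge(\tilde{\mathbb{R}}^{s_2})^+\cong(\tilde{\mathbb{R}}^{s_1+s_2})^+$. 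With that caveat spelled out, the argument closes.
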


We will denote the element $(Z,0,0) \in \CEL$, for $Z$ a suspensionlike chain complex of type SWF, simply by $Z$.  
\subsection{Ordering $\mathfrak{CLE}$}\label{subsec:order}
In the following section we depart from \cite{betaseifert} and define a partial order on $\mathfrak{CLE}$. 
\begin{defn}\label{def:partord1}
The groups $\mathfrak{LE}$ and $\mathfrak{CLE}$ also come with a natural partial ordering.  That is, we say $X_1 \preceq X_2$ if there exists a local equivalence $X_1 \rightarrow X_2$ or a local equivalence $\Sigma^{\frac{1}{2}\mathbb{H}}X_1 \rightarrow X_2$, for $X_1,X_2 \in \LE$.  For $(Z,m,n) \in \CEL$, we write $\Sigma^{\frac{1}{2}\mathbb{H}}(Z,m,n)=(Z,m,n-\frac{1}{2})$.  For $Z_1,Z_2 \in \CEL$, we say $Z_1 \preceq Z_2$ if there exists a chain local equivalence $Z_1 \rightarrow Z_2$ or if there exists a chain local equivalence $\Sigma^{\frac{1}{2}\mathbb{H}}Z_1 \rightarrow Z_2.$ 
\end{defn} 

We have:
\begin{lem}\label{lem:abcor1}
If $Z_1 \preceq Z_2 \in \CEL$, then $\alpha(Z_1)\leq \alpha(Z_2),\beta(Z_1)\leq \beta(Z_2), \gamma(Z_1) \leq \gamma(Z_2)$.
\end{lem}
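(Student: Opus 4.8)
The plan is to isolate a single ``monotonicity under a chain local equivalence'' statement and then feed in the elementary behaviour of $\alpha,\beta,\gamma$ under $\Sigma^{\frac{1}{2}\mathbb{H}}$. First I would record that $\Sigma^{\frac{1}{2}\mathbb{H}}$ raises each of $\alpha,\beta,\gamma$ by exactly $1$: since $\Sigma^{\frac{1}{2}\mathbb{H}}(Z,m,n)=(Z,m,n-\frac{1}{2})$ by Definition~\ref{def:partord1} and, by Definition~\ref{def:chainmanoinvars}, $\alpha$, $\beta$, $\gamma$ depend on $n$ only through the summand $-2n$, one gets $\alpha(\Sigma^{\frac{1}{2}\mathbb{H}}Z_1)=\alpha(Z_1)+1\ge\alpha(Z_1)$, and likewise for $\beta,\gamma$. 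Reading Definition~\ref{def:partord1}, it is then enough to prove: \emph{if there is a chain local equivalence $Z_1'\to Z_2'$ in $\CEL$, then $\alpha(Z_1')\le\alpha(Z_2')$, $\beta(Z_1')\le\beta(Z_2')$, $\gamma(Z_1')\le\gamma(Z_2')$} (apply this with $(Z_1',Z_2')=(Z_1,Z_2)$ in the first case of Definition~\ref{def:partord1}, and with $(Z_1',Z_2')=(\Sigma^{\frac{1}{2}\mathbb{H}}Z_1,Z_2)$ in the second).

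So fix a chain local equivalence $Z_1'\to Z_2'$. Write $Z_i'=(W_i,m_i,n_i)$ with $W_i$ suspensionlike of type SWF; by Definition~\ref{def:compstl} there are $M\in\mathbb{Z}$, $N\in\mathbb{Q}$ and a $\G$-chain map $\phi\colon A\to B$, where $A=\Sigma^{(N-n_1)\mathbb{H}}\Sigma^{(M-m_1)\tilde{\mathbb{R}}}W_1$ and $B=\Sigma^{(N-n_2)\mathbb{H}}\Sigma^{(M-m_2)\tilde{\mathbb{R}}}W_2$, which restricts to a chain homotopy equivalence on the fixed-point subcomplexes. The fixed-point subcomplex of a suspensionlike complex of level $t$ has its top nonvanishing homology in degree $t$, so this equivalence forces $\mathrm{lev}(W_1)+(M-m_1)=\mathrm{lev}(W_2)+(M-m_2)$, i.e.\ $\mathrm{lev}(W_1)-m_1=\mathrm{lev}(W_2)-m_2$. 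The crux, proved in the next paragraph, is that $\phi_*\colon H^G_*(A)\to H^G_*(B)$ maps the submodule $T_A$ of infinitely $v$-divisible classes \emph{onto} the corresponding submodule $T_B$ of $H^G_*(B)$. Granting this: the suspension isomorphism $H^G_*(\Sigma^V(-))\cong H^G_{*-\dim V}(-)$, which is natural and $v$-equivariant (cf.\ the proof of Fact~\ref{fct:chainborelhomologywelldef}), transports the surjectivity back to $W_1,W_2$; a routine check of gradings modulo $4$, using the level identity above and the description~(\ref{eq:chainaaltdef}) of $a,b,c$ as minimal gradings carrying an infinitely $v$-divisible class, then gives $a(W_1)\le a(W_2)+D_2-D_1$ with $D_i=4(N-n_i)+(M-m_i)$, and the same for $b$ and $c$. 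Substituting $D_2-D_1=4(n_1-n_2)+(m_1-m_2)$ into Definition~\ref{def:chainmanoinvars} yields exactly $\alpha(Z_1')\le\alpha(Z_2')$, $\beta(Z_1')\le\beta(Z_2')$, $\gamma(Z_1')\le\gamma(Z_2')$.

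It remains to prove $\phi_*(T_A)=T_B$. The map $\phi$ induces an $\f[q,v]/(q^3)$-module homomorphism $\phi_*\colon H^G_*(A)\to H^G_*(B)$, the module structure being the natural one coming from the cap action of $C^*_{CW}(EG)$; in particular $\phi_*$ commutes with $v$, so $\phi_*(T_A)\subseteq T_B$. Now I would use the identification $T_A=\mathrm{Im}\bigl(\iota_{A*}\colon H^G_*(A^{S^1})\to H^G_*(A)\bigr)$ and $T_B=\mathrm{Im}(\iota_{B*})$: the inclusion $\mathrm{Im}\,\iota_{A*}\subseteq T_A$ holds because $H^G_*(A^{S^1})\cong H_*(BG)[-s]$ is $v$-divisible ($v$ acts surjectively on $H_*(BG)$ in every degree) and $\iota_{A*}$ is $v$-equivariant, while $T_A\subseteq\mathrm{Im}\,\iota_{A*}$ because $\iota_{A*}$ is an isomorphism in all sufficiently high degrees by the Equivariant Localization Theorem~\ref{thm:equivlocalzn} --- this is the chain-complex analogue of Fact~\ref{fct:highgrads} and of~(\ref{eq:iotaim}). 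Since $\phi$ restricts to a chain homotopy equivalence $\phi^{S^1}\colon A^{S^1}\to B^{S^1}$, the induced map $(\phi^{S^1})_*$ is an isomorphism, and naturality of $\iota$ gives $\phi_*\circ\iota_{A*}=\iota_{B*}\circ(\phi^{S^1})_*$. Hence $\phi_*(T_A)=\mathrm{Im}(\phi_*\circ\iota_{A*})=\mathrm{Im}\bigl(\iota_{B*}\circ(\phi^{S^1})_*\bigr)=\mathrm{Im}(\iota_{B*})=T_B$. Finally, surjectivity of $\phi_*|_{T_A}$ gives the grading comparison used above: if a grading $r$ carries a nonzero class of $T_B$, lift it to a class in $T_A$ of the same grading, which is automatically nonzero, so $r$ carries a nonzero class of $T_A$ too.

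I expect the main obstacle to be the identification $T_A=\mathrm{Im}\,\iota_{A*}$ (and the same for $B$), i.e.\ establishing the chain-complex versions of Theorem~\ref{thm:equivlocalzn} and Fact~\ref{fct:highgrads} for the complexes $A,B$; these rest on the $v$-torsion of $H^G_*(A)$ being bounded above in grading, which ultimately comes from $A$ having finitely many cells. Everything else --- the $\Sigma^{\frac{1}{2}\mathbb{H}}$ reduction, the suspension bookkeeping for $a,b,c$, and the passage from the $a,b,c$ inequalities to those for $\alpha,\beta,\gamma$ --- is routine and formal.
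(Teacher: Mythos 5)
Your argument is correct and is essentially the paper's proof, just carried out with the suspension bookkeeping made explicit rather than swept away by the WLOG normalization $m_i=n_i=0$: both hinge on the commuting triangle $\phi_*\circ\iota_{A*}=\iota_{B*}\circ(\phi^{S^1})_*$ and the identification of $\mathrm{Im}\,\iota_*$ with the infinitely $v$-divisible submodule (the paper invokes this via Fact~\ref{fct:highgrads} and equation~(\ref{eq:iotaim}), citing Remark~2.24 of~\cite{betaseifert} to pass from chain complexes to spaces --- exactly the step you correctly flag as the main obstacle). The one cosmetic difference is that you package the core step as $\phi_*(T_A)=T_B$ while the paper phrases it as: a preimage $y$ of $x\in\mathrm{Im}\,\iota_{2,*}$ under $\iota_{2,*}$ has $\iota_{1,*}(y)\ne 0$ by commutativity; these are the same observation.
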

\begin{proof}
We assume without loss of generality $Z_1=(Z_1,0,0), \; Z_2=(Z_2,0,0)$, for suspensionlike chain complexes of type SWF $Z_1$ and $Z_2$.  A chain local equivalence $\phi: Z_1 \rightarrow Z_2$ induces a map $\phi_G: C^{CW}_*(EG) \otimes_\G Z_1 \rightarrow C^{CW}_*(EG) \otimes_\G Z_2$.  We then have a commuting triangle, where $\iota_1$ and $\iota_2$ come from the inclusions $Z_1^{S^1} \rightarrow Z_1$ and $Z_2^{S^1}\rightarrow Z_2$.
\begin{equation}\label{eq:comtriag}
\begin{tikzcd}[column sep = small]
C^{CW}_*(EG) \otimes_\G Z_1 \arrow{rr}{\phi} & & C^{CW}_*(EG) \otimes_\G Z_2 \\
\\
& C^{CW}_*(EG) \otimes_\G C^{CW}_*((\tilde{\mathbb{R}}^{S^1})^+,\p)\arrow{luu}{\iota_1} \arrow{ruu}{\iota_2}
\end{tikzcd}
\end{equation} 
Diagram (\ref{eq:comtriag}) also induces a commuting triangle in homology:
\begin{equation}\label{eq:homcomtriag}
\begin{tikzcd}
H^G_*(Z_1) \arrow{rr}{\phi_*} & & H^G_*(Z_2) \\
\\
& H^G_*(Z_1^{S^1})\arrow{luu}{\iota_{1,*}} \arrow{ruu}{\iota_{2,*}}
\end{tikzcd}
\end{equation}
As in Remark 2.24 of \cite{betaseifert}, a suspensionlike chain complex of type SWF is chain stably equivalent to some $C^{CW}_*(X,\p)$ for $X$ a space of type SWF.  Then we may apply Fact \ref{fct:highgrads} to see that $\iota_{1,*}$ and $\iota_{2,*}$ are isomorphisms in sufficiently high degree. Thus $\phi_*$ must be an isomorphism in sufficiently high degree.  Furthermore, 
\[ \mathrm{Im}\;\iota_i =\{ x \in H^G_*(Z_i) \mid x\in \mathrm{Im} \, v^l \, \mathrm{for \;all}\; l\geq 0 \},\]
from (\ref{eq:iotaim}).  Thus, if $x \in H^G_*(Z_2)$ is in $\mathrm{Im}\, v^l$ for all $l\geq 0$, there exists some $y$ so that $x=\iota_{2,*}y$.  By the commutativity of (\ref{eq:homcomtriag}), $\iota_{1,*}(y)\neq 0$.  Applying the definitions (\ref{eq:aaltdef}), we see $m(Z_2)\geq m(Z_1)$ where $m$ is any of $a,b,c$.  Applying Definition \ref{def:chainmanoinvars}, the Lemma follows.

A similar argument applies for a chain local equivalence $\phi: \Sigma^{\frac{1}{2}\mathbb{H}}Z_1 \rightarrow Z_2$, in which case one has:
\[\alpha(Z_1)\leq \alpha(Z_2)-1,\,\beta(Z_1)\leq \beta(Z_2)-1,\, \gamma(Z_1) \leq \gamma(Z_2)-1.\]
\end{proof}

\begin{lem}\label{lem:ordadd}
Let $Z_1,Z_2,Z_3$ complexes of type SWF with $Z_1 \preceq Z_2$.  Then $Z_1 \otimes Z_3 \preceq Z_2 \otimes Z_3$.  
\end{lem}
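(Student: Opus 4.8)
The plan is to reduce the statement to a purely functorial observation about the operation $(-)\otimes Z_3$ on suspensionlike chain complexes of type SWF. First I would normalize, as in the proof of Lemma \ref{lem:abcor1}, by assuming $Z_1=(Z_1,0,0)$, $Z_2=(Z_2,0,0)$, $Z_3=(Z_3,0,0)$; the general case follows because tensoring simply adds the $(m,n)$-labels and $\Sigma^{\frac12\mathbb H}$ commutes with $(-)\otimes Z_3$. So suppose $\phi\colon Z_1\to Z_2$ is a chain local equivalence (the case of a chain local equivalence $\Sigma^{\frac12\mathbb H}Z_1\to Z_2$ is handled identically, carrying the half-$\mathbb H$ suspension through the tensor product). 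The goal is to produce a chain local equivalence $Z_1\otimes Z_3\to Z_2\otimes Z_3$, i.e. a $\G$-chain map that is a chain homotopy equivalence on fixed-point sets.

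The natural candidate is $\phi\otimes \mathrm{id}_{Z_3}$, with the $\G$-module structure on the tensor products given by \eqref{eq:gtensoract}. The first thing to check is that $\phi\otimes\mathrm{id}$ is a $\G$-chain map: it is a chain map because $\phi$ and $\mathrm{id}$ are, and the Leibniz-type formula for $\partial$ on a tensor product together with \eqref{eq:gtensoract} shows $\G$-equivariance, since $\phi$ is $\G$-equivariant. Next, one must verify that $Z_1\otimes Z_3$ and $Z_2\otimes Z_3$ are again suspensionlike chain complexes of type SWF, so that the assertion $Z_1\otimes Z_3\preceq Z_2\otimes Z_3$ even makes sense; here I would invoke the discussion around \eqref{eq:smashform} and the fact that $\CEL$ is a group under $\otimes$, so closure under tensor product is already in place — or, if one wants to be careful, observe that the tensor product of a suspensionlike complex of type SWF at level $s$ with one at level $t$ is chain stably equivalent to a suspensionlike complex of type SWF at level $s+t$ (the levels of $(\tilde{\mathbb R}^s)^+\wedge(\tilde{\mathbb R}^t)^+=(\tilde{\mathbb R}^{s+t})^+$ add), exactly as smash products of spaces of type SWF behave.

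The main point to verify is that $\phi\otimes\mathrm{id}$ is a chain homotopy equivalence on the fixed-point sets. The fixed-point subcomplex of $Z_i\otimes Z_3$ is $(Z_i\otimes Z_3)^{S^1}$; from Definition \ref{def:typswf}, the $S^1$-fixed part of a tensor product is the tensor product of the $S^1$-fixed parts, because the $S^1$-action on each $x_i$-generator is free, so any tensor monomial containing such a generator lies outside the fixed set. Thus $(Z_i\otimes Z_3)^{S^1}=Z_i^{S^1}\otimes Z_3^{S^1}$, and $\phi$ restricts on the fixed sets to a chain homotopy equivalence $Z_1^{S^1}\to Z_2^{S^1}$ by hypothesis. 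Tensoring a chain homotopy equivalence (of free $\f$-complexes — everything is over the field $\f=\mathbb Z/2$) with the identity on $Z_3^{S^1}$ yields a chain homotopy equivalence: if $h$ is a chain homotopy for $\psi\phi\simeq\mathrm{id}$ on the fixed sets, then $h\otimes\mathrm{id}$ (suitably corrected by the tensor-product sign/ordering conventions, which are trivial mod $2$) does the job, and $\psi\otimes\mathrm{id}$ is the required map in the other direction. Hence $\phi\otimes\mathrm{id}\colon Z_1\otimes Z_3\to Z_2\otimes Z_3$ is a chain local equivalence, giving $Z_1\otimes Z_3\preceq Z_2\otimes Z_3$.

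The part I expect to require the most care is bookkeeping rather than conceptual difficulty: confirming that the $\G$-module structure \eqref{eq:gtensoract} on the tensor product interacts correctly with restriction to fixed-point sets, and that ``chain homotopy equivalence on the fixed-point set'' is genuinely preserved by $-\otimes_\f Z_3^{S^1}$ (this uses that $Z_3^{S^1}$ is a bounded complex of free $\f$-modules, so tensoring is exact and preserves chain homotopies). There is also a minor subtlety in the $\Sigma^{\frac12\mathbb H}$ case: a chain local equivalence $\Sigma^{\frac12\mathbb H}Z_1\to Z_2$ tensors to $\Sigma^{\frac12\mathbb H}(Z_1\otimes Z_3)=(\Sigma^{\frac12\mathbb H}Z_1)\otimes Z_3\to Z_2\otimes Z_3$, using that formal desuspension by $\mathbb H$ commutes with $\otimes_\f$; this is immediate from Definition \ref{def:partord1} and the identification $\Sigma^{\frac12\mathbb H}(Z,m,n)=(Z,m,n-\frac12)$. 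No genuinely hard step arises; the lemma is a naturality statement and the proof is essentially this verification.
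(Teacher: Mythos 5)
Your proof is correct and takes the same approach as the paper: tensor the given chain local equivalence (or the map from $\Sigma^{\frac12\mathbb H}Z_1$) with $\mathrm{id}_{Z_3}$ and check it satisfies Definition \ref{def:partord1}. The paper records only the one-line observation that $\phi\otimes\mathrm{Id}$ works; your write-up supplies the bookkeeping (identification of $(Z_i\otimes Z_3)^{S^1}$ with $Z_i^{S^1}\otimes Z_3^{S^1}$, exactness of $-\otimes_\f Z_3^{S^1}$, and commutation of $\Sigma^{\frac12\mathbb H}$ with $\otimes_\f$) that the paper leaves implicit.
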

\begin{proof}
If there exists a (stable) map:
\[\phi: Z_1 \rightarrow Z_2,\]
then $\phi \otimes \mathrm{Id}: Z_1 \otimes Z_3 \rightarrow Z_2 \otimes Z_3$ satisfies the conditions of Definition \ref{def:partord1}, establishing the Lemma (and similarly for suspensions by $\frac{1}{2}\mathbb{H}$).  
\end{proof}

\section{Inequalities for the Manolescu Invariants}\label{sec:ineqs} 
In this section we will obtain bounds on the Manolescu invariants of tensor products of suspensionlike chain complexes.  In Section \ref{sec:gauge} we will apply these results to obtain bounds on the Manolescu invariants of three-manifolds.
\subsection{Calculating Manolescu Invariants from a Chain Complex}\label{subsec:chaincalcs}
We start by fixing a convenient $G$-CW decomposition of $EG=S(\mathbb{H}^\infty)$.  Recalling Example \ref{ex:hsusp2}, we have a $G$-CW decomposition for $\mathbb{H}^+\cong S^4=\langle r_0, y_1,y_2,y_3\rangle$ with differentials as in (\ref{eq:hdiffs}).  We then attach free $G$ cells $y_5, y_6,y_7$, with $\mathrm{deg} \; y_i = i$, where the attaching map of $y_i$ is the suspension of the attaching map of $y_{i-4}$.  The result is a $G$-CW decomposition by cells $\{ r_0,  y_i \}$, for $i\leq 7, i\neq 4$, of $S^8\cong (\mathbb{H}^2)^+$.  We can repeat this procedure to obtain a $G$-CW decomposition of $((\mathbb{H}^n)^+,\p)$ for any $n$, by cells $\{ r_0,y_i\}_{i \not\equiv 0 \bmod{4}}$.  

The unit sphere $S(\mathbb{H}^n)$ admits a $G$-CW decomposition with $G$ $(i-1)$-cells $e_{i-1}=y_i\cap S(\mathbb{H}^n)$ for $i\leq {4n-1}$.

In the limit, the $e_i$ provide a $G$-CW decomposition of $S(\mathbb{H}^\infty)=EG$.  That is, there is a $G$-CW decomposition of $EG$ with cells $e_{4i},\, e_{4i+1},\, e_{4i+2}$ for $i\geq 0$.  The chain complex $C^{CW}_*(EG)$ is then the free $\G$-module on $e_i$ with
\begin{align}\label{eq:diffrules1} \partial(e_0)&=0, \\ \partial(e_{4i})&=s(1+j+j^2+j^3)e_{4i-2} \; \mathrm{for} \; i \geq 1, \nonumber \\ \partial(e_{4i+1})&=(1+j)e_{4i}, \nonumber \\ \partial(e_{4i+2})&=(1+j)e_{4i+1}+se_{4i}. \nonumber
\end{align}
The reader may check that $H(C^{CW}_*(EG))$, for $C^{CW}_*(EG)$ as above, is a copy of $\f$ concentrated in degree $0$.  As all contractible free $\G$-chain complexes are chain homotopy equivalent, all $G$-CW complexes for $EG$ have CW chain complex chain homotopic to that given above.  

Fix a space $X$ of type SWF so that $Z=C^{CW}_*(X,\p)$ is a suspensionlike chain complex of type SWF.  (By Remark \ref{rmk:suspensionissue}, for any class in $\E$ there will be such a representative $X$).  One may compute the reduced Borel homology of $X$ in terms of $Z$, using (\ref{eq:chainborel}) and (\ref{eq:chains1borel}).  

In particular, we show how to determine $a(Z),b(Z),c(Z)$ from $Z$.    
\begin{lem}\label{lem:manolowerbounds}
Let $Z$ be a suspensionlike chain complex of type SWF at level $t$, with fundamental class $f_t \in Z^{S^1}$, of degree $t$, and $A,B,C \in \mathbb{Z}_{\geq 0}$.  Then $a(Z) \geq 4A+t$ if and only if there exist elements $x_i \in Z$, $\mathrm{deg}\, x_i=i$, for all $i$ with $t+1 \leq i \leq t+4A-3$ and $i \not\equiv t+2 \, \mathrm{mod} \, 4$, so that 
\begin{equation}\label{eq:rawlowerbounds}
\partial(x_{i})=\begin{cases}f_t & \mbox{if } i=t+1\\ s(1+j+j^2+j^3)x_{i-2} & \mbox{if } i \equiv t+3 \, \mathrm{mod} \, 4, \, i \leq t+4A-3 \\ (1+j)x_{i-1} & \mbox{if } i \equiv t \, \mathrm{mod} \, 4, i \leq t+4A-3 \\ (1+j)x_{i-1}+sx_{i-2} & \mbox{if } i \equiv t+1 \, \mathrm{mod} \, 4, \, t+1<i \leq t +4A -3.  \end{cases}
\end{equation}
Also, $b(Z) \geq 4B+t$ if and only if there exist elements $x_i \in Z$, $\mathrm{deg}\, x_i=i$, for all $i$ with $t+1 \leq i \leq t+4B-2$ and $i \not\equiv t+3 \, \mathrm{mod} \, 4$ so that 
\begin{equation}\label{eq:brawlowerbounds}
\partial(x_{i})=\begin{cases}f_t & \mbox{if } i=t+1\\ (1+j)x_{t+1} & \mbox{if } i=t+2\\ s(1+j+j^2+j^3)x_{i-2} & \mbox{if } i \equiv t \, \mathrm{mod} \, 4, \, i \leq t+4B-2 \\ (1+j)x_{i-1} & \mbox{if } i \equiv t+1 \, \mathrm{mod} \, 4, t+1<i \leq t+4B-2 \\ (1+j)x_{i-1}+sx_{i-2} & \mbox{if } i \equiv t+2 \, \mathrm{mod} \, 4, \, t+2<i \leq t +4B -2.  \end{cases}\end{equation}
Also, $c(Z) \geq 4C+t$ if and only if there exist elements $x_i \in Z$, $\mathrm{deg}\, x_i=i$, for all $i$ with $t+1 \leq i \leq t+4C-1$ and $i \not\equiv t \, \mathrm{mod} \, 4$ so that 
\begin{equation}\label{eq:crawlowerbounds}
\partial(x_{i})=\begin{cases}f_t & \mbox{if } i=t+1\\ (1+j)x_{i-1} & \mbox{if } i \equiv t+2 \, \mathrm{mod} \, 4, \, i \leq t+4C-1 \\ (1+j)x_{i-1}+sx_{i-2} & \mbox{if } i \equiv t+3 \, \mathrm{mod} \, 4, i \leq t+4C-1 \\ s(1+j+j^2+j^3)x_{i-2} & \mbox{if } i \equiv t+1 \, \mathrm{mod} \, 4, \, t+1<i \leq t +4C -1.  \end{cases}
\end{equation}
\end{lem}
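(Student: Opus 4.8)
The plan is to compute the Borel homology $H^G_*(Z)$ directly from the chain complex $C^{CW}_*(EG)\otimes_\G Z$, using the explicit $G$-CW decomposition of $EG$ fixed just before the statement, and to unravel exactly what the condition ``$v^l$ acts nontrivially in all degrees'' (equivalently, ``$x\in\mathrm{Im}\,v^l$ for all $l$'', cf.\ (\ref{eq:iotaim}) and (\ref{eq:aaltdef})) demands at the chain level. Recall $v$ acts as a cap product with the generator of $H^4(BG)$ coming from the dual of $C^{CW}_*(EG)$; concretely, in $C^{CW}_*(EG)\otimes_\G Z$, capping with $v$ shifts the $e_i$-index down by $4$. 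So an element surviving under all powers of $v$ is exactly the image, under the localization map (Fact \ref{fct:highgrads}), of the fundamental class $f_t$ of $Z^{S^1}$, and the question ``$a(Z)\ge 4A+t$?'' becomes: does the element of $H^G_{t+4A}(Z)$ detected by $v^A$ on the image of $f_t$ actually survive, i.e.\ can one build a cycle in $C^{CW}_*(EG)\otimes_\G Z$ in degree $t+4A$ whose image under $\iota_*$ is $v^A \cdot (\text{class of }f_t)$?

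The key computational step is to write down, for the $S^1$-fixed subcomplex, the canonical tower: in $C^{CW}_*(EG)\otimes_\G Z^{S^1}$ one has $e_{4k}\otimes_\G f_t$ (suitably normalized) as the generators of $H^G_{t+4k}(Z^{S^1})=H_{t+4k}(BG)$, with the differential (\ref{eq:diffrules1}) together with the $\G$-module relations $j^2 f_t = f_t$, $s f_t = 0$ (here I mean the fundamental class; for $t>0$ it is $(1+j)c_t$, and one checks the relations directly from Definition \ref{def:typswf}) forcing the usual pattern. First I would recall/record that $H^G_*(Z^{S^1})\cong \f[v]$ starting in degree $t$ (shifted $H_*(BG)$), generated over $\f[v]$ by the class of $f_t$. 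Then I would observe that $\iota_*$ is onto in each degree $\equiv t,t+1,t+2 \pmod 4$ onto the ``$v$-eternal'' part, by Fact \ref{fct:highgrads} and (\ref{eq:iotaim}). So $a(Z)\ge 4A+t$ iff the image $v^A\cdot[\iota(f_t)]\in H^G_{t+4A}(Z)$ is nonzero, and the whole content is to translate ``$v^A\cdot[\iota(f_t)]\ne 0$'' into ``there is a chain $\xi\in C^{CW}_{t+4A}(EG)\otimes_\G Z$ with $\partial\xi$ equal to a specific boundary coming from $e_{4A}\otimes_\G f_t$ built out of lower $e_i\otimes_\G(\text{stuff in }Z)$''.

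The mechanical heart is then to write a general chain in $C^{CW}_{t+k}(EG)\otimes_\G Z$ as $\sum_{i\le k} e_i\otimes_\G z_{k-i}$ with $z_{k-i}\in Z_{k-i}$, impose the cycle (or relative cycle) condition using (\ref{eq:diffrules1}), (\ref{eq:gtensoract}), and the Leibniz rule, and match coefficients $e_i$-degree by $e_i$-degree. Because $C^{CW}_*(EG)$ is $\G$-free on $e_{4i},e_{4i+1},e_{4i+2}$, the $\otimes_\G$ kills the $\G$-action on the $EG$ factor, so each equation becomes an equation purely in $Z$ with specific $\G$-coefficients $(1+j)$, $s$, $s(1+j+j^2+j^3)$ acting on the $z$'s --- and these are precisely the coefficients appearing on the right-hand sides of (\ref{eq:rawlowerbounds})--(\ref{eq:crawlowerbounds}). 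Setting $x_i := z_{i-t}$ (the component paired with $e_{t+4A - i + \dots}$, with the index bookkeeping determined by which $e_j$ it multiplies) recovers exactly the stated recursion. I would do the $a$ case in full detail: the generator in degree $t+4A$ is paired against $e_{4A-?}$; peeling off from the top, the successive conditions ``$\partial x_{t+1}=f_t$'', ``$\partial x_{i}=(1+j)x_{i-1}$ for $i\equiv t$'', ``$\partial x_i = (1+j)x_{i-1}+sx_{i-2}$ for $i\equiv t+1$'', ``$\partial x_i = s(1+j+j^2+j^3)x_{i-2}$ for $i\equiv t+3$'' are forced one at a time by reading off the coefficient of each $e_j$ in $\partial(\sum e_j\otimes_\G x_{\dots})$, and the ranges ($i\le t+4A-3$, skipping $i\equiv t+2\pmod 4$) are exactly the degrees where a free $\G$-generator $e_j$ of $C^{CW}_*(EG)$ lives below $e_{4A}$. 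The $b$ and $c$ cases are the ``same'' computation but with the starting degree shifted by one and two respectively --- i.e.\ one feeds in $(1+j)f_t$-type classes or their analogues, producing the extra initial equation $\partial x_{t+2}=(1+j)x_{t+1}$ in the $b$ case and the shifted residues mod $4$ in the $c$ case; I would indicate these and leave the parallel verification to the reader.

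The main obstacle I anticipate is not any deep idea but the index bookkeeping: making sure the ``if and only if'' is genuinely both directions. The ``if'' direction is: given such $x_i$'s, assemble $\xi = \sum e_{j}\otimes_\G x_{i(j)}$ and check $\partial \xi$ is a boundary witnessing $v^A\cdot[\iota f_t]\ne 0$, which is a finite verification using (\ref{eq:diffrules1}) and the Leibniz rule. The ``only if'' direction requires knowing that \emph{any} chain realizing the nonvanishing can be put in this normal form --- here I would use that $C^{CW}_*(EG)$ is $\G$-free with the given generators, so $C^{CW}_*(EG)\otimes_\G Z$ has an $\f$-basis $\{e_j\otimes_\G z : z \text{ in an }\f\text{-basis of }Z/(\text{appropriate relations})\}$, and then that any relative cycle can be reduced, modulo boundaries of higher cells, to one supported on the free $\G$-generators $e_j$ ($j\equiv 0,1,2\bmod 4$) with $Z$-coefficients constrained exactly by the displayed equations; the subtlety is the $S^1$-fixed part of $Z$, where $s\cdot(\text{fixed cells})=0$ and $j$ acts nontrivially only in degree $0$, so those cells can only contribute to the $x_i$'s through the prescribed boundary $f_t$ and nowhere else, which is what pins down the recursion uniquely rather than merely sufficiently. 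Once the $a$ case is written carefully, $b$ and $c$ follow by the evident degree shift, and the lemma is complete.
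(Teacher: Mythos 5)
Your high-level plan --- compute in $C^{CW}_*(EG)\otimes_\G Z$ using the explicit $G$-CW structure on $EG$ and translate the $v$-divisibility criterion of (\ref{eq:aaltdef}) into a coefficient-by-coefficient recursion --- is the same as the paper's proof. However, the logical setup is reversed in a way that would derail the argument rather than just the exposition.

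By (\ref{eq:iotaim}), the eternally $v$-divisible classes in $H^G_*(Z)$ are exactly $\mathrm{Im}\,\iota_*$, and in degree $t+4k$ this image is either zero or spanned by $\iota_*[e_{4k}\otimes_\G f_t]$. By (\ref{eq:aaltdef}), $a(Z)$ is the smallest degree $\equiv t\bmod 4$ in which $\mathrm{Im}\,\iota_*$ is nonzero. So $a(Z)\geq t+4A$ is a \emph{vanishing} condition: $\iota_*[e_{4k}\otimes_\G f_t]=0$ for $k=0,\dots,A-1$, which (since capping with $v$ sends each such class to the next one down) is equivalent to the single condition that $e_{4A-4}\otimes_\G f_t$ \emph{is a boundary} in $C^{CW}_*(EG)\otimes_\G Z$. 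The recursion (\ref{eq:rawlowerbounds}) is precisely the list of $Z$-components of a chain $\xi=\sum_{i=t+1}^{t+4A-3}e_{t+4A-3-i}\otimes_\G x_i$ of total degree $t+4A-3$ with $\partial\xi=e_{4A-4}\otimes_\G f_t$. You instead assert ``$a(Z)\geq 4A+t$ iff $v^A\cdot[\iota(f_t)]\neq 0$,'' look for a cycle in degree $t+4A$, and say $\partial\xi$ should come from $e_{4A}\otimes_\G f_t$. That has the sign backwards (you need vanishing, not survival), the index wrong ($e_{4A-4}$, not $e_{4A}$), and the degree of the chain wrong ($t+4A-3$, not $t+4A$); also $v^A\cdot[\iota f_t]$ would sit in degree $t-4A$, which vanishes for any complex of type SWF at level $t$, so the claim as written is vacuous. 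The ``if'' direction you sketch --- assemble $\xi$ from the $x_i$ and ``check $\partial\xi$ witnesses $v^A[\iota f_t]\neq 0$'' --- in fact produces a chain showing $[e_{4A-4}\otimes_\G f_t]=0$, which is the correct criterion but contradicts what you wrote. Once the sign and indices are corrected, the coefficient-matching you describe (via (\ref{eq:diffrules1}), (\ref{eq:gtensoract}), and $\G$-freeness of $C^{CW}_*(EG)$) does recover (\ref{eq:rawlowerbounds}), and the $b$- and $c$-cases follow the same pattern with $e_{4B-3}\otimes_\G f_t$ and $e_{4C-2}\otimes_\G f_t$ in place of $e_{4A-4}\otimes_\G f_t$.
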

\begin{proof}
By (\ref{eq:iotaim}), we have, where $\iota_*: H^G_*(Z^{S^1})\rightarrow H^G_*(Z)$ is the map induced by inclusion, 
\begin{equation}\label{eq:hominftydef}
\mathrm{Im}\; \iota_*=\{ x \in H^G_*(Z) \mid x \in \mathrm{Im}\; v^l \; \mathrm{for\; all } \; l \geq 0\}.
\end{equation}
Further, $H^G_*(Z^{S^1})$ is given by:
\[H^G_*(Z^{S^1})= C^{CW}_*(EG) \otimes_\f f_t,\]
which is an $\f$-vector space with generators $e_i \otimes f_t$ in degree $i+t$ for $i$ such that $i \geq 0 $ and $i \not\equiv 3 \bmod{4}$.  Then $a(Z) \geq 4A+t$ is equivalent to $e_{4A-4} \otimes f_t$ being a boundary in \[C^G_*(Z)=C^{CW}_*(EG)\otimes_\G Z.\]  That is, $a(Z) \geq 4A +t$ is equivalent to the existence of some 
\[x=\sum_{i=t}^{i=t+4A-3} e_{t+4A-3-i}\otimes x_{i} \in C^{CW}_*(EG) \otimes_\G Z,\]
so that $\partial(x)=e_{4A-4}\otimes f_t$, where $x_i \in Z$ is of degree $i$.  Writing out the differential of $x$, one obtains the conditions (\ref{eq:rawlowerbounds}) of the Lemma.  Similarly, $b(Z) \geq 4B+t$ if and only if $e_{4B-3} \otimes f_t$ is a boundary, and $c(Z) \geq 4C+t$ if and only if $e_{4C-2} \otimes f_t$ is a boundary, from which one obtains (\ref{eq:brawlowerbounds}) and (\ref{eq:crawlowerbounds}). 
\end{proof} 
\begin{lem}\label{lem:cord}
Let $Z$ be a suspensionlike chain complex of type SWF at level $t$, so that $c(Z) \geq 4C+t$.  Then \[C^{CW}_*(\Sigma^{C\mathbb{H}}(\tilde{\mathbb{R}}^t)^+,\p) \preceq Z.\]   
\end{lem}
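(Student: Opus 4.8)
The plan is to construct an explicit chain local equivalence $\phi \colon C^{CW}_*(\Sigma^{C\mathbb{H}}(\tilde{\mathbb{R}}^t)^+,\p) \to Z$, i.e.\ a $\G$-chain map that restricts to a chain homotopy equivalence on the fixed-point sets. Write $W = C^{CW}_*(\Sigma^{C\mathbb{H}}(\tilde{\mathbb{R}}^t)^+,\p)$. By Lemma~\ref{lem:cwsus} (applied $C$ times for $\mathbb{H}$ and implicitly through the structure of $(\tilde{\mathbb{R}}^t)^+$), $W$ is itself a suspensionlike chain complex of type SWF at level $t$, with fixed-point set $W^{S^1} = C^{CW}_*((\tilde{\mathbb{R}}^t)^+,\p)$ as in Example~\ref{ex:rsusp2}, and with a free $\G$-part coming from the $C$ copies of $\mathbb{H}$. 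The key point is that, because of the differentials (\ref{eq:diffrules1}) on $C^{CW}_*(EG)$ used in the definition of $c(Z)$, the free generators of $W$ over and above $W^{S^1}$ can be taken to be elements $w_i$, for $t+1 \le i \le t+4C-1$ with $i \not\equiv t \bmod 4$, satisfying exactly the differential relations (\ref{eq:crawlowerbounds}) of Lemma~\ref{lem:manolowerbounds} (with $f_t$ the fundamental class of $W^{S^1}$). In other words, $W$ is the ``minimal'' suspensionlike complex realizing $c \ge 4C+t$: this is precisely the content of the $G$-CW structure on $\Sigma^{C\mathbb{H}}(\tilde{\mathbb{R}}^t)^+$ built from the cell structure on $\mathbb{H}^+$ in Example~\ref{ex:hsusp2} and its iterated suspensions, matching the $e_i$-cells of $EG$.

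Next I would use the hypothesis $c(Z) \ge 4C+t$ together with Lemma~\ref{lem:manolowerbounds}: this hands us elements $x_i \in Z$, for $t+1 \le i \le t+4C-1$ and $i \not\equiv t \bmod 4$, with $\deg x_i = i$ and $\partial x_i$ given by (\ref{eq:crawlowerbounds}) in terms of $f_t^Z$, the fundamental class of $Z^{S^1}$. I then define $\phi$ on $W$ by sending $W^{S^1}$ isomorphically onto $Z^{S^1}$ via the canonical identification of fixed-point sets (both are $C^{CW}_*((\tilde{\mathbb{R}}^t)^+,\p)$ as $\G$-modules, so in particular $\phi$ sends the fundamental class to $f_t^Z$), and by sending the free generator $w_i \mapsto x_i$, extended $\G$-linearly. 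One must check this is a chain map: on $W^{S^1}$ it is by construction a $\G$-chain isomorphism; on the free generators, $\partial \phi(w_i) = \partial x_i$ is given by (\ref{eq:crawlowerbounds}), and $\phi(\partial w_i)$ is the image under $\phi$ of the right-hand side of (\ref{eq:crawlowerbounds}) for $W$, and these agree term by term because $\phi$ is $\G$-linear and sends $w_{i-1}\mapsto x_{i-1}$, $w_{i-2}\mapsto x_{i-2}$, $f_t \mapsto f_t^Z$. Since $\phi$ restricts to an isomorphism (hence a chain homotopy equivalence) on fixed-point sets, $\phi$ is a chain local equivalence in the sense of Definition~\ref{def:compstl}, which by Definition~\ref{def:partord1} gives $W \preceq Z$.

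The main subtlety — and the step I would spend the most care on — is making sure the identification of $W$ with the minimal model is correct, i.e.\ that the $G$-CW structure on $\Sigma^{C\mathbb{H}}(\tilde{\mathbb{R}}^t)^+$ really does produce free generators with differentials exactly of the shape (\ref{eq:crawlowerbounds}), as opposed to (\ref{eq:rawlowerbounds}) or (\ref{eq:brawlowerbounds}). This comes down to the degree shift: $(\tilde{\mathbb{R}}^t)^+$ has fundamental class in degree $t$, and suspending by $\mathbb{H}$ shifts by $4$ while the first free cell $y_1$ of $\mathbb{H}^+$ sits one degree above the fixed point — so the first free generator of $\Sigma^{C\mathbb{H}}(\tilde{\mathbb{R}}^t)^+$ appears in degree $t+1$ with $\partial = f_t$, and the recursive pattern of the $y_i$ (namely $\partial y_2 = (1+j)y_1$, $\partial y_3 = sy_1 + (1+j)y_2$, and then $\partial y_{4k+1} = s(1+j+j^2+j^3)y_{4k-1}$ after passing to $EG$-type cells) is exactly the $c$-pattern of (\ref{eq:crawlowerbounds}). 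Since any two free $\G$-resolutions of a given complex are chain homotopy equivalent, one may freely replace $W$ by the model with these standardized differentials without changing its class in $\CEL$. With that identification in hand, the rest is the routine verification above, and no further obstacle arises.
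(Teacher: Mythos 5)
Your proposal is correct and follows essentially the same route as the paper: describe the minimal $G$-CW model of $\Sigma^{C\mathbb{H}}(\tilde{\mathbb{R}}^t)^+$ as having fixed-point cells $c_0,\dots,c_t$ and free $\G$-generators $x_i$ in degrees $t+1,\dots,t+4C-1$ (with $i\not\equiv t\bmod 4$) whose differentials are exactly (\ref{eq:crawlowerbounds}), and then send these generators to the elements of $Z$ produced by Lemma~\ref{lem:manolowerbounds}, yielding a chain local equivalence. One small caution: the justification that ``any two free $\G$-resolutions are chain homotopy equivalent'' is not quite the right citation here since the complexes in question are not free (they carry the nonfree fixed-point part, and the literal tensor-product chain complex $C^{CW}_*((\mathbb{H}^C)^+,\p)\otimes_\f C^{CW}_*((\tilde{\mathbb{R}}^t)^+,\p)$ has free generators as low as degree $1$); the intended statement is that two $G$-CW structures on $G$-homotopy-equivalent spaces of type SWF yield $G$-chain homotopy equivalent complexes, which is what Remark~\ref{rmk:suspensionissue} implicitly licenses and which the paper also uses without comment.
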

\begin{proof}
The chain complex $C^{CW}_*(\Sigma^{C\mathbb{H}}(\tilde{\mathbb{R}}^t)^+,\p)$ consists of cells $c_0, \dots, c_t$ constituting the $S^1$-fixed point set, and has free cells $x_i$, of degree $i$, for $i=t+1,\dots, t+4C-1$, for $i\not \equiv t \bmod{4}$.  The fundamental class of the subcomplex $C^{CW}_*((\tilde{\mathbb{R}}^t)^+,\p)$ is $f_t=(1+j)c_t$ (if $t>0$, or $f_t=c_0$ if $t=0$).  The differentials of the $x_i$ in $C^{CW}_*(\Sigma^{C\mathbb{H}}(\tilde{\mathbb{R}}^t)^+,\p)$ are given exactly by the relations in (\ref{eq:crawlowerbounds}).  Then, since $Z$ has elements satisfying (\ref{eq:crawlowerbounds}), there exists a chain local equivalence
\[C^{CW}_*(\Sigma^{C\mathbb{H}}(\tilde{\mathbb{R}}^t)^+,\p)\rightarrow Z,\]
as needed.
\end{proof}
 The problem of computing the Manolescu invariants of tensor products (and, thus, connected sums, using Fact \ref{fct:homf}) then amounts to asking how to find towers of elements of the form (\ref{eq:rawlowerbounds})-(\ref{eq:crawlowerbounds}) in  $Z_1 \otimes_\f Z_2$ from towers in $Z_1$ and $Z_2$.  Note that if $Z_1,Z_2$ are suspensionlike $\G$-chain complexes of type SWF at levels $t_1,t_2$ respectively, then their product $Z_1 \otimes_\f Z_2$ is of type SWF at level $t_1+t_2$.
 
\begin{rmk}\label{rmk:vanish}
Say $\alpha(Z)=\gamma(Z)=0$ for $Z$ a chain complex of type SWF.  Then Lemma \ref{lem:cord} implies $Z\succeq 0 \in \mathfrak{CLE}$.  By duality, $-\alpha(Z)=\gamma(Z^*)=0$, where $Z^*$ is the dual of $Z$, so $Z^* \succeq 0$.  Combined, we see $Z=0\in \mathfrak{CLE}$.  That is, if $Z \in \mathfrak{CLE}$ has $\alpha(Z)=\gamma(Z)=0$, then $[Z]_{cl}=[C^{CW}_*(S^0,\p)]_{cl}$.  
\end{rmk}
\begin{thm}\label{thm:easyineq}
For $Z_1,Z_2$ suspensionlike $\G$-chain complexes of type SWF, we have: 
\begin{equation}\label{eq:easyineqsthm}\alpha(Z_1)+\alpha(Z_2)\geq \alpha(Z_1 \otimes_\f Z_2) \geq \alpha(Z_1)+ \gamma(Z_2),\end{equation}\[
\alpha(Z_1)+\beta(Z_2)\geq \beta(Z_1 \otimes_\f Z_2) \geq \beta(Z_1)+ \gamma(Z_2),\]\[ 
\alpha(Z_1)+\gamma(Z_2)\geq\gamma(Z_1 \otimes_\f Z_2) \geq \gamma(Z_1)+\gamma(Z_2). \]
\end{thm}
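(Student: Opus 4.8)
The plan is to prove the six inequalities in Theorem \ref{thm:easyineq} by combining three ingredients: (1) the combinatorial criterion of Lemma \ref{lem:manolowerbounds}, which translates $a(Z) \geq 4A + t$ (and likewise for $b,c$) into the existence of ``towers'' of elements satisfying the differential relations \eqref{eq:rawlowerbounds}--\eqref{eq:crawlowerbounds}; (2) the order-theoretic package of Lemmas \ref{lem:abcor1}, \ref{lem:ordadd}, \ref{lem:cord} together with the $\succeq$-relation on $\mathfrak{CLE}$; and (3) Spanier-Whitehead duality, which swaps $\alpha \leftrightarrow -\gamma$ and fixes $\beta \leftrightarrow -\beta$, so that many of the six inequalities follow from others by dualizing. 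So first I would isolate which inequalities are ``independent.'' Note the three upper bounds $\alpha(Z_1 \otimes Z_2) \leq \alpha(Z_1)+\alpha(Z_2)$, $\beta(Z_1\otimes Z_2)\leq \alpha(Z_1)+\beta(Z_2)$, $\gamma(Z_1\otimes Z_2)\leq \alpha(Z_1)+\gamma(Z_2)$ are dual (via $Z \mapsto Z^*$, using $(Z_1\otimes Z_2)^* = Z_1^* \otimes Z_2^*$ and $\alpha(Z^*)=-\gamma(Z)$ etc.) to the three lower bounds $\gamma(Z_1)+\gamma(Z_2)\leq \gamma(Z_1\otimes Z_2)$, $\beta(Z_1)+\gamma(Z_2)\leq \beta(Z_1\otimes Z_2)$, $\alpha(Z_1)+\gamma(Z_2)\leq\alpha(Z_1\otimes Z_2)$, so it suffices to prove, say, the three lower bounds.

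For the lower bound $\gamma(Z_1 \otimes_\f Z_2) \geq \gamma(Z_1)+\gamma(Z_2)$, the cleanest route is through $\mathfrak{CLE}$: writing $\gamma(Z_i) = 2C_i - \text{(level correction)}$ appropriately via $c(Z_i) \geq 4C_i + t_i$, Lemma \ref{lem:cord} gives $C^{CW}_*(\Sigma^{C_i\mathbb{H}}(\tilde{\mathbb{R}}^{t_i})^+,\p) \preceq Z_i$ for $i=1,2$. Then by Lemma \ref{lem:ordadd} (applied twice, tensoring the first relation with $Z_2$ and then with the fixed model on the other side, being careful about the $\Sigma^{\frac12\mathbb{H}}$ ambiguity in $\preceq$), one gets
\[
C^{CW}_*(\Sigma^{C_1\mathbb{H}}(\tilde{\mathbb{R}}^{t_1})^+,\p)\otimes_\f C^{CW}_*(\Sigma^{C_2\mathbb{H}}(\tilde{\mathbb{R}}^{t_2})^+,\p) \preceq Z_1 \otimes_\f Z_2.
\]
The left side is $C^{CW}_*(\Sigma^{(C_1+C_2)\mathbb{H}}(\tilde{\mathbb{R}}^{t_1+t_2})^+,\p)$ up to chain homotopy (using $(V\oplus W)^+ \simeq V^+ \wedge W^+$ and the remark after Lemma \ref{lem:cwsus}), whose $c$-invariant is exactly $4(C_1+C_2)+(t_1+t_2)$; so Lemma \ref{lem:abcor1} gives $\gamma(Z_1 \otimes Z_2) \geq \gamma(Z_1)+\gamma(Z_2)$.

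For the two mixed lower bounds $\alpha(Z_1 \otimes Z_2) \geq \alpha(Z_1)+\gamma(Z_2)$ and $\beta(Z_1\otimes Z_2)\geq \beta(Z_1)+\gamma(Z_2)$, the $\mathfrak{CLE}$-order argument alone does not suffice because $\alpha$ and $\beta$ do not come from a $\preceq$-comparison with a join of $(\tilde{\mathbb{R}}^t)^+$'s; here I would argue directly with Lemma \ref{lem:manolowerbounds}. Given an $\alpha$-tower $\{x_i\}$ in $Z_1$ witnessing $a(Z_1) \geq 4A+t_1$ and a $\gamma$-tower $\{y_k\}$ in $Z_2$ witnessing $c(Z_2) \geq 4C + t_2$, I would build an $\alpha$-tower in $Z_1 \otimes_\f Z_2$ witnessing $a(Z_1\otimes Z_2) \geq 4(A+C) + t_1 + t_2$ by taking appropriate elements of the form $x_i \otimes (\text{fundamental class of } Z_2)$ stacked below elements $x_{4A-3+t_1} \otimes y_k$ (i.e., first climb the $Z_1$-tower against the $Z_2$-fixed-point fundamental class, then, having reached the top of the $Z_1$-tower, continue climbing the $Z_2$-tower against the top $Z_1$-element), checking at each junction that the differential formulas \eqref{eq:rawlowerbounds} are satisfied using the Leibniz rule and the $\mathfrak{G}$-action formula \eqref{eq:gtensoract}: $s(a\otimes b) = sa\otimes b + j^2 a \otimes sb$. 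The main obstacle — and the place where real care is needed — is exactly this bookkeeping at the ``splice point'' where the $Z_1$-tower hands off to the $Z_2$-tower: the congruence class mod $4$ of the degree must match up so that the two sets of differential relations (of type \eqref{eq:rawlowerbounds} for $\alpha$ in $Z_1$, and the relations for whichever of $a,b,c$ is relevant in $Z_2$) glue into a single valid $\alpha$-tower, and one must verify that the extra $s(\cdots)\otimes$ and $\otimes s(\cdots)$ cross-terms produced by \eqref{eq:gtensoract} either vanish (because $s$ kills fixed-point cells, $sc_i = 0$) or are absorbed correctly. The $\beta$-case is identical with the $\beta$-tower shape \eqref{eq:brawlowerbounds} in place of \eqref{eq:rawlowerbounds} in the $Z_1$-factor. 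Once the three lower bounds are established this way, the three upper bounds follow formally by the duality substitution $Z_i \rightsquigarrow Z_i^*$ together with \eqref{eq:orierev}-style identities $\alpha(Z^*) = -\gamma(Z)$, $\beta(Z^*)=-\beta(Z)$, $\gamma(Z^*)=-\alpha(Z)$ and $(Z_1\otimes Z_2)^* \simeq Z_1^*\otimes Z_2^*$.
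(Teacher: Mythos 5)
Your duality reduction is correct, and your argument for $\gamma(Z_1\otimes_\f Z_2)\geq\gamma(Z_1)+\gamma(Z_2)$ via Lemmas~\ref{lem:cord}, \ref{lem:ordadd}, \ref{lem:abcor1} is fine. But your central claim --- that ``the $\mathfrak{CLE}$-order argument alone does not suffice'' for the mixed lower bounds $\alpha(Z_1\otimes Z_2)\geq\alpha(Z_1)+\gamma(Z_2)$ and $\beta(Z_1\otimes Z_2)\geq\beta(Z_1)+\gamma(Z_2)$ --- is a misconception, and it is precisely what sends you down the harder road. The correct move is to replace only $Z_2$ by its lower bound from Lemma~\ref{lem:cord}, not both factors. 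Since $Z_1\otimes_\f C^{CW}_*(\Sigma^{C_2\mathbb{H}}(\tilde{\mathbb{R}}^{t_2})^+,\p)$ is by definition a formal suspension of $Z_1$, i.e.\ the element $(Z_1,-t_2,-C_2)$ of $\mathfrak{CE}$ with $C_2=(c(Z_2)-t_2)/4$, and since Lemma~\ref{lem:abcor1} gives monotonicity of \emph{all three} of $\alpha,\beta,\gamma$ under $\preceq$, a single application of Lemma~\ref{lem:ordadd} followed by Lemma~\ref{lem:abcor1} yields all three right-hand inequalities simultaneously: $\alpha((Z_1,-t_2,-C_2))=\alpha(Z_1)+\gamma(Z_2)$ by the shift formula of Definition~\ref{def:chainmanoinvars}, and likewise for $\beta,\gamma$. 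This is the paper's actual argument, and it makes the tower-splicing entirely unnecessary for this theorem.

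Furthermore, the tower-splicing you sketch as a fallback is not just unpleasant bookkeeping --- as written it does not work. The $\gamma$-tower $\{y_j\}$ in $Z_2$ lives in degrees $j\not\equiv t_2\bmod 4$, while what is needed above the top of the $Z_1$-tower $x_{t_1+4A-3}$ (which sits in degree $\equiv t_1+1\bmod 4$) to continue the $\alpha$-shape of (\ref{eq:rawlowerbounds}) is elements in degrees $j'$ with $j'\not\equiv t_2+3\bmod 4$; these congruence classes disagree. Resolving this requires inserting factors of $s$ (which shift degree by one) in the style of the $s(1+j^2)$ and $s(1+j)^3$ maneuvers the paper uses in the proof of Theorem~\ref{thm:hardineq}, which is a genuinely more delicate result. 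You flag ``real care is needed at the splice point'' but do not carry it out, so as it stands your plan for the two mixed inequalities has a gap, and the gap is nontrivial to close --- which is exactly why the paper avoids it for Theorem~\ref{thm:easyineq} and reserves the splicing for Theorem~\ref{thm:hardineq}.
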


\begin{proof} 
Let $Z_i$ be at level $t_i$ for $i=1,2$.  Then, by Lemma \ref{lem:cord}, $C^{CW}_*(\Sigma^{\frac{(c(Z_2)-t_2)}{4}\mathbb{H}}(\tilde{\mathbb{R}}^{t_2})^+,\p)\preceq Z_2$.  By Lemma \ref{lem:ordadd},
\[Z_1 \otimes_\f C^{CW}_*(\Sigma^{\frac{(c(Z_2)-t_2)}{4}\mathbb{H}}(\tilde{\mathbb{R}}^{t_2})^+,\p) \preceq Z_1 \otimes_\f Z_2.\]
However, $Z_1 \otimes_\f C^{CW}_*(\Sigma^{\frac{(c(Z_2)-t_2)}{4}\mathbb{H}}(\tilde{\mathbb{R}}^{t_2})^+,\p)$ is, by definition, $(Z_1,-t_2,\frac{-c(Z_2)+t_2}{4})$.  

Then \[(Z_1,-t_2,\frac{-c(Z_2)+t_2}{4})\preceq Z_1 \otimes_\f Z_2.\]  By Lemma \ref{lem:abcor1}, $M((Z_1,-t_2,\frac{-c(Z_2)+t_2}{4}))\preceq M(Z_1 \otimes_\f Z_2)$ where $M$ is any of $\alpha,\beta,$ or $\gamma$.  By Definition \ref{def:chainmanoinvars}, we have $\gamma(Z_2)=c(Z_2)/2$.  Then, again using Definition \ref{def:chainmanoinvars}, we see
\begin{align}
\alpha(Z_1,-t_2,\frac{-c(Z_2)+t_2}{4})&=\alpha(Z_1)+\gamma(Z_2)\leq \alpha(Z_1 \otimes_\f Z_2), \nonumber\\
\beta(Z_1,-t_2,\frac{-c(Z_2)+t_2}{4})&=\beta(Z_1)+\gamma(Z_2)\leq \beta(Z_1 \otimes_\f Z_2),\nonumber\\
\gamma(Z_1,-t_2,\frac{-c(Z_2)+t_2}{4})&=\gamma(Z_1)+\gamma(Z_2)\leq \gamma(Z_1 \otimes_\f Z_2).\nonumber
\end{align}  Thus, we have obtained the right-hand inequalities of (\ref{eq:easyineqsthm}).  

To obtain the left-hand inequalities, we recall from \cite{ManolescuPin}[Proposition 2.13] that $\alpha(X)=-\gamma(X^*)$ and $\beta(X)=-\beta(X^*)$ where $X$ is a space of type SWF and $X^*$ is Spanier-Whitehead dual to $X$.  The same argument as in \cite{ManolescuPin}[Proposition 2.13] implies that, for $Z$ a chain complex of type SWF, $\alpha(Z)=-\gamma(Z^*)$ and $\beta(Z)=-\beta(Z^*)$ where $Z^*$ is the dual chain complex.  The left-hand inequalities of (\ref{eq:easyineqsthm}) then follow by applying the right-hand inequalities to $Z_1^*$ and $Z_2^*$, and using the above rules for duality.  
\end{proof}
\begin{thm}\label{thm:hardineq}
For $Z_1,Z_2$ suspensionlike $\G$-chain complexes of type SWF, we have:
\begin{equation}\label{eq:hardineq}
\gamma(Z_1 \otimes_\f Z_2) \leq \beta(Z_1)+ \beta(Z_2) \leq \alpha(Z_1 \otimes_\f Z_2).
\end{equation}
\end{thm}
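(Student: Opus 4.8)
The plan is to reduce both inequalities in \eqref{eq:hardineq} to a single one by Spanier--Whitehead duality, and then to prove that one by an explicit chain-level construction of a tower of the kind in Lemma \ref{lem:manolowerbounds}. Since $\gamma(Z_1\otimes_\f Z_2)=c(Z_1\otimes_\f Z_2)/2$, $\beta(Z_i)=b(Z_i)/2$ and $\alpha(Z_1\otimes_\f Z_2)=a(Z_1\otimes_\f Z_2)/2$, the assertion \eqref{eq:hardineq} is exactly $c(Z_1\otimes_\f Z_2)\leq b(Z_1)+b(Z_2)\leq a(Z_1\otimes_\f Z_2)$. Recall from the proof of Theorem \ref{thm:easyineq} that $\alpha(Z)=-\gamma(Z^*)$ and $\beta(Z)=-\beta(Z^*)$ for $Z$ of type SWF (so $a(Z)=-c(Z^*)$, $b(Z)=-b(Z^*)$), and that $(Z_1\otimes_\f Z_2)^*\cong Z_1^*\otimes_\f Z_2^*$; applying the right-hand inequality $b(Z_1)+b(Z_2)\leq a(Z_1\otimes_\f Z_2)$ to $(Z_1^*,Z_2^*)$ then gives $-b(Z_1)-b(Z_2)\leq-c(Z_1\otimes_\f Z_2)$, i.e.\ the left-hand inequality. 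So it suffices to prove $a(Z_1\otimes_\f Z_2)\geq b(Z_1)+b(Z_2)$. Let $t_i$ be the level of $Z_i$, so $Z_1\otimes_\f Z_2$ has level $t_1+t_2$; suspending each $Z_i$ by copies of $\mathbb{H}$ (which adds $4$ to $b(Z_i)$ and to $a(Z_1\otimes_\f Z_2)$ alike) we may assume $b(Z_i)=4B_i+t_i$ with $B_i\geq 0$, and the goal becomes $a(Z_1\otimes_\f Z_2)\geq 4(B_1+B_2)+t_1+t_2$.

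By Lemma \ref{lem:manolowerbounds} we are given $b$-towers $\{x^{(i)}_k\}\subset Z_i$ obeying \eqref{eq:brawlowerbounds}, and I would manufacture from them an $a$-tower $\{w_k\}\subset Z_1\otimes_\f Z_2$ of length $B_1+B_2$ obeying \eqref{eq:rawlowerbounds}, taken relative to $f_{t_1}\otimes f_{t_2}$; the latter is a legitimate choice of fundamental class since it generates the top homology of $(Z_1\otimes_\f Z_2)^{S^1}=Z_1^{S^1}\otimes_\f Z_2^{S^1}$ (K{\"u}nneth), is $j$-fixed, and satisfies $s(f_{t_1}\otimes f_{t_2})=0$ because each $f_{t_i}$ does. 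The tower $\{w_k\}$ would be assembled in two stages joined by a short splice: a lower block of length about $B_1$, built from the $Z_1$-tower smashed with the fixed-point and low-degree cells of $Z_2$, whose differentials --- computed from the Leibniz rule and the relations \eqref{eq:gtensoract}, $s(a\otimes b)=sa\otimes b+j^2a\otimes sb$ --- reproduce the $a$-pattern of \eqref{eq:rawlowerbounds}; an upper block of length about $B_2$, built from the $Z_2$-tower smashed with the top of the $Z_1$-tower (with suitable $s$- and $(1+j)$-multiples), reproducing the $a$-pattern shifted up by the length of the lower block; and a single splice element bridging the two, reflecting that the top relation of the $Z_1$ $b$-tower, once tensored, fits the bottom relations of the $Z_2$ $b$-tower.

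The step I expect to be the main obstacle is this splice, together with the bookkeeping forced by the mismatch of skip-patterns: the $b$-tower of $Z_i$ omits degrees $\equiv t_i+3\pmod 4$ while the target $a$-tower omits degrees $\equiv t_1+t_2+2\pmod 4$, so the period-$4$ blocks do not line up naively and each of the four cases of \eqref{eq:rawlowerbounds} must be verified directly at and near the junction. Each such verification is a finite but delicate computation in $\G\cong\f[s,j]/(sj=j^3s,\ s^2=0,\ j^4=1)$, using $(1+j)^2=1+j^2$, $sj^2=j^2s$, and the vanishing of $s(1+j+j^2+j^3)$ on $(1+j)$-divisible classes in the relevant degrees. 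One way to isolate this computation is to first prove a $b$-analogue of Lemma \ref{lem:cord} --- that $b(Z)\geq 4B+t$ implies $W^b_{B,t}\preceq Z$, where $W^b_{B,t}$ is the minimal suspensionlike chain complex of type SWF carrying a $b$-tower of length $B$ --- and then, by Lemmas \ref{lem:ordadd} and \ref{lem:abcor1}, reduce to estimating $a\big(W^b_{B_1,t_1}\otimes_\f W^b_{B_2,t_2}\big)$ in this fully explicit small model.
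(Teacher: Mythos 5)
Your strategy coincides with the paper's: reduce the left-hand inequality to the right-hand one via $\alpha(Z)=-\gamma(Z^*)$, $\beta(Z)=-\beta(Z^*)$, $(Z_1\otimes_\f Z_2)^*\cong Z_1^*\otimes_\f Z_2^*$, and then show $a(Z_1\otimes_\f Z_2)\geq b(Z_1)+b(Z_2)$ by splicing $b$-towers of $Z_1$ and $Z_2$ into an $a$-tower for the product, with a lower block built from the $Z_1$-tower against $f_{t_2}$ and an upper block built from the $Z_2$-tower against (a $\G$-multiple of) the top of the $Z_1$-tower. You also correctly diagnose the skip-pattern mismatch as the crux: tensoring the $Z_1$ $b$-tower with $f_{t_2}$ produces an element in degree $t+4k+2$, a residue the $a$-tower of $Z_1\otimes_\f Z_2$ omits, whereas it \emph{wants} one in degree $t+4k+3$, which is precisely the residue the $b$-tower of $Z_1$ omits. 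Since you leave this ``main obstacle'' unresolved, what you have is a correct outline but not a complete proof. For the record, the paper resolves the mismatch by replacing $x_{t_1+4k+2}\otimes f_{t_2}$ with $s(1+j^2)x_{t_1+4k+2}\otimes f_{t_2}$, which lands in degree $t+4k+3$; the required differentials follow from $\partial(s(1+j^2))=(1+j^2)^2=0$ and $(1+j)s(1+j^2)=s(1+j^3)(1+j^2)=s(1+j+j^2+j^3)$, matching the pattern of \eqref{eq:rawlowerbounds}. The upper block is $s(1+j)^3x_{b(Z_1)-2}\otimes y_i$ with $\{y_i\}$ the $b$-tower of $Z_2$, and the junction closes because $(1+j^2)(1+j+j^2+j^3)=0$, $(1+j+j^2+j^3)(1+j)=0$, and $s^2=0$ annihilate the would-be extra terms. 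One small correction to your sketch: the lower block uses only the fixed-point class $f_{t_2}$ from $Z_2$, not any other ``low-degree cells'' of $Z_2$.
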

\begin{proof}  We construct a tower of elements in $Z_1\otimes_\f Z_2$ satisfying (\ref{eq:rawlowerbounds}) from towers in $Z_1$ and $Z_2$ satisfying (\ref{eq:brawlowerbounds}).  Say that $Z_1$ is at level $t_1$ and $Z_2$ is at level $t_2$, and denote the fundamental class of $Z_1^{S^1}$ by $f_{t_1}$ and that of $Z_2^{S^1}$ by $f_{t_2}$.  

Let $\{x_i\}_{i=t_1,\dots,b(Z_1)-2}$ and $\{y_i\}_{i=t_2,\dots,b(Z_2)-2}$ be sequences satisfying (\ref{eq:brawlowerbounds}) for $Z_1,Z_2$, respectively.  Then consider the sequence of elements:
\begin{equation}\label{eq:newbtower1}
f_{t_1} \otimes f_{t_2}, \; x_{t_1+1} \otimes f_{t_2},\; s(1+j^2)x_{t_1+2} \otimes f_{t_2}, 
\end{equation}
\[x_{t_1+4} \otimes f_{t_2},\;x_{t_1+5} \otimes f_{t_2},\; s(1+j^2)x_{t_1+6} \otimes f_{t_2},\]
\[ x_{t_1+8} \otimes f_{t_2},\; x_{t_1+9} \otimes f_{t_2}, s(1+j^2)x_{t_1+10} \otimes f_{t_2},\]\[ \dots,\] 
\[x_{b(Z_1)-4} \otimes f_{t_2},\; x_{b(Z_1)-3}\otimes f_{t_2},\;s(1+j^2)x_{b(Z_1)-2}\otimes f_{t_2}.\]
One may verify that the sequence in (\ref{eq:newbtower1}) satisfies (\ref{eq:rawlowerbounds}).  In fact, the sequence in (\ref{eq:newbtower1}) generates a subcomplex that is just a subcomplex of $Z_1$ satisfying (\ref{eq:rawlowerbounds}) smashed against $Z_2^{S^1}$.  To lengthen the sequence, we then incorporate chains coming from $Z_2$:
\begin{equation}\label{eq:newbtower2} s(1+j)^3x_{b(Z_1)-2}\otimes y_{t_2+1}, s(1+j)^3x_{b(Z_1)-2} \otimes y_{t_2+2}, \end{equation}\[s(1+j)^3x_{b(Z_1)-2}\otimes y_{t_2+4}  ,\;s(1+j)^3x_{b(Z_1)-2}\otimes y_{t_2+5},\;s(1+j)^3x_{b(Z_1)-2}\otimes y_{t_2+6},\; \]\[s(1+j)^3x_{b(Z_1)-2}\otimes y_{t_2+8},\;s(1+j)^3x_{b(Z_1)-2}\otimes y_{t_2+9},\;s(1+j)^3x_{b(Z_1)-2}\otimes y_{t_2+10} \]\[\dots,\]\[s(1+j)^3x_{b(Z_1)-2}\otimes y_{b(Z_2)-4},\; s(1+j)^3x_{b(Z_1)-2}\otimes y_{b(Z_2)-3},\;s(1+j)^3x_{b(Z_1)-2}\otimes y_{b(Z_2)-2}.
\]
One confirms that the sequence specified by (\ref{eq:newbtower1})-(\ref{eq:newbtower2}) satisfies (\ref{eq:rawlowerbounds}), and this establishes 
\[a(Z_1 \otimes_\f Z_2)\geq b(Z_1) +b(Z_2).\]
Using Definition \ref{def:chainmanoinvars}, we obtain the right-hand inequality of (\ref{eq:hardineq}).  The left-hand side follows from duality, as in the proof of Theorem \ref{thm:easyineq}.
\end{proof}

\subsection{Relationship with $S^1$-invariants} 
We also recall the definition of the invariant $d$ from \cite{ManolescuPin}, analogous to the Fr{\o}yshov invariant of $S^1$-equivariant Seiberg-Witten Floer theory.  
\begin{defn}
Let $Z$ be a suspensionlike chain complex of type SWF at level $t$.  
\begin{equation}\label{eq:ddef} 
d(Z)=\mathrm{min}\;\{ r \equiv t \;\mathrm{mod} \; 2 \mid \exists\, x \in H^{S^1}_{r}(Z),\,  x \in \mathrm{Im}\; u^l\; \mathrm{\; for\; all \;} l \geq 0 \}. 
\end{equation}
\end{defn}
\begin{rmk}In \cite{ManolescuPin}, $d_p$ is defined for coefficients in any field, rather than only $\mathbb{F}=\mathbb{Z}/2$.  The invariant $d$ in our notation is $d_2$ of \cite{ManolescuPin}.  
\end{rmk}

Analogous to the the calculation for $a,b,$ and $c$ in Lemma \ref{lem:manolowerbounds}, we find a formula for $d(Z)$.  We obtain:
\begin{lem}\label{lem:drules} Let $Z$ be a suspensionlike chain complex of type SWF at level $t$, and let $f_t$ denote the fundamental class of $Z^{S^1}$.  Then $d(Z) \geq 2D+t$ if and only if there exist elements $x_{i}$ in $Z$, for $i=t+1,\dots,t+2D-1$ with $i\equiv t+1 \bmod{2}$, where $\mathrm{deg}\; x_i = i$, such that 
\begin{equation}\label{eq:dtow} \partial(x_{i})=\begin{cases} f_t & \mbox{if } i=t+1,\\ s(1+j^2)x_{i-2}  & \mbox{if} \; t+3\leq i\leq t+2D-1. \end{cases}
\end{equation}\end{lem}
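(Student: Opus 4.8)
The plan is to imitate the proof of Lemma \ref{lem:manolowerbounds}, only now for the $S^1$-Borel homology $H^{S^1}_*(Z)$ rather than the $G$-Borel homology. First I would recall, from (\ref{eq:chains1borel}) and Fact \ref{fct:chainhomologydefagrees}, that $H^{S^1}_*(Z)=H(C^{CW}_*(EG)\otimes_{C^{CW}_*(S^1)}Z,\partial)$, where $C^{CW}_*(S^1)\subset \G$ is the subcomplex generated by $1,j^2$ and $s$ (that is, $C^{CW}_*(S^1)=\f[s,j^2]/(s^2,(j^2)^2-1)$, the CW chains of the copy of $S^1$ inside $G$ spanned by $\{e^{i\theta}\}$). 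The restriction of the $G$-CW decomposition of $EG=S(\mathbb{H}^\infty)$ fixed in \S\ref{subsec:chaincalcs} gives an $S^1$-CW decomposition with free $S^1$-cells; concretely $C^{CW}_*(EG)$, regarded as a $C^{CW}_*(S^1)$-complex, is chain homotopy equivalent to the standard free resolution with one $S^1$-cell $e_i'$ in each degree $i\geq 0$ and differential $\partial e_{2k}'=0$, $\partial e_{2k+1}'=(1+j)e_{2k}'$ (equivalently one recovers this by collapsing the $j$-action, consistent with Fact \ref{fct:bs1bg}). Then $H^{S^1}_*(Z^{S^1})=C^{CW}_*(EG)\otimes_{C^{CW}_*(S^1)} f_t$ is the $\f$-vector space on generators $e_i'\otimes f_t$ in degree $i+t$ for all $i\geq 0$, and the $U$-action (which comes from the cap product of $C^*_{CW}(EG)$ on $C^{CW}_*(EG)$, as noted after Fact \ref{fct:chainhomologydefagrees}) sends the degree-$(i+t)$ generator to the degree-$(i+t-2)$ generator.

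Next, exactly as in Lemma \ref{lem:manolowerbounds}, by the $S^1$-analogue of (\ref{eq:iotaim}) — which holds because, by Remark 2.24 of \cite{betaseifert}, $Z$ is chain stably equivalent to $C^{CW}_*(X,\p)$ for a space $X$ of type SWF, so Fact \ref{fct:highgrads} (and its $S^1$-version, via $\mathrm{res}^G_{S^1}$) applies — the condition $\{x\in H^{S^1}_*(Z)\mid x\in\mathrm{Im}\,u^l\text{ for all }l\geq 0\}$ equals the image of the inclusion-induced map $\iota_*\colon H^{S^1}_*(Z^{S^1})\to H^{S^1}_*(Z)$. Hence $d(Z)\geq 2D+t$ is equivalent to the statement that the class $e_{2D-2}'\otimes f_t$, which represents the lowest generator of $H^{S^1}_*(Z^{S^1})$ that must survive, is a boundary in $C^{CW}_*(EG)\otimes_{C^{CW}_*(S^1)}Z$. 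I would then spell out what a bounding chain looks like: writing a general element of the right degree as $x=\sum_{i=t}^{t+2D-2} e_{t+2D-2-i}'\otimes x_i$ with $\deg x_i=i$, and computing $\partial x$ using the Leibniz rule (\ref{eq:gtensoract}) and the $C^{CW}_*(S^1)$-differentials $\partial e_{2k}'=0$, $\partial e_{2k+1}'=(1+j)e_{2k}'$ together with the relation $s e_{2k}' = $ (the term producing $j^2$-symmetrization). The equation $\partial x=e_{2D-2}'\otimes f_t$ forces the $x_i$ with even index $i-t$ to drop out and yields precisely the recursion (\ref{eq:dtow}): $\partial x_{t+1}=f_t$ and $\partial x_i = s(1+j^2)x_{i-2}$ for the remaining odd-offset indices. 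Conversely, given elements $x_i$ satisfying (\ref{eq:dtow}), this same formula produces a bounding chain, so $e_{2D-2}'\otimes f_t$ is killed and $d(Z)\geq 2D+t$.

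The main obstacle I anticipate is purely bookkeeping rather than conceptual: getting the correct free $C^{CW}_*(S^1)$-model for $C^{CW}_*(EG)$ and tracking how the cells $e_{4i},e_{4i+1},e_{4i+2}$ of the $G$-complex, together with $j e_{4i}$ etc., reorganize into a resolution with one $S^1$-cell per degree, so that the $S^1$-differential rules and the resulting index shifts ($2$ instead of $4$, and the mod-$2$ rather than mod-$4$ conditions on indices) come out right. Concretely one must check that the $s(1+j^2)$ operator — rather than $s(1+j+j^2+j^3)$ as in the $G$-case — is exactly what appears, which is essentially the statement that over $C^{CW}_*(S^1)$ one has $\partial e_{2k+2}' = s e_{2k}'$ up to the identification collapsing $(1+j)$-symmetrization to $(1+j^2)$-symmetrization in $Z$. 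Once the $S^1$-resolution is pinned down, the remainder is a verbatim transcription of the argument for $c(Z)$ in Lemma \ref{lem:manolowerbounds}, with all $4$'s replaced by $2$'s and the single-tower pattern replacing the period-four pattern.
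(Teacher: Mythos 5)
Your overall strategy is exactly the paper's intent: the paper's proof of this lemma is literally ``analogous to the proof of Lemma \ref{lem:manolowerbounds}'', carried out in $S^1$-Borel homology via $C^{CW}_*(EG)\otimes_{C^{CW}_*(S^1)}Z$, and you set this up correctly, including the reduction (via Fact \ref{fct:highgrads} and the $S^1$-analogue of (\ref{eq:iotaim})) to the condition that the class $f_{2D-2}\otimes f_t\in C^{CW}_*(EG)\otimes_{C^{CW}_*(S^1)}Z$ be a boundary. However, your description of the free $C^{CW}_*(S^1)$-model for $C^{CW}_*(EG)$ is wrong, and the error is not just notational. You posit a free complex with one $C^{CW}_*(S^1)$-generator $e_i'$ in \emph{every} degree $i\ge 0$ and differential $\partial e_{2k+1}'=(1+j)e_{2k}'$. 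First, $(1+j)$ does not lie in $C^{CW}_*(S^1)=\f[s,j^2]/(s^2,(j^2)^2-1)$, so that formula is not a $C^{CW}_*(S^1)$-module map. Second, a free $C^{CW}_*(S^1)$-chain complex with exactly one generator per degree cannot have homology $\f$ in degree $0$: since $H_*(BS^1)\cong \f[U]^\vee$ is concentrated in even degrees, the minimal free model has generators only in \emph{even} degrees. Concretely, the minimal model $P$ is the free $C^{CW}_*(S^1)$-module on generators $f_{2k}$, $k\ge 0$, with $\partial f_0=0$ and $\partial f_{2k}=s(1+j^2)f_{2k-2}$ for $k\ge 1$ --- that is, the operator $s(1+j^2)$ which you correctly anticipate \emph{is} the whole differential, not a byproduct of some further collapsing. (Alternatively one may use the paper's $C^{CW}_*(EG)$ itself, which as a free $C^{CW}_*(S^1)$-module has two generators $e_i,je_i$ in each degree $i\not\equiv 3\pmod 4$ with differentials (\ref{eq:diffrules1}); this is chain homotopy equivalent to $P$ over $C^{CW}_*(S^1)$, which is what the paper's ``analogous'' argument tacitly uses.)

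Once the resolution is corrected, the bounding-chain computation is a clean transcription of Lemma \ref{lem:manolowerbounds} and does give (\ref{eq:dtow}): writing $x=\sum_{k=0}^{D-1}f_{2k}\otimes x_{t+2D-1-2k}$ with $\deg x_j=j$ and imposing $\partial x=f_{2D-2}\otimes f_t$ yields precisely $\partial x_{t+1}=f_t$ and $\partial x_i=s(1+j^2)x_{i-2}$ for $i=t+3,\dots,t+2D-1$, with no parasitic even-offset terms to ``drop out'' because the resolution has no odd-degree generators to begin with. As written, though, your bounding chain $x=\sum_{i=t}^{t+2D-2}e_{t+2D-2-i}'\otimes x_i$ sums over cells in every degree and then appeals to a cancellation you never verify; since those cells do not exist in the correct model and the claimed differential on them is not even $C^{CW}_*(S^1)$-linear, this step needs to be redone. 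This is exactly the ``bookkeeping obstacle'' you flagged, but the resolution you proposed for it is incorrect, so it is a genuine gap rather than a detail.
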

\begin{proof}
The proof is analogous to that of Lemma \ref{lem:manolowerbounds}.
\end{proof}

\begin{defn}\label{def:dtow}
We let $T_D(t)$ denote the chain complex given by \[C^{CW}_*((\tilde{\mathbb{R}}^t)^+,\p) \oplus \langle \{x_{t+2i-1}\}\rangle_{\{ i=1,\dots,D\} },\] where $\langle \{x_i\} \rangle$ is the free $\G$-module with generators $x_i$, with the following requirements.  We require that $C^{CW}_*((\tilde{\mathbb{R}}^t)^+,\p)\subseteq T_D(t)$ is a subcomplex, where $C^{CW}_*((\tilde{\mathbb{R}}^t)^+,\p)$ is as in Example \ref{ex:rsusp2}.  Also, we set $\mathrm{deg}\, x_{i}=i$.  The differentials of $T_D(t)$ are as in (\ref{eq:dtow}); namely, $\partial(x_{t+1})$ is the fundamental class of $(\tilde{\mathbb{R}}^t)^+$:
\[\partial(x_{t+1})=\begin{cases}
(1+j)c_t & \mathrm{if}\; t>0\\
c_0 & \mathrm{if} \; t=0.
\end{cases}\]
The differential of $x_i$ for $i>t+1$ is given by $\partial(x_i)=s(1+j)^2x_{i-2}$.  \end{defn}
\begin{fact}\label{fct:interpdtow}
If $t=0$, the chain complex $T_D(t)$ is the reduced CW complex of the unreduced suspension $\tilde{\Sigma}(S^{2D-1} \amalg S^{2D-1})$, where $S^1$ acts on $S^{2D-1}$ by complex multiplication, and $j$ interchanges the two copies of $S^{2D-1}$ (see Definition \ref{def:unredsusp}).  
\end{fact}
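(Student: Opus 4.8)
The plan is to equip $\tilde{\Sigma}(S^{2D-1}\amalg S^{2D-1})$ with an explicit $G$-CW decomposition, compute its reduced cellular $\G$-chain complex, and check that the answer agrees with $T_D(0)$ from Definition \ref{def:dtow}; since any two $G$-CW structures on a $G$-space give chain homotopy equivalent cellular chain complexes, this suffices. Write $W$ for $S^{2D-1}\amalg S^{2D-1}$ with the stated action. Because conjugation by $j$ acts on the $\langle 1,i\rangle$-circle as complex conjugation, $W$ is $G$-homeomorphic to the induced space $G\times_{S^1}S(\mathbb{C}^D)$, where $S^1$ acts on $S(\mathbb{C}^D)=S^{2D-1}$ by complex multiplication (identifying the second copy with $S(\mathbb{C}^D)$ involves a harmless complex conjugation). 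On $S(\mathbb{C}^D)$ I would use the standard $S^1$-CW structure coming from the Hopf fibration $S^{2D-1}\to\mathbb{C}P^{D-1}$: the bundle restricts to a trivial bundle over each open cell of the standard cell structure on $\mathbb{C}P^{D-1}$, giving one free $S^1$-cell $\delta_k=S^1\times D^{2k}$ of equivariant dimension $2k$ for $k=0,\dots,D-1$, so that $C^{CW}_*(S(\mathbb{C}^D))$ is free over $C^{CW}_*(S^1)$ on $\delta_0,\dots,\delta_{D-1}$. Inducing along $S^1\hookrightarrow G$ (so that $C^{CW}_*(W)=\G\otimes_{C^{CW}_*(S^1)}C^{CW}_*(S(\mathbb{C}^D))$) gives a $G$-CW structure on $W$ with one free $G$-cell $\epsilon_k:=1\otimes\delta_k$ of equivariant dimension $2k$, and $C^{CW}_*(W)$ is $\G$-free on $\epsilon_0,\dots,\epsilon_{D-1}$.

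Next I would pass to the unreduced suspension (Definition \ref{def:unredsusp}). As $S^1$ acts trivially on the suspension parameter and fixes the two cone points $p_{\pm}$, while acting freely off $\{p_-,p_+\}$, the space $V:=\tilde{\Sigma}W$ is of type SWF at level $0$, with $V^{S^1}=\{p_-,p_+\}=(\tilde{\mathbb{R}}^0)^+$. It carries the $G$-CW structure whose cells are the two $G$-fixed $0$-cells $p_{\pm}$ together with one $G$-cell $S\epsilon_k$ of equivariant dimension $2k+1$ for each $\epsilon_k$. Taking $p_-$ as basepoint, the reduced cellular $\G$-chain complex $C^{CW}_*(V,\p)$ has $\mathbb{F}\langle p_+\rangle$ in degree $0$ (with $j\cdot p_+=p_+$, $s\cdot p_+=0$) and a free $\G$-summand on $S\epsilon_0,\dots,S\epsilon_{D-1}$ in degrees $1,3,\dots,2D-1$. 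These match the generators of $T_D(0)$ under $c_0\leftrightarrow p_+$ and $x_{2k+1}\leftrightarrow S\epsilon_k$, so it remains only to compare differentials.

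For the unreduced suspension, $\partial^V(Se)=S(\partial^W e)$ when $e$ has positive ordinary dimension, while $\partial^V(Sv)=p_-+p_+$ on ordinary $0$-cells $v$ of $W$; reduced, this gives $\partial(S\epsilon_0)=p_+$ and $\partial(S\epsilon_k)=S(\partial\epsilon_k)$ for $k\geq 1$. Everything thus reduces to the equivariant cellular boundary $\partial\epsilon_k\in C^{CW}_{2k-1}(W)$, equivalently $\partial\delta_k\in C^{CW}_{2k-1}(S(\mathbb{C}^D))$. I would compute this from the explicit $S^1$-equivariant attaching map of the $2k$-cell of $S(\mathbb{C}^{k+1})$, namely $S^1\times S^{2k-1}\to S(\mathbb{C}^k)$, $(\lambda,v)\mapsto\lambda v$: its restriction to the identity component $\{1\}\times S^{2k-1}$ is the identity of $S^{2k-1}=S(\mathbb{C}^k)$, so $\partial\delta_k$ is the fundamental cycle of $S(\mathbb{C}^k)$ in top degree $2k-1$. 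In the cell structure above, that fundamental cycle is $s(1+j^2)\delta_{k-1}=s(1+j)^2\delta_{k-1}$: one checks closedness from $\partial s=1+j^2$, $s^2=0$ and $(j^2)^2=1$, and that it generates $H_{2k-1}(S^{2k-1})\cong\mathbb{F}$ by induction on $k$ (the base case $k=1$ being the fundamental class of $S^1$). Inducing up, $\partial\epsilon_k=s(1+j)^2\epsilon_{k-1}$, so $\partial(S\epsilon_k)=s(1+j)^2S\epsilon_{k-1}$, which is exactly $\partial(x_i)=s(1+j)^2x_{i-2}$ of Definition \ref{def:dtow}. Hence $C^{CW}_*(V,\p)\cong T_D(0)$.

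The main obstacle is precisely this boundary computation $\partial\delta_k=s(1+j)^2\delta_{k-1}$: one must pin down the $S^1$-equivariant attaching maps of $S^{2D-1}$ well enough to read off the coefficient in $\mathbb{F}[\langle j^2\rangle]\cdot s$, not merely modulo the augmentation $C^{CW}_*(S^1)\to\mathbb{F}$ (which only recovers the non-equivariant statement that $S\epsilon_k$ is a cycle). A cleaner alternative is to observe that, in the colimit, $C^{CW}_*(ES^1)$ is a free $C^{CW}_*(S^1)$-resolution of $\mathbb{F}$ whose differential is forced by acyclicity together with the vanishing of the differential on $C^{CW}_*(ES^1)\otimes_{C^{CW}_*(S^1)}\mathbb{F}=C^{CW}_*(\mathbb{C}P^\infty)$, and then to truncate at the $(2D-1)$-skeleton and induce along $S^1\hookrightarrow G$; this is the $S^1$-analogue of the computation of $C^{CW}_*(EG)$ recorded in (\ref{eq:diffrules1}).
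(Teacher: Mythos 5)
The paper states Fact~\ref{fct:interpdtow} without proof, so there is no argument of the paper's to compare against. Your verification is correct and is the natural one: identify $W=S^{2D-1}\amalg S^{2D-1}\cong G\times_{S^1}S(\mathbb{C}^D)$, equip $S(\mathbb{C}^D)$ with the free $S^1$-CW structure induced by the Hopf fibration so that $C^{CW}_*(S(\mathbb{C}^D))$ is $C^{CW}_*(S^1)$-free on generators $\delta_0,\dots,\delta_{D-1}$ in even degrees with $\partial\delta_k=s(1+j^2)\delta_{k-1}$, induce along $S^1\hookrightarrow G$, pass to the unreduced suspension (which adds the two fixed $0$-cells $p_\pm$ and replaces each $\epsilon_k$ by the free $(2k+1)$-cell $S\epsilon_k$), and read off that under $c_0\leftrightarrow p_+$, $x_{2k+1}\leftrightarrow S\epsilon_k$, the resulting $\G$-chain complex is precisely $T_D(0)$ of Definition~\ref{def:dtow} (recalling $(1+j)^2=1+j^2$ over $\mathbb{F}$).

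One small inaccuracy in your ``cleaner alternative'' for pinning down $\partial\delta_k$: the requirement that the induced differential vanish on $C^{CW}_*(ES^1)\otimes_{C^{CW}_*(S^1)}\mathbb{F}=C^{CW}_*(\mathbb{CP}^\infty)$ is vacuous, since that complex is concentrated in even degrees while $\partial\delta_k$ sits in odd degree, so it imposes no constraint. What actually determines the differential is $\partial^2=0$ together with acyclicity: writing $\partial\delta_k=a\,s\delta_{k-1}+b\,j^2s\delta_{k-1}$, one computes $\partial(s\delta_{k-1})=\partial(j^2s\delta_{k-1})=(1+j^2)\delta_{k-1}$, so $\partial^2\delta_k=(a+b)(1+j^2)\delta_{k-1}$ forces $a=b$, and acyclicity in degree $2k$ rules out $a=b=0$. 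This is the same mechanism that fixes $C^{CW}_*(EG)$ in (\ref{eq:diffrules1}); your primary route via the attaching maps of the Hopf cells also arrives at the same formula and is fine.
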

\begin{fact}\label{fct:dtowmanos}
We have $\beta(T_D(t))=t/2$ and $\gamma(T_D(t))=t/2$.
\end{fact}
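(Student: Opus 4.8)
The plan is to compute the invariants $\beta$ and $\gamma$ of the explicit complex $T_D(t)$ directly from the chain-level formulas of Lemma~\ref{lem:manolowerbounds}, using the fact that $T_D(t)$ contains only the fixed-point cells $c_0,\dots,c_t$ together with the free $\G$-cells $x_{t+1},x_{t+3},\dots,x_{t+2D-1}$ in odd degrees above $t$, with differentials $\partial x_{t+1}=f_t$ and $\partial x_i=s(1+j)^2 x_{i-2}$. First I would establish the easy direction, $\beta(T_D(t))\geq t/2$ and $\gamma(T_D(t))\geq t/2$: by definition of $a,b,c$ via \eqref{eq:chainaaltdef} one always has $b(Z),c(Z)\geq t$ because the fixed-point subcomplex $Z^{S^1}=C^{CW}_*((\tilde{\mathbb R}^t)^+,\p)$ contributes classes $e_i\otimes f_t$ that survive to $H^G_*$ and lie in the image of $v^l$ for all $l$ in the appropriate residues mod $4$; the minimal such degree in the relevant residue classes is exactly $t+1$ for $b$ (giving $b\geq t+1-1=t$) and $t+2$ for $c$ (giving $c\geq t+2-2=t$). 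Equivalently, one can invoke the localization statement Fact~\ref{fct:highgrads}, which gives $\iota_*$ an isomorphism in high degrees, so $a,b,c$ are all $\geq t$.

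The substance is the upper bounds $\beta(T_D(t))\leq t/2$ and $\gamma(T_D(t))\leq t/2$, i.e. $b(T_D(t))\leq t$ and $c(T_D(t))\leq t$. For this I would show that one cannot build towers of the form \eqref{eq:brawlowerbounds} or \eqref{eq:crawlowerbounds} of length exceeding the trivial one. Concretely, in \eqref{eq:brawlowerbounds} the first required relation after $\partial x_{t+1}=f_t$ is $\partial x_{t+2}=(1+j)x_{t+1}$; but $T_D(t)$ has \emph{no} free generator in degree $t+2$ (the $x_i$ sit only in odd degrees $t+1,t+3,\dots$), and the only degree-$(t+2)$ elements of $T_D(t)$ come from $\G\cdot x_{t+1}$, where $x_{t+1}$ has degree $t+1$, so a degree-$(t+2)$ element is of the form $s x_{t+1}$ (since $\deg s=1$), whose differential is $\partial(sx_{t+1})=s f_t + (1+j^2)x_{t+1}$, which is not $(1+j)x_{t+1}$. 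Hence no such $x_{t+2}$ exists, forcing $b(T_D(t))=t$. The argument for $c$ is parallel: \eqref{eq:crawlowerbounds} with $t'=t$ first demands $\partial x_{t+2}=(1+j)x_{t+1}$ as well, and the same degree/module-structure obstruction applies, giving $c(T_D(t))=t$. One should double-check the degenerate case $D=0$ (where there are no $x_i$ at all and $T_0(t)=C^{CW}_*((\tilde{\mathbb R}^t)^+,\p)$, so $a=b+1=c+2=t$ and the claimed equalities $\beta=\gamma=t/2$ still hold) and the case $t=0$ (where $f_0=c_0$ and the reduction $jc_0=c_0$ must be used), but these are routine.

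The main obstacle I anticipate is purely bookkeeping: verifying precisely which elements of $T_D(t)$ exist in each degree and residue class, and confirming that the candidate chains one might try to use to extend a tower (various $\G$-multiples of the $x_i$, and sums involving the fixed-point cells) all fail the recursion \eqref{eq:brawlowerbounds}/\eqref{eq:crawlowerbounds} at the very first step past $x_{t+1}$. Since that first obstruction already appears at degree $t+2$ and the structure of $T_D(t)$ near the bottom is the same regardless of $D$, the argument is short once the module structure is pinned down; there is no need to analyze the higher-degree part of the tower at all. An alternative, cleaner route — which I would mention as a remark rather than carry out — is to identify $T_D(t)=\Sigma^{\,t\tilde{\mathbb R}}T_D(0)$ via Lemma~\ref{lem:cwsus}, reduce to the case $t=0$ using the grading shift in Proposition~\ref{prop:susp} (so $b,c$ all shift by $t$ and the $\beta,\gamma$ formulas shift by $t/2$), and then recognize from Fact~\ref{fct:interpdtow} that $T_D(0)$ is the reduced chain complex of $\tilde\Sigma(S^{2D-1}\amalg S^{2D-1})$, whose Borel homology is computed directly from the Gysin/Mayer–Vietoris description; the $j$-action swaps the two cones, and a direct inspection shows the only $v$-nontorsion classes in residues $s+1$ and $s+2\bmod 4$ sit in the minimal degrees $t+1$ and $t+2$.
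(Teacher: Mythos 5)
Your argument is correct and is essentially the paper's argument: both pin the upper bound on $b$ and $c$ to the impossibility of finding $x_1,x_2$ with $\partial x_1=f_t$ and $\partial x_2=(1+j)x_1$, which in $T_D(t)$ fails already at the first step because the differential of the free part lands only in $(1+j^2)\cdot(\text{free part})$. The paper packages this same obstruction more compactly as the observation that, in the quotient $Q=T_D(t)/T_D(t)^{S^1}$, one has $\partial Q\subseteq(1+j^2)Q$, which covers in one stroke the ``bookkeeping'' you flag (arbitrary $\G$-multiples, not just $x_2=sx_{t+1}$, and arbitrary lifts of $x_1$); you may wish to phrase your enumeration this way to avoid having to enumerate the finitely many candidates by hand.
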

\begin{proof}
Let $Q$ be the quotient complex $T_D(t)/ T_D(t)^{S^1}$.  By inspection $\partial Q \subseteq (1+j^2)Q$.  Then there is no pair of elements $x_1,x_2 \in T_D(t)$ so that $\partial x_1 = f_t$ and $\partial x_2=(1+j)x_1$.  By (\ref{eq:brawlowerbounds}) and (\ref{eq:crawlowerbounds}), we obtain $b(T_D(t))=c(T_D(t))=t$.  By Definition \ref{def:chainmanoinvars}, we obtain the Fact.
\end{proof}
The motivation for considering the complex $T_D(t)$ is that it is the ``minimal" $\G$-chain complex for a fixed $d$-invariant, as made precise in the following fact.
\begin{fact}\label{fct:dtows} Let $Z$ be a suspensionlike chain complex at level $t$.  Then $d(Z) \geq 2D+t$ if and only if $Z \succeq T_D(t)$.
\end{fact}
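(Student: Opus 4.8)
The plan is to prove the two implications directly: for ``only if'' I would push the tower supplied by Lemma~\ref{lem:drules} forward to produce a chain local equivalence $T_D(t)\to Z$, and for ``if'' I would invoke monotonicity of $d$ under $\preceq$. As a preliminary remark, the $\Sigma^{\frac{1}{2}\mathbb{H}}$-clause of Definition~\ref{def:partord1} is vacuous here: regarding $T_D(t)$ and $Z$ as the elements $(T_D(t),0,0),(Z,0,0)$ of $\mathfrak{CLE}$, a chain local equivalence $\Sigma^{\frac{1}{2}\mathbb{H}}T_D(t)=(T_D(t),0,-\frac{1}{2})\to(Z,0,0)$ would force $0-(-\frac{1}{2})\in\mathbb{Z}$ by the remark after Definition~\ref{def:compst}, which is false. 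Hence $Z\succeq T_D(t)$ holds if and only if there is a chain local equivalence $\phi\colon T_D(t)\to Z$, and (taking $M=N=0$ in Definition~\ref{def:compstl}) we may take $\phi$ to be an honest $\G$-chain map that restricts to a chain homotopy equivalence on the fixed-point subcomplexes.

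For ``only if'', assume $d(Z)\geq 2D+t$ and let $x_{t+1},x_{t+3},\dots,x_{t+2D-1}\in Z$, $\deg x_i=i$, be elements satisfying $(\ref{eq:dtow})$, as furnished by Lemma~\ref{lem:drules}. By Definition~\ref{def:dtow}, $T_D(t)=C^{CW}_*((\tilde{\mathbb{R}}^t)^+,\p)\oplus\langle\{x^{T_D}_{t+2i-1}\}\rangle$ as a $\G$-module, so there is a unique $\G$-module map $\phi\colon T_D(t)\to Z$ equal to the identity on the common fixed-point subcomplex $C^{CW}_*((\tilde{\mathbb{R}}^t)^+,\p)$ and sending each free generator $x^{T_D}_i$ to $x_i$. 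Because $s(1+j)^2=s(1+j^2)$ over $\mathbb{F}$, the differentials of Definition~\ref{def:dtow} coincide with the relations $(\ref{eq:dtow})$, so $\phi$ is a chain map; being the identity on fixed points it is a chain homotopy equivalence there, hence a chain local equivalence, and $Z\succeq T_D(t)$.

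For ``if'', the key point is that $d$ is monotone under $\preceq$. A chain local equivalence $\psi\colon Z_1\to Z_2$ induces, after the suspension bookkeeping of Definition~\ref{def:compstl} (which changes $H^{S^1}_*$ only by an overall grading shift, by the $S^1$-analogue of $(\ref{eq:homtarg})$), a map $\psi_*\colon H^{S^1}_*(Z_1)\to H^{S^1}_*(Z_2)$ sitting in a commuting triangle with the fixed-point inclusion maps $\iota_{1,*},\iota_{2,*}$, just as in diagram~$(\ref{eq:homcomtriag})$. The $S^1$-equivariant Localization Theorem, the $S^1$-analogue of Fact~\ref{fct:highgrads}, shows $\iota_{1,*}$ and $\iota_{2,*}$ are isomorphisms in high degrees and that $\mathrm{Im}\,\iota_{i,*}$ is precisely the set of classes lying in $\mathrm{Im}\,u^l$ for all $l\geq 0$; running the proof of Lemma~\ref{lem:abcor1} verbatim, with $S^1$-Borel homology, $u$, and $d$ replacing $G$-Borel homology, $v$, and $a,b,c$, gives $d(Z_1)\leq d(Z_2)$. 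Taking $Z_1=T_D(t)$ and $Z_2=Z$, and using that the free generators of $T_D(t)$ form a length-$D$ tower, so $d(T_D(t))\geq 2D+t$ by Lemma~\ref{lem:drules}, we get $d(Z)\geq 2D+t$. The one mildly delicate step is the appeal to $S^1$-localization in the chain-complex setting; this is justified because every suspensionlike chain complex of type SWF is chain stably equivalent to $C^{CW}_*(X,\p)$ for a genuine space $X$ of type SWF, exactly as already used in the proof of Lemma~\ref{lem:abcor1}.
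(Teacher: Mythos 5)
Your proof is correct and amounts to spelling out what the paper records in a single sentence: a tower satisfying $(\ref{eq:dtow})$ is precisely the data of a $\G$-chain map $T_D(t)\to Z$ restricting to the identity on the fixed-point subcomplex, which is your ``only if'' direction, and the converse follows by pushing the canonical tower in $T_D(t)$ forward along a chain local equivalence and re-applying Lemma~\ref{lem:drules} (you instead route through $S^1$-monotonicity of $d$ under $\preceq$, i.e.\ the $S^1$-analogue of Lemma~\ref{lem:abcor1}; both are valid and you handle the suspension shifts correctly). One small caveat: the parenthetical assertion that any chain local equivalence $T_D(t)\to Z$ can be taken with $M=N=0$ in Definition~\ref{def:compstl} is not obviously justified at the chain level, but it is not load-bearing — in ``only if'' you construct such a map explicitly, and in ``if'' you explicitly account for nonzero $M,N$ via the grading-shift bookkeeping, so the argument goes through without it.
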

\begin{proof}
The Fact follows immediately from Lemma \ref{lem:drules}.
\end{proof}

We also recall the definition of the invariant $\delta$ from \cite{ManolescuPin}, analogous to Definition \ref{def:manolescudefn1}.
\begin{defn}\label{def:manolescudefns1}
For $[(X,m,n)] \in \E$, we set 
\begin{equation}
\delta((X,m,n))=d(C^{CW}_*(X,\p))/2-m/2-2n
\end{equation}
The invariant $\delta$ does not depend on the choice of representative of the class $[(X,m,n)]$.    
\end{defn}   

\begin{prop}\label{prop:dcalc} 
For $X_1,X_2 \in \E$, $\delta(X_1 \otimes X_2) = \delta(X_1) +\delta(X_2)$.
\end{prop}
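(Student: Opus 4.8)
The plan is to reduce the statement to a purely algebraic computation of the $S^1$-Borel homology of a tensor product, using the chain-level definition of $d$ from Lemma~\ref{lem:drules} together with Fact~\ref{fct:dtows}. Writing $X_i=(Z_i,m_i,n_i)$ with $Z_i$ a suspensionlike chain complex of type SWF at level $t_i$, and noting that by Definition~\ref{def:manolescudefns1} and the additivity of the shift it suffices to prove $d(Z_1\otimes_\f Z_2)=d(Z_1)+d(Z_2)-(t_1+t_2)+\big(\text{correction}\big)$ — more precisely, it suffices to prove that if $d(Z_1)=2D_1+t_1$ and $d(Z_2)=2D_2+t_2$ then $d(Z_1\otimes_\f Z_2)=2(D_1+D_2)+t_1+t_2$. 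Indeed $Z_1\otimes_\f Z_2$ is of type SWF at level $t_1+t_2$ by the remark following Lemma~\ref{lem:cord}, so this is the right normalization, and feeding it through Definition~\ref{def:manolescudefns1} (with the $\mathbb{H}$-suspensions contributing $-2n_i$ and the $\tilde{\mathbb{R}}$-suspensions contributing $-m_i/2$) yields $\delta(X_1\otimes X_2)=\delta(X_1)+\delta(X_2)$.

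For the lower bound $d(Z_1\otimes_\f Z_2)\geq 2(D_1+D_2)+t_1+t_2$, I would build an explicit $d$-tower in $Z_1\otimes_\f Z_2$ of the required length out of the towers $\{x_i\}$ in $Z_1$ and $\{y_i\}$ in $Z_2$ guaranteed by Lemma~\ref{lem:drules}, in the same style as the construction of (\ref{eq:newbtower1})--(\ref{eq:newbtower2}) in the proof of Theorem~\ref{thm:hardineq}: first use $x_{t_1+1}\otimes f_{t_2},\, x_{t_1+3}\otimes f_{t_2},\dots$ climbing the $Z_1$-tower smashed against the fixed point $f_{t_2}$, then, at the top, switch to $(\text{top of }Z_1\text{-tower})\otimes y_{t_2+1},\,(\text{top})\otimes y_{t_2+3},\dots$, using the differential rule (\ref{eq:gtensoract}), namely $s(a\otimes b)=sa\otimes b+j^2a\otimes sb$, to check that each consecutive pair satisfies $\partial(\,\cdot\,)=s(1+j^2)(\text{previous})$ as in (\ref{eq:dtow}); the correction terms from the Leibniz rule vanish against the fixed-point part exactly as in Theorem~\ref{thm:hardineq}. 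Equivalently and perhaps more cleanly, one can invoke Fact~\ref{fct:dtows}: $Z_i\succeq T_{D_i}(t_i)$, hence by Lemma~\ref{lem:ordadd} $Z_1\otimes_\f Z_2\succeq T_{D_1}(t_1)\otimes_\f T_{D_2}(t_2)$, and then a direct computation of $H^{S^1}_*$ of the model complex $T_{D_1}(t_1)\otimes_\f T_{D_2}(t_2)$ shows its $d$-invariant is $2(D_1+D_2)+t_1+t_2$, giving the lower bound by the $S^1$-analogue of Lemma~\ref{lem:abcor1}.

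For the upper bound I would pass to Spanier--Whitehead duality: $\delta(Z^*)=-\delta(Z)$ (the same argument as \cite{ManolescuPin}[Proposition 2.13], noting $d$ of the dual complex negates because $u$-divisibility towers in homology dualize to $u$-multiplication towers in cohomology), and $(Z_1\otimes_\f Z_2)^*\simeq Z_1^*\otimes_\f Z_2^*$. Applying the lower bound to $Z_1^*,Z_2^*$ and dualizing back yields $d(Z_1\otimes_\f Z_2)\leq 2(D_1+D_2)+t_1+t_2$, completing the proof. The main obstacle is the bookkeeping in the explicit tower construction (or, in the cleaner route, the direct $H^{S^1}_*$ computation of the tensor product of two model complexes $T_{D}(t)$): one must verify that no shorter obstruction appears, i.e.\ that the $u$-divisible tower in $H^{S^1}_*(Z_1\otimes_\f Z_2)$ does not start lower than $2(D_1+D_2)+t_1+t_2$, which is exactly where the $S^1$-version of the localization/Fact~\ref{fct:highgrads} argument and the freeness of the non-fixed part over $\G$ are needed.
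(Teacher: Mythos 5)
Your proposal is correct and takes essentially the same approach as the paper: a chain-level lower bound for $\delta$ of a tensor product, then duality ($\delta(Z^*)=-\delta(Z)$, $(Z_1\otimes_\f Z_2)^*\simeq Z_1^*\otimes_\f Z_2^*$) for the upper bound. The paper's lower bound is a one-line appeal to ``entirely analogous to Theorem~\ref{thm:easyineq}''; the cleanest reading is the $S^1$-level mirror of the $\preceq$-argument: Lemma~\ref{lem:drules} realizes a $C^{CW}_*(S^1)$-linear chain local map from $C^{CW}_*(\Sigma^{D_2\mathbb{C}}(\tilde{\mathbb{R}}^{t_2})^+,\p)$ into $Z_2$ (with $\mathbb{C}$ the standard $S^1$-representation, so this is a sphere), tensoring with $Z_1$ then gives a pure $(2D_2+t_2)$-shift of $Z_1$, and $S^1$-monotonicity of $d$ under local maps finishes with no further computation. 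Your $\G$-level alternatives both work, with two caveats: in the explicit tower the class carried into the $y$-tower must be $s(1+j^2)x_{\mathrm{top}}$, not $x_{\mathrm{top}}$ itself (cf.\ the use of $s(1+j)^3x_{b(Z_1)-2}$ in Theorem~\ref{thm:hardineq}), which you gesture at but should state; and the $T_{D_1}(t_1)\otimes T_{D_2}(t_2)$ route is not really ``cleaner,'' since $T_{D_2}(t_2)$ is not a sphere, $Z_1\otimes T_{D_2}(t_2)$ is not a shift of $Z_1$, one still has to run a tower-type computation on the model complex, and in particular one cannot shortcut via Theorem~\ref{thm:premain}, whose $\delta$-statement (\ref{eq:dcalc}) cites the present proposition.
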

\begin{proof}
Entirely analogous to the proof of Theorem \ref{thm:easyineq}, we obtain \[\delta(X_1 \otimes X_2) \geq \delta(X_1) +\delta(X_2).\]  Additionally, $\delta(X)=-\delta(X^*)$, as in (\ref{eq:orierev}), where $X^*$ denotes the dual of $X$.  We then obtain:
\[\delta(X_1 \otimes X_2) \leq \delta(X_1) + \delta(X_2),\]
completing the proof.
\end{proof}

We next relate the $\mathrm{Pin}(2)$-invariants to $d$. 

\begin{prop}\label{prop:adc} 
Let $Z$ be a suspensionlike $\G$-chain complex of type SWF.  Then $\alpha(Z) \geq \delta(Z)$.
\end{prop}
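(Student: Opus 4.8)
The plan is to work entirely at the chain level and prove $a(Z)\geq d(Z)$, which is the same assertion as $\alpha(Z)\geq\delta(Z)$ since $\alpha(Z)=a(Z)/2$ and $\delta(Z)=d(Z)/2$. The idea is to convert a \emph{$d$-tower} for $Z$, of the form produced by Lemma \ref{lem:drules}, into an \emph{$a$-tower} for $Z$, of the form required by Lemma \ref{lem:manolowerbounds}. A $d$-tower has one free generator every two degrees, while an $a$-tower has three every four degrees, so each block of four consecutive degrees of the $a$-tower should be built out of two consecutive generators of the $d$-tower. Since an $a$-tower of length $A$ certifies $a(Z)\geq 4A+t$, producing one of length $\lceil D/2\rceil$ out of a $d$-tower of length $D$ will give $a(Z)\geq 4\lceil D/2\rceil+t\geq 2D+t=d(Z)$.

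Concretely: assume $Z$ is at level $t$ with fundamental class $f_t$, and write $d(Z)=2D+t$. By Lemma \ref{lem:drules} there are elements $y_k\in Z$ with $\mathrm{deg}\,y_k=t+2k-1$ for $1\leq k\leq D$, with $\partial y_1=f_t$ and $\partial y_k=s(1+j^2)y_{k-1}$ for $k\geq 2$. Set $A=\lceil D/2\rceil$ and define, in the degrees $t+4k+1,\ t+4k+3,\ t+4k+4$ that appear in an $a$-tower of length $A$: $x'_{t+1}=y_1$; $x'_{t+4k+1}=j^3 y_{2k+1}$ for $k\geq 1$; $x'_{t+4k+3}=(1+j)y_{2k+2}$; and $x'_{t+4k+4}=s\,y_{2k+2}$. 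These involve only $y_1,\dots,y_{2A-1}$, and $2A-1\leq D$, so all of them are defined.

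It then remains to verify that the $x'_i$ satisfy the relations (\ref{eq:rawlowerbounds}); by Lemma \ref{lem:manolowerbounds} this yields $a(Z)\geq 4A+t\geq 2D+t=d(Z)$ and finishes the proof. The verification is pure bookkeeping in $\G$, using $\partial j=0$, $\partial s=1+j^2$, $s^2=0$, $js=sj^3$, $(1+j)^2=1+j^2$, and their consequences $(1+j)s(1+j^2)=s(1+j+j^2+j^3)$ and $(1+j+j^2+j^3)j^a=1+j+j^2+j^3$. For instance $\partial x'_{t+1}=\partial y_1=f_t$; for degrees $\equiv t+3\bmod 4$, $\partial\big((1+j)y_{2k+2}\big)=s(1+j+j^2+j^3)y_{2k+1}=s(1+j+j^2+j^3)x'_{t+4k+1}$; for degrees $\equiv t\bmod 4$, $\partial\big(s\,y_{2k+2}\big)=(1+j^2)y_{2k+2}=(1+j)x'_{t+4k+3}$; and for degrees $\equiv t+1\bmod 4$ with $i>t+1$, $\partial\big(j^3 y_{2k+1}\big)=j^3 s(1+j^2)y_{2k}=s(j+j^3)y_{2k}=(1+j)\big(s\,y_{2k}\big)+s\,(1+j)y_{2k}=(1+j)x'_{t+4k}+s\,x'_{t+4k-1}$.

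The main obstacle is precisely getting these $\G$-coefficients right: which of $1,\ j^3,\ 1+j,\ s$ to attach to each $y_k$ is forced by the Leibniz rule, but checking the identities is delicate because of the noncommutativity $js=sj^3$, and one must also confirm that the block pattern closes off correctly at the top generator $x'_{t+4A-3}=j^3 y_{2A-1}$ (available exactly because $A=\lceil D/2\rceil$, with the boundary cases $D\leq 1$ being trivial). As an alternative packaging, one can first invoke Fact \ref{fct:dtows} to reduce to the model complex $Z=T_D(t)$, run the identical construction inside that explicit complex to obtain $\alpha(T_D(t))\geq D+t/2$, and then conclude with Lemma \ref{lem:abcor1} using $Z\succeq T_D(t)$; the computation is the same either way.
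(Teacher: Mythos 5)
Your proof is correct, but it takes a genuinely different route from the paper's. The paper's argument stays entirely inside the Borel complexes: it observes that the $\mathrm{Pin}(2)$-Borel complex $C^{CW}_*(EG)\otimes_\G Z$ is a quotient of the $S^1$-Borel complex $C^{CW}_*(EG)\otimes_{C^{CW}_*(S^1)}Z$, so any cycle that is a boundary in the $S^1$-complex maps to a boundary in the $G$-complex. Identifying (via (\ref{eq:s1gens}) and the analogue of Lemma \ref{lem:manolowerbounds} for $S^1$) which classes $e_{4i}\otimes f_t$ and $j((1+j)e_{4i+2}+se_{4i+1})\otimes f_t$ must be boundaries once $d(Z)\geq 4A+t-2$, the paper concludes $e_{4A-4}\otimes f_t$ is a $G$-boundary and hence $a(Z)\geq 4A+t$; no explicit $\G$-arithmetic is needed. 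Your proof instead works inside $Z$ itself: you convert a $d$-tower of Lemma \ref{lem:drules} into an $a$-tower of Lemma \ref{lem:manolowerbounds} by explicit formulas, carrying out the Leibniz-rule bookkeeping in $\G$. I have checked the identities you rely on --- $(1+j)s(1+j^2)=s(1+j+j^2+j^3)$, $\partial(sy_{2k+2})=(1+j^2)y_{2k+2}$ via $s^2=0$, $(1+j+j^2+j^3)j^3=1+j+j^2+j^3$, and $j^3s(1+j^2)=s(j+j^3)=(1+j)s+s(1+j)$ --- and they all hold, as does the bound $2A-1\leq D$ for $A=\lceil D/2\rceil$ and the closing estimate $4\lceil D/2\rceil\geq 2D$. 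Your argument is more computational, but it is also more explicit (it actually produces the $a$-tower), and your alternative packaging --- use Fact \ref{fct:dtows} to reduce to $T_D(t)$, compute $\alpha(T_D(t))$ directly, and invoke Lemma \ref{lem:abcor1} --- is arguably the cleanest way to present this construction, since $T_D(t)$ is an explicit small complex. Both approaches prove $a(Z)\geq d(Z)$; the paper's buys brevity, yours buys a concrete certificate.
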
 
\begin{proof}
We will use the description of $\alpha$ from Lemma \ref{lem:manolowerbounds}.  Recall that $EG$ is the total space of the universal $S^1$-bundle, by forgetting the action of $j \in G$.  Viewed thus, the chains 
\begin{equation}\label{eq:s1gens}
e_0,j((1+j)e_2+se_1),e_4,j((1+j)e_6+se_5),e_8,j((1+j)e_{10}+se_9),e_{12}, \dots
\end{equation}
descend to generators of homology in $BS^1=EG \times_{S^1} \{\p\}$. 

Say $Z$ is at level $t$ and let $f_t$ be the fundamental class of $Z^{S^1}$.  Using (\ref{eq:s1gens}) and repeating the proof of Lemma \ref{lem:manolowerbounds}, $d(Z)$ is the degree of the minimal element of the form
\[ e_{4i} \otimes f_t \; \mathrm{or}\; j((1+j)e_{4i+2}+se_{4i+1})\otimes f_t\]
that is not a boundary in $C^{S^1}_*(Z)=C^{CW}_*(EG)\otimes_{C^{CW}_*(S^1)} Z$.  

That is, $d(Z) \geq 4D+2+t$ if and only if $e_{4D}\otimes f_t$ is a boundary. Further, $d(Z)\geq 4D+t$ if and only if $j((1+j)e_{4D-2}+se_{4D-3})\otimes f_t$ is a boundary.  In particular, if, for some $A\geq 0$, $d(Z)\geq 4A+t-2$, we have $e_{4A-4} \otimes_{C^{CW}_*(S^1)} f_t$ is a boundary.  

However, if \[e_{4A-4} \otimes f_t \in C^{CW}_*(EG) \otimes_{C^{CW}_*(S^1)} Z\] is a boundary, then $e_{4A-4} \otimes f_t \in C^{CW}_*(EG) \otimes_{\G} Z$ is also a boundary.  Thus $a(Z) \geq 4A+t$, and so $a(Z)\geq d(Z)$.  Thus, using Definition \ref{def:chainmanoinvars}, the Proposition follows.  
\end{proof}

\section{Manolescu Invariants of unreduced suspensions}\label{sec:smash}
\subsection{Unreduced Suspensions}\label{subsec:unred}
We draw from \cite{ManolescuPin} the following calculation, which we will use in our application to Seifert fiber spaces.  

\begin{defn}\label{def:unredsusp}
Let $G$ act freely on a finite $G$-CW complex $X$ (not a space of type SWF).  We call
\begin{equation}
\tilde{\Sigma}X =([0,1] \times X) / ((0,x) \sim (0,x') \mathrm{ \;and\; } (1,x) \sim (1,x') \;\mathrm{ for\; all } \; x,x' \in X)
\end{equation}
the unreduced suspension of $X$.  The space $\tilde{\Sigma}X$ obtains a $G$-action by letting $G$ act trivially on the $[0,1]$ factor.  We make $\tilde{\Sigma}X$ into a pointed space by setting $(0,x)$ as the basepoint.  Then $\tilde{\Sigma}X$ is a space of type SWF, since $(\tilde{\Sigma}X)^{S^1}=S^0$, and away from $(\tilde{\Sigma}X)^{S^1}$, $G$ acts freely.  
\end{defn}

For $X$ a free $G$-space, the cone of the inclusion map $(\tilde{\Sigma}X)^{S^1} \rightarrow \tilde{\Sigma}X$ is $\Sigma^{\mathbb{R}}X_+$, where $X_+$ is $X$ with a disjoint basepoint added.  This gives the exact sequence, by taking Borel homology,
\begin{equation}\label{eq:unredseq}\begin{tikzcd}
\dots\arrow{r} &\tilde{H}^G_{*+1}(\Sigma^{\mathbb{R}}X_+) \arrow{r} &\tilde{H}^G_*(S^0) \arrow{r} &\tilde{H}^{G}_*(\tilde{\Sigma}X) \arrow{r} & \dots\end{tikzcd}
\end{equation} 
The term $\tilde{H}_{*+1}^G(\Sigma^\mathbb{R}X_+)$ is isomorphic to $\tilde{H}^G_*(X_+)$ because of suspension-invariance of Borel homology with $\f$-coefficients, from (\ref{eq:suspensioninvar}).  Furthermore, $\tilde{H}^G_*(X_+) \simeq H_*(X/G)$ since $G$ acts freely on $X$.  The exact sequence (\ref{eq:unredseq}) becomes (as an exact sequence of $\f[q,v]/(q^3)$-modules):
\begin{equation}\label{eq:unredseq2}\begin{tikzcd}
\dots \arrow{r} &H_*(X/G) \arrow{r}{\kappa_*} &H_*(BG) \arrow{r} & \tilde{H}^G_*(\tilde{\Sigma}X) \arrow{r} & \dots
\end{tikzcd}\end{equation}
Here $\kappa_*$ is induced from $\kappa: X/G \rightarrow BG$, the classifying space map.  Let $\kappa^d_*$ denote the restriction of $\kappa_*$ to degree $d$.  From the exactness of (\ref{eq:unredseq2}), we have:
\begin{align}\label{eq:asusp}
a(\tilde{\Sigma}X)=&\mathrm{min } \{d \equiv 0 \;\mathrm{mod }\; 4\mid\;\kappa^*_d =0\},
\\ \label{eq:bsusp}
b(\tilde{\Sigma}X)=&\mathrm{min } \{d \equiv 1 \;\mathrm{mod }\; 4\mid\;\kappa^*_d =0\}-1,
\\ \label{eq:csusp}
c(\tilde{\Sigma}X)=&\mathrm{min } \{d \equiv 2\; \mathrm{mod }\; 4\mid\;\kappa^*_d =0\}-2.
\end{align}

\subsection{Smash Products}\label{subsec:projsmash}
In this section we compute the Manolescu invariants for smash products of the form
\begin{equation}\label{eq:targsmash}
\bigwedge^n_{i=1}\tilde{\Sigma}(S^{2\tilde{\delta}_i-1}\amalg S^{2\tilde{\delta}_i-1}).
\end{equation}
This calculation will enable us to find the Manolescu invariants for connected sums of certain Seifert spaces in Section \ref{sec:gauge}.

We will find it convenient to write: 
\[E(x)=2\left\lfloor \frac{x+1}{2} \right\rfloor.\]
\begin{thm}\label{thm:premain}
Fix $\tilde{\delta}_i \in \mathbb{Z}_{\geq 1}$, and $\tilde{\delta}_1 \leq \dots \leq \tilde{\delta}_n$.  Let $X_i=S^{2\tilde{\delta}_i-1}\amalg S^{2\tilde{\delta}_i-1}$ for $i=1,\dots,n$, where $X_i$ has a $G$-action given by $S^1$ acting by complex multiplication on each factor, and $j$ acting by interchanging the sphere factors.  Then:
\begin{align}\label{eq:dcalc}
\delta(\bigwedge^n_{i=1} \tilde{\Sigma}X_i)&=\sum^n_{i=1} \tilde{\delta}_i, \\ \label{eq:acalc}
\alpha(\bigwedge^n_{i=1} \tilde{\Sigma}X_i)&=E(\sum^n_{i=1} \tilde{\delta}_i),\\  \label{eq:bcalc}
\beta(\bigwedge^n_{i=1} \tilde{\Sigma}X_i)&=E(\sum^{n-1}_{i=1} \tilde{\delta}_i),\\ \label{eq:ccalc}
\gamma(\bigwedge^n_{i=1} \tilde{\Sigma}X_i)&=E(\sum^{n-2}_{i=1}  \tilde{\delta}_i), 
\end{align}
\end{thm}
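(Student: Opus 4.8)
The plan is to identify the smash product $W_n:=\bigwedge_{i=1}^n\tilde\Sigma X_i$, up to $G$-equivalence, with the unreduced suspension of a single free $G$-space, and then extract all four invariants from the exact sequence (\ref{eq:unredseq2}) and its $S^1$-analogue via (\ref{eq:asusp})--(\ref{eq:csusp}). As a preliminary remark, by Fact \ref{fct:interpdtow} and (\ref{eq:smashform}) we have $C^{CW}_*(W_n,\p)\simeq T_{\tilde\delta_1}(0)\otimes_\f\cdots\otimes_\f T_{\tilde\delta_n}(0)$, which is the algebraic incarnation of what follows.

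The first step is the $G$-homotopy equivalence
\[\tilde\Sigma A\wedge\tilde\Sigma B\;\simeq_G\;\tilde\Sigma(A*B),\]
valid for any free $G$-spaces $A,B$, where $A*B$ carries the (still free) diagonal action. This is a routine comparison of $G$-CW structures: writing $\tilde\Sigma A=S^0*A$, the reduced smash $(S^0*A)\wedge(S^0*B)$ is obtained from $S^0*A*S^0*B$ by collapsing a contractible subcomplex, and commuting the two $S^0$-join coordinates into $S^0*S^0=S^1$ identifies the result with $\tilde\Sigma(A*B)$ (one checks this is level-preserving, both sides having $S^1$-fixed set $S^0$). Iterating over $i=1,\dots,n$ gives $W_n\simeq_G\tilde\Sigma Y_n$ with $Y_n:=X_1*\cdots*X_n$, a free $G$-space whose reduced homology is $\f^{2^n}$ concentrated in degree $2(\sum_i\tilde\delta_i)-1$. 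In particular $W_n$ is a space of type SWF at level $0$, and (\ref{eq:unredseq2}) computes $\tilde H^G_*(W_n)$ from the classifying map $\kappa_n\colon Y_n/G\to BG$.

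The heart of the argument is then the computation, by induction on $n$, of $H_*(Y_n/G)$ together with $\kappa_{n,*}$ as a map of $\f[q,v]/(q^3)$-modules. Using the homotopy-pushout description of the join, $Y_n/G$ is the double mapping cylinder of $Y_{n-1}/G\leftarrow(Y_{n-1}\times X_n)/G\to X_n/G$: the left map is the $X_n$-bundle associated to the principal $G$-bundle $Y_{n-1}\to Y_{n-1}/G$, and the right map is the $Y_{n-1}$-bundle associated to $X_n\to X_n/G=\mathbb{C}P^{\tilde\delta_n-1}$, the latter being classified by $\mathbb{C}P^{\tilde\delta_n-1}\hookrightarrow\mathbb{C}P^\infty=BS^1\xrightarrow{f}BG$. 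Feeding this into a Mayer--Vietoris sequence and using Fact \ref{fct:bs1bg} to control the restriction of $\kappa_n$ to the $\mathbb{C}P^{\tilde\delta_n-1}$-stratum, one shows that the image of $q\in H^1(BG)$ vanishes in $H^1(Y_n/G)$ for $n\le 1$ but not for $n\ge 2$, and more precisely that $H^*(Y_n/G)$ carries a $v$-tower persisting through degree $2\sum_{i\le n}\tilde\delta_i-2$, a $q$-extended tower persisting through degree $2\sum_{i\le n-1}\tilde\delta_i-2$ (shorter by $2\tilde\delta_n$), and a $q^2$-extended tower persisting through degree $2\sum_{i\le n-2}\tilde\delta_i-2$ (shorter by a further $2\tilde\delta_{n-1}$) --- the mechanism being that joining on the largest factor $X_n$ lengthens only the bare $v$-tower, while joining on $X_{n-1}$ lengthens the $v$- and $q$-towers but not the $q^2$-tower. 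The base case $n=1$ is immediate since $Y_1/G=\mathbb{C}P^{\tilde\delta_1-1}$ with $\kappa_1^*(v^k)=U^{2k}$ and $\kappa_1^*(q\cdot{-})=0$, which already gives $\alpha(\tilde\Sigma X_1)=E(\tilde\delta_1)$ and $\beta(\tilde\Sigma X_1)=\gamma(\tilde\Sigma X_1)=0$. Plugging the three tower-lengths into (\ref{eq:asusp})--(\ref{eq:csusp}) produces (\ref{eq:acalc})--(\ref{eq:ccalc}); the rounding in $E(x)=2\lfloor(x+1)/2\rfloor$ appears precisely because $v$ has degree $4$, so the thresholds in degrees $\equiv 0,1,2\bmod 4$ round the tower-lengths up to even integers. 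Finally, (\ref{eq:dcalc}) is comparatively cheap: from Definition \ref{def:dtow} (and the fact that $T_{\tilde\delta_i}(0)/T_{\tilde\delta_i}(0)^{S^1}$ has differential landing in $s(1+j^2)(-)$, as in Fact \ref{fct:dtowmanos}) one gets $d(\tilde\Sigma X_i)=2\tilde\delta_i$, hence $\delta(\tilde\Sigma X_i)=\tilde\delta_i$, and then (\ref{eq:dcalc}) follows from the multiplicativity of $\delta$ (Proposition \ref{prop:dcalc}); alternatively one runs the $S^1$-version of (\ref{eq:unredseq2}) for $\tilde\Sigma Y_n$, noting $Y_n/S^1$ has its cohomology $U$-tower through degree $2\sum_i\tilde\delta_i-2$.

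I expect the main obstacle to be the inductive computation of $H^*(Y_n/G)$ as a module over $\f[q,v]/(q^3)$ --- i.e.\ tracking precisely, through the iterated Mayer--Vietoris sequences, how the $v$-, $q$-, and $q^2$-towers grow as one joins on each successive $X_i$. The crude inequalities of Theorems \ref{thm:easyineq} and \ref{thm:hardineq} are too lossy to pin down the sharp values (already $\alpha(Z_1\otimes Z_2)\le\alpha(Z_1)+\alpha(Z_2)$ overshoots), so this bookkeeping cannot be shortcut. A secondary technical point is verifying that $\tilde\Sigma A\wedge\tilde\Sigma B\simeq_G\tilde\Sigma(A*B)$ is genuinely $G$-equivariant (or at least a chain local equivalence in both directions), so that it preserves $\alpha,\beta,\gamma,\delta$.
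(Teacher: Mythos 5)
Your opening moves line up with the paper: the identity $\bigwedge_i\tilde\Sigma X_i\simeq_G\tilde\Sigma(\star_i X_i)$ is exactly (\ref{eq:join}), and your computation of (\ref{eq:dcalc}) via Fact \ref{fct:dtows} and Proposition \ref{prop:dcalc} is the paper's argument verbatim. The $n=1$ base case and the derivation of (\ref{eq:acalc}) from a linear sphere $S^{2\sum\tilde\delta_i-1}\hookrightarrow\star_iX_i$ are also essentially what the paper does (it feeds this through the second row of (\ref{eq:projectivediag}) and the dimension bound $\dim X=2\sum\tilde\delta_i-1$ rather than a Mayer--Vietoris induction, but the content is the same).

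The gap is precisely where you flag it, and it is not a technicality. For (\ref{eq:bcalc}) and (\ref{eq:ccalc}) you propose to compute $H_*(Y_n/G)$ together with $\kappa_{n,*}$ as an $\f[q,v]/(q^3)$-module by induction on $n$ via the double-mapping-cylinder Mayer--Vietoris, and then to read off the lengths of the $v$-, $q$-, and $q^2$-towers. But ``one shows that $H^*(Y_n/G)$ carries \dots'' is exactly the content that needs an argument, and Mayer--Vietoris alone will not deliver it: that sequence computes the underlying $\f$-vector space, while the quantities entering (\ref{eq:bsusp})--(\ref{eq:csusp}) depend on resolving the module extension problem --- in particular, on deciding whether specific classes in the image of $\kappa_{n,*}$ are in the image of $q$ and $q^2$ (i.e.\ hit $q^{-1}v^{-k}$ and $q^{-2}v^{-k}$). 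The paper bypasses this by a different mechanism: it uses the Gysin sequence (\ref{eq:gysin}) / Fact \ref{fct:gysin1plusj}, which converts the question ``is this class $q$-divisible?'' into the $S^1$-level question ``is this class annihilated by $(1+j)$?'', and then answers the latter \emph{geometrically} by constructing explicit homotopies that slide a linear sphere $\star_{i\le n-1}S_{i,0}$ across the faces $F_{(k_1,\dots,k_n)}$ of the join to its $j$-translate (and, for $\gamma$, a homotopy between such homotopies, using $\pi_1(\mathrm{Gr})=1$). The matching upper bounds for $\beta$ and $\gamma$ are then supplied by the easy inequalities of Theorem \ref{thm:easyineq}, not by knowing the full module structure. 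So, while your predicted tower pattern is consistent with the answer, your proposal currently asserts rather than proves the crucial $q$- and $q^2$-divisibility statements, and the route you sketch (inductive MV module computation) is both a genuinely different and a harder road than the one the paper takes; to close the gap you would either need to carry out that module computation in full, or, more economically, import the Gysin-plus-homotopy argument.
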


We will use Gysin sequences in the proof of Theorem \ref{thm:premain}; for convenience we record the necessary fact here.  As in \cite{tomDieck}[\S III.2] there exists a Gysin sequence in homology for a $G$-space $X$: 
\begin{equation}\label{eq:gysin}\begin{tikzcd}
H^G_*(X) \arrow{r}{(1+j)\cdot -}& H^{S^1}_*(X) \arrow{r}{\pi_*} &H^G_*(X) \arrow{r}{q\cap -}& H^G_{*-1}(X) \arrow{r} & \dots
\end{tikzcd}
\end{equation}
Here, the map $(1+j)\cdot -$ is the map sending a cycle $[x] \in H^G_*(X)$, with chain representative (not necessarily a cycle) $x \in H^{S^1}_*(X)$, to $[(1+j)x] \in H^{S^1}_*(X)$.  The map $\pi_*$ comes from the quotient $\pi:EG \times_{S^1} X \rightarrow EG \times_G X$.  From (\ref{eq:gysin}), we obtain immediately: 

\begin{fact}\label{fct:gysin1plusj}
Let $[x]\in H^G_*(X)$ so that $(1+j)\cdot[x]=0$.  Then $[x]\in \mathrm{Im}\, q$.
\end{fact}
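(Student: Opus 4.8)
The plan is to deduce the statement directly from the exactness of the Gysin sequence (\ref{eq:gysin}). First I would pin down the relevant segment of that sequence: since cap product with the degree-one class $q$ lowers homological degree by one, the copy of $H^G_*(X)$ that serves as the source of the transfer map $(1+j)\cdot -$ is the target of $q\cap -\colon H^G_{*+1}(X)\to H^G_*(X)$. Thus the portion of (\ref{eq:gysin}) I care about is
\[
\cdots \xrightarrow{\;q\cap -\;} H^G_*(X) \xrightarrow{\;(1+j)\cdot -\;} H^{S^1}_*(X) \xrightarrow{\;\pi_*\;} \cdots.
\]

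Next I would invoke exactness at the term $H^G_*(X)$ displayed in the middle, which gives $\ker\bigl((1+j)\cdot -\bigr)=\mathrm{Im}\,(q\cap -)$. Given $[x]\in H^G_*(X)$ with $(1+j)\cdot[x]=0$, we then have $[x]\in\ker\bigl((1+j)\cdot -\bigr)=\mathrm{Im}\,(q\cap -)$, i.e. $[x]\in\mathrm{Im}\,q$, which is exactly the assertion.

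There is no real obstacle here: the content of the Fact is entirely carried by the exactness of (\ref{eq:gysin}), which is quoted from \cite{tomDieck}. The only point worth checking carefully is bookkeeping — that one is reading the homology version of the sequence with the degree conventions as displayed immediately before the Fact, so that ``$\mathrm{Im}\,q$'' unambiguously denotes the image of the cap-product map landing in $H^G_*(X)$.
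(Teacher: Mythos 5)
Your proof is correct and is exactly the paper's (implicit) argument: the paper simply states that the Fact follows immediately from exactness of the Gysin sequence (\ref{eq:gysin}), and your identification of the relevant segment $\cdots \xrightarrow{q\cap -} H^G_*(X) \xrightarrow{(1+j)\cdot -} H^{S^1}_*(X) \to \cdots$ together with exactness at $H^G_*(X)$ is precisely that reasoning. The degree bookkeeping you flag is the right thing to check, and you have it right.
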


\emph{Proof of Theorem \ref{thm:premain}.}
We will use the description in Section \ref{subsec:unred} to perform the required calculation.  Let $X=\star^n_{i=1} X_i$, where $\star^n_{i=1}$ denotes the join.  We note  
\begin{equation}\label{eq:join}
\bigwedge^n_{i=1} \tilde{\Sigma} X_i=\tilde{\Sigma} (\star^n_{i=1} X_i).
\end{equation}
Further, for each $i$, label one of the disjoint spheres of $X_i$ by $S_{i,0}$ and the other by $S_{i,1}$.  See Figures \ref{fig:theidean1}, \ref{fig:theidean2}, and \ref{fig:theidean3} for visualization of $X$.  As in the figures, we consider $X$ as if it were a polyhedron, with ``points" the $X_i$ and ``faces" (edges, etc.) the joins of subsets of $\{X_i\}$.
We write
\[F_{(k_1,\dots,k_n)}=\star_{i=1}^n S_{i,k_i},\] 
where $k_i \in \{ 0,1\}$ for all $i \in \{ 1,\dots,n\}$, for the ``face" spanned by $S_{i,k_i}$ (see Figure \ref{fig:theidean3}).  

\begin{figure}
  \scalebox{.1}{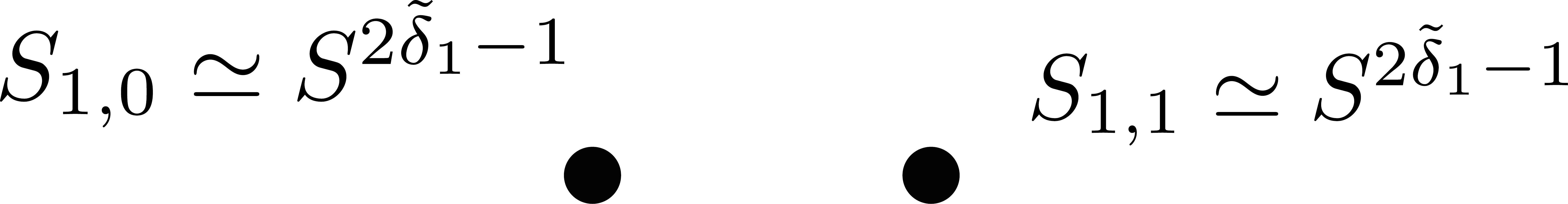}
  \caption{A pictorial representation of $X$, when $n=1$.  }
  \label{fig:theidean1}
\end{figure} 
\begin{figure}
  \scalebox{.1}{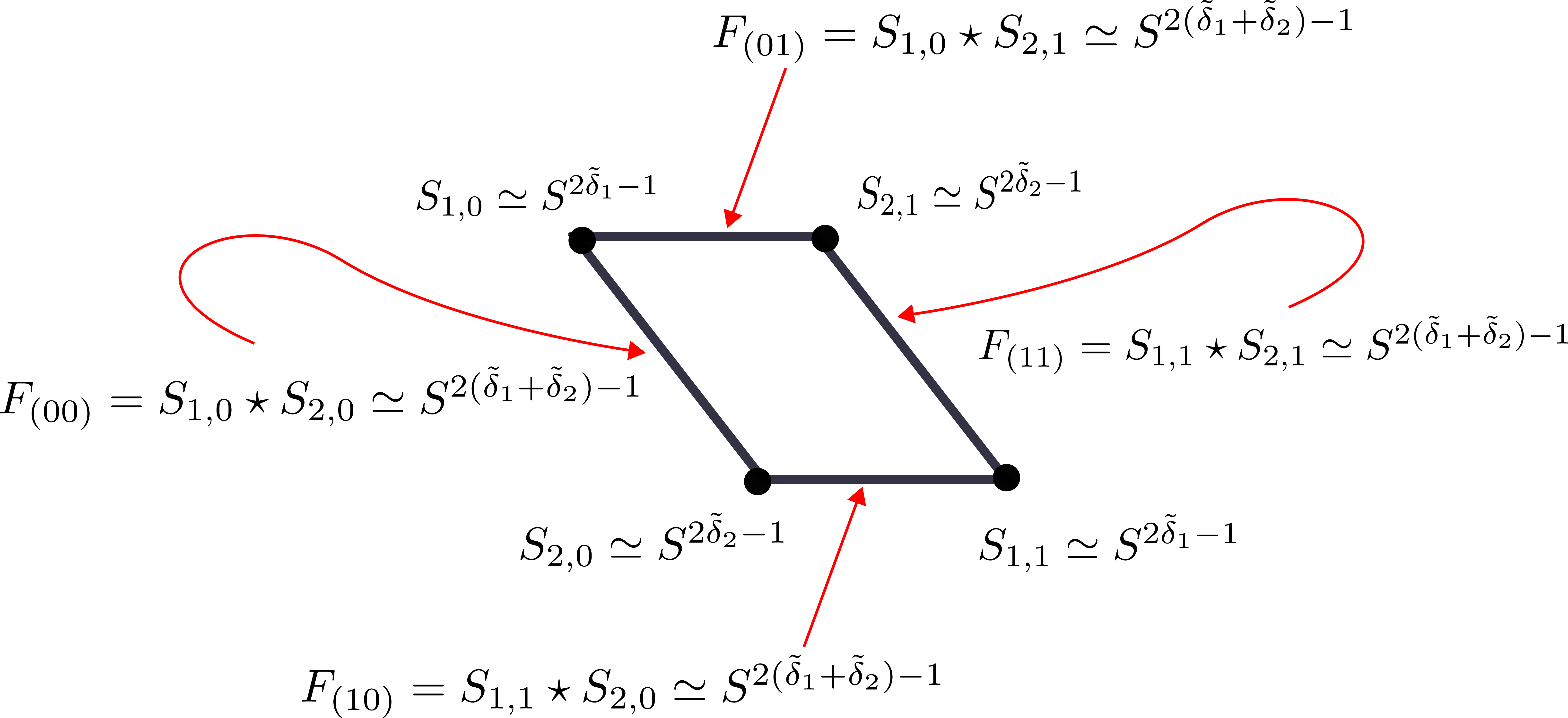}
  \caption{An image of $X$ for $n=2$.}
  \label{fig:theidean2}
\end{figure} 
\begin{figure}
  \scalebox{.1}{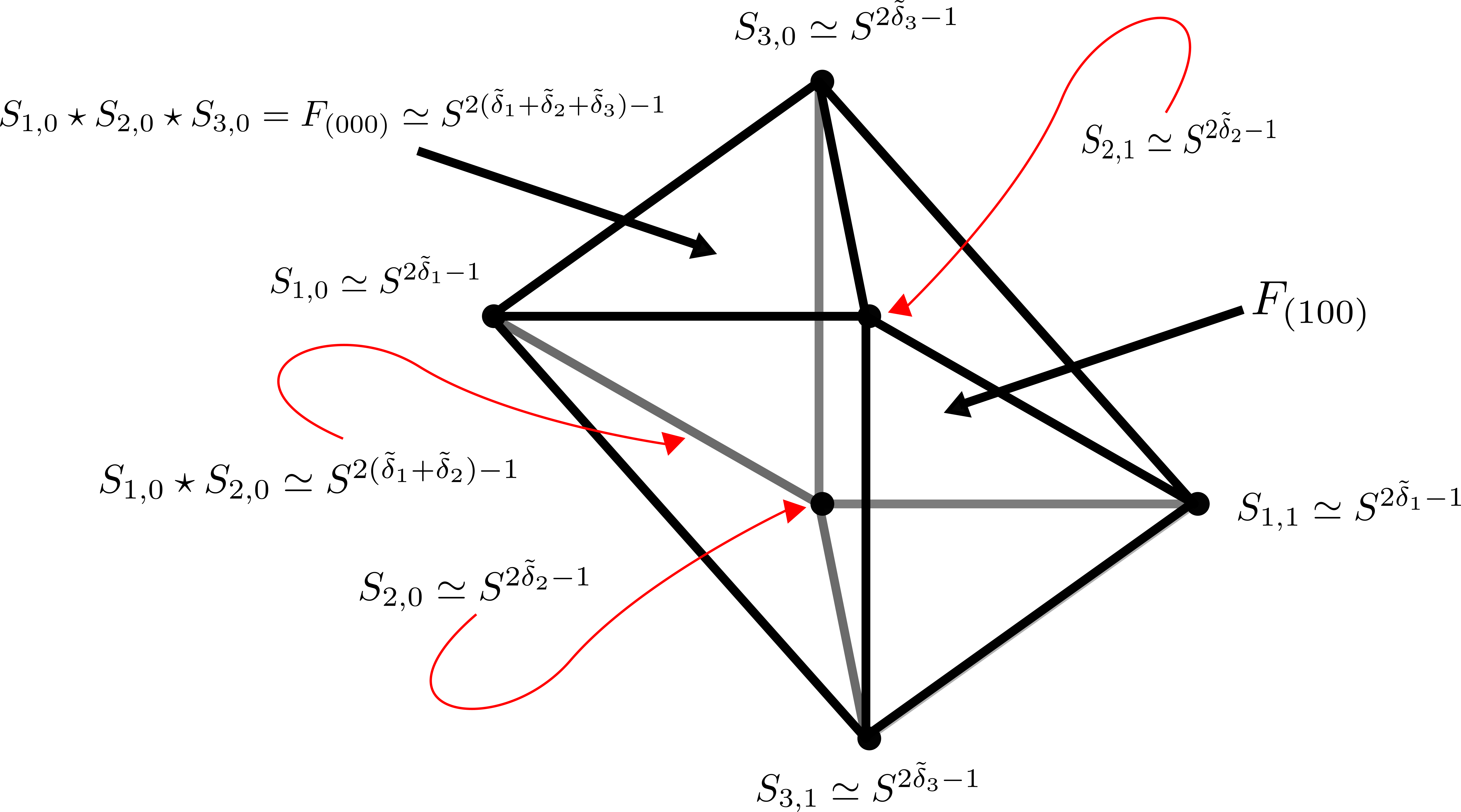}
  \caption{The join $X$ for $n=3$.  Here, we only notate a few of the faces.  }
  \label{fig:theidean3}
\end{figure} 

By Fact \ref{fct:dtows}, $\delta(\tilde{\Sigma}X_i)=\tilde{\delta}_i$.  Proposition \ref{prop:dcalc} then implies (\ref{eq:dcalc}). \medskip

\textbf{Proof of (\ref{eq:acalc}).}  We observe that $S^{2\sum_{i=1}^n \tilde{\delta}_i -1}\simeq \star_{i=1}^n S_{i,0}\subseteq X$, as $S^1$-spaces, where the action on both sides is given by complex multiplication.  We then have a map:
\[S^{2\sum_{i=1}^n \tilde{\delta}_i -1}\amalg S^{2\sum_{i=1}^n \tilde{\delta}_i -1}\rightarrow \star_{i=1}^n S_{i,0}\amalg\star_{i=1}^n S_{i,1}\subseteq X\] of $G$-spaces, where the action of $j$ interchanges the factors of $S^{2\sum_{i=1}^n \tilde{\delta}_i -1}\amalg S^{2\sum_{i=1}^n \tilde{\delta}_i -1}$.  Taking the quotient by the action of $G$ we have a diagram:
\begin{equation}\label{eq:projectivediag}
\begin{tikzcd}
S^{2\sum_{i=1}^n \tilde{\delta}_i -1}\amalg S^{2\sum_{i=1}^n \tilde{\delta}_i -1} \arrow{d} \arrow{r} & X \arrow{d} \\
\mathbb{CP}^{\sum_{i=1}^n\tilde{\delta}_i-1 } \arrow{r}& X/G \arrow{r} & BG
\end{tikzcd}
\end{equation}
with vertical arrows given by $G$-quotient.  The composition $H_*(\mathbb{CP}^{\sum^n_{i=1}\tilde{\delta}_i-1})\rightarrow H_*(BG)$ coming from the second line of (\ref{eq:projectivediag}) is the characteristic class map $\kappa_{*,\mathbb{CP}^{\sum_{i=1}^n\tilde{\delta}_i-1}}$ of $S^{2\sum_{i=1}^n \tilde{\delta}_i -1}\amalg S^{2\sum_{i=1}^n \tilde{\delta}_i -1}$ as a $G$-bundle, so using Fact \ref{fct:bs1bg}, we have:
\[\kappa_{*,\mathbb{CP}^{\sum_{i=1}^n\tilde{\delta}_i-1}}(U^{-
2\lfloor\frac{\sum_{i=1}^n\tilde{\delta}_i-1}{2}\rfloor})=v^{-\lfloor\frac{\sum_{i=1}^n\tilde{\delta}_i-1}{2}\rfloor}.\]  
Here $U^{-i}$, for $i,N\geq 0$, is the unique element of $H_*(\mathbb{CP}^N)$ so that $U^i(U^{-i})=1$, where $1$ is the unique nonzero element of $H_0(\mathbb{CP}^N)$, and similarly $q^{-i},v^{-i}$ are, respectively, the unique elements of $H_*(BG)$ so that $q^i(q^{-i})=1=v^i(v^{-i})$, where $1\in H_0(BG)$ is nonzero.  Then $\mathrm{Im}\, \kappa_*$ must be nonzero in degree $4\lfloor \frac{\sum_{i=1}^n \tilde{\delta}_i-1}{2}\rfloor $, so \[a(\tilde{\Sigma}X) \geq 4 \lfloor\frac{\sum_{i=1}^n\tilde{\delta}_i-1}{2}\rfloor +4.\]
However, $\kappa^d_*$ must be zero in all degrees $d\geq 4 \lfloor\frac{\sum_{i=1}^n\tilde{\delta}_i-1}{2}\rfloor +4$, since $\mathrm{dim}\, X=2\sum_{i=1}^n\tilde{\delta}_i-1$.  Thus, using Definition \ref{def:manolescudefn1}:
\[\alpha(\tilde{\Sigma}X) = E(\sum_{i=1}^n\tilde{\delta}_i),\]
giving (\ref{eq:acalc}).  \medskip

\textbf{Proof of (\ref{eq:bcalc}).}  We have a ($G$-equivariant) map $\phi_{\beta}:S^{2\sum_{i=1}^{n-1}\tilde{\delta}_i-1} \times S^0\rightarrow X$ (where $j$ acts by interchanging the factors $S^{2\sum_{i=1}^{n-1}\tilde{\delta}_i-1}$) given by the inclusion \[\star_{i=1}^{n-1} S_{i,0}\amalg \star_{i=1}^{n-1} S_{i,1}\subseteq \star_{i=1}^n (S_{i,0} \amalg S_{i,1}).\]  We will use the map $\phi_{\beta}$ to find classes in $H_*(BG)$ in the image of $\kappa_*$ in degree congruent to $1 \; \mathrm{mod} \; 4$.  
 
Let \[F^{n-1}=\amalg_{(l_1,\dots,l_{n-1}) \in \mathcal{L}}\star_{i=1}^{n-1}(S_{l_i,0}\amalg S_{l_i,1}), \]
where $\mathcal{L}$ is the set of all $(n-1)$-tuples of distinct elements of $\{1,\dots,n\}$.  In the analogy from the start of the proof, $F^{n-1}$ is the ``$(n-1)$-skeleton" of $X$.  

Note that associated to a linear subspace $\mathbb{C}^{K}\subseteq \mathbb{C}^N$, there is an $S^1$-equivariant submanifold $S^{2K-1}\subseteq S^{2N-1}$.  That is, there is a map from $\mathrm{Gr}(K,N)$, the space of all $K$-planes in $\mathbb{C}^N$, to the space of all submanifolds $S^{2K-1}\subseteq S^{2N-1}$.  We will call an embedded sphere obtained from a linear subspace this way a \emph{linear} sphere.  We also see that the inclusion \begin{equation}\label{eq:firstlinearsphere}\star^{n-1}_{i=1}S_{i,0}\subseteq F_{(0,\dots,0)}\end{equation} corresponds to the inclusion of a linear subspace $\mathbb{C}^{\sum_{i=1}^{n-1}\tilde{\delta}_i}\subset \mathbb{C}^{\sum_{i=1}^n\tilde{\delta}_i}$ (i.e. (\ref{eq:firstlinearsphere}) is linear).   

Since $\mathrm{Gr}(K,N)$ is connected, we see that any two linear spheres $S^{2K-1}\rightarrow F_{(k_1,\dots,k_n)}$, with $K \leq \sum_{i=1}^{n}\tilde{\delta}_i$, are homotopic in $F_{(k_1,\dots,k_n)}$, through linear spheres. 

Further, we note that for any $\star_{i=1}^{n-1}S_{l_i,k_{l_i}}\subset F_{(k_1,\dots,k_n)}$, there exists some linear sphere \begin{equation}\label{eq:spherexi}S\simeq S^{2K-1} \subseteq \star_{i=1}^{n-1}S_{l_i,k_{l_i}},\end{equation} for all $K\leq \sum_{i=1}^{n-1}\tilde{\delta}_i$ (here we have used $\tilde{\delta}_1 \leq \dots \leq \tilde{\delta}_n$).

In particular, fixing $K \leq \sum_{i=1}^{n-1}\tilde{\delta}_i$, we have a linear sphere $S$ as in (\ref{eq:spherexi}).  Then $S$ is $S^1$-equivariantly homotopic (through linear spheres, in $F_{(k_1,\dots,k_n)}$) to a copy of $S^{2K-1}$ in $\star_{i=1}^{n-1}S_{l'_i,k_{l'_i}}$, for any other sequence of integers $1\leq l'_1 < \dots < l'_{n-1}\leq n$.  Inductively then, $S$ is homotopic to a subset of 
\[\star_{i=1}^{n-1}S_{l''_i,k'_{l''_i}},\]
for any sequences $l''\in \mathcal{L},$ and $k'_i\in \{0,1\}$, in $X$.  It follows that there exists a homotopy from $\star_{i=1}^{n-1} S^{2\tilde{\delta}_i-1}_{i,0}$ to $\star_{i=1}^{n-1} S^{2\tilde{\delta}_i-1}_{i,1}=j(\star_{i=1}^{n-1} S^{2\tilde{\delta}_i-1}_{i,0})$ in $X$. See Figures \ref{fig:homotopybeta1} and \ref{fig:homotopybeta2} for illustrations in the $n=2,3$ cases.

\begin{figure}
  \scalebox{.1}{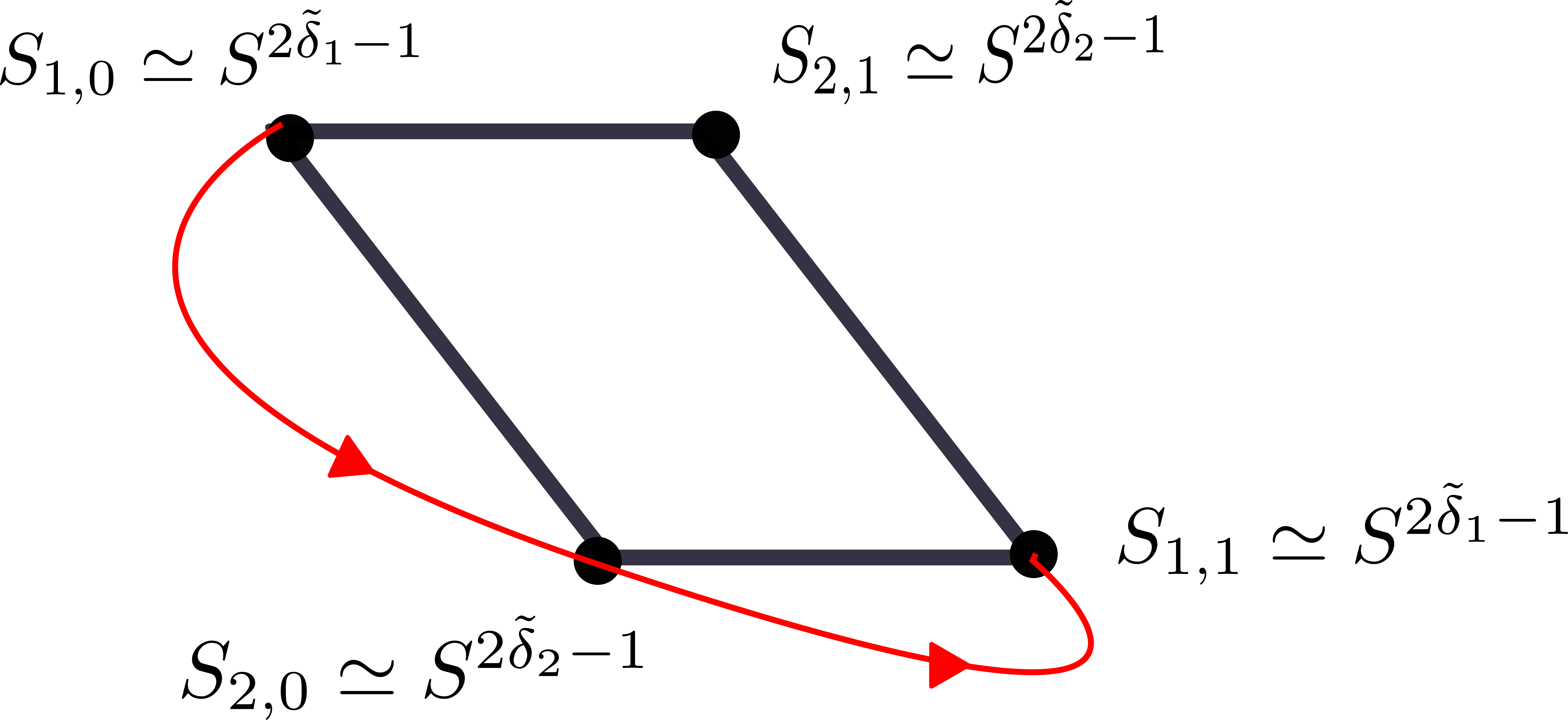}
  \caption{The homotopy from $S_{1,0}$ to $jS_{1,0}$ in the case $n=2$.  The sphere $S_{1,0}$ is homotopic to a copy of $S^{2\tilde{\delta}_1-1}\subseteq S_{2,0}$ in $F_{(00)}\simeq S_{1,0} \star S_{2,0}\simeq S^{2(\tilde{\delta}_1+\tilde{\delta}_2)-1}.$  Furthermore, $S^{2\tilde{\delta}_1 -1}\subseteq S_{2,0}$ is homotopic to $S_{1,1}$ in $F_{(10)}$.  Thus, we have found a homotopy $\star^{n-1}_{i=1} S_{i,0} \rightarrow j(\star^{n-1}_{i=1} S_{i,0} )$ for $n=2$.}
  \label{fig:homotopybeta1}
\end{figure} 
\begin{figure}
  \scalebox{.1}{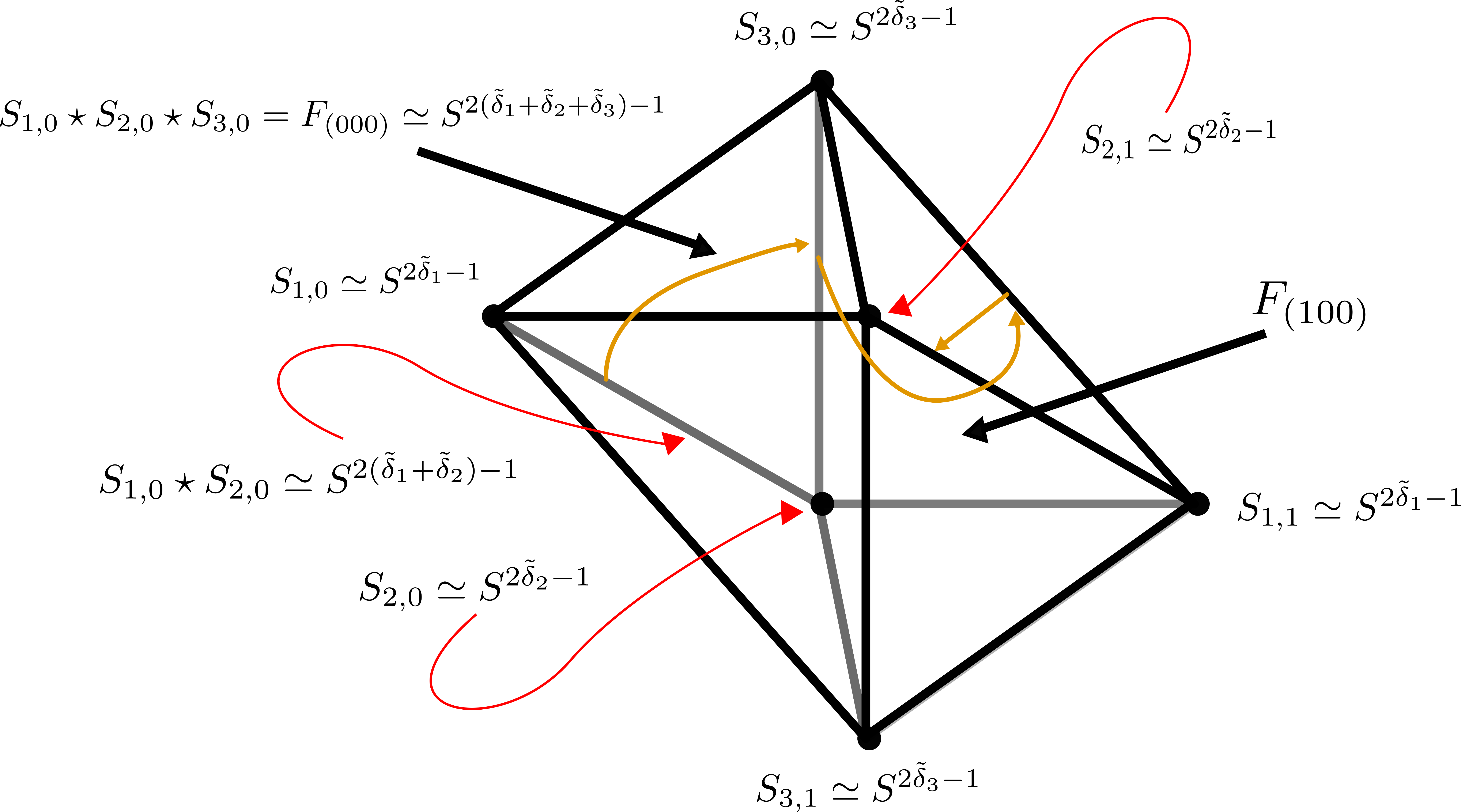}
  \caption{An illustration of the homotopy $ \star^{n-1}_{i=1} S_{i,0} \rightarrow j(\star^{n-1}_{i=1} S_{i,0}), $ for $n=3$. In $F_{(000)}$ we have $S_{1,0}\star S_{2,0} \simeq S^{2(\tilde{\delta}_1+\tilde{\delta}_2)-1}$ homotopic to a copy $S'$ of $S^{2(\tilde{\delta}_1+\tilde{\delta}_2)-1}$ contained in $S_{2,0}\star S_{3,0}$.  The sphere $S'$ is then homotopic in $F_{(100)}$ to $S'' \subseteq S_{1,1} \star S_{3,0}$.  In $F_{(110)}$, $S''$ is homotopic to $S''' \subseteq S_{1,1} \star S_{2,1}$, so we have constructed a homotopy $\star^2_{i=1} S_{i,0} \rightarrow j(\star^2_{i=1}S_{i,0}),$ as needed.  A similar procedure applies for $n \geq 4$.}
  \label{fig:homotopybeta2}
\end{figure} 

Now we take advantage of the Gysin sequence from (\ref{eq:gysin}).  Let $\Phi_\alpha$ denote the fundamental class of the projective space \[\mathbb{CP}^{\sum_{i=1}^{n-1}\tilde{\delta}_i-1}\simeq (S^{2\sum_{i=1}^{n-1}\tilde{\delta}_i-1}\times S^0)/G\simeq (\star_{i=1}^{n-1} S_{i,0})/S^1,\] and let $\iota_*$ denote the map on homology induced by the inclusion:
\[\iota: (\star_{i=1}^{n-1} S_{i,0}\amalg \star_{i=1}^{n-1} S_{i,1})/G \rightarrow X/G.\]
Then, as in the argument proving (\ref{eq:acalc}), we have $\kappa_*(\iota_*(\Phi_\alpha))=v^{-\lfloor \frac{\sum_{i=1}^{n-1}\tilde{\delta}_i-1}{2}\rfloor}$.  

We check that $\iota_*(\Phi_\alpha)$ is in the image of $q$ (for the action of $q$ on $H_*(X/G)$).  Indeed, we have that $(1+j)\cdot \iota_*(\Phi_\alpha)$, viewed as a class of $X/S^1$, is zero by the above homotopy from $\star_{i=1}^{n-1} S_{i,0}$ to $\star_{i=1}^{n-1} S_{i,1}=j(\star_{i=1}^{n-1} S_{i,0})$.  Then, by Fact \ref{fct:gysin1plusj}, $\iota_*(\Phi_\alpha)$ is in the image of $q\cap -$.

Thus, there exists some class $\Phi_\beta^X \in H^G_*(X)$ so that $q\Phi_\beta^X =\iota_*(\Phi_\alpha)$.  It follows that $\kappa_*(\Phi^X_\beta)$ must be nonzero, and we obtain $q^{-1}v^{-\lfloor \frac{\sum_{i=1}^{n-1}\tilde{\delta}_i-1}{2}\rfloor}\in \mathrm{Im}\, \kappa_*$.  Using (\ref{eq:bsusp}), we see:
\begin{equation}\label{eq:blowbound}b(\tilde{\Sigma}X) \geq 2E(\sum_{i=1}^{n-1}\tilde{\delta}_i).\end{equation}
Using the Definition \ref{def:manolescudefn1} of $\beta$, that is:
\begin{equation}\label{eq:betalowbound}
\beta(\tilde{\Sigma}X) \geq E(\sum_{i=1}^{n-1}\tilde{\delta}_i).\end{equation}
By Theorem \ref{thm:easyineq},
\begin{equation}
\begin{array}{lcl}\label{eq:bupbound} \beta(\tilde{\Sigma}X) & \leq & \alpha(\tilde{\Sigma}(\star_{i=1}^{n-1} X_i))+\beta(\tilde{\Sigma}X_n) \\
& \leq & E(\sum_{i=1}^{n-1}\tilde{\delta}_i)+0. 
\end{array}
\end{equation}
Here we have used Fact \ref{fct:dtowmanos} to see $\beta(\tilde{\Sigma}X_n)=0$.  Finally, (\ref{eq:betalowbound}) and (\ref{eq:bupbound}) together imply (\ref{eq:bcalc}).  \medskip

\textbf{Proof of (\ref{eq:ccalc}).}  We again apply the Gysin sequence after constructing a homotopy.  Repeating the argument from (\ref{eq:bcalc}), we construct a homotopy, where $I$ is the unit interval:
\[\psi: I \times S^{2\sum^{n-2}_{i=1} \tilde{\delta}_i-1} \rightarrow X\]
so that $\psi(0,-)$ is a linear sphere:
\[S^{2\sum^{n-2}_{i=1}\tilde{\delta}_i-1}\rightarrow \star_{i=1}^{n-2} S_{i,0}, \]
and so that $\psi(1,-)$ is a linear sphere:
\[S^{2\sum^{n-2}_{i=1}\tilde{\delta}_i-1}\rightarrow \star_{i=1}^{n-2} S_{i,1}=j(\star_{i=1}^{n-2} S_{i,0}). \]
Following the argument of (\ref{eq:bcalc}), we see that we may choose $\psi$ to lie entirely within $F^{n-1}$, the ``$(n-1)$-skeleton" of $X$.
The construction of $\psi$ gives that it is a composition of homotopies in the faces: \[F^{n-1}_{(k_1,\dots,k_n)}=F^{n-1}\cap F_{(k_1,\dots,k_n)},\] so that in each $F_{(k_1,\dots,k_n)}$, $\psi$ is a homotopy through linear spheres. 

We will construct a homotopy from $\psi$ to $j\psi$ (perhaps up to reparameterization in the domain).  Knowing that the homotopy $\psi$ was constructed by combining homotopies in the ``faces" $F_{(k_1,\dots,k_n)}$, we constuct a homotopy from $\psi$ to $j\psi$ by considering homotopies between homotopies in the ``faces".  

Let $S\subseteq \star_{i=1}^{n-2}S_{l_i,k_{l_i}}\subset F_{(k_1,\dots,k_n)}$ where $1\leq l_1 < \dots <l_{n-2} \leq n$, and $S\simeq S^{2K-1}$ for some $K \leq \sum_{i=1}^{n-2} \tilde{\delta}_i$.  Let $\psi'$ be a homotopy, through linear spheres, in $F_{(k_1,\dots,k_n)}$, from $S$ to some $S'\simeq S^{2K-1} \subseteq \star_{i=1}^{n-2}S_{l'_i,k_{l'_i}}$, where $1 \leq l'_1 < \dots < l'_{n-2} \leq n$. 

Let $L_1,\dots, L_m\in\mathcal{L}$ so that
\[(l_1,\dots,l_{n-2})\subset L_1\; \mathrm{and} \; (l'_1,\dots,l'_{n-2})\subset L_m,\] 
and so that $L_i$ and $L_{i+1}$ differ in only one place; see Figure \ref{fig:pyramid}.    
\begin{figure}
\scalebox{.2}{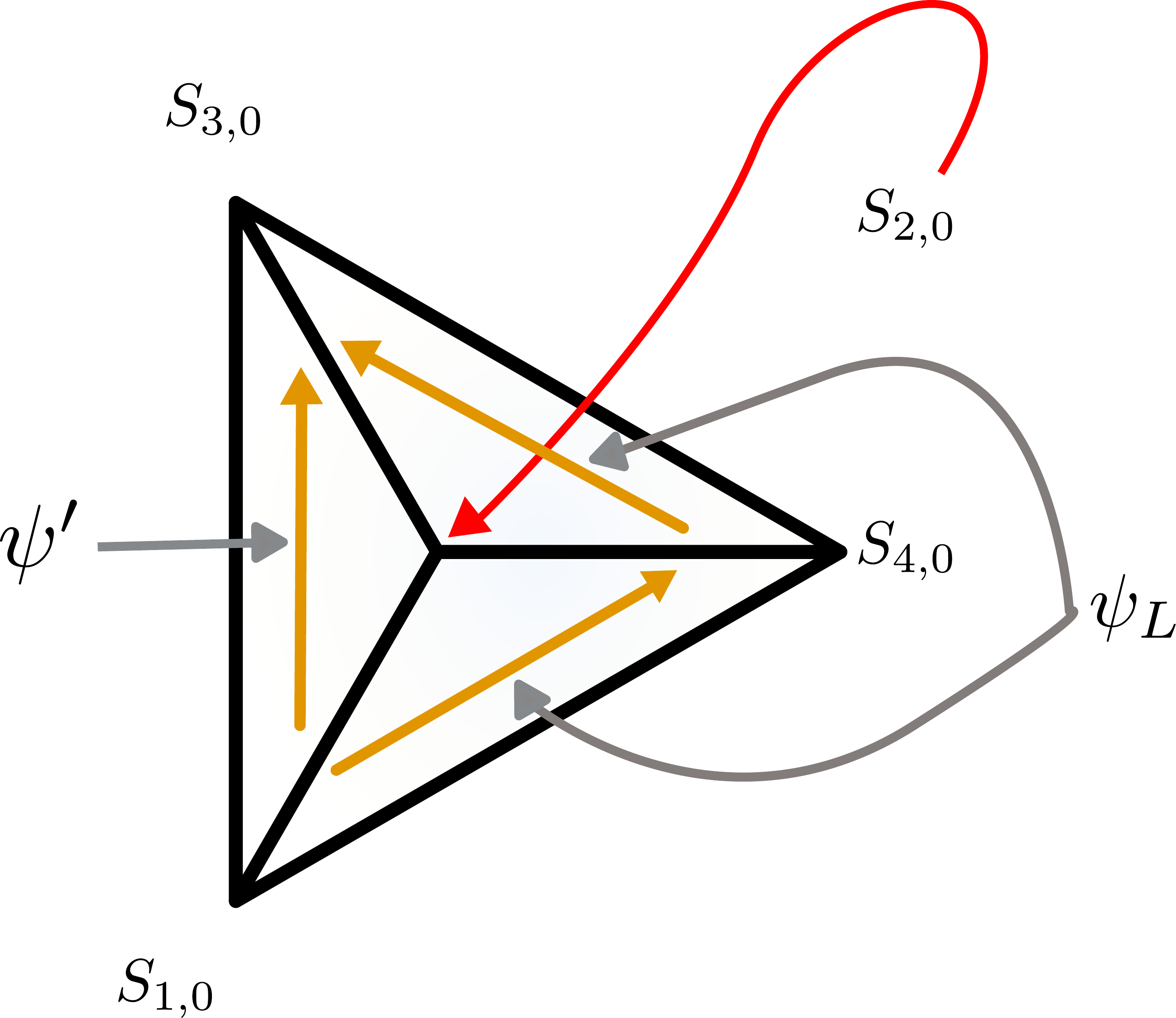}
\caption{The tetrahedron corresponding to the face $F_{(0000)}$, where $n=4$.  In this example, the image of $\psi'$ is contained in $S_{1,0} \star S_{2,0} \star S_{3,0}$, and $\psi'$ takes a sphere in $S_{1,0} \star S_{2,0}$ to a sphere in $S_{2,0} \star S_{3,0}$.  Further, for this example, $L_1=(1,2,4)$, and $L_2=(2,3,4)$.  The path followed by $\psi_L$ is pictured.}
\label{fig:pyramid}
\end{figure}
Then there exists a homotopy: \[\psi_L: I \times S \rightarrow F_{(k_1,\dots,k_n)},\] 
so that $\psi_{L}(0,-)$ is the inclusion of $S$ and $\psi_{L}(1,-)$ is $\psi'(1,-)$, and so that: \[\psi_L([\frac{p-1}{m},\frac{p}{m}],-)\subset \star_{l \in L_p} S_{l,k_l},\] for $1\leq p \leq m$.
The homotopy $\psi_L|_{[\frac{p-1}{m},\frac{p}{m}]\times S}$ is constructed exactly as in the proof of (\ref{eq:bcalc}).

Next, let $\psi^-_L: I \times S\rightarrow F_{(k_1,\dots,k_n)}$ be given by $\psi^-_L(x,y)=\psi_L(1-x,y)$.  

Consider the concatenation \begin{equation}\label{eq:hdef}H=\psi^-_L\ast\psi':I \times S \rightarrow F_{(i_1,\dots,i_n)}\end{equation} obtained by applying $\psi'$ and then running $\psi_L$ backwards.  Since \[\mathrm{Im}\, H(0,-)=\mathrm{Im}\, H(1,-),\] we see that $H$ corresponds to a loop in $\mathrm{Gr}(K-1,\sum_{i=1}^n\tilde{\delta}_i-1)$.  However, $\pi_1(\mathrm{Gr}(K-1,\sum_{i=1}^n\tilde{\delta}_i-1))=1$, from which we see that $H$ is null-homotopic.  That is, $\psi'$ is homotopic to $\psi_L$ (again, perhaps up to reparameterization in the domain), as needed.

As in the proof of (\ref{eq:bcalc}), we compose a sequence of the $\psi'$ to $\psi_L$ homotopies to see that $\psi$ is homotopic to $j\psi$, as in Figure \ref{fig:pyramidpath}.  
\begin{figure}
\includegraphics[scale=.18]{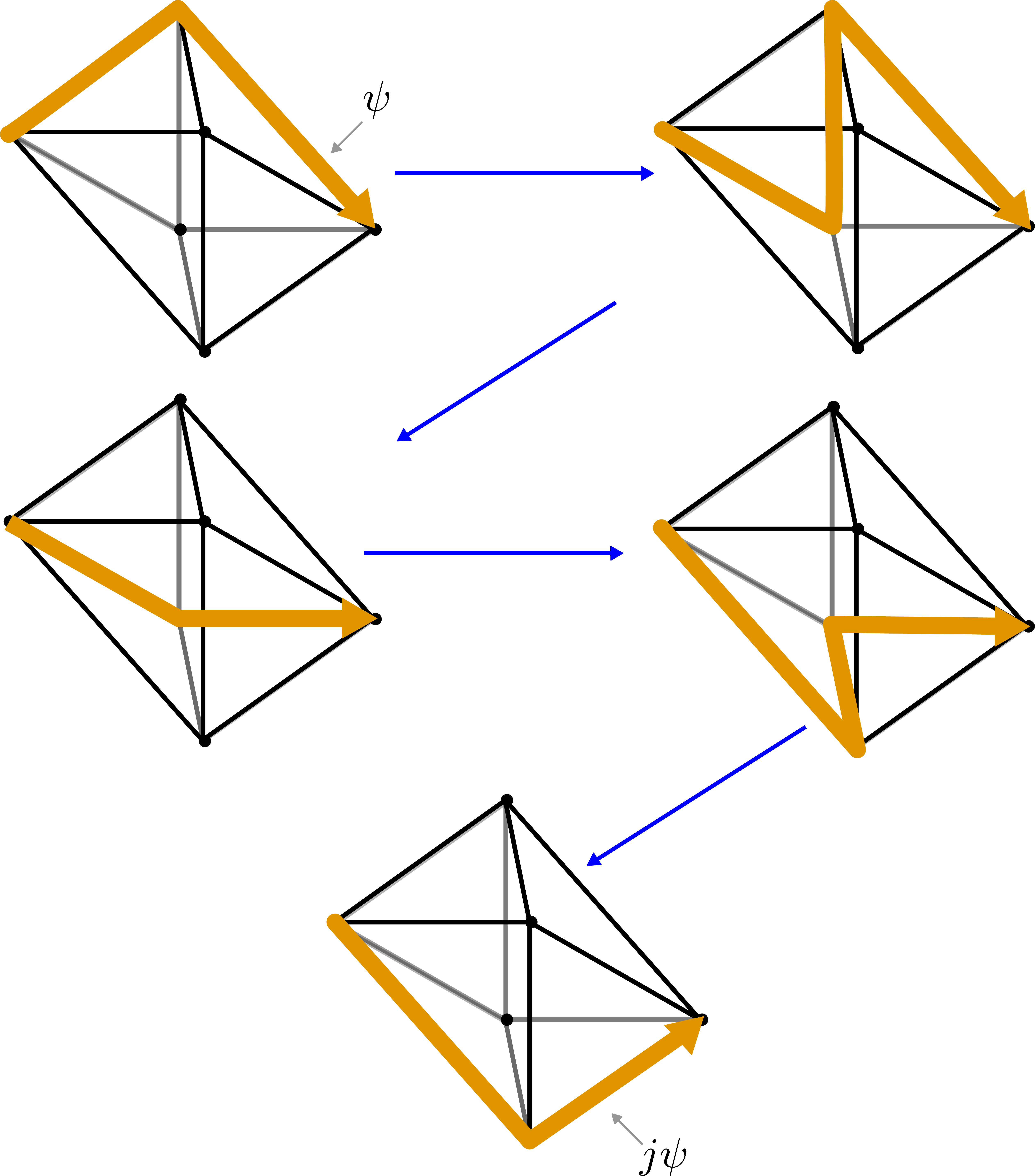}
\caption{A homotopy from $\psi$ to $j\psi$ in the case $n=3$.  Here $\psi$ is a homotopy from $S_{1,0}$ to $jS_{1,0}$, and each stage pictured is one instance of the above construction of $\psi_L$.  Composing these intermediate homotopies in the faces, we have the homotopy between $\psi$ and $j\psi$.} 
\label{fig:pyramidpath}
\end{figure}
Concatenating the reverse $(j\psi)^-$ and $\psi$, we obtain a map:
\[(j\psi)^-\ast \psi: I \times S^{2\sum^{n-2}_{i=1} \tilde{\delta}_i-1}\rightarrow X.\]
Since $\mathrm{Im}\;j\psi(1)=\mathrm{Im}\; \psi(0)$, by reparameterizing the domain $S^{2\sum^{n-2}_{i=1} \tilde{\delta}_i-1}$ we obtain a map:
\[\iota:=(j\psi)^-\ast\psi:S^1 \tilde{\times}S^{2\sum^{n-2}_{i=1} \tilde{\delta}_i-1}\rightarrow X.\]
Here $S^1 \tilde{\times} S^{2\sum^{n-2}_{i=1} \tilde{\delta}_i-1}$ is a space obtained by gluing the ends of $I \tilde{\times} S^{2\sum^{n-2}_{i=1} \tilde{\delta}_i-1}$. 

The map $\iota$ descends to quotients by $S^1$ and $G$ to give maps $\iota_{S^1}$ and $\iota_G$, respectively.  

Now that we have constructed the homotopy between $\psi$ and $j\psi$, we repeat the Gysin sequence argument we have already used in proving (\ref{eq:bcalc}).

Let $\Phi_{\alpha}$ denote the fundamental class of \[(1\times S^{2\sum^{n-2}_{i=1} \tilde{\delta}_i-1})/S^1\simeq \mathbb{CP}^{\sum^{n-2}_{i=1} \tilde{\delta}_i-1}\subseteq (S^1 \tilde{\times} S^{2\sum^{n-2}_{i=1} \tilde{\delta}_i-1})/G. \] 
We have that $(1+j)\cdot \Phi_{\alpha}=0$ as a homology class in $H^{S^1}_*(S^1 \tilde{\times}S^{2\sum^{n-2}_{i=1} \tilde{\delta}_i-1})$, since the homotopy $\psi$ takes $\Phi_{\alpha}$ to $j\Phi_{\alpha}$.  Then $\Phi_{\alpha}$, viewed as a class in $H^{G}_*(S^1 \tilde{\times}S^{2\sum^{n-2}_{i=1} \tilde{\delta}_i-1})$, is in $\mathrm{Im}\; q$.  Let $\Phi_{\beta}$ denote the fundamental class of \[(S^1 \tilde{\times} S^{2\sum^{n-2}_{i=1} \tilde{\delta}_i-1})/G.\]  Then $\Phi_{\beta}\in H^{G}_*(S^1 \tilde{\times}S^{2\sum^{n-2}_{i=1} \tilde{\delta}_i-1})$ is the only class in degree $2\sum^{n-2}_{i=1}\tilde{\delta}_i-1$, so $q\Phi_{\beta}=\Phi_{\alpha}$.  Our next goal will be to show that $(1+j)\cdot\iota_{G,*}\Phi_{\beta}=0$, as a class in $H^{S^1}_*(X)$.  

Note that a chain representative $C$ of $\Phi_{\beta}$ in $(S^1 \tilde{\times}S^{2\sum^{n-2}_{i=1} \tilde{\delta}_i-1})/S^1$ is the relative fundamental class of $([0,\frac{1}{2}]\times S^{2\sum^{n-2}_{i=1} \tilde{\delta}_i-1})/S^1$, as in Figure \ref{fig:fundclass}.  Then we see that $(1+j)\cdot\Phi_{\beta}$ is the fundamental class of $(S^1 \tilde{\times}S^{2\sum^{n-2}_{i=1} \tilde{\delta}_i-1})/S^1$.  It follows that \[0=\iota_{S^1,*}(1+j)\cdot\Phi_{\beta}=(1+j)\cdot\iota_{G,*}\Phi_{\beta},\] since \[\psi([0,1] \times S^{2\sum^{n-2}_{i=1} \tilde{\delta}_i-1}) \; \mathrm{and}\; j\psi([0,1] \times S^{2\sum^{n-2}_{i=1} \tilde{\delta}_i-1})\] are homotopic in $X$.  By Fact \ref{fct:gysin1plusj}, we have $\iota_{G,*}\Phi_{\beta}=q\Phi^X_\gamma$ for some $\Phi^X_\gamma \in H^G_*(X)$.  
\begin{figure}
\scalebox{.2}{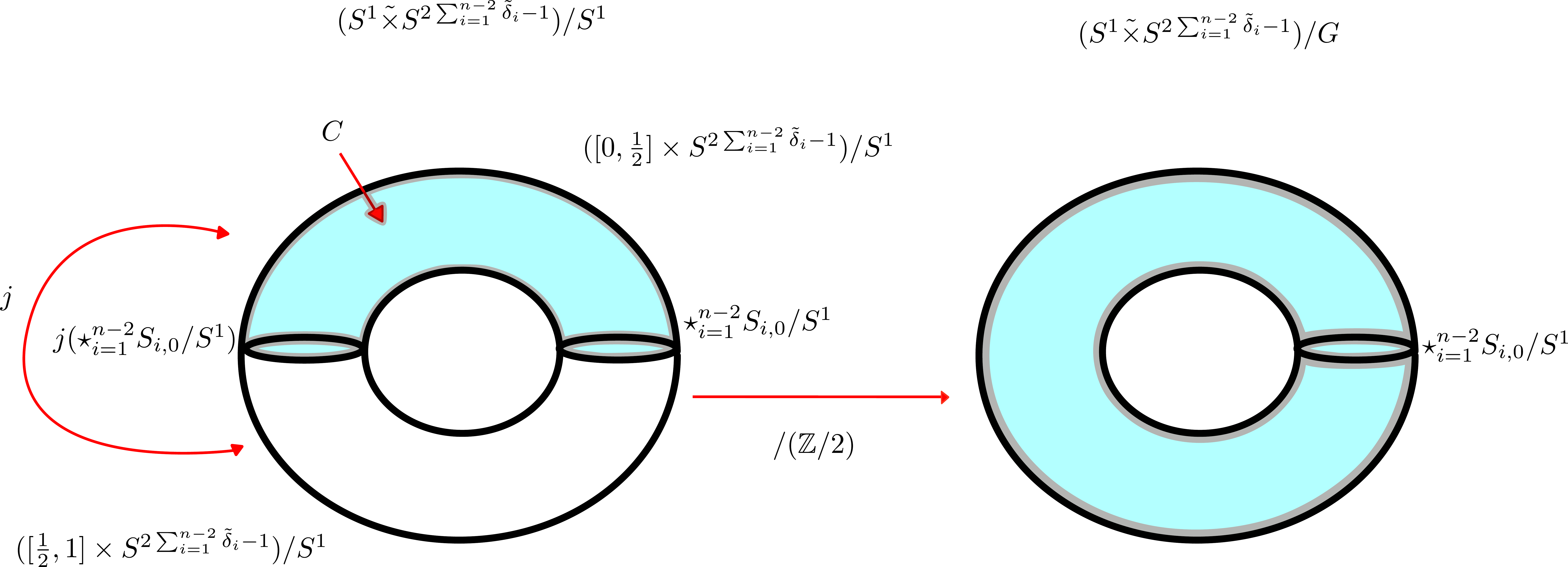}
\caption{The shaded region $C$ denotes the relative fundamental class of $([0,\frac{1}{2}]\times S^{2\sum^{n-2}_{i=1}\tilde{\delta}_i-1})/S^1$, the domain of $\psi$.  We see from the figure that the quotient by the action of $\mathbb{Z}/2=G/S^1$ takes $([0,\frac{1}{2}]\times S^{2\sum^{n-2}_{i=1}\tilde{\delta}_i-1})/S^1)$ surjectively onto $(S^1 \tilde{\times} S^{2\sum^{n-2}_{i=1} \tilde{\delta}_i-1})/G$. Thus $C$ is indeed a chain representative for $\Phi_{\beta}$, as a class in $(S^1 \tilde{\times}S^{2\sum^{n-2}_{i=1} \tilde{\delta}_i-1})/S^1$.}
\label{fig:fundclass}
\end{figure}

As in the argument for (\ref{eq:bcalc}) we note that $\kappa_*\iota_{G,*}(\Phi_\alpha)\neq 0$, since $\kappa_*\iota_{G,*}$ is the characteristic class map for $S^1 \tilde{\times} S^{2\sum^{n-2}_{i=1} \tilde{\delta}_i-1}$ as a $G$-bundle.  Then $\kappa_*\iota_{G,*}(\Phi_\beta)$ is nonzero, because $\kappa_*\iota_{G,*}$ must be $\f[q,v]/(q^3)$-equivariant.  Similarly, we see $\kappa_*(\Phi^X_\gamma)\in H_*(BG)$ must be nonzero, from which we obtain \[q^{-2}v^{-\lfloor \frac{\sum_{i=1}^{n-2}\tilde{\delta}_i-1}{2}\rfloor}\in \mathrm{Im}\, \kappa_*.\]

Thus:
\[c(\tilde{\Sigma}X) \geq 2E(\sum_{i=1}^{n-2}\tilde{\delta}_i),\] so
\begin{equation}\label{eq:clowbound}\gamma(\tilde{\Sigma}X) \geq E(\sum_{i=1}^{n-2}\tilde{\delta}_i).\end{equation}
From Theorem \ref{thm:easyineq}, we have the inequalities (using $0\leq \gamma(\tilde{\Sigma}(X_{n-1} \star X_n))\leq \beta(\tilde{\Sigma}X_{n-1})+\beta(\tilde{\Sigma}X_{n})=0$):
\begin{equation}
\begin{array}{lcl}\label{eq:cupbound} \gamma(\tilde{\Sigma}X) & \leq & \alpha(\tilde{\Sigma}(\star_{i=1}^{n-2} X_i))+\gamma(\tilde{\Sigma}(X_{n-1} \star X_{n})) \\
  & \leq & E(\sum_{i=1}^{n-2}\tilde{\delta}_i)+0. 
  \end{array}
  \end{equation}
  Finally, (\ref{eq:clowbound}) and (\ref{eq:cupbound}) imply (\ref{eq:ccalc}).
\qed

\section{Manolescu Invariants for Connected Sums of Seifert Spaces}\label{sec:gauge} 
First, we recall the output of Manolescu's construction in \cite{ManolescuPin}:

\begin{thm}[Manolescu {\cite{ManolescuPin},\cite{ManolescuK}}]\label{thm:ManolescuSWF}
There is an invariant $\mathit{SWF}(Y,\mathfrak{s})$, the Seiberg-Witten Floer spectrum class, of rational homology three-spheres with spin structure $(Y,\s)$, taking values in $\E$.  A spin cobordism $(W,\mathfrak{t})$, with $b_2(W)=0$, from $Y_1$ to $Y_2$, induces a map $\mathit{SWF}(Y_1, \mathfrak{t}|_{Y_1}) \rightarrow \mathit{SWF}(Y_2,\mathfrak{t}|_{Y_2})$. The induced map is a homotopy-equivalence on $S^1$-fixed-point sets.  
\end{thm}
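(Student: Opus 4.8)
The plan is to recall the finite-dimensional approximation of the Seiberg--Witten equations from \cite{ManolescuPin}, since Theorem \ref{thm:ManolescuSWF} is precisely packaging the output of that construction. First I would fix a metric and the spin structure on $Y$, work on the global Coulomb slice of the configuration space, and perturb the Chern--Simons--Dirac functional so that its gradient flow has nondegenerate critical points, with the unique reducible critical point surviving as the $S^1$-fixed-point data. The quaternionic structure on the spinor bundle, together with the charge-conjugation symmetry, endows the Coulomb slice with a $G=\mathrm{Pin}(2)$-action commuting with the flow, whose $S^1$-fixed-point set is exactly the reducible locus.

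Next, for eigenvalue cutoffs $-\lambda<0<\mu$ I would project the flow onto the span $V^{\mu}_{-\lambda}$ of eigenvectors of the linearization with eigenvalue in $(-\lambda,\mu)$; this is a finite-dimensional $G$-representation, and the projected flow restricted to a large ball has an isolating neighborhood whose $G$-equivariant Conley index $I^{\mu}_{-\lambda}$ is a pointed finite $G$-CW complex. On the reducible locus the equations are linear, so $(I^{\mu}_{-\lambda})^{S^1}$ is $G$-homotopy equivalent to a representation sphere $(\tilde{\mathbb{R}}^{s})^+$; hence $I^{\mu}_{-\lambda}$ is a space of type SWF in the sense of Definition \ref{thm:swfdefn}, and one sets $\mathit{SWF}(Y,\s)=(I^{\mu}_{-\lambda},m(\lambda),n(\mu))\in\E$, the formal desuspension indices recording $\dim V^{0}_{-\lambda}$ (the $\tilde{\mathbb{R}}$-part) and $\dim V^{\mu}_{0}$ (the $\mathbb{H}$-part). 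Invariance under the choices of $\lambda,\mu$, the metric, and the perturbation follows from the continuation maps of equivariant Conley index theory; this step --- the equivariant Conley index machinery together with the compactness of Seiberg--Witten trajectories needed to make it apply --- is the technical heart of the argument.

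For the cobordism map, given a spin cobordism $(W,\mathfrak{t})$ with $b_2(W)=0$ I would attach cylindrical ends and study the four-dimensional Seiberg--Witten equations on $W$; finite-dimensional approximation of this relative problem yields, for compatible cutoffs on the two ends, a $G$-equivariant map $I^{\mu}_{-\lambda}(Y_1)\to I^{\mu'}_{-\lambda'}(Y_2)$, and compactness of the four-dimensional trajectory spaces --- where $b_2(W)=0$ is used to exclude the reducible/bubbling degenerations that would obstruct well-definedness --- shows this descends to a morphism $\mathit{SWF}(Y_1,\mathfrak{t}|_{Y_1})\to\mathit{SWF}(Y_2,\mathfrak{t}|_{Y_2})$ in $\E$, independent of the approximation. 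Finally I would verify the $S^1$-fixed-point statement: on the reducible locus the equations on $W$ reduce to the complex-linear Dirac operator coupled to the unique flat connection, and the hypothesis $b_2(W)=0$ forces the relevant index/spectral-flow contribution to be accounted for exactly by the bookkeeping suspensions, so the induced map on fixed-point sets is the one-point compactification of a $G/S^1=\mathbb{Z}/2$-equivariant linear isomorphism between spaces $G$-homotopy equivalent to $(\tilde{\mathbb{R}}^{s_1})^+$ and $(\tilde{\mathbb{R}}^{s_2})^+$, hence a $G$-homotopy equivalence. In the language of Section \ref{sec:2}, this says precisely that a $b_2=0$ spin cobordism induces a local equivalence, which is what the later sections require.
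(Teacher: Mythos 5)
The paper does not prove this theorem; it is cited from \cite{ManolescuPin}, \cite{ManolescuK}, and the paragraph following the statement is an informal orientation to the construction, not a proof. Your outline is broadly faithful to that construction, but one step is directly contradicted by the paper's own remarks. You propose to ``perturb the Chern--Simons--Dirac functional so that its gradient flow has nondegenerate critical points,'' yet the paper emphasizes, two sentences after the theorem, that ``the approach by finite-dimensional approximation allows one to avoid the need to introduce perturbations or to use the blowup construction of \cite{KM}.'' The Conley index does not require nondegeneracy of critical points --- one only needs an isolated invariant set, which the unperturbed (but cut-off and regularized) flow already provides --- and avoiding perturbations is important precisely because it cleanly preserves the full $\mathrm{Pin}(2)$-symmetry; introducing perturbations would recreate the equivariant-transversality difficulties the finite-dimensional model is designed to sidestep. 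So this step of your sketch, as written, misrepresents how the cited construction proceeds.

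A secondary inaccuracy is the desuspension bookkeeping. You describe $(m,n)$ as ``recording $\dim V^0_{-\lambda}$ (the $\tilde{\mathbb{R}}$-part) and $\dim V^\mu_0$ (the $\mathbb{H}$-part),'' but in fact both $m$ and $n$ come from the negative eigenspace $V^0_{-\lambda}$: $m$ from its $\tilde{\mathbb{R}}$-isotypical summand, and $n$ from its $\mathbb{H}$-summand together with a metric-dependent rational correction term. The positive cutoff $\mu$ affects the isolating neighborhood but not the formal desuspension; as $\mu\to\infty$ the Conley index stabilizes up to equivariant homotopy equivalence without any further suspension. The remainder of your sketch --- Coulomb slice, $\mathrm{Pin}(2)$-action from the quaternionic/charge-conjugation structure, equivariant Conley index of the finite-dimensional flow, representation-sphere fixed-point set giving a space of type SWF, relative finite-dimensional approximation on a cobordism with cylindrical ends, and $b_2(W)=0$ forcing the fixed-point map to be a homotopy equivalence (i.e.\ a local equivalence in the sense of Definition \ref{def:loceq}) --- is consistent in spirit with the cited construction, though the paper itself does not reproduce any of this and simply refers the reader to Manolescu.
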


Manolescu \cite{ManolescuPin} constructs $\mathit{SWF}(Y,\s)$ by using a finite-dimensional approximation to the Seiberg-Witten equations on $Y$.  That is, $\mathit{SWF}(Y,\s)$ is constructed by looking at approximations to the Seiberg-Witten equations on certain finite-dimensional subspaces $V_\lambda$, for $\lambda\in \mathbb{R}$, of the global Coulomb slice.  These approximations define, for each $\lambda$, a flow $\phi_\lambda$ on the vector space $V_\lambda$.  Manolescu \cite{ManolescuS1} shows that there is a certain isolated invariant set associated to the flows $\phi_\lambda$.  

Associated to any isolated invariant set $S$ of a flow $(M,\phi)$ on a manifold $M$, Conley \cite{Conley} constructed a pointed homotopy type $I(S)$, an invariant of the triple $(M,\phi,S)$. 
Manolescu then defines $\mathit{SWF}(Y,\s)$ as (a suitable desuspension of) the Conley index $I(S)$ associated to the isolated invariant set of $(V_\lambda,\phi_\lambda)$.  The approach by finite-dimensional approximation allows one to avoid the need to introduce perturbations or to use the blowup construction of \cite{KM}. 
  
\begin{defn}\label{def:mano3invars} For $(Y,\s)$ a spin rational homology three-sphere, the Manolescu invariants $\alpha(Y,\s),$ $\beta(Y,\s),$ and $\gamma(Y,\s)$ are defined by $\alpha(\mathit{SWF}(Y,\s)),$ $\beta(\mathit{SWF}(Y,\s)),$ and $\gamma(\mathit{SWF}(Y,\s))$, respectively.
\end{defn}

\begin{thm}[\cite{ManolescuPin}]\label{thm:duality}
Let $(Y,\s)$ be a spin rational homology three-sphere, and let $-Y$ denote $Y$ with orientation reversed.  Then
\[\alpha(Y,\s)=-\gamma(-Y,\s), \; \beta(Y,\s)=-\beta(-Y,\s), \; \gamma(Y,\s)=-\alpha(-Y,\s).\]
Furthermore $\delta(Y,\s)=-\delta(-Y,\s)$.  
\end{thm}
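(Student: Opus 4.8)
The plan is to reduce the statement to two ingredients, one geometric and one algebraic. The geometric ingredient is the identification of orientation reversal with Spanier--Whitehead duality on the Floer spectrum: $\mathit{SWF}(-Y,\s)$ is the Spanier--Whitehead dual $\mathit{SWF}(Y,\s)^{*}$ as an element of $\mathfrak{E}$. First I would recall this from \cite{ManolescuPin}; at the level of finite-dimensional approximation it reflects the fact that reversing the orientation of $Y$ reverses the Seiberg--Witten flow, and the Conley index of a reversed flow on an isolated invariant set in $V_\lambda$ is (a suitable desuspension of) the $V_\lambda$-dual of the original Conley index. The bookkeeping of the normalizing suspensions by $\mathbb{R}$, $\tilde{\mathbb{R}}$, and $\mathbb{H}$ --- in particular the behavior of the rational grading $n$ in a triple $(X,m,n)$ --- is precisely where one must be careful; this verification is carried out in \cite{ManolescuPin}, so here it suffices to cite it.

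The algebraic ingredient is the behavior of $a,b,c,d$ under duality of spaces of type SWF. As used already in the proof of Theorem \ref{thm:easyineq}, \cite[Proposition 2.13]{ManolescuPin} gives $\alpha(X)=-\gamma(X^{*})$ and $\beta(X)=-\beta(X^{*})$ for any space $X$ of type SWF; the proof is the $\f$-coefficient universal-coefficients pairing $\tilde{H}^{G}_{*}(X^{*}) \cong \tilde{H}_{G}^{-*}(X)$ of $\f[q,v]/(q^3)$-modules, under which the $v$-localized part of the Borel homology of $X^{*}$ matches the quotient of the Borel cohomology of $X$ by its $v$-torsion, after which tracing the definitions (\ref{eq:adef})--(\ref{eq:aaltdef}) together with the fact that $X^{*}$ is of type SWF at level $-s$ when $X$ is at level $s$ produces the swap $a \leftrightarrow -c$, $b \leftrightarrow -b$. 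Replacing $G$-Borel homology by $S^{1}$-Borel homology and $q$-towers by $u$-towers, the identical argument gives $\delta(X)=-\delta(X^{*})$; this is the duality already invoked in the proof of Proposition \ref{prop:dcalc}. Applying $\alpha(X)=-\gamma(X^{*})$ with $X$ replaced by $X^{*}$ (and using $X^{**}=X$) also yields $\gamma(X)=-\alpha(X^{*})$.

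Finally I would combine the two ingredients. By Definition \ref{def:mano3invars},
\[
\alpha(Y,\s)=\alpha(\mathit{SWF}(Y,\s))=-\gamma(\mathit{SWF}(Y,\s)^{*})=-\gamma(\mathit{SWF}(-Y,\s))=-\gamma(-Y,\s),
\]
and identically $\beta(Y,\s)=-\beta(-Y,\s)$ from $\beta(X)=-\beta(X^{*})$, $\gamma(Y,\s)=-\alpha(-Y,\s)$ from $\gamma(X)=-\alpha(X^{*})$, and $\delta(Y,\s)=-\delta(-Y,\s)$ from $\delta(X)=-\delta(X^{*})$ together with Definition \ref{def:manolescudefns1}; here one uses that $\alpha,\beta,\gamma,\delta$ are well-defined on $\mathfrak{E}$ and hence insensitive to the formal (de)suspensions present in the duality identification. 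The main obstacle is the first step --- the geometric identification $\mathit{SWF}(-Y,\s)\simeq \mathit{SWF}(Y,\s)^{*}$ with correct gradings --- which is a genuine gauge-theoretic input rather than a formal consequence; everything after it is homological algebra.
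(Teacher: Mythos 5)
The paper does not give its own proof of this theorem; it is stated as an imported result from \cite{ManolescuPin}, and the paper elsewhere (in the proofs of Theorem \ref{thm:easyineq} and Proposition \ref{prop:dcalc}) uses exactly the two facts you isolate: $\alpha(X)=-\gamma(X^*)$, $\beta(X)=-\beta(X^*)$, and $\delta(X)=-\delta(X^*)$ from \cite[Proposition 2.13]{ManolescuPin}, together with the identification of $\mathit{SWF}(-Y,\s)$ with the equivariant Spanier--Whitehead dual of $\mathit{SWF}(Y,\s)$. Your reconstruction of the argument is correct and is the standard one; a small terminological caveat is that the homology/cohomology swap $\tilde{H}^G_*(X^*)\cong\tilde{H}_G^{-*}(X)$ is equivariant Spanier--Whitehead (Alexander) duality rather than universal coefficients, and in the $S^1$-equivariant version of the argument it is the $v$-towers that are replaced by $U$-towers, not $q$-towers; neither affects the substance.
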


We also recall that chain local equivalence classes behave well under connected sum:

\begin{fact}[\cite{betaseifert}]\label{fct:homf}
The map $\theta^H_3 \rightarrow \CEL$ given by $Y \rightarrow [\mathit{SWF}(Y,\s)]_{cl}$ is a homomorphism. 
\end{fact}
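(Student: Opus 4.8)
The plan is to verify two things: that $Y\mapsto[\mathit{SWF}(Y,\s)]_{cl}$ is well-defined on homology cobordism classes, and that this assignment carries connected sum to the tensor-product multiplication on $\mathfrak{CLE}$.

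\emph{Well-definedness.} Suppose $W$ is an integral homology cobordism from $Y_1$ to $Y_2$. Then $b_2(W)=0$, and $W$ carries a (unique) spin structure $\mathfrak{t}$ restricting to the spin structures of $Y_1$ and $Y_2$. By Theorem \ref{thm:ManolescuSWF}, $(W,\mathfrak{t})$ induces a map $\mathit{SWF}(Y_1,\s_1)\to\mathit{SWF}(Y_2,\s_2)$ in $\mathfrak{E}$ that is a $G$-homotopy equivalence on $S^1$-fixed-point sets; applying the same statement to the reversed cobordism $-W$ (also a homology cobordism, now from $Y_2$ to $Y_1$) produces a map $\mathit{SWF}(Y_2,\s_2)\to\mathit{SWF}(Y_1,\s_1)$ with the same property. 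By Definition \ref{def:loceq}, $\mathit{SWF}(Y_1,\s_1)\equiv_l\mathit{SWF}(Y_2,\s_2)$, so they have the same class in $\mathfrak{LE}$, hence, by Theorem \ref{thm:gcwstruc}, the same class in $\mathfrak{CLE}$. Thus $[\mathit{SWF}(-)]_{cl}$ descends to a map on $\theta^H_3$.

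\emph{Behavior under connected sum.} The geometric input is the gluing identification $\mathit{SWF}(Y_1\#Y_2,\s_1\#\s_2)=\mathit{SWF}(Y_1,\s_1)\wedge\mathit{SWF}(Y_2,\s_2)$ as objects of $\mathfrak{E}$, which follows from the behavior of the finite-dimensional approximation of the Seiberg--Witten flow under connected sum, the desuspension corrections being additive. Applying the map $\mathfrak{E}\to\mathfrak{CE}$ of Theorem \ref{thm:gcwstruc} together with the smash-product formula (\ref{eq:smashform}), we obtain $C^{CW}_*(\mathit{SWF}(Y_1\#Y_2),\p)\simeq C^{CW}_*(\mathit{SWF}(Y_1),\p)\otimes_\f C^{CW}_*(\mathit{SWF}(Y_2),\p)$ in $\mathfrak{CE}$. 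Since $\mathfrak{LE}\to\mathfrak{CLE}$ is a group homomorphism and multiplication on $\mathfrak{CLE}$ is given by the tensor product, this yields $[\mathit{SWF}(Y_1\#Y_2)]_{cl}=[\mathit{SWF}(Y_1)]_{cl}\cdot[\mathit{SWF}(Y_2)]_{cl}$. For completeness, one checks the unit and inverses: $S^3$ bounds a homology ball, so $\mathit{SWF}(S^3)=S^0=(\f,0,0)$, the identity of $\mathfrak{CLE}$; and $\mathit{SWF}(-Y,\s)=\mathit{SWF}(Y,\s)^*$ by Spanier--Whitehead duality \cite{ManolescuPin}, which maps to the dual chain complex, the inverse in $\mathfrak{CLE}$.

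The main obstacle is the connected-sum formula for $\mathit{SWF}$ itself, namely comparing the Conley indices of the finite-dimensional Seiberg--Witten flows on $Y_1$, $Y_2$, and $Y_1\#Y_2$ while keeping track of the $G$-representations one desuspends by; this is the only genuinely geometric step. Everything downstream of it --- well-definedness on $\theta^H_3$ and the passage from $\mathfrak{E}$ to $\mathfrak{CLE}$ --- is formal, relying only on Theorem \ref{thm:gcwstruc} and the chain-level constructions of Section \ref{sec:2}.
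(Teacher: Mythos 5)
The paper does not prove this Fact---it cites \cite{betaseifert}---so there is no in-house argument for a direct comparison. Assessing your proposal on its own terms: the well-definedness half is correct and standard, and once the gluing identification is granted, the passage through Theorem \ref{thm:gcwstruc} and formula (\ref{eq:smashform}) is also correct. The gap is concentrated in the one sentence asserting $\mathit{SWF}(Y_1\#Y_2)\simeq \mathit{SWF}(Y_1)\wedge \mathit{SWF}(Y_2)$ in $\mathfrak{E}$, which you attribute to ``the behavior of the finite-dimensional approximation of the Seiberg--Witten flow under connected sum.'' That sentence \emph{is} the content of the Fact, not a lemma one can wave at. Proving it requires relating the isolated invariant set for the finite-dimensional Seiberg--Witten flow on $Y_1\#Y_2$ to the product flow for $Y_1\times Y_2$, identifying the Conley index of the product flow with the smash product of the Conley indices, and tracking the $\mathbb{R}$-, $\tilde{\mathbb{R}}$-, and $\mathbb{H}$-desuspension corrections additively---all while verifying that the gluing construction is $\mathrm{Pin}(2)$-equivariant, not merely $S^1$-equivariant. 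You correctly flag this as ``the only genuinely geometric step,'' but naming an obstacle does not discharge it, and nothing in Sections \ref{sec:2}--\ref{sec:gauge} of this paper supplies the gluing theorem.

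One more observation: for the homomorphism to $\mathfrak{CLE}$ the full stable equivalence in $\mathfrak{E}$ is more than you need; a chain local equivalence in each direction would suffice. One might hope to produce these from the cobordism maps of Theorem \ref{thm:ManolescuSWF} applied to the standard connected-sum cobordism from $Y_1\amalg Y_2$ to $Y_1\#Y_2$. But Theorem \ref{thm:ManolescuSWF} as stated gives maps only for cobordisms with connected ends, so this route also requires extending the cobordism-map machinery to disjoint unions (equivalently, to relative Bauer--Furuta maps), which again is not in the paper. Either way the geometric input is missing, and the proposal as written does not close it.
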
 

We can now prove Theorems \ref{thm:standineq} and \ref{thm:ineq2} of the Introduction.

\noindent
\emph{Proof of Theorem \ref{thm:standineq}.}
By Definition \ref{def:mano3invars}, $M(Y_1\#Y_2,\s_1\#\s_2)=M(\mathit{SWF}(Y_1\#Y_2,\s_1 \# \s_2))$, where $M$ is any of $\alpha,\beta$ and $\gamma$.  By Fact \ref{fct:homf}, $M(\mathit{SWF}(Y_1\#Y_2,\s_1\#\s_2))=M(\mathit{SWF}(Y_1,\s_1)\wedge \mathit{SWF}(Y_2,\s_2))$.  Theorems \ref{thm:easyineq} and \ref{thm:hardineq} applied to $\mathit{SWF}(Y_1,\s_1)$ and $\mathit{SWF}(Y_2,\s_2)$ yield Theorem \ref{thm:standineq}.  \qed \\

\noindent
\emph{Proof of Theorem \ref{thm:ineq2}.}
It follows from Definition \ref{def:mano3invars} and Proposition \ref{prop:adc} that $\delta(Y,\s) \leq \alpha(Y,\s)$.  The inequality $\gamma(Y,\s) \leq \delta(Y,\s)$ then follows from Theorem \ref{thm:duality}.\qed \\

Next, we specialize to Seifert spaces to acquire Theorem \ref{thm:main} of the Introduction.  First we recall some notation associated with Seifert spaces.

We let the \emph{standard fibered torus of type} $(a,b)$, for integers $a>0,b$, be the mapping torus of the automorphism of the disk $D^2$ given by rotation by $2\pi b/a$.  Let $D^2_{a}$ be the standard disk with orbifold structure where $\mathbb{Z}/a$ acts by rotation by $2\pi/a$; the origin is then an orbifold point, with multiplicity $a$.  The standard fibered torus is naturally a circle bundle over the orbifold $D^2_a$.  

Let $f: Y \rightarrow P$ be a circle bundle over an orbifold $P$, and $x \in P$ an orbifold point of multiplicity $a$.  If a neighborhood of the fiber of $x$ is equivalent, as an orbifold circle bundle, to the standard fibered torus of type $(a,b)$, we say that $Y$ has \emph{local invariant} $b$ at $x$.  

We call a closed three-manifold $Y$ a Seifert fiber space if it may be written as a disjoint union of circles, so that a neighborhood of any circle is homeomorphic to a standard fibered torus.  Such a decomposition gives $Y$ the structure of a circle bundle $Y \rightarrow P$, for $P$ an orbifold surface with singularities coming from the $\mathbb{Z}/a$ action above.  We restrict attention to Seifert spaces with \emph{orientable} base $|P|$ (The desingularization of the $P$)\footnote{There are also Seifert fibered rational homology spheres with base orbifold $\mathbb{RP}^2$, and  some of them do not have a Seifert structure over $S^2$. These are not considered in this paper. None of them are integral homology spheres.  Furthermore, in order for a Seifert fiber space $Y$ to be a rational homology sphere, it must fiber over an orbifold with underlying space either $\mathbb{RP}^2$ or $S^2$.}.  

For $a_i \in \mathbb{Z}_{\geq 1}$, let $S(a_1,\dots,a_k)$ denote the orbifold with underlying space $S^2$ and $k$ orbifold points, with corresponding multiplicities $a_1,\dots,a_k$.  Fix $b_i \in \mathbb{Z}$ with $\gcd(a_i,b_i)=1$ for all $i$.  We let $\Sigma(b,(b_1,a_1),\dots, (b_k,a_k))$ denote the circle bundle over $S(a_1,\dots,a_k)$ with first Chern class $b$ and local invariants $b_i$.  We define the \emph{degree} of the Seifert space $\Sigma(b,(b_1,a_1), \dots, (b_k,a_k))$ by $b+\sum \frac{b_i}{a_i}$.  Finally, we call a space $\Sigma(b,(b_1,a_1),\dots,(b_k,a_k))$ negative (positive) if $b+\sum \frac{b_i}{a_i}$ is negative (positive).  We note that the orientation reversal of a positive space is negative, and vice versa.  The spaces $\Sigma(b,(b_1,a_1), \dots, (b_k,a_k))$ of nonzero degree are rational homology spheres. 

\begin{defn}\label{def:projtype}
We call a negative Seifert rational homology three-sphere with spin structure $(Y,\s)$ \emph{of projective type} if, for some constants $d,n,m,a_i,m_i$,
\begin{equation}\label{eq:hfcond}
\mathit{HF^{+}}(Y,\mathfrak{s})=\bt_{d}\oplus \bt_{-2n+1}(m) \oplus \bigoplus_{i \in I} \bt_{a_i}(m_i)^{\oplus 2}.
\end{equation}
\end{defn}
We focus on Seifert spaces of projective type because their chain local equivalence class is simplest, as given by the following theorem from \cite{betaseifert}.  Its proof is based on the description of the monopole moduli space on Seifert spaces from \cite{MOY}.
\begin{thm}[\cite{betaseifert}]\label{thm:loctyp}
Let $(Y,\s)$ be a negative Seifert rational homology three-sphere of projective type as in (\ref{eq:hfcond}), and $d(Y,\s)$ the Heegaard Floer correction term.  
Then, for some $s \in \mathbb{Q}$,
\begin{equation}\label{eq:chainprojtypeseif}
[\mathit{SWF}(Y,\s)]_{cl}= [(\tilde{\Sigma}(S^{d(Y)+2s-1}\amalg S^{d(Y)+2s-1}),0,s/2)]_{cl}.
\end{equation}
If $Y$ is an integral homology three-sphere, the quantity $s$ is $n=\bar{\mu}(Y)$.  Moreover, for $(Y,\s)$ any negative Seifert integral homology sphere, not necessarily of projective type, \[\alpha(Y)=E(d(Y)/2+\bar{\mu}(Y))-\bar{\mu}(Y),\]
\[\beta(Y)=\gamma(Y)=-\bar{\mu}(Y).\]
\end{thm}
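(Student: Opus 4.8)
The plan is to reduce the theorem to (i) the geometry of the Seiberg--Witten flow on a negative Seifert space, which makes $\mathit{SWF}(Y,\s)$ $j$-split with an explicitly controlled reducible, and (ii) the chain-level machinery of Sections~\ref{sec:ineqs} and~\ref{sec:smash} together with the Gysin sequence (\ref{eq:gysin}). The geometric input I would take from Mrowka--Ozsv\'ath--Yu \cite{MOY}, as used by Manolescu \cite{ManolescuPin}: for a Seifert metric and a suitable perturbation the finite-dimensional approximation to the Seiberg--Witten equations has reducible critical orbit carrying the expected $S^1$-fixed data, all irreducible critical points have free $G$-orbits, and the charge-conjugation symmetry $j$ pairs the irreducibles with no fixed point --- negativity of the orbifold degree is exactly what rules out $j$-fixed irreducibles. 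The first step is to package this into the statements that (a) $\mathit{SWF}(Y,\s)$ is a $j$-split space of type SWF, and (b) on the chain level its CW complex is chain homotopy equivalent to a suspensionlike complex whose free part over $\G$ is induced up from a $C^{CW}_*(S^1)$-complex computing a grading shift of $\mathit{HF}^+(Y,\s)$, via the identifications $\mathit{HF}^+\cong\widecheck{HM}\cong\mathit{SWFH}^{S^1}$ of \cite{CGH1,Taubes,KLT1,LM}; the shift is the one detected by $\bar{\mu}(Y)$.

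Granting this, I would prove the ``moreover'' clause first, as it is the most formal. For any $j$-split space $X$ of type SWF at level $t$ the cofiber sequence $X^{S^1}\to X\to X/X^{S^1}$ together with the fact that $\tilde H^G_*(X/X^{S^1})$ is induced from $S^1$ --- so that $q$ acts on it as zero, since $\mathrm{res}^G_{S^1}(q)=0$ by Fact~\ref{fct:bs1bg} --- shows that the connecting map has image inside $\ker q\subset\tilde H^G_*(X^{S^1})=H_{*-t}(BG)$, i.e. in degrees $\equiv t\bmod 4$. Hence $\iota_*$ is injective in degrees $\equiv t+1$ and $\equiv t+2\bmod 4$, so $b(X)=c(X)=t$ and $\beta(\mathit{SWF}(Y,\s))=\gamma(\mathit{SWF}(Y,\s))$; evaluating $d$ from the $S^1$-Borel homology and using the $\bar{\mu}$-shift from (b) gives the common value $-\bar{\mu}(Y)$. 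For $\alpha$, the connecting map into degree $\equiv t\bmod 4$ need not vanish, and its rank is controlled by how the $U$-tower of $\tilde H^{S^1}_*(X_+)$ is squeezed through $q$; carrying out the same analysis of the characteristic-class map $\kappa_*$ as in the proof of Theorem~\ref{thm:premain} for $n=1$, one finds that $a(\mathit{SWF}(Y,\s))$ exceeds $d(\mathit{SWF}(Y,\s))$ by $0$ or $2$ according to the parity of $d(Y)/2+\bar{\mu}(Y)$, which is exactly $\alpha(Y)=E(d(Y)/2+\bar{\mu}(Y))-\bar{\mu}(Y)$.

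For the chain local equivalence formula under the projective-type hypothesis, I would use that in the $j$-split model the doubled towers $\mathcal{T}^+_{a_i}(m_i)^{\oplus 2}$ of (\ref{eq:hfcond}) come from $\G$-free summands attached to the reducible tower by maps that are null on the fixed-point set, so each may be split off by chain local equivalences in both directions; after discarding them, the remaining suspensionlike complex has free part a single tower built from $\mathcal{T}^+_{-2n+1}(m)$ lying over the reducible tower built from $\mathcal{T}^+_d$, and comparing its differentials with Example~\ref{ex:hsusp2} and Fact~\ref{fct:interpdtow} identifies it, up to the allowed $\frac{1}{2}\mathbb{H}$-desuspensions, with $(\tilde{\Sigma}(S^{d(Y)+2s-1}\amalg S^{d(Y)+2s-1}),0,s/2)$, the value of $s$ being forced by the grading of the reducible. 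When $Y$ is an integral homology sphere I would pin down $s=\bar{\mu}(Y)$ by computing that grading from the spectral flow of the family of Dirac operators on the Seifert plumbing, which is precisely the combinatorial quantity defining $\bar{\mu}$.

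The main obstacle is the first step: extracting the $j$-split structure together with the precise attaching data of the irreducible cells onto the reducible is genuine input from the Seifert geometry, not formal algebra. The second delicate point is the identification $s=\bar{\mu}(Y)$, which concerns the grading of the reducible rather than merely the homotopy type of $\mathit{SWF}(Y,\s)$ and so forces a matching of the Neumann--Siebenmann invariant with a spectral-flow computation; everything else is the algebra of Sections~\ref{sec:ineqs}--\ref{sec:smash} together with the Gysin sequence and Fact~\ref{fct:gysin1plusj}.
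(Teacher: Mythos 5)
The paper does not itself prove this theorem: it is imported verbatim from the author's earlier paper \cite{betaseifert}, so there is no in-paper argument to compare yours against. Evaluating your sketch on its own terms: the cleanest and genuinely complete piece is the argument for $\beta=\gamma$. Because $X/X^{S^1}$ is $G$-induced from an $S^1$-space (that is what $j$-split means), $\tilde H^G_*(X/X^{S^1})$ is an $H^*(BG)$-module on which the action factors through $\mathrm{res}^G_{S^1}$, and $\mathrm{res}^G_{S^1}(q)=0$ by Fact~\ref{fct:bs1bg}; hence the connecting map $\partial\colon\tilde H^G_*(X/X^{S^1})\to\tilde H^G_{*-1}(X^{S^1})$ lands in $\ker q\subset H_{*-t}(BG)$, which is concentrated in degrees $\equiv t\bmod 4$. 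This forces $\iota_*$ to be injective in degrees $\equiv t+1,t+2\bmod 4$, so $b(X)=c(X)=t$ for any $j$-split $X$ at level $t$, not only for projective type, exactly as the ``moreover'' clause requires. The parallel analysis for $\alpha$, using that $\kappa_*$ factors through $f_*\colon H_*(BS^1)\to H_*(BG)$ and hence can only survive in degrees divisible by $4$, is also sound, though you could make more explicit why only $d$ (and not finer data about $X_+/S^1$) determines the top $4k$ in which $\kappa_*$ is nonzero; the $v$- and $U$-module structures supply the needed downward propagation.

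Two load-bearing steps remain assertions, and you flag them honestly: that for a Seifert metric and suitable perturbation $\mathit{SWF}(Y,\s)$ really is $j$-split with the attaching data you describe (this is the substantive \cite{MOY}-based content of \cite{betaseifert}), and the identification $s=\bar\mu(Y)$, which is a spectral-flow/Neumann--Siebenmann comparison, not formal algebra. In addition, the claim that the doubled towers $\bt_{a_i}(m_i)^{\oplus 2}$ of (\ref{eq:hfcond}) correspond to $\G$-free \emph{chain-level} direct summands that detach by chain local equivalences in both directions is not a formal consequence of the $\f[U]$-module decomposition of $HF^+$: a splitting of Borel homology does not automatically lift to a compatible splitting of the $\G$-CW complex respecting the fixed-point inclusion, and one must still rule out, say, a free summand whose differential hits the reducible tower nontrivially yet contributes a doubled tower to $HF^+$. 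That lift is precisely the geometric-plus-algebraic input from \cite{betaseifert} that you would need to carry out. So the proposal is a faithful roadmap of the expected argument, but as written it defers the hard steps to the same reference the paper cites.
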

Theorem \ref{thm:loctyp} explains why spaces $Y$ of projective type are thus named; they satisfy: \[(\mathit{SWF}(Y,\s)/(\mathit{SWF}(Y,\s))^{S^1})/G \] is a copy of $\mathbb{CP}^N$ for some $N$.  Applying Theorem \ref{thm:premain}, we obtain Theorem \ref{thm:main} of the Introduction:\\

\noindent
\emph{Proof of Theorem \ref{thm:main}.}
By Theorem \ref{thm:loctyp} and Fact \ref{fct:homf}, we have:
\begin{equation}\label{eq:seifconn}[SWF(Y_1 \# \dots \# Y_n)]_{cl} =[(\wedge_{i=1}^n(\tilde{\Sigma}(S^{2(d(Y_i)/2+\bar{\mu}(Y_i))-1}\amalg S^{2(d(Y_i)/2+\bar{\mu}(Y_i))-1}),0,\bar{\mu}(Y_1 \#\dots\#Y_n)/2)]_{cl}.\end{equation} 
In Theorem \ref{thm:premain}, we computed $\alpha, \beta,$ and $\gamma$ for the right-hand side of (\ref{eq:seifconn}), completing the proof.   
\qed

\section{Applications}\label{sec:app}
We use Theorem \ref{thm:main} to obtain Theorem \ref{thm:nonSeifert} of the Introduction:

\noindent
\emph{Proof of Theorem \ref{thm:nonSeifert}.}  Define $\tilde{\delta}(Y_i)$ by $d(Y_i)/2+\bar{\mu}(Y_i)$.  Assume without loss of generality that $\tilde{\delta}(Y_1) \leq \dots \leq \tilde{\delta}(Y_n)$.  We have, by Theorem \ref{thm:main}: 
\[\beta(Y)-\gamma(Y)=E(\sum^{n-1}_{i=1} \tilde{\delta}(Y_i))-E(\sum^{n-2}_{i=1} \tilde{\delta}(Y_i)).\]
Since we assumed $\tilde{\delta}(Y_i) \geq 2$ for at least two distinct $i$, we have $\tilde{\delta}(Y_{n-1})\geq 2$, so:
\[\beta(Y)-\gamma(Y)\geq 2.\]
Negative Seifert integral homology spheres $Z$ have $\beta(Z)-\gamma(Z)=0$, so $Y$ is not homology cobordant to any negative Seifert integral homology sphere.  

Using Theorem \ref{thm:main} again, we similarly obtain $\alpha(Y) - \beta(Y) \geq 2$.  But positive Seifert spaces have $\alpha(Z)=\beta(Z)$, using Theorems \ref{thm:duality} and \ref{thm:loctyp}.  Thus $Y$ is not homology cobordant to any positive Seifert space, completing the proof.\qed

\begin{defn}\label{def:hsplit}
We call a rational homology three-sphere with spin structure $(Y,\s)$ \emph{$H$-split} if $\alpha(Y,\s)=\beta(Y,\s)=\gamma(Y,\s)$, in analogy to the concept of $K$-split from \cite{ManolescuK}.  We note from Theorem \ref{thm:standineq} that the subset $\theta_{H\text{--}\mathrm{split}}$ of $H$-split homology cobordism classes is a subgroup of $\theta_3^H$.\end{defn}

\begin{lem}\label{lem:deltadeterm}
Let $Y=Y_1 \# \dots \# Y_n$ be a connected sum of negative Seifert integral homology spheres of projective type $Y_i$, with $\tilde{\delta}(Y_1) \leq \dots \leq \tilde{\delta}(Y_n)$.  Then $\tilde{\delta}(Y_n)$ is determined by $[Y]\in \theta^H_3$.  That is, $\tilde{\delta}(Y_n)$ is a homology cobordism invariant of $Y_1 \# \dots \# Y_n$ among connected sums of negative Seifert integral homology spheres of projective type.   
\end{lem}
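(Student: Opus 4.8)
The plan is to express $\tilde\delta(Y_n)$ as an explicit function of $\delta(Y)$, $\beta(Y)$, and $\beta(Y\#\Sigma(2,3,7))$, each of which depends only on $[Y]\in\theta^H_3$ — the last because $[Y\#\Sigma(2,3,7)]=[Y]+[\Sigma(2,3,7)]$ and $\beta$ is a homology cobordism invariant. Throughout I will write $\tilde\delta_i=\tilde\delta(Y_i)$, $S_k=\sum_{i=1}^k\tilde\delta_i$, and $M=\sum_{i=1}^n\bar\mu(Y_i)$, and use Theorem~\ref{thm:main} in the forms $\delta(Y)=S_n-M$ and $\beta(Y)=E(S_{n-1})-M$.

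First I would isolate the only genuine ambiguity. Using that $E(x)-x$ equals $0$ when $x$ is even and $1$ when $x$ is odd, one computes $\delta(Y)-\beta(Y)=S_n-E(S_{n-1})=\tilde\delta_n-\bigl(E(S_{n-1})-S_{n-1}\bigr)$, that is,
\[\tilde\delta_n=\bigl(\delta(Y)-\beta(Y)\bigr)+\epsilon,\qquad \epsilon:=S_{n-1}\bmod 2\in\{0,1\}.\]
So the whole problem reduces to recovering the single bit $\epsilon=\bigl(\sum_{i=1}^{n-1}\tilde\delta_i\bigr)\bmod 2$ from $[Y]$. The hard part is precisely this: $\epsilon$ is invisible to $\alpha(Y),\beta(Y),\gamma(Y),\delta(Y)$ on their own — for instance the values $\{\tilde\delta_i\}=\{2,3\}$ and $\{\tilde\delta_i\}=\{1,4\}$ produce identical quadruples $(\alpha,\beta,\gamma,\delta)$ but different $\tilde\delta_n$ — so the argument is forced to exploit more of the structure carried by $[Y]$ than just those four numbers.

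To pin down $\epsilon$ I would connect-sum with $Y':=\Sigma(2,3,7)$, which by Section~\ref{sec:intro} is a negative Seifert integral homology sphere of projective type with $\tilde\delta(Y')=1$ and $\bar\mu(Y')=1$. Then $Y\#Y'=Y_1\#\cdots\#Y_n\#Y'$ is again a connected sum of negative Seifert integral homology spheres of projective type, so Theorem~\ref{thm:main} applies to it; since $\tilde\delta(Y')=1\le\tilde\delta_n$, the largest of the $n+1$ parameters of $Y\#Y'$ is still $\tilde\delta_n$ and the sum of all but the largest is $(1+S_n)-\tilde\delta_n=1+S_{n-1}$, whence $\beta(Y\#Y')=E(1+S_{n-1})-(M+1)$ and
\[\beta(Y\#Y')-\beta(Y)=E(S_{n-1}+1)-E(S_{n-1})-1.\]
Because $E(x+1)-E(x)$ equals $2$ for $x$ even and $0$ for $x$ odd, this difference is $+1$ when $S_{n-1}$ is even and $-1$ when $S_{n-1}$ is odd; equivalently $\epsilon=\tfrac12\bigl(1-\beta(Y\#Y')+\beta(Y)\bigr)$. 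Substituting into the previous display yields the closed formula
\[\tilde\delta(Y_n)=\delta(Y)-\frac{\beta(Y)+\beta(Y\#\Sigma(2,3,7))-1}{2},\]
whose right-hand side depends only on $[Y]\in\theta^H_3$, which is exactly the assertion.

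I expect the only delicate point is the one flagged above: recognizing that the invariants of $Y$ alone are insufficient, and choosing an auxiliary Seifert space (any fixed negative Seifert space of projective type with odd $\tilde\delta$ will do) to detect the missing parity bit. Everything else is elementary manipulation of $E(x)=2\lfloor(x+1)/2\rfloor$. I would also record the boundary case $n=1$ (there $S_0=0$, $\epsilon=0$, and the formula reads $\tilde\delta(Y_1)=\delta(Y)-\beta(Y)$) and note that a degenerate summand with $\tilde\delta_i=0$ is harmless, being trivial in $\mathfrak{CLE}$ up to a grading shift.
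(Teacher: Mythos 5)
Your strategy is the same as the paper's: both arguments isolate the ambiguity to the single parity bit $\epsilon=\bigl(\sum_{i=1}^{n-1}\tilde{\delta}(Y_i)\bigr)\bmod 2$ and then detect $\epsilon$ by connected-summing with a fixed negative Seifert integral homology sphere of projective type having $\tilde{\delta}=1$. The paper uses $\Sigma(2,3,11)$ ($d=2$, $\bar{\mu}=0$) and compares $\alpha-\beta$ before and after; you use $\Sigma(2,3,7)$ ($d=0$, $\bar{\mu}=1$) and compare $\delta-\beta$ together with the change in $\beta$. Both choices of auxiliary space and both invariant combinations are legitimate, and your closed-form expression is a pleasant refinement of the paper's case analysis.

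There is, however, a genuine gap in the degenerate case. Your computation $\beta(Y\#\Sigma(2,3,7))=E\bigl(1+\sum_{i=1}^{n-1}\tilde{\delta}(Y_i)\bigr)-\sum\bar{\mu}(Y_i)-1$ relies on $\tilde{\delta}(Y_n)\geq\tilde{\delta}(\Sigma(2,3,7))=1$, so that $\tilde{\delta}(Y_n)$ remains the largest of the $n+1$ parameters. When $\tilde{\delta}(Y_n)=0$ (equivalently all $\tilde{\delta}(Y_i)=0$), the largest parameter of $Y\#\Sigma(2,3,7)$ is instead $1$, and the correct value is $\beta(Y\#\Sigma(2,3,7))=E(0)-\sum\bar{\mu}(Y_i)-1$; plugging the true value of $\beta(Y\#\Sigma(2,3,7))$ into your closed formula then returns $\tilde{\delta}(Y_n)=1$ rather than the correct $0$. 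Your remark that a summand with $\tilde{\delta}_i=0$ is harmless applies to $i<n$ but does not cover $i=n$. The repair is exactly the paper's first sentence: note that $\tilde{\delta}(Y_n)=0$ if and only if $Y$ is $H$-split (detectable from $[Y]$ via $\alpha(Y)=\gamma(Y)$, or $\delta(Y)=\beta(Y)$ together with $\alpha(Y)=\beta(Y)$), dispatch that case separately, and only then invoke the standing assumption $\tilde{\delta}(Y_n)\geq 1$ under which your formula is valid.
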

\begin{proof}  We show how to determine $\tilde{\delta}(Y_n)$ from $Y$.  First, we note that $Y$ is $H$-split if and only if $\tilde{\delta}(Y_n)=0$ using (\ref{eq:aadd})-(\ref{eq:dadd}), so we may assume from now on that $\tilde{\delta}(Y_n) \geq 1$.  Consider $Y \# \Sigma(2,3,11)$ (recalling that $d(\Sigma(2,3,11))=2$, and $\bar{\mu}(\Sigma(2,3,11))=0$).  We have:
\begin{align}\label{eq:ab1}
\alpha(Y)-\beta(Y)&=E(\sum^n_{i=1} \tilde{\delta}(Y_i))-E(\sum^{n-1}_{i=1} \tilde{\delta}(Y_i))
\\ \label{eq:ab2}
\alpha(Y \# \Sigma(2,3,11))-\beta(Y\# \Sigma(2,3,11))&=E(\sum^n_{i=1} \tilde{\delta}(Y_i)+1)-E(\sum^{n-1}_{i=1} \tilde{\delta}(Y_i)+1)
\end{align}  
If $\tilde{\delta}(Y_n)$ is even, then the difference in (\ref{eq:ab1}) is $\tilde{\delta}(Y_n)$, while if $\tilde{\delta}(Y_n)$ is odd, (\ref{eq:ab1}) is $\tilde{\delta}(Y_n)+1$ if $\sum^{n-1}_{i=1} \tilde{\delta}(Y_i)$ is even, or $\tilde{\delta}(Y_n)-1$ otherwise.  If $\tilde{\delta}(Y_n)$ is even, the difference in (\ref{eq:ab2}) is $\tilde{\delta}(Y_n)$, while if $\tilde{\delta}(Y_n)$ is odd, (\ref{eq:ab2}) is $\tilde{\delta}(Y_n)-1$ if $\sum^{n-1}_{i=1} \tilde{\delta}(Y_i)$ is even, or $\tilde{\delta}(Y_n)+1$ otherwise.  

In particular, we observe that $\alpha(Y), \beta(Y),$ $\alpha(Y\# \Sigma(2,3,11))$, and $\beta(Y\# \Sigma(2,3,11))$ determine $\tilde{\delta}(Y_n)$.  \end{proof}
    
We show the existence of a summand of a certain subgroup of the homology cobordism group.  Let $\theta_{SFP}$ denote the subgroup of $\theta^H_3$ generated by negative Seifert spaces of projective type.  

\begin{thm}\label{thm:minisum} Let $\theta_{H\text{--}\mathrm{split},SFP}=\theta_{H\text{--}\mathrm{split}} \cap \theta_{SFP}$.  The group $\theta_{SFP}$ splits into a direct sum
\begin{equation}\label{eq:Seifertsplitting1} \theta_{SFP}=\theta_{H\text{--}\mathrm{split},SFP}\oplus \bigoplus_{\{x>0 \mid \exists Y,\, \tilde{\delta}(Y)=x \} } \mathbb{Z}. \end{equation}
\end{thm}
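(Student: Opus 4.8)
The strategy is to study the homomorphism $\Psi:\theta_{SFP}\to\cle$, $Y\mapsto[\mathit{SWF}(Y,\s)]_{cl}$, of Fact \ref{fct:homf}, and to identify its image as an explicitly based free abelian group. Put $h:=[(S^0,0,\tfrac12)]_{cl}\in\cle$; a direct computation of $a,b,c,d$ for $S^0=(\tilde{\mathbb{R}}^0)^+$ gives $\alpha(h)=\beta(h)=\gamma(h)=\delta(h)=-1$, so each $h^e$ is $H$-split, and $h$ has infinite order since $\delta(h^e)=-e$. For a generator $W$ of $\theta_{SFP}$, Theorem \ref{thm:loctyp} and Fact \ref{fct:interpdtow} give
\begin{equation*}
\Psi(W)=[T_{\tilde{\delta}(W)}(0)]_{cl}\cdot h^{\bar{\mu}(W)}\quad\text{in }\cle,\qquad \tilde{\delta}(W)=\tfrac{d(W)}{2}+\bar{\mu}(W)\in\mathbb{Z}_{\geq 0},
\end{equation*}
with $T_0(0)$ the identity of $\cle$. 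Hence $\Psi(\theta_{SFP})$ lies in the subgroup $\mathcal{P}\subseteq\cle$ generated by $h$ and $\{[T_x(0)]_{cl}:x\in S\}$, where $S=\{x>0:\exists\,W,\ \tilde{\delta}(W)=x\}$.

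The crux is to show that $h$ and the $[T_x(0)]_{cl}$ ($x\geq 1$) are $\mathbb{Z}$-linearly independent in $\cle$, so that $\mathcal{P}=\mathbb{Z}h\oplus\bigoplus_{x\in S}\mathbb{Z}[T_x(0)]_{cl}$. I would prove this by induction on $N+M$ applied to an equality
\begin{equation*}
\bigotimes_{i=1}^{N}T_{a_i}(0)=\Big(\bigotimes_{j=1}^{M}T_{b_j}(0)\Big)\cdot h^{e}\quad\text{in }\cle,\qquad a_i,b_j\geq 1,\ e\in\mathbb{Z},
\end{equation*}
obtained from a hypothetical relation $\prod_x[T_x(0)]_{cl}^{n_x}h^{e}=1$ after collecting positive and negative exponents; the goal is $\{a_i\}=\{b_j\}$ as multisets and $e=0$. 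If $N=0$ (the case $M=0$ is symmetric), the right side is a nontrivial product of $T_{b_j}(0)$'s equal to $h^{-e}$; but by Theorem \ref{thm:premain} every such nontrivial product satisfies $\alpha\geq\gamma+2$ (compare $E(\sum b_j)$ with $E$ of the sum after dropping the one or two largest entries, using $b_j\geq 1$), hence is not $H$-split, while $h^{-e}$ is, forcing $M=0$ and then $e=0$. If $N,M\geq 1$, set $u=\bigotimes_iT_{a_i}(0)$ and $w=(\bigotimes_jT_{b_j}(0))h^{e}$; using Theorem \ref{thm:premain} and the identity $E(t)+E(t+1)=2t+2$ one computes
\begin{equation*}
\max_i a_i=\tfrac12\big[(\alpha(u)-\beta(u))+(\alpha(u\otimes T_1(0))-\beta(u\otimes T_1(0)))\big],
\end{equation*}
and the same expression with $w$ in place of $u$ equals $\max_j b_j$ (the $h^{e}$-shift cancels in $\alpha-\beta$). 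As $u$ and $w$ represent the same class, $\max_i a_i=\max_j b_j=:m$; smashing both sides with $[T_m(0)]_{cl}^{-1}$ (valid since $\cle$ is a group) removes one $T_m(0)$ from each side and reduces $N+M$ by $2$, and the induction closes.

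Granting this, compose $\Psi$ with $\mathcal{P}\to\mathcal{P}/\mathbb{Z}h\cong\bigoplus_{x\in S}\mathbb{Z}$ to get a homomorphism $\Phi:\theta_{SFP}\to\bigoplus_{x\in S}\mathbb{Z}$. It is onto: picking, for each $x\in S$, a generator $Y_x$ with $\tilde{\delta}(Y_x)=x$, the element $\Phi(Y_x)$ is the $x$-th standard basis vector. Its kernel is exactly $\theta_{H\text{--}\mathrm{split},SFP}$: indeed $\Phi(Y)=0\iff\Psi(Y)\in\mathbb{Z}h\iff\Psi(Y)=h^{e}$ for some $e\iff\alpha(Y)=\beta(Y)=\gamma(Y)=-e\iff Y$ is $H$-split, using that $\alpha,\beta,\gamma$ depend only on the $\cle$-class. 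Since $\bigoplus_{x\in S}\mathbb{Z}$ is free, $\Phi$ splits with section generated by $\{[Y_x]\}_{x\in S}$, giving
\begin{equation*}
\theta_{SFP}=\ker\Phi\oplus\langle[Y_x]:x\in S\rangle=\theta_{H\text{--}\mathrm{split},SFP}\oplus\bigoplus_{x\in S}\mathbb{Z},
\end{equation*}
which is \eqref{eq:Seifertsplitting1}. The main obstacle is the linear-independence (Key) Lemma — specifically verifying that the $\max$-extraction of Lemma \ref{lem:deltadeterm} survives smashing with $T_1(0)$ and the $h^{e}$-discrepancy between the two sides, together with the base-case fact that a nontrivial product of $T$'s is never $H$-split; everything after it is the routine splitting of a surjection onto a free abelian group.
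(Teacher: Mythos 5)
Your proof is correct and reaches the theorem by essentially the same mechanism as the paper's: iteratively extract $\max_i\tilde{\delta}(Y_i)$ from $\alpha-\beta$ of the class and of the class tensored with a $\tilde{\delta}=1$ piece, cancel the top factor, and conclude that the multiset of positive $\tilde{\delta}$-values is a homology cobordism invariant, then split off the resulting free quotient. The paper packages this as well-definedness of a direct map $\psi:\theta_{SFP}\to\mathbb{Z}^\infty$ (with Lemma \ref{lem:deltadeterm} giving the max-extraction), while you package it as linear independence of $\{h\}\cup\{[T_x(0)]_{cl}\}_{x\geq 1}$ in $\cle$; these are equivalent, and your identity $\max_i a_i=\tfrac12[(\alpha(u)-\beta(u))+(\alpha(u\otimes T_1(0))-\beta(u\otimes T_1(0)))]$ via $E(t)+E(t+1)=2t+2$ is a genuinely cleaner formulation than the four-way parity case analysis in Lemma \ref{lem:deltadeterm}. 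One small remark: in your final paragraph the implication ``$Y$ is $H$-split $\Rightarrow\Psi(Y)\in\mathbb{Z}h$'' is not justified by ``$\alpha,\beta,\gamma$ depend only on the $\cle$-class'' (that gives only the converse); it requires Remark \ref{rmk:vanish} applied to $\Psi(Y)\cdot h^{-\alpha(\Psi(Y))}$, which shows a $\cle$-class with $\alpha=\gamma$ is a power of $h$ --- the paper is equally terse at this spot, and the needed fact is available.
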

\begin{proof}
Here the rightmost direct sum runs over all positive $x$ for which there exists a negative Seifert integral homology sphere $Y$ of projective type with $\tilde{\delta}(Y)=x$.  Let $H$ be the free abelian group with generators $e_i$, for each $i\in\mathbb{Z}_{>0}$.  The group $H$ is isomorphic to $\mathbb{Z}^\infty$.  

We define a homomorphism $\psi: \theta_{SFP} \rightarrow H$.  For $Y$ a negative Seifert integral homology sphere of projective type with $\tilde{\delta}(Y)>0$, we define $\psi(Y)=e_{\tilde{\delta}(Y)}$, while if $\tilde{\delta}(Y)=0$, we set $\psi(Y)=0$.  To define $\psi$ on all of $\theta_{SFP}$ we extend linearly.  To establish that $\psi$ is a homomorphism, we need only show that the set (with multiplicity) $\{ \tilde{\delta}(Y_1),\dots,\tilde{\delta}(Y_n) \}$ associated to $Y \sim Y_1 \# \dots \# Y_n$ is indeed a homology cobordism invariant of $Y$, i.e. that it does not depend on how we express $Y$ as a connected sum of Seifert integral homology spheres in $\theta_{SFP}$.  

Say we have an identity in $\theta_{SFP}$ among (not necessarily negative) Seifert spaces of projective type: 
\begin{equation}\label{eq:infg1}
Y_1 \# \dots \# Y_n \sim Z_1 \# \dots \# Z_m.
\end{equation}
We need to show $\sum \psi(Y_i) = \sum \psi(Z_i)$.  To do so, by rearranging (\ref{eq:infg1}) we may assume that all the $Y_i,Z_j$ are negative Seifert spaces.  
We assume without loss of generality that $\tilde{\delta}(Y_1) \leq \dots \leq \tilde{\delta}(Y_n)$ and $\tilde{\delta}(Z_1) \leq \dots \leq \tilde{\delta}(Z_m)$, and that $n \leq m$.  

By Lemma \ref{lem:deltadeterm}, $\tilde{\delta}(Y_n)=\tilde{\delta}(Z_m)$.  By Theorem \ref{thm:loctyp}, 
\[ [\mathit{SWF}(Z_m\# - Y_n)]_{cl}=[(S^0,0,\frac{\bar{\mu}(Z_m)-\bar{\mu}(Y_n)}{2})]_{cl}.\]  Thus, subtracting $Y_n$ from both sides of (\ref{eq:infg1}), we obtain:
\begin{equation}\label{eq:swfbreak} [\mathit{SWF}(Y_1 \# \dots \# Y_{n-1})]_{cl}=[ (\mathit{SWF}(Z_1 \# \dots \# Z_{m-1})) \wedge (S^0,0,\frac{\bar{\mu}(Z_m)-\bar{\mu}(Y_n)}{2})]_{cl}\end{equation}
The right-hand side of (\ref{eq:swfbreak}) is \[ [\mathit{SWF}((\#_{(\bar{\mu}(Y_n)-\bar{\mu}(Z_m))}\Sigma(2,3,5))\# Z_1\# \dots\# Z_{m-1})]_{cl},\]
using $d(\Sigma(2,3,5))=2$ and $\bar{\mu}(\Sigma(2,3,5))=-1$. 

We repeat the use of Lemma \ref{lem:deltadeterm} to find $\tilde{\delta}(Y_{n-i})=\tilde{\delta}(Z_{m-i})$ for all $i \leq n$.  This gives finally that $Z_1 \# \dots \# Z_{m-n}$ must be $H$-split, and so in particular $\tilde{\delta}(Z_i)=0$ for all $i \leq m-n$.  This shows that $\sum_{i=1}^m\psi(Z_i)=\sum_{i=1}^n\psi(Y_i)$, whence $\psi$ is well-defined on $\theta_{SFP}$.  It is clear that $\psi$ is surjective onto the $\bigoplus_{\{x>0 \mid \exists Y,\, \tilde{\delta}(Y)=x \} } \mathbb{Z}$ factor, with kernel $\theta_{H\text{--}\mathrm{split},SFP}$, giving the splitting stated in the Theorem.   \end{proof}
\noindent\emph{Proof of Theorem \ref{thm:furuta}.}
By Theorem \ref{thm:maingrad}, for all $N>0$ there exists some negative Seifert space of projective type $Y$ for which $\tilde{\delta}(Y)=N$.  Theorem \ref{thm:furuta} then follows from Theorem \ref{thm:minisum}.  

However, other generators for \[\bigoplus_{\{x>0 \mid \exists Y,\, \tilde{\delta}(Y)=x \} } \mathbb{Z}\] are easier to find, using results of N{\'e}methi (we use $Y_p$ from Theorem \ref{thm:maingrad} in order to obtain Corollary \ref{cor:knot}).  

We record a different generating set, starting with some notation from \cite{Nem07}.  Let, for relatively prime $p$ and $q$, $\mathcal{S}_{p,q}\subset \mathbb{Z}_{\geq 0}$ denote the semigroup
\[ \mathcal{S}_{p,q}=\{ap+bg \mid (a,b) \in \mathbb{Z}^2_{\geq 0}\},\]
and \[\alpha_i=\# \{ s\not\in \mathcal{S}_{p,q} \mid s>i\}.\]  Also, set \[g=\frac{(p-1)(q-1)}{2}.\]  Then N{\'e}methi \cite{Nem07} shows
\[\mathit{HF}^+(-\Sigma(p,q,pqn+1))=\bt_0\oplus \bt_0(\alpha_{g-1})^{\oplus n}\oplus \bigoplus^{n(g-1)}_{i=1}\bt_{(\lfloor \frac{i}{n}\rfloor+1)(\{\frac{i}{n}\}n+i)}(\alpha_{g-1+\lceil\frac{i}{n}\rceil})^{\oplus 2}.  \]
Reversing orientation, we have:
\[\mathit{HF}^+(\Sigma(p,q,pqn+1))=\bt_0\oplus \bt_{1-2\alpha_{g-1}}(\alpha_{g-1})^{\oplus n}\oplus \bigoplus^{n(g-1)}_{i=1}\bt_{1-(\lfloor \frac{i}{n}\rfloor+1)(\{\frac{i}{n}\}n+i)-2\alpha_{g-1+\lceil \frac{i}{n}\rceil}}(\alpha_{g-1+\lceil\frac{i}{n}\rceil})^{\oplus 2}.\]
Then Definition \ref{def:projtype} implies that $\Sigma(p,q,pqn+1)$ is of projective type, and Theorem \ref{thm:loctyp} gives, for $n$ odd, $\alpha_{g-1}=d(\Sigma(p,q,pqn+1))/2+\bar{\mu}(\Sigma(p,q,pqn+1))$.  

Fixing $p=2$, we note that the complement of $\mathcal{S}_{p,q}$ is precisely $\{s \mid s<q,\; s\; \mathrm{odd}\}$.  We see from the definition of $\alpha_{g-1}$ that $\alpha_{g-1}=\lfloor \frac{q+1}{4}\rfloor$.  We then have that $\{\Sigma(2,q,2q+1)\mid q>1, \; \mathrm{odd} \}$ attains all positive values of $\tilde{\delta}=d/2+\bar{\mu}$.  By Theorem \ref{thm:minisum}, $\Sigma(2,4k+3,8k+7)$ then span a $\mathbb{Z}^\infty$ summand of $\theta_{SFP}$.   
\qed \\

\noindent\emph{Proof of Corollary \ref{cor:hsplitcor}.}
By the calculation in \cite{ManolescuPin}, for all $k\geq 1$, \[d(\Sigma(2,3,12k-1))=2,\;\bar{\mu}(\Sigma(2,3,12k-1)=0,\]\[d(\Sigma(2,3,12k-7)=2,\;\bar{\mu}(\Sigma(2,3,12k-7)=-1.\]   
In particular, by Theorem \ref{thm:loctyp}, $[\Sigma(2,3,12k-7)]_{cl}$ is independent of $k$.  Furthermore,\[ [\Sigma(2,3,12k-7)]\in \theta_{H\text{--}\mathrm{split}}\] for all $k\geq 1$.  However, Furuta \cite{Furuta} shows $\Sigma(2,3,6k-1)$ are linearly independent in $\theta^H_3$.  Then $\{\Sigma(2,3,12k-7)\}_{k\geq 1}$ generates a $\mathbb{Z}^\infty$ subgroup of $\theta_{H\text{--}\mathrm{split}}$, as needed. \qed \\

We establish Theorem \ref{thm:asymp} of the Introduction, using Theorem \ref{thm:premain}.

\noindent
\emph{Proof of Theorem \ref{thm:asymp}.}  
By Fact \ref{fct:dtows}, for $X$ a space of type SWF at level $t$ the complex $C^{CW}_*(X)$ must contain a copy of $T=T_{(d(X)-t)/2}(t)$.  We recall, by Fact \ref{fct:interpdtow}, that $T$ is chain locally equivalent to \[\Sigma^{t\tilde{\mathbb{R}}}\tilde{\Sigma}(S^{d(X)-t-1}\amalg S^{d(X)-t-1}).\]  Theorem \ref{thm:premain} then shows:
\begin{align}\label{eq:asymp1}
a(T^{\otimes_n})&=2E(n(d(X)-t)/2)+nt,\\ \label{eq:asymp2}
b(T^{\otimes_n})&=2E((n-1)(d(X)-t)/2)+nt,\\ \label{eq:asymp3}
c(T^{\otimes_n})&= 2E((n-2)(d(X)-t)/2)+nt.
\end{align}
Let $(X,g,h)=SWF(Y,\s)$, and let $X$ be of type SWF at level $t$.  Then $\delta(Y,\s)=d(X)/2-g/2-2h$.  From \[\bigwedge^n(T_{(d(X)-t)/2}(t),g,h)\leq \bigwedge^n(X,g,h)\] and (\ref{eq:asymp1})-(\ref{eq:asymp3}) we obtain:
\begin{align}\nonumber
\alpha(\bigwedge^n(X,g,h)) &\geq E(n(d(X)-t)/2)+\frac{nt-ng-4nh}{2},\\ \nonumber \beta(\bigwedge^n(X,g,h)) &\geq E((n-1)(d(X)-t)/2)+\frac{nt-ng-4nh}{2},\\ \nonumber \gamma(\bigwedge^n(X,g,h)) &\geq E((n-2)(d(X)-t)/2)+\frac{nt-ng-4nh}{2},\\ \nonumber \delta(\bigwedge^n(X,g,h))&=nd(X)/2-ng/2-2nh.\end{align}
Using $E(x)\geq x$, we see:
\begin{align}\nonumber 
\alpha(\#_n(Y,\s)) &\geq n\delta(Y,\s),\\ \nonumber 
\beta(\#_n(Y,\s)) &\geq (n-1)\delta(Y,\s)+\frac{(t-g-4h)}{2},\\ \label{eq:gammaasymp}
\gamma(\#_n(Y,\s))& \geq (n-2)\delta(Y,\s)+2\frac{(t-g-4h)}{2},\\ \nonumber
\delta(\#_n(Y,\s))&=n\delta(Y,\s).\end{align}
From (\ref{eq:gammaasymp}), we obtain:
\begin{equation}\label{eq:asympineq}
\gamma(\#_n (Y,\s)) \geq n\delta(Y,\s)+C\end{equation}
where $C$ is some constant depending on $Y$ (but not $n$).  
However, by Theorem \ref{thm:ineq2}, $\gamma(\#_n (Y,\s))\leq \delta(\#_n (Y,\s))=n\delta(Y,\s)$, from which we obtain that $\gamma(\#_n (Y,\s)) -n\delta(Y,\s)$ is a bounded function of $n$.  
Using the properties of $\alpha,\beta,$ and $\gamma$ under orientation reversal we find that $\alpha(\#_n (Y,\s)) - n\delta(Y,\s)$ is also a bounded function of $n$.
Since $\gamma(\#_n (Y,\s)) \leq \beta(\#_n (Y,\s)) \leq \alpha(\#_n (Y,\s))$, we also obtain that $\beta(\#_n (Y,\s)) -n\delta(Y,\s)$ is a bounded function of $n$.
\qed

\section{Graded Roots}\label{sec:gr}
In this section we collect the preliminaries needed to show Theorem \ref{thm:submain}.  We use graded roots, which were introduced by N\'{e}methi \cite{Nemethigr} in order to study the Heegaard Floer homology of plumbed manifolds.  The graded roots of Seifert spaces were studied in \cite{CanKarakurt},\cite{KarakurtLidman}.  Our brief introduction to graded roots will follow \cite[\S 4]{HLK} extremely closely.  

\subsection{Definitions}\label{subsec:grdef}
\begin{defn}[\cite{Nemethigr}]\label{def:gr}
A \emph{graded root} consists of a pair $(\Gamma,\chi)$, where $\Gamma$ is an infinite tree, and $\chi:\mathrm{Vert} (\Gamma) \rightarrow \mathbb{Z}$ satisfies the following.  
\begin{itemize}
\item $\chi(u)-\chi(v)=\pm 1$, if $u,v$ are adjacent.  
\item $\chi(u) > \mathrm{min} \{ \chi(v),\chi(w)\}$ if $u$ and $v$ are adjacent and $u$ and $w$ are adjacent.  
\item $\chi$ is bounded below.
\item For all $k\in \mathbb{Z}$, $\chi^{-1}(k)$ is finite.
\item For $k$ sufficiently large, $| \chi^{-1}(k)|=-1$.
\end{itemize}
\end{defn} 
Some examples of graded roots are featured in Figure \ref{fig:examplegr1}.  

\begin{figure} 
\includegraphics[scale=.8]{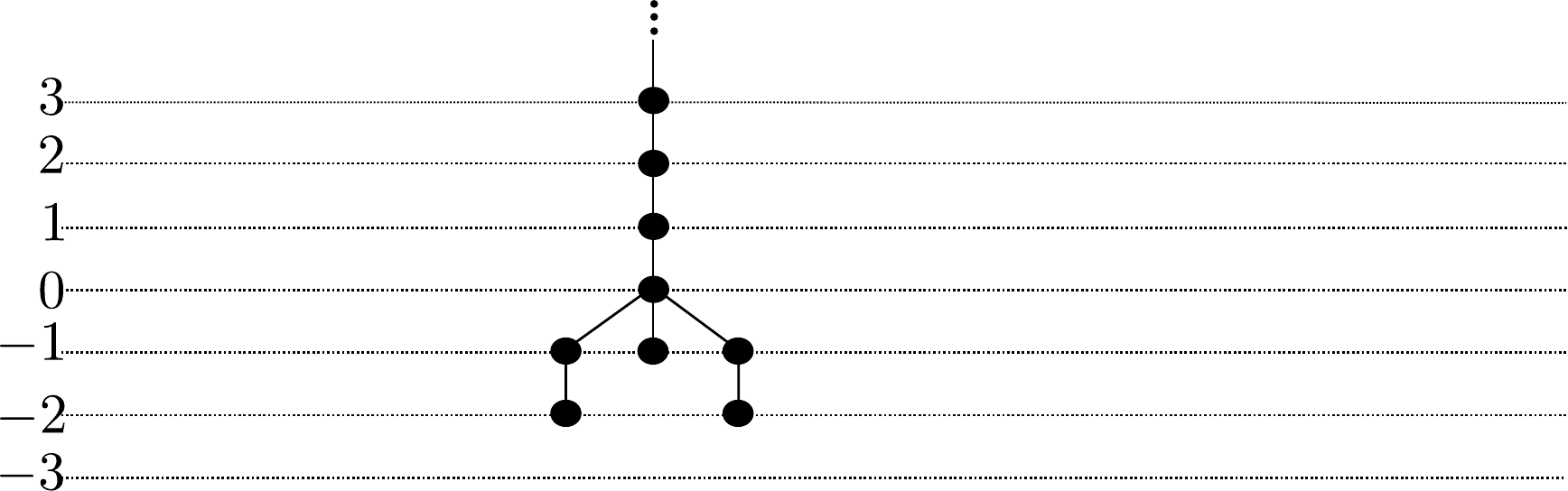}
\caption{Example of a graded root. }
\label{fig:examplegr1}
\end{figure}

Graded roots are specified, up to degree shift, by a finite sequence, as follows.  Let $\Delta: \{ 0, \dots, N\} \rightarrow \mathbb{Z}$, and define $\tau_\Delta : \{ 0, \dots, N\} \rightarrow \mathbb{Z}$ by the recurrence:
\begin{equation}\label{eq:taurec}
\tau_\Delta(n+1)-\tau_{\Delta}(n)=\Delta(n), \; \mathrm{with} \; \tau_\Delta(0)=0.
\end{equation}
For each $n \in \{ 0,\dots, N+1\}$, let $R_n$ be the graph with vertex set $\{\tau(n),\tau(n)+1,\dots\}$, with edges between $k$ and $k+1$ for all $k \geq \tau(n)$.  The graded root associated to $\tau_\Delta$ is the infinite tree obtained by identifying the common edges and vertices of $R_n$ and $R_{n+1}$ for each $n\in \{0,\dots, N+1\}$; call this tree $\Gamma_\Delta$.  We define the grading function $\chi_\Delta$ on $\Gamma_\Delta$ by setting $\chi_\Delta(v)$ to be the integer corresponding to $v$ (this integer is independent of which tree $R_n$ we consider $v$ as a vertex of, by the construction).  Notice that lengthening $\Delta$ by assigning $0$ to $\{ N+1,\dots , M\}$, for some $M>N$ does not change the graded root determined by $\Delta$.  

To a graded root $(\Gamma,\chi)$ is associated a graded $\f[U]$-module $\mathbb{H}(\Gamma,\chi)$.  We define $\hgc$ by the $\f$-vector space with generators the vertices of $\Gamma$.  The element of $\hgc$ corresponding to a vertex $v\in \Gamma$ has grading $2\chi(v)$.  The $\f[U]$-module structure is given by setting $Uv$ to be the sum of all vertices $w$ adjacent to $v$ with $\chi(w)=\chi(v)-1$.  

\subsection{Delta Sequences}\label{subsec:deltas}
Karakurt and Lidman \cite{KarakurtLidman} define an \emph{abstract delta sequence} as a pair $(X,\Delta)$ with $X$ a well-ordered finite set, and $\Delta: X \rightarrow \mathbb{Z}-\{0\}$, with $\Delta$ positive on the minimal element of $X$.  As we saw in \S \ref{subsec:grdef}, an abstract delta sequence specifies a graded root up to a grading shift. 

To connect graded roots back to topology: N\'{e}methi associates a graded root to any manifold belonging to a large family of plumbed manifolds (including Brieskorn spheres).  The corresponding $\f[U]$-module $\hgc$ is isomorphic to $\mathit{HF}^+(-Y)$ up to a grading shift.  Can and Karakurt \cite{CanKarakurt} simplify the method for Seifert homology spheres.  In the proof of Theorem \ref{thm:maingrad} we will use their reformulation.  

In particular, we review the abstract delta sequence $(X_Y,\Delta_Y)$ of an arbitrary Brieskorn sphere $Y=\Sigma(p,q,r)$, following \cite{CanKarakurt}.  Set $N_Y=pqr-pq-pr-qr$.  Let $S_Y$ be the intersection of the semigroup on the generators $pq,pr,qr$ with $[0,N_Y]$.  Set 
\[Q_Y=\{N_Y-s \mid s \in S_Y\},\]
and
\[X_Y=S_Y \cup Q_Y.\]
Can and Karakurt show $S_Y$ and $Q_Y$ are disjoint.  Define $\Delta_Y: X_Y \rightarrow \{-1,1\}$ by $\Delta_Y=1$ on $S_Y$ and $-1$ on $Q_Y$.  It is clear that $(X_Y,\Delta_Y)$ is an abstract delta sequence.  
\begin{thm}[\cite{CanKarakurt} Theorem 1.3,\cite{Nemethigr} Section 11 ,\cite{OzSzplumb} Theorem 1.2]\label{thm:grdroo}
Let $Y=\Sigma(p,q,r)$ for coprime $p,q,r$.  Let $(\Gamma_Y,\chi_Y)$ be the graded root associated to the abstract delta sequence $(X_Y,\Delta_Y)$ described above.  Then $\mathbb{H}(\Gamma_Y,\chi_Y)\cong \mathit{HF}^+(-Y)$ as relatively graded $\f[U]$-modules.  
\end{thm}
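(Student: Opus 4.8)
The plan is to obtain the statement as the endpoint of the chain of reductions \cite{OzSzplumb} $\to$ \cite{Nemethigr} $\to$ \cite{CanKarakurt}, each step passing from a more geometric to a more combinatorial description of $HF^+(-Y)$. First I would represent $-\Sigma(p,q,r)$ by its canonical negative-definite plumbing graph: a star-shaped tree with one central vertex of valence three and three legs read off from the Seifert invariants. Such a graph has at most one \emph{bad} vertex (the central one), so the Ozsv\'ath--Szab\'o algorithm \cite{OzSzplumb} applies and identifies $HF^+(-Y)$, in the unique spin$^c$ structure of the homology sphere, with the combinatorial module built from the function $\chi_K$ on the characteristic vectors of the plumbing lattice.

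Second, I would invoke N\'emethi's theory of \emph{almost rational} graphs \cite{Nemethigr}: star-shaped plumbings are almost rational, and for such a graph the relevant part of the module above is governed by a single weight function $\tau\colon \mathbb{Z}_{\geq 0}\to\mathbb{Z}$, obtained by restricting $\chi_K$ to a distinguished Laufer-type computation sequence (repeatedly add the central generator, then sweep through the legs). N\'emethi's theorem then says this module is, up to an overall grading shift, the graded $\f[U]$-module $\mathbb{H}(\Gamma_\tau,\chi_\tau)$ of the graded root of $\tau$; equivalently, the increments $\Delta(n)=\tau(n+1)-\tau(n)$ assemble into an abstract delta sequence whose graded root computes $HF^+(-Y)$ up to shift. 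At this point the theorem is reduced to showing that this intrinsic delta sequence coincides, after truncation to $[0,N_Y]$, with the semigroup delta sequence $(X_Y,\Delta_Y)$.

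Third --- and this is the main obstacle --- I would carry out Can--Karakurt's closed-form evaluation \cite{CanKarakurt} of the increment $\Delta$. For a Brieskorn sphere the central Euler number together with the reduced Seifert invariants let one rewrite $\Delta(\ell)$ as $1$ minus a count of ceilings of $\ell$ times the leg fractions; the task is to show this equals $+1$ exactly when $\ell$ belongs to the numerical semigroup generated by $pq$, $pr$, $qr$ (these have greatest common divisor $1$, so the semigroup is numerical) and equals $-1$ exactly when $N_Y-\ell$ does, where $N_Y=pqr-pq-pr-qr$ is the relevant symmetry point; disjointness of $S_Y$ and $Q_Y$ then reflects the symmetry of this semigroup about $N_Y$. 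An equivalent route is to compare the generating function $\sum t^{\tau(n)}$ with the Poincar\'e series of the semigroup, both of which admit explicit product expressions. This step is the technical heart of \cite{CanKarakurt}, and I would follow their argument essentially verbatim.

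Finally, since the assertion is only a \emph{relatively} graded isomorphism, the overall shift relating $2\chi_\tau$ to the absolute $\mathbb{Q}$-grading on $HF^+(-Y)$ --- computable from $\chi_K$ of the canonical characteristic vector, and equal to $d(-Y)$ at the bottom of the tower --- does not need to be pinned down. Combining the three reductions with the combinatorial identification then yields $\mathbb{H}(\Gamma_Y,\chi_Y)\cong HF^+(-Y)$ as relatively graded $\f[U]$-modules.
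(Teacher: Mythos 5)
Your proposal is a faithful outline of how this result is established in the cited literature, and it matches the paper's treatment: the paper offers no proof of its own, but simply records the statement as a combination of \cite{OzSzplumb}, \cite{Nemethigr}, and \cite{CanKarakurt}, which is exactly the chain of reductions you trace. Your three steps (Ozsv\'ath--Szab\'o's at-most-one-bad-vertex algorithm, N\'emethi's almost rational graph/graded root reformulation via a Laufer computation sequence, and Can--Karakurt's identification of the resulting delta sequence with the semigroup description on $[0,N_Y]$) correctly identify the content of each cited reference, and you are right both that the technical core lives in the Can--Karakurt comparison between $\tau$-increments and membership in the semigroup generated by $pq,pr,qr$, and that the conclusion is only a relatively graded isomorphism so the absolute shift need not be computed.
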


Note furthermore that $\Delta_Y(x)=-\Delta_Y(N_Y-x)$ for $x \in X_Y$.  
\subsection{Operations on Delta Sequences}\label{subsec:refmerg}
Different abstract delta sequences may correspond to the same graded root.  For instance, Let $(X,\Delta)$ be an abstract delta sequence.  Fix $t\geq 2$ and $z \in X$ with $|\Delta(z)| \geq t$.  Choose $n_1,\dots, n_t \in \mathbb{Z}$, so that the sign of all $n_i$ is the same as that of $\Delta(z)$ and so that $n_1+\dots+n_t=\Delta(z)$.  From this data we construct an abstract delta sequence wtih the same graded root as $(X,\Delta)$.  Let $X'=X/z \cup\{ z_1,\dots,z_t\}$ for some new elements $z_1\leq \dots \leq z_t$ taking the place of $z$ in $X$.  Define $\Delta': X' \rightarrow \mathbb{Z}$ by $\Delta'(x)=\Delta(x)$ for $x \in X / \{z\}$ and by $\Delta'(z_i)=n_i$ for all $i$.  We call $(X',\Delta')$ a \emph{refinement} of $(X,\Delta)$, and $(X,\Delta)$ a \emph{merge} of $(X',\Delta')$.  
\begin{defn}\label{def:reduced}
We call an abstract delta sequence $(X,\Delta)$ \emph{reduced} if it has no consecutive positive or negative values of $\Delta$ (this is the same as $(X,\Delta)$ not admitting any merges).  Every abstract delta sequence admits a unique reduced form.  We call an abstract delta sequence \emph{expanded} if it does not admit any refinement (this is equivalent to all values of $\Delta$ being $\pm 1$).  
\end{defn}
It is more convenient to work with reduced delta sequences, but we saw in Section \ref{subsec:deltas} that the abstract delta sequence associated to Brieskorn spheres is expanded, so we will need a way to explicitly write the reduced form of $(X_Y,\Delta_Y)$.  This will be handled in Section \ref{sec:semicre} using several lemmas from \cite{HLK}.  

\subsection{Successors and Predecessors}\label{subsec:sucpre}
Let $(X,\Delta)$ be an abstract delta sequence.  Let $S \subset X$ be the set on which $\Delta$ is positive, and $Q \subset X$ the set on which $\Delta$ is negative.  For $x \in X$, we define the positive successor 
\[ \sucp (x) = \mathrm{min}\, \{ x' \in S \mid x < x' \}\] and negative successor $\sucm(x)=\mathrm{min}\, \{ x' \in Q \mid x< x' \}$.  

The sequence $(X,\Delta)$ is reduced if and only if for all $x \in S$:
\[x< \sucm(x) \leq \sucp(x),\]
and, for all $x \in Q$:
\[ x < \sucp(x) \leq \sucm(x).\] 
We also define $\mathrm{pre}_\pm(x)$, the positive and negative \emph{predecessors}, analogously.  

We will need a specific model for the reduced form of $(X,\Delta)$.  First, we need a few further pieces of notation.
For $x\in S$, let \[\pi_+(x)=\mathrm{max} \{ z\in S \mid z < \sucm(x)\} \; \; \mathrm{and} \; \; \pi_-(x) = \mathrm{min}\{ z\in S \mid z> \mathrm{pre}_-(x)\}.\]  For $y \in Q$, let \[\eta_+(y) = \mathrm{max} \{ z \in Q \mid z < \sucp(y) \} \;\; \mathrm{and}\;\;\eta_-(y)=\mathrm{min} \{z \in Q \mid z > \mathrm{pre}_+(y)\}.\] Now define $\tilde{S}=\{ \pi_+(x) \mid x \in S\}$ (noting that $S$ contains one element for each maximal interval of elements of $X$ on which $\Delta$ is positive).  Similarly, define $\tilde{Q} = \{ \eta_-(y) \mid y \in Q\}$.  Then set $\tilde{X} =\tilde{S} \cup \tilde{Q}$.  We define $\tilde{\Delta}$ on $\tilde{S}$ by \[\tilde{\Delta}(\pi_+(x))=\sum_{z \mid \pi_-(x) \leq z \leq \pi_+(x)} \Delta(z),\] and on $\tilde{Q}$ by \[\tilde{\Delta}(\eta_-(y))=\sum_{z \mid \eta_-(y) \leq z \leq \eta_+(y)} \Delta(z).\]  The pair $(\tilde{X},\tilde{\Delta})$ is the reduced form of $(X,\Delta)$.  

Note, in particular, that we may consider $\tilde{X}$ as a subset of $X$.  
\subsection{Tau Functions and Sinking Delta Sequences}\label{subsec:sink}
Let $\suc(x)$ be $\mathrm{min}\{ x' \in X \mid x<x'\}$, and let $x_\mathrm{min}$, $x_\mathrm{max}$ be the minimal and maximal elemetns of $X$.  For an abstract delta sequence $(X,\Delta)$, we define $\tau_\Delta$ as in (\ref{eq:taurec}) by:
\[\tau_\Delta(\suc(x))-\tau_\Delta(x)=\Delta(x),\; \mathrm{with} \; \tau_\Delta(x_\mathrm{min})=0.\]
Let $X^+=X \cup \{x^+\}$ where $x^+=\suc(x_\mathrm{max})$.  The function $\tau_\Delta$ is then defined on $X^+$.  

We call $\tau_\Delta$ the \emph{tau function} associated to the abstract delta sequence $(X,\Delta)$.  

\begin{defn}[\cite{HLK}]\label{def:sink}
Let $(X,\Delta)$ be an abstract delta sequence and $(\tilde{X},\tilde{\Delta})$ its reduced form.  We call $(X,\Delta)$ sinking if the following hold.  
\begin{itemize}
\item The maximal element $x_\mathrm{max}$ of $X$ belongs to $Q$ (i.e. $\Delta(x_\mathrm{max})<0$).
\item For all $x \in \tilde{S}$, $\tilde{\Delta}(x)\leq |\tilde{\Delta}(\sucm(x))|$.
\item $\tilde{\Delta}(\mathrm{pre}_+(x_\mathrm{max}))<|\tilde{\Delta}(x_\mathrm{max})|$.  
\end{itemize}
\end{defn}
Sinking delta sequences will be significant to us because of the following Proposition, which follows immediately from Definition \ref{def:sink}.
\begin{prop}[Proposition 4.7 \cite{HLK}]\label{prop:47}
A sinking delta sequence attains its minimum at and only at its last element.
\end{prop}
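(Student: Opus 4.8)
The statement to prove is Proposition~\ref{prop:47}: a sinking delta sequence attains its minimum at and only at its last element.

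\medskip

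The plan is to work directly with the tau function $\tau_\Delta$ on $X^+$ and the three conditions in Definition~\ref{def:sink}, reducing everything to the reduced form $(\tilde X,\tilde\Delta)$ since $(X,\Delta)$ and $(\tilde X,\tilde\Delta)$ determine the same graded root and hence the same tau function up to reparametrization of the domain; in particular $\tau_\Delta$ and $\tau_{\tilde\Delta}$ have the same image and the local minima occur at the same values. So first I would replace $(X,\Delta)$ by its reduced form, and observe that in a reduced delta sequence the signs of $\tilde\Delta$ strictly alternate, starting with $+$ (positive on $x_{\min}$) and, by the first bullet of Definition~\ref{def:sink}, ending with $-$ (negative on $x_{\max}$). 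Thus $\tilde X$ alternates $s_1 < q_1 < s_2 < q_2 < \cdots < s_k < q_k$ with $s_i \in \tilde S$, $q_i \in \tilde Q$, where $\tilde\Delta(s_i) > 0$ and $\tilde\Delta(q_i) < 0$.

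\medskip

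Next I would compute the tau function at the ``valley'' points, i.e. at the elements $\suc(q_i)$ (the points immediately after each negative run, which are exactly where $\tau$ can have a local minimum, since $\tau$ decreases across each $q_i$ and increases across each $s_{i+1}$). Writing $v_i = \tau_\Delta(\suc(q_i))$ for the $i$-th valley value (with $v_0 := \tau_\Delta(x_{\min}) = 0$ the initial value, which is also a valley since $\tau$ increases first), the key is the recursion
\[
v_i - v_{i-1} = \tilde\Delta(s_i) + \tilde\Delta(q_i) = \tilde\Delta(s_i) - |\tilde\Delta(q_i)|.
\]
By the second bullet of Definition~\ref{def:sink}, $\tilde\Delta(s_i) \le |\tilde\Delta(\sucm(s_i))| = |\tilde\Delta(q_i)|$ for every $i < k$ — wait, I need to be careful about the range: the second bullet applies to all $x \in \tilde S$, so in particular to each $s_i$, giving $v_i - v_{i-1} \le 0$ for all $i$, i.e. the valley values are (weakly) nonincreasing: $0 = v_0 \ge v_1 \ge \cdots \ge v_k$. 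The third bullet, $\tilde\Delta(\mathrm{pre}_+(x_{\max})) < |\tilde\Delta(x_{\max})|$, says $\tilde\Delta(s_k) < |\tilde\Delta(q_k)|$, which makes the \emph{last} step strict: $v_k < v_{k-1}$. Since the global minimum of $\tau_\Delta$ is $\min_i v_i$ and $v_k = \tau_\Delta(x^+) = \tau_\Delta(\suc(x_{\max}))$ is the value at the last element of $X^+$, and since $v_k < v_{k-1} \le v_{k-2} \le \cdots \le v_0$, the minimum is attained at $x^+$ and nowhere else. (One must also note that $\tau$ is strictly monotone between consecutive valleys, so no interior point can tie $v_k$.)

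\medskip

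The main obstacle I anticipate is bookkeeping: making sure that the ``valley points'' of $\tau_\Delta$ are exactly $\suc(q_i)$ together with $x_{\min}$, that $\tau$ is strictly monotone on each maximal monochromatic run of $\tilde\Delta$ so that local minima occur only at these points, and that the passage from $(X,\Delta)$ to $(\tilde X,\tilde\Delta)$ genuinely preserves the set of $\tau$-values and the location (up to reindexing) of the minimum — this last point relies on the fact, implicit in \S\ref{subsec:refmerg}, that merging never changes the graded root and that $\tau_\Delta$ over a monochromatic run is monotone so that only its endpoints matter. None of these is deep, but all three bullets of the sinking hypothesis must be used, and the strictness in the third bullet is precisely what upgrades ``attains its minimum at its last element'' to ``at and only at its last element.''
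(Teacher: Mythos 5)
Your argument is correct, and since the paper offers no proof beyond the remark that the statement "follows immediately from Definition \ref{def:sink}," you are essentially just writing out the intended one-line argument in full. Passing to the reduced form, noting that the signs then strictly alternate starting with $+$ and (by bullet (\lowerromannumeral{1})) ending with $-$, observing that $\tau$ is strictly monotone on each two-step ascent/descent, and then using bullet (\lowerromannumeral{2}) to make the valley values weakly decreasing and bullet (\lowerromannumeral{3}) to make the last drop strict is precisely what the definition is designed to encode, and your reduction from $(X,\Delta)$ to $(\tilde X,\tilde\Delta)$ via monotonicity on monochromatic runs is the right way to justify working with the reduced form. One point worth stating explicitly if you write this up: the symbols $x_{\max}$ and $\mathrm{pre}_+(x_{\max})$ in bullet (\lowerromannumeral{3}) should be read inside $\tilde X$ (i.e.\ as $q_k$ and $s_k$ in your notation), since $\tilde\Delta$ is only defined on $\tilde X$ and the maximal element of $X$ need not lie in $\tilde X$ when the final negative run of $\Delta$ has length greater than one; with that reading, your identification $\tilde\Delta(\mathrm{pre}_+(x_{\max}))=\tilde\Delta(s_k)$ and $|\tilde\Delta(x_{\max})|=|\tilde\Delta(q_k)|$ is exactly right.
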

\subsection{Symmetric Delta Sequences}\label{subsec:sym}
There is a symmetry in Figure \ref{fig:examplegr1} obtained by reflecting the graded roots across the vertical axis.  This symmetry holds for graded roots of all Seifert integral homology spheres.  For simplicity, write $\Delta=\langle k_1,k_2, \dots, k_n \rangle$ for the function $\Delta:X \rightarrow \mathbb{Z} / \{ 0 \}$, where $X$ is a finite well-ordered set, and $k_1$ is the value of $\Delta$ on the minimal element of $X$, $k_2$ is the value of $\Delta$ on the successor of the minimal element of $X$, and so on.  
\begin{defn}\label{def:symzn}
Let $(X,\Delta)$ be an abstract delta sequence with $\Delta=\langle k_1, \dots, k_n \rangle$.  Define the \emph{symmetrization} of $(X,\Delta)$ by the abstract delta sequence $\Delta^{\mathrm{sym}}=\langle k_1, \dots, k_n, -k_n,\dots,-k_1)$.  We call a delta sequence $\Delta$ symmetric if $\Delta=(\Delta')^{\mathrm{sym}}$ for some delta sequence $\Delta'$.  
\end{defn}
\begin{defn}\label{def:join}
For delta sequences $\Delta_1=\langle k_1, \dots, k_n \rangle$ and $\Delta_2=\langle \ell_1, \dots, \ell_m \rangle$, we define the \emph{join} delta sequence $\Delta_1 \ast \Delta_2$ by 
\[\Delta_1 \ast \Delta_2 = \langle k_1, \dots, k_n,\ell_1,\dots,\ell_m\rangle.\]
\end{defn}

For $\Delta$ a symmetric delta sequence, the $\f[U]$-module $\mathbb{H}(\Gamma_\Delta)$ admits an involution $\iota_\Delta$, given as follows.  The delta sequence $\Delta$ gives a map:   
\[ \Delta: \{ 0,\dots,2n+1\} \rightarrow \mathbb{Z}.\]
Let $\iota: \{0,\dots,2n+2\} \rightarrow \{0,\dots,2n+2\}$ be $\iota(k)=2n+2-k$.  Then $\tau_\Delta$ is $\iota$-equivariant:
\begin{align} \Delta(\iota(k))=& \tau_\Delta(\iota(k+1))-\tau_\Delta(\iota(k)) \\ \nonumber =& \tau_\Delta(2n+2-(k+1))-\tau_\Delta(2n+2-k) \\ \nonumber =& -(\tau_\Delta(2n+2-k)-\tau_\Delta(2n+1-k) ) \\ \nonumber =&-\Delta(2n+1-k) \\ \nonumber =& \Delta(k).\end{align}
where in the last equality we have used that $\Delta$ is symmetric.  We may then define $\iota_\Delta$ on each of the $R_{\tau_\Delta(k)}$ by acting as the identity map:
\[\iota_\Delta:R_{\tau_\Delta(k)}\rightarrow R_{\tau_\Delta(\iota(k))}.\]
Then $\iota_\Delta$ induces an involution of $\Gamma_{\Delta}$, and so also of $\mathbb{H}(\Gamma_{\Delta})$, as an $\f[U]$-module.  

We use the definition of symmetrization for delta sequences to further specify the form of the abstract delta sequence (and its reduction) associated to Brieskorn spheres.  

Since $x \in S_Y$ if and only if $N_Y-x \in Q$ (so, in particular, $\Delta_Y(x)=-\Delta_Y(N_Y-x)$), we have $N_Y/2 \not\in X_Y$, and 
\begin{equation}\label{eq:briessym}
\Delta_Y=(\Delta_Y |_{[0,N_Y/2]})^{\mathrm{sym}}.
\end{equation}
We also need a version of (\ref{eq:briessym}) for the reduction.  By $\Delta_Y(x)=-\Delta_Y(N_y-x)$, if the maximal element of $X_Y \cap [0,N_Y/2]$ is in $S_Y$ (respectively $Q_Y$), then the minimal element of $X_Y \cap [N_Y/2,N_Y]$ is in $Q_Y$ ($S_Y$).  Then
\begin{equation}\label{eq:redbriess}
\tilde{\Delta}_Y=(\tilde{\Delta}_Y|_{[0,N_Y/2]})^\sym.
\end{equation}
\section{Semigroups and Creatures}\label{sec:semicre}
In this section we will prove Theorem \ref{thm:maingrad}, and so Theorem \ref{thm:submain}.  First, we will introduce the \emph{creatures} from \cite{HLK} and write their delta sequences.  Then we will prove a technical decomposition result (Lemma \ref{lem:main}) for the graded roots of the Brieskorn spheres $\Sigma(p,2p-1,2p+1)$, for $p$ odd, using several auxiliary lemmas collected from \cite{HLK}.  The proof of Lemma \ref{lem:main} is adapted from the proof for $p$ even from \cite{HLK}, with only minor changes.  We will then verify that $\Sigma(p,2p-1,2p+1)$ is of projective type, and calculate its $\beta$ and $d$.  As in Section \ref{sec:gr}, we will be following \cite{HLK} extremely closely.  

\subsection{Creatures}\label{subsec:crea} 
Hom, Karakurt, and Lidman \cite{HLK} observe via examples that there are certain sub-graded roots occuring in $\Sigma(p,2p-1,2p+1)$, as shown in Figure \ref{fig:creature}.  The two graded roots $\gcp$ in Figure \ref{fig:creature} are both called \emph{creatures}.
\begin{figure}
\includegraphics[scale=.8]{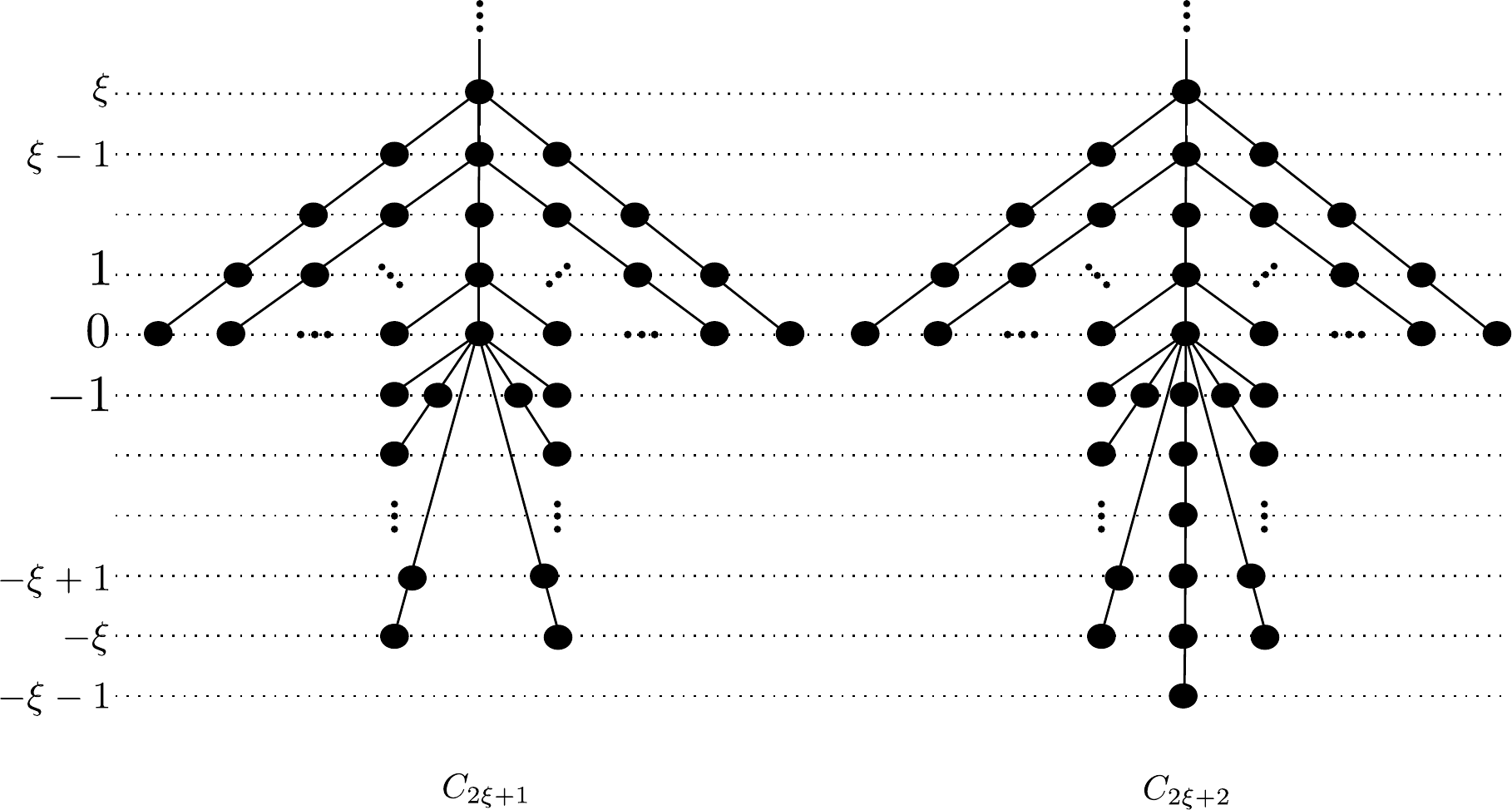}
\caption{Creatures $\Gamma_{C_p}$. Based on Figure 5 of \cite{HLK}.}
\label{fig:creature}
\end{figure}

The abstract delta sequence for the creature $\gcp$ for $p=2\xi+2$, $\xi\in \mathbb{Z}_{\geq 1}$ is the symmetrization of
\[\dcp = \langle \xi ,-\xi,(\xi-1),-(\xi-1),\dots,2,-2,1,-2,1,-2,2,\dots,-(\xi-1),\xi-1,-\xi,\xi,-(\xi+1)\rangle,\]
as observed in \cite{HLK}.  
\begin{defn}\label{def:crea}
For every $p=2\xi+1$, with $\xi\in \mathbb{Z}_{\geq 1}$, the creature $\gcp$ is the graded root defined by the symmetrization of the abstract delta sequence:
\begin{equation}\label{eq:creadef}
\dcp = \langle \xi ,-\xi,(\xi-1),-(\xi-1),\dots,2,-2,1,-2,1,-2,2,\dots,-(\xi-1),\xi-1,-\xi,\xi\rangle.
\end{equation}
\end{defn}
Set $Y_p=\Sigma(p,2p-1,2p+1)$, and $\dyp$ the abstract delta sequence corresponding to $Y_p$, with reduced form $\tilde{\Delta}_{Y_p}$.  We have the following technical lemma, the analogue of \cite{HLK}[Lemma 5.3].  
\begin{lem}\label{lem:main}
For every odd integer $p \geq 3$, we have the decomposition:
\begin{equation}\label{eq:maingrdecomp}
\tdyp=(\Delta_{Z_p} \ast \dcp)^\sym,
\end{equation}
where $\Delta_{Z_p}$ is a sinking delta sequence.
\end{lem}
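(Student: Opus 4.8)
The plan is to follow the strategy of \cite{HLK}[Lemma 5.3] for even $p$ and adapt it to odd $p$, tracking carefully where the parity of $p$ enters. First I would write down explicitly the expanded abstract delta sequence $(X_{Y_p},\Delta_{Y_p})$ of $Y_p=\Sigma(p,2p-1,2p+1)$ from the semigroup description in Section \ref{subsec:deltas}: here $N_{Y_p}=p(2p-1)(2p+1)-p(2p-1)-p(2p+1)-(2p-1)(2p+1)$, the set $S_{Y_p}$ is the truncation to $[0,N_{Y_p}]$ of the numerical semigroup generated by $p(2p-1)$, $p(2p+1)$, $(2p-1)(2p+1)$, and $Q_{Y_p}=\{N_{Y_p}-s\mid s\in S_{Y_p}\}$. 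By (\ref{eq:redbriess}) it suffices to understand the reduced form $\tilde{\Delta}_{Y_p}$ on the half-interval $[0,N_{Y_p}/2]$, since $\tilde{\Delta}_{Y_p}=(\tilde{\Delta}_{Y_p}|_{[0,N_{Y_p}/2]})^{\sym}$.

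Next I would analyze the membership pattern of the semigroup generators near the relevant range to identify the gaps of $S_{Y_p}$ and hence the blocks of consecutive positive/negative values that get merged in passing to $\tilde{\Delta}_{Y_p}$. The key computation is to show that the ``tail'' of $\tilde{\Delta}_{Y_p}|_{[0,N_{Y_p}/2]}$ — the part near $N_{Y_p}/2$ — coincides, after reduction, with the half $\Delta_{C_p}$ of the creature delta sequence from (\ref{eq:creadef}), namely $\langle \xi,-\xi,\xi-1,-(\xi-1),\dots,2,-2,1,-2,1,-2,2,\dots,-(\xi-1),\xi-1,-\xi,\xi\rangle$ with $p=2\xi+1$. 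This is the genuinely computational heart: one must match the successive sums $\sum_{z:\,\pi_-(x)\le z\le \pi_+(x)}\Delta_{Y_p}(z)$ (and the negative analogue) against the integers $\xi,\xi-1,\dots$ appearing in $\dcp$. The even case in \cite{HLK} already does precisely this bookkeeping, and the claim is that replacing $p=2\xi+2$ by $p=2\xi+1$ changes only the central block of the creature (the even case has $\langle\dots 1,-2,1,-2,2,\dots\rangle$ vs. the odd case $\langle\dots 1,-2,1,-2,2,\dots\rangle$ — in fact one verifies the central pattern is as written in (\ref{eq:creadef})) and leaves the extraction argument structurally identical; the final entry $-\xi$ in $\dcp$ for odd $p$ replaces the $\langle -\xi,\xi,-(\xi+1)\rangle$ ending of the even case.

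With the tail identified as $\dcp$, the remaining initial segment is $\Delta_{Z_p}:=\tilde{\Delta}_{Y_p}|_{[0,N_{Y_p}/2]}$ with the $\dcp$-part removed, so that $\tilde{\Delta}_{Y_p}|_{[0,N_{Y_p}/2]}=\Delta_{Z_p}\ast \dcp$ and hence (\ref{eq:maingrdecomp}) holds by symmetrization. The last task is to verify $\Delta_{Z_p}$ is sinking in the sense of Definition \ref{def:sink}: that its maximal element lies in $Q$, that $\tilde{\Delta}(x)\le|\tilde{\Delta}(\sucm(x))|$ for every $x\in\tilde S$, and that $\tilde{\Delta}(\mathrm{pre}_+(x_{\max}))<|\tilde{\Delta}(x_{\max})|$. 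This again mirrors the even-$p$ verification in \cite{HLK}; the point is that the ``staircase'' of values of $\tilde{\Delta}_{Y_p}$ is non-decreasing in absolute value as one approaches $N_{Y_p}/2$, which is a monotonicity property of gap-counts $\alpha_i$ for the semigroup $S_{Y_p}$. The main obstacle is the explicit semigroup combinatorics in the first step: correctly locating which multiples of the three generators fall below $N_{Y_p}/2$ and confirming the gap structure produces exactly the sequence of reduced values matching the creature; once that dictionary is set up, the sinking property and the decomposition are essentially formal, and the proof reduces to citing the parallel lemmas (\cite{HLK}[Lemmas 5.1--5.3]) with the noted substitutions.
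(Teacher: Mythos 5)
Your proposal matches the paper's strategy: the proof there also mirrors \cite{HLK}[Lemma 5.3] step-for-step, splitting $\tilde{\Delta}_{Y_p}|_{[0,N_{Y_p}/2]}$ at the explicit element $K=(\xi-1)r_-+(\xi-1)r_+$ (where $p=2\xi+1$ and $r_\pm=p(2p\pm 1)$) into $\Delta_{Z_p}$ on $[0,K)$ and $\Delta_{W_p}$ on $[K,N_{Y_p}/2]$, and then invoking \cite{HLK}'s Lemma 5.4, Lemma 5.6, and Proposition 5.7 to show $\Delta_{Z_p}$ is sinking and $\Delta_{W_p}\cong\Delta_{C_p}$. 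Two minor imprecisions worth noting: you leave the cut point $K$ unspecified (the paper needs the fact $K\in\tilde S_{Y_p}$ to make the split well-defined), and the sinking verification in the paper runs directly through the inequalities of \cite{HLK}[Proposition 5.7(iii)] rather than via monotonicity of the gap-counts $\alpha_i$, but neither affects the substance of the argument.
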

The proof of Lemma \ref{lem:main} is built from the proof of \cite{HLK}[Lemma 5.3].  In particular, we list several auxiliary Lemmas from \cite{HLK} without proof.  

Set $r_{\pm}=p(2p\pm1)$ and $w=(2p+1)(2p-1)$.  We work with the semigroup $S(r_-,r_+,w)$ on the generators $r_-,r_+,$ and $w$ in studying the graded root associated to $Y_p$.  

\begin{lem}[\cite{HLK} Lemma 5.4]\label{lem:54}
Let $S(r_-,r_+)$ be the semigroup generated by $r_-$ and $r_+$.  The intersection $S(r_-,r_+)\cap[0,(p-1)r_+]$, as an ordered set, is given by:
\begin{multline}\label{eq:rminplussemi}
\{ 0, \\ r_-,r_+,\\2r_-,r_-+r_+,2r_+,\\3r_-,2r_-+r_+,r_-+2r_+,3r_+,\\ \vdots  \\ (p-1)r_-,(p-2)r_-+r_+,\dots,(p-1)r_+\}.
\end{multline} 
\end{lem}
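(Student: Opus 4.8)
The plan is to organise $S(r_-,r_+)$ by the total number of generators used. For $k\geq 0$ set
\[
L_k=\{\,ar_-+br_+ \mid a,b\in\mathbb{Z}_{\geq 0},\ a+b=k\,\};
\]
the asserted list is precisely $L_0\cup L_1\cup\dots\cup L_{p-1}$, written level by level, each level in increasing order. So I must check three things: (i) for $k\leq p-1$ the set $L_k$ has exactly $k+1$ elements, enumerated in increasing order as $kr_-,\ (k-1)r_-+r_+,\ \dots,\ kr_+$; (ii) the levels are totally ordered, i.e. $\max L_k<\min L_{k+1}$ for $k\leq p-2$; and (iii) no element of $S(r_-,r_+)$ of level $\geq p$ lies in $[0,(p-1)r_+]$.

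For (i), first note $r_+-r_-=p(2p+1)-p(2p-1)=2p>0$, so within a fixed level $k$ the assignment $b\mapsto (k-b)r_-+br_+$ is strictly increasing, hence the listed enumeration of $L_k$ is weakly increasing. Distinctness (so that this list has no repeats and $|L_k|=k+1$) follows from uniqueness of representations: if $ar_-+br_+=a'r_-+b'r_+$ then $(a-a')(2p-1)=(b'-b)(2p+1)$, and since $\gcd(2p-1,2p+1)=1$ this forces $(2p+1)\mid(a-a')$; when $a,a'\leq p-1$ this gives $a=a'$ and then $b=b'$. This uniqueness statement is also what guarantees that the final list is not double-counting any integer.

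For (ii) and (iii) the key inputs are two elementary identities. First,
\[
(k+1)r_--k r_+ = p\big[(k+1)(2p-1)-k(2p+1)\big] = p(2p-1-2k),
\]
which is positive exactly when $k\leq p-1$; in particular $\max L_k=kr_+<(k+1)r_-=\min L_{k+1}$ for all $k\leq p-2$, giving (ii). Second,
\[
p r_- - (p-1)r_+ = p\big[p(2p-1)-(p-1)(2p+1)\big] = p,
\]
so $\min L_p=pr_->(p-1)r_+$; since the minimum of $L_\ell$ is $\ell r_-$, which increases with $\ell$, no element of level $\geq p$ meets $[0,(p-1)r_+]$, giving (iii). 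Combining (i)–(iii): every element of $S(r_-,r_+)\cap[0,(p-1)r_+]$ lies in some $L_k$ with $k\leq p-1$; the $L_k$ are pairwise disjoint and totally ordered by (ii); and each $L_k$ is enumerated as in (i). This is exactly the displayed list.

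I expect the only point requiring genuine care is the uniqueness-of-representation step, which is what pins down the length of each level and rules out repetitions; the rest is the bookkeeping that the two displayed identities do all the separation work. Neither is a serious obstacle, so this should be a short, self-contained argument.
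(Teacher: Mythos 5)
Your proof is correct, and I should note up front that the paper itself does not prove this lemma --- it is quoted verbatim from \cite{HLK} (Lemma 5.4) and invoked as a black box --- so there is no proof in the paper against which to compare. Your level-by-level decomposition $L_k=\{ar_-+br_+ : a+b=k\}$, together with the two identities $r_+-r_-=2p$ and $(k+1)r_--kr_+=p(2p-1-2k)$, is the natural argument and almost certainly the one in \cite{HLK}. Both computations check out: $(k+1)(2p-1)-k(2p+1)=2p-1-2k$, and $p(2p-1)-(p-1)(2p+1)=1$, so $pr_--(p-1)r_+=p$. The containment in both directions and the separation of consecutive levels are handled correctly.

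One small remark: the uniqueness-of-representation step you flag as ``the only point requiring genuine care'' is in fact redundant. Within a level, $(k-b)r_-+br_+ = kr_-+2pb$ is strictly (not merely weakly) increasing in $b$ with constant step $2p>0$, which by itself gives $|L_k|=k+1$ with no repeats; and your item (ii) already gives $\max L_k < \min L_{k+1}$, so no integer can appear in two different levels. Thus the $\gcd(2p-1,2p+1)=1$ argument, while correct, is not needed.
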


\begin{lem}[\cite{HLK} Lemma 5.6]\label{lem:56} 
Say that $x \in S_{Y_p}$ is of the form $x=ar_-+br_+$, with $a,b \geq 0$, and $x \leq 2r_-+(p-3)r_+$.  Then, 
\begin{enumerate}
\item $x< N_{Y_p}-(p-a-1)r_--(p-b-3)r_+ < \sucp(x)$.
\item $[\pi_-(x),\pi_+(x)]\cap S_{Y_p} = \{ x-\mathrm{min}\{a,b\},\dots,x\}$, unless $x=(p-2)r_+$ or $(p-1)r_-$.  In either of these exceptional cases, $[\pi_-(x),\pi_+(x)]\cap S_{Y_p}= \{(p-2)r_+,(p-1)r_-\}$.  
\end{enumerate}
\end{lem}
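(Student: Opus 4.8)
The plan is to treat the whole statement as an explicit computation in the numerical semigroup $S=\langle r_-,r_+,w\rangle$, with $r_\pm=p(2p\pm 1)$ and $w=(2p-1)(2p+1)$, resting on the relations $r_+-r_-=2p$, $r_-+r_+=w+1$, and $N_{Y_p}=4p^3-8p^2-p+1$ (a direct evaluation of $pqr-pq-pr-qr$ at $q=2p-1$, $r=2p+1$). Two simplifications do most of the work. First: since $r_\pm\equiv 0$ and $w\equiv -1\pmod p$, a representation $m=\alpha r_-+\beta r_++\gamma w$ forces $\gamma\equiv -m\pmod p$, and the bound $\gamma w\le m$ pins $\gamma$ down for the small $m$ in question; rewriting $\gamma w=\gamma(r_-+r_+)-\gamma$ then turns the representation into $m+\gamma=(\alpha+\gamma)r_-+(\beta+\gamma)r_+\in S(r_-,r_+)$, so one is essentially reduced to the two-generator semigroup, whose elements in $[0,(p-1)r_+]$ are listed by Lemma \ref{lem:54} as the ``rows'' $\{kr_-,kr_-+2p,\dots,kr_+\}$, $0\le k\le p-1$: spacing $2p$ inside a row, spacing $\ge p$ between consecutive rows, and $\ge 3p$ from the top of an in-range row to the bottom of the next. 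Second: $Q_{Y_p}=N_{Y_p}-S_{Y_p}$ lets one trade ``$m$ is a gap of $S_{Y_p}$'' for ``$N_{Y_p}-m\in S_{Y_p}$''. Finally, the hypothesis $x=ar_-+br_+\le 2r_-+(p-3)r_+$ is used to extract $0\le a\le p-1$ and $0\le b\le p-2$, with $b=p-2$ occurring only for $x=(p-2)r_+$.

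For part (1) I would first substitute the relations to get the identity $N_{Y_p}-(p-a-1)r_--(p-b-3)r_+=x+(p+1)$, valid even when $b=p-2$ (the negative $r_+$-coefficient cancels formally). Then $x<M:=x+(p+1)$ is trivial, and $M<\sucp(x)$ says precisely that $x+1,\dots,x+p+1$ all avoid $S_{Y_p}$. Using the row description, the gap of $S(r_-,r_+)$ immediately above an in-range $x$ is $\ge 2p$ (and $\ge 3p$ above the top of a row), so no such $x+j$ lies in $S(r_-,r_+)$; for representations using $w$, the reduction above turns $x+j=\alpha r_-+\beta r_++\gamma w$ into $x+(j+\gamma)\in S(r_-,r_+)$ with $j+\gamma\in\{p,2p\}$ in the minimal cases, which is excluded either by the gap estimate or, for $j+\gamma=2p$, by $a\le p-1$, $b\le p-2$; larger $\gamma$ is killed by size. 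As a byproduct, in the non-exceptional cases $N_{Y_p}-M=(p-a-1)r_-+(p-b-3)r_+\in S_{Y_p}$, so $M\in Q_{Y_p}$ and hence $\sucm(x)\le M<\sucp(x)$, giving $\pi_+(x)=x$.

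For part (2), the inclusion $\{x-\min\{a,b\},\dots,x\}\subseteq S_{Y_p}$ is immediate from $r_-+r_+=w+1$: for $0\le i\le \min\{a,b\}$ one has $x-i=(a-i)r_-+(b-i)r_++iw$, a genuine non-negative combination in $[0,N_{Y_p}]$. With $\pi_+(x)=x$ in hand, it remains to show $\pi_-(x)=x-\min\{a,b\}$. For this I would (i) rule out $x-\min\{a,b\}-1\in S_{Y_p}$ --- were it $\alpha r_-+\beta r_++\gamma w$, the mod-$p$ constraint forces $\gamma=\min\{a,b\}+1$, and $r_-+r_+=w+1$ plus uniqueness of the $S(r_-,r_+)$-representation of $x$ in this range (Lemma \ref{lem:54}, $\gcd(2p-1,2p+1)=1$) produces a negative coefficient --- and (ii) produce a $Q_{Y_p}$-element strictly between $x-\min\{a,b\}$ and the next lower row-element $x'$ ($=x-2p$, or the top of the previous row): applying part (1) to $x'$ gives $M':=x'+p+1\in Q_{Y_p}$ with $x'<M'<x-\min\{a,b\}$, while no $S_{Y_p}$-element lies in $(x',x-\min\{a,b\})$ by the same gap/mod-$p$ estimate. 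Since $X_{Y_p}=S_{Y_p}\sqcup Q_{Y_p}$ and its blocks of constant sign alternate, (i) and (ii) force the positive block through $x$ to be exactly $\{x-\min\{a,b\},\dots,x\}$, i.e.\ $[\pi_-(x),\pi_+(x)]\cap S_{Y_p}$ as claimed.

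The exceptional pair $x\in\{(p-2)r_+,(p-1)r_-\}$ is where I expect the genuine obstacle. Here $(p-2)r_+$ is the top of row $p-2$ and $(p-1)r_-$ the bottom of row $p-1$, they differ by exactly $3p$, and the point is that these two fuse into one positive block of $X_{Y_p}$ --- upward from $(p-2)r_+$ (because there $N_{Y_p}-M$ acquires a negative $r_+$-coefficient, the byproduct $M\in Q_{Y_p}$ of part (1) fails, and in fact $N_{Y_p}-M=p^2(2p-5)\notin S_{Y_p}$, so $M\notin X_{Y_p}$) and downward from $(p-1)r_-$. To make this precise one must show that no element of $X_{Y_p}$ lies strictly between $(p-2)r_+$ and $(p-1)r_-$: no $S_{Y_p}$-element by the part-(1) argument (in that window the only $S(r_-,r_+)$-elements are the endpoints, and $\alpha r_-+\beta r_++\gamma w=(\alpha+\gamma)r_-+(\beta+\gamma)r_+-\gamma$ with $\gamma\ge 1$ cannot fall strictly between two consecutive row-elements without forcing $\alpha<0$ or $\beta<0$); and no $Q_{Y_p}$-element because, via $Q_{Y_p}=N_{Y_p}-S_{Y_p}$, this amounts to the absence of $S_{Y_p}$-elements in the length-$3p$ interval $[N_{Y_p}-(p-1)r_-,N_{Y_p}-(p-2)r_+]$, which the same row analysis dispatches --- the decisive check being that $p^2(2p-5)\notin S(r_-,r_+)$, since $p(2p-5)\equiv -2\pmod{2p-1}$ cannot be written as $k(2p-1)+2i$ with $0\le i\le k\le p-1$ (that would require $i\equiv p-1$, out of range). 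Granting all this, the positive block through either endpoint is $\{(p-2)r_+,(p-1)r_-\}$, the asserted exceptional value.
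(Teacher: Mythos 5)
The paper offers no proof of this statement: it is quoted directly from Hom--Karakurt--Lidman \cite{HLK} (their Lemma~5.6), and the surrounding text explicitly says the auxiliary lemmas from \cite{HLK} are listed \emph{without proof}. So there is nothing in this paper to compare your argument against; I can only assess the proposal on its own terms.

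The framework and the load-bearing computations check out. The identity $N_{Y_p}-(p-1)r_--(p-3)r_+=p+1$, hence $N_{Y_p}-(p-a-1)r_--(p-b-3)r_+=x+p+1$, is correct, and the reduction via $w=r_-+r_+-1$ together with $r_\pm\equiv 0$, $w\equiv -1\pmod p$ to the two-generator semigroup $S(r_-,r_+)$ (whose in-range elements Lemma~\ref{lem:54} lists with spacing $2p$ inside a row and $\ge 3p$ between rows) is exactly the right mechanism. The representation $x-i=(a-i)r_-+(b-i)r_++iw$ for part~(2), the uniqueness argument ruling out $x-\min\{a,b\}-1\in S_{Y_p}$ (here $\gamma=\min\{a,b\}+1$ is forced and produces a negative coefficient), and the exceptional-case computation $N_{Y_p}-M=p^2(2p-5)$ with $p(2p-5)\equiv -2\pmod{2p-1}$ unrealizable are all correct. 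So I believe the argument goes through.

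What keeps this from being a complete proof is that several steps are asserted ``by the same estimate'' rather than carried out. (a) In part~(1) the case $j+\gamma=2p$ should be written out --- it needs both the uniqueness of the $S(r_-,r_+)$-representation of $x+2p$ in range and the bound $\gamma\ge p-1$, forcing $\alpha=(a-1)-\gamma<0$ --- and $j+\gamma\ge 3p$ should be killed by the explicit size bound $\gamma w\le x+p+1$. (b) In part~(2), step~(ii)'s claim that no $S_{Y_p}$-element lies in $(x',x-\min\{a,b\})$ needs its own two-line check, and the identification of $x'$ should distinguish $b\ge 1$ (where $x'=x-2p=(a+1)r_-+(b-1)r_+$) from $b=0$ (where $x'=(a-1)r_+$, a genuinely different row, with $\min\{a,b\}=0$ and the window of length $\ge 3p$). (c) In the exceptional case, the absence of $Q_{Y_p}$-elements in $\bigl((p-2)r_+,(p-1)r_-\bigr)$ amounts to the absence of $S_{Y_p}$-elements in $\bigl(2p^3-5p^2-2p+1,\;2p^3-5p^2+p+1\bigr)$; the nearest $S(r_-,r_+)$-elements are $(p-3)r_+=2p^3-5p^2-3p$ and $(p-2)r_-=2p^3-5p^2+2p$, both outside the window, but the $\gamma\ge 1$ representations still need to be dismissed there explicitly rather than by reference. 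None of these appear hard and I expect they all go through, but as written they are gestures rather than arguments.
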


\begin{lem}[\cite{HLK} Proposition 5.7 ]\label{lem:57}
The reduced form $\tilde{\Delta}_{Y_p}$ of $\dyp$ satisfies:
\begin{enumerate}
\item As ordered subsets of $\mathbb{N}$, $\tilde{S}_{Y_p}\cap[0,2r_-+(p-3)r_+]=S(r_-,r_+) \cap [0,2r_-+(p-3)r_+]\backslash \{(p-2)r_+\}$.
\item Let $x \in S(r_-,r_+)\cap [0,2r_-+(p-3)r_+]\backslash \{ (p-2)r_+,(p-1)r_-\}$ be written $x=ar_-+br_+$.  Then $\tdyp (x)=\mathrm{min}\{a,b\}+1$.  Further, $\tdyp ((p-1)r_-)=2$.
\item Let $x \in \tilde{S}_{Y_p}$ and say $x< N_{Y_p}-cr_--dr_+< \sucp(x)$, where $c,d\geq 0$.  Then $\tdyp(\sucm(x))\leq -\mathrm{min}\{c,d\}-1$.  
\end{enumerate}
\end{lem}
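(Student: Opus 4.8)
The plan is to translate the explicit descriptions of $S_{Y_p}$ furnished by Lemmas \ref{lem:54} and \ref{lem:56} into the language of the reduced delta sequence: parts (1) and (2) read $\tilde{S}_{Y_p}$ and $\tilde{\Delta}_{Y_p}$ directly off the ``run structure'' of $S_{Y_p}$ inside $X_{Y_p}$, while part (3) follows from a short reflection argument using only the arithmetic identity $w=r_-+r_+-1$ and the disjointness $S_{Y_p}\cap Q_{Y_p}=\varnothing$.

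First I would record the key arithmetic: $r_-+r_+=p(2p-1)+p(2p+1)=4p^2=w+1$, so $w=r_-+r_+-1$; hence every element of $S_{Y_p}$ has the form $Ar_-+Br_+-k$ with $A,B\geq 0$ and $0\leq k\leq\min(A,B)$. In particular, around each semigroup point $ar_-+br_+$ with $a,b\geq 0$, the $\min(a,b)+1$ consecutive integers $ar_-+br_+,\,ar_-+br_+-1,\dots,ar_-+br_+-\min(a,b)$ all lie in $S_{Y_p}$ (they are $\leq N_{Y_p}$ throughout the range $[0,2r_-+(p-3)r_+]$, since $2r_-+(p-3)r_+<N_{Y_p}$). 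Lemma \ref{lem:56}(2) says precisely that these intervals are the maximal $S_{Y_p}$-runs of $X_{Y_p}$ in this range, with the single exception that the runs attached to $(p-2)r_+$ and $(p-1)r_-$ are fused into one run $\{(p-2)r_+,(p-1)r_-\}$ (no $Q_{Y_p}$-element separates them), while Lemma \ref{lem:56}(1) guarantees a $Q_{Y_p}$-element immediately above every other run, so runs do not merge further. Using Lemma \ref{lem:54} to enumerate $S(r_-,r_+)\cap[0,2r_-+(p-3)r_+]$ (levels $a+b\leq p-1$, the range ending exactly at the semigroup point $2r_-+(p-3)r_+$), one checks that these runs are pairwise disjoint — consecutive semigroup points differ by at least $r_+-r_-=2p$, which exceeds every run length $\min(a,b)+1\leq\lfloor(p-1)/2\rfloor+1$ — and exhaust $S_{Y_p}\cap[0,2r_-+(p-3)r_+]$, with no run straddling the right endpoint.

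Parts (1) and (2) are then immediate, since the reduction records each maximal $S$-run by its largest element and assigns it $\tilde{\Delta}_{Y_p}$ equal to the cardinality of that run. The run maxima in $[0,2r_-+(p-3)r_+]$ are exactly the semigroup points in that range except $(p-2)r_+$, whose run has maximum $(p-1)r_-$; this is (1). A non-fused run ending at $ar_-+br_+$ has $\min(a,b)+1$ elements, so $\tilde{\Delta}_{Y_p}(ar_-+br_+)=\min(a,b)+1$, while the fused run $\{(p-2)r_+,(p-1)r_-\}$ has two elements, giving $\tilde{\Delta}_{Y_p}((p-1)r_-)=2$; this is (2). For part (3), let $x\in\tilde{S}_{Y_p}$ with $x<N_{Y_p}-cr_--dr_+<\sucp(x)$ and $c,d\geq 0$. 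Since $x$ is the maximum of its $S$-run, $(x,\sucp(x))\cap X_{Y_p}=(x,\sucp(x))\cap Q_{Y_p}$ is exactly the $Q$-run beginning at $\sucm(x)$, so $\tilde{\Delta}_{Y_p}(\sucm(x))=-\lvert Q_{Y_p}\cap(x,\sucp(x))\rvert=-\lvert S_{Y_p}\cap(N_{Y_p}-\sucp(x),\,N_{Y_p}-x)\rvert$. Now $cr_-+dr_+\in S_{Y_p}\cap(N_{Y_p}-\sucp(x),N_{Y_p}-x)$ by hypothesis, and $cr_-+dr_+-k\in S_{Y_p}$ for $k=0,\dots,\min(c,d)$ by the identity above; their reflections $N_{Y_p}-cr_--dr_++k$ lie in $Q_{Y_p}$, hence not in $S_{Y_p}$, so $\sucp(x)$ — being an element of $S_{Y_p}$ greater than $N_{Y_p}-cr_--dr_+$ — must in fact exceed $N_{Y_p}-cr_--dr_++\min(c,d)$. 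Therefore all $\min(c,d)+1$ elements $cr_-+dr_+-k$ lie in $S_{Y_p}\cap(N_{Y_p}-\sucp(x),N_{Y_p}-x)$, whence $\tilde{\Delta}_{Y_p}(\sucm(x))\leq-\min(c,d)-1$.

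I expect the main obstacle to be the bookkeeping concentrated in the second paragraph: verifying carefully that the intervals produced by Lemma \ref{lem:56}(2) are pairwise disjoint, that they cover $S_{Y_p}$ up to $2r_-+(p-3)r_+$ with no partial run extending past the endpoint, and that the fused run at $(p-2)r_+$ and $(p-1)r_-$ is the only deviation from the naive picture — together with checking that the small cases (notably $p=3$, where $(p-1)r_-$ coincides with the right endpoint and most runs are singletons) fit the same pattern. Once the run structure is nailed down, parts (1)--(2) are a direct reading, and part (3) needs essentially no further input.
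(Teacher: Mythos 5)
There is no in-paper proof to compare against here: Lemma \ref{lem:57} is one of the auxiliary statements that the paper explicitly imports from \cite{HLK} without proof (``we list several auxiliary Lemmas from \cite{HLK} without proof''). Judged on its own terms, your derivation from the other imported ingredients (Lemmas \ref{lem:54} and \ref{lem:56}) is correct. Your reading of the reduction procedure of Section \ref{subsec:sucpre} is accurate: $\tilde{S}_{Y_p}$ consists of the maxima of the maximal $S_{Y_p}$-blocks of $X_{Y_p}$, and $\tilde{\Delta}_{Y_p}$ records their cardinalities; Lemma \ref{lem:56}(2) identifies those blocks in the stated range with the intervals $\{ar_-+br_+-\min\{a,b\},\dots,ar_-+br_+\}$ (fused at $(p-2)r_+$ and $(p-1)r_-$), which gives (1) and (2) once the exhaustion bookkeeping you flag is done. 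That bookkeeping does go through: $N_{Y_p}=(p-2)(r_-+r_+)-(p-1)$, so the range lies below $N_{Y_p}$; the identity $w=r_-+r_+-1$ places every element of $S_{Y_p}$ within $\min\{A,B\}$ below a point of $S(r_-,r_+)$; consecutive points of $S(r_-,r_+)$ in the range are separated by at least $2p$ (exactly $2p$ within a level, at least $3p$ between levels) while $\min\{a,b\}+1\leq (p+1)/2$; and every semigroup point above $2r_-+(p-3)r_+$ sits at distance at least $2p$ from it, so no block from outside the range dips below the endpoint. Your argument for (3) is the cleanest part and is complete as written: for $x\in\tilde{S}_{Y_p}$ one has $\tilde{\Delta}_{Y_p}(\mathrm{suc}_-(x))=-\lvert S_{Y_p}\cap(N_{Y_p}-\mathrm{suc}_+(x),\,N_{Y_p}-x)\rvert$, and the reflection observation --- that $N_{Y_p}-cr_--dr_++k\in Q_{Y_p}$ for $0\leq k\leq\min\{c,d\}$, forcing $\mathrm{suc}_+(x)>N_{Y_p}-cr_--dr_++\min\{c,d\}$ --- correctly places all $\min\{c,d\}+1$ elements $cr_-+dr_+-k$ in that window. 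One small point worth making explicit in a final write-up: the hypothesis $x\in\tilde{S}_{Y_p}$ is what guarantees both that $\mathrm{suc}_-(x)$ is the minimum of its $Q$-block (so that $\tilde{\Delta}_{Y_p}(\mathrm{suc}_-(x))$ is defined and equals minus the block size) and that $(x,\mathrm{suc}_+(x))\cap X_{Y_p}$ is a single $Q$-block; this matches the way the lemma is invoked in the proof of Lemma \ref{ref:lem:58}.
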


Fix $p=2\xi+1$ for a positive integer $\xi$.  Define\begin{equation}\label{eq:kdef}
K=(\xi-1)r_-+(\xi-1)r_+.
\end{equation}
We note two inequalities:
\begin{equation}\label{eq:53}
(p-1)r_-+(p-3)r_+<N_{Y_p},
\end{equation}
\begin{equation}\label{eq:54}
(p-2)r_-+(p-2)r_+>N_{Y_p}.
\end{equation}
Note
\begin{equation}\label{eq:nypk}
K<(p-3)r_+<N_{Y_p}/2,
\end{equation} 
by (\ref{eq:54}).  By (\ref{eq:redbriess}),
\begin{equation}\label{eq:58}
\tdyp=(\tdyp |_{\tilde{X}_{Y_p}\cap [0,K)} \ast \tdyp |_{\tilde{X}_{Y_p}\cap[K,N_{Y_p}/2]})^{\mathrm{sym}}.
\end{equation}
Let $S(r_-,r_+)$ be the semigroup generated by $r_-,r_+$.  Observe that $K\in S(r_-,r_+)\cap[0,2r_-+(p-3)r_+]$ and $K \neq (p-2)r_+$, so $K\in \tilde{S}_{Y_p}$ by Lemma \ref{lem:57}.  Set:
\begin{align}\label{eq:59}
\Delta_{Z_p}&=\tdyp|_{\tilde{X}_{Y_p} \cap[0,K)}\\ \label{eq:510}
\Delta_{W_p}&=\tdyp|_{\tilde{X}_{Y_p}\cap[K,\ny/2]}. 
\end{align}
\begin{lem}[cf. Lemma 5.8 of \cite{HLK}]\label{ref:lem:58}
The abstract delta sequence $\dz$ is sinking.  
\end{lem}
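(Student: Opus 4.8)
Looking at this problem, I need to prove that the abstract delta sequence $\Delta_{Z_p} = \tilde{\Delta}_{Y_p}|_{\tilde{X}_{Y_p} \cap [0,K)}$ is sinking, where $K = (\xi-1)r_- + (\xi-1)r_+$ and $p = 2\xi+1$.

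The plan is to verify the three conditions in Definition \ref{def:sink} directly using the detailed structural information about $\tilde{\Delta}_{Y_p}$ provided by Lemma \ref{lem:57}.

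\begin{proof}
We verify the three conditions of Definition \ref{def:sink} for $\Delta_{Z_p}$.

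First I would identify the maximal element of $X_{Z_p} := \tilde{X}_{Y_p} \cap [0,K)$ and check that $\Delta_{Z_p}$ is negative there. By Lemma \ref{lem:57}(1), the elements of $\tilde{S}_{Y_p}$ in $[0, 2r_- + (p-3)r_+]$ are exactly the semigroup elements $S(r_-,r_+) \cap [0, 2r_-+(p-3)r_+]$ with $(p-2)r_+$ removed; using Lemma \ref{lem:54} one reads off that the largest such element strictly below $K = (\xi-1)(r_-+r_+)$ and the structure near $K$. Since $K \in \tilde{S}_{Y_p}$ (so $\Delta$ is positive at $K$, and $K \notin X_{Z_p}$), the reducedness of $\tilde{\Delta}_{Y_p}$ forces the predecessor of $K$ in $\tilde{X}_{Y_p}$ to lie in $\tilde{Q}_{Y_p}$, i.e. to be a point where $\tilde{\Delta}_{Y_p}$ is negative; this point is the maximal element $x_{\max}$ of $X_{Z_p}$, giving the first bullet.

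Second, for the condition $\tilde{\Delta}(x) \le |\tilde{\Delta}(\operatorname{suc}_-(x))|$ for all $x \in \tilde{S}_{Z_p}$: writing such $x = ar_- + br_+$ with $a,b \ge 0$ (and $x < K$), Lemma \ref{lem:57}(2) gives $\tilde{\Delta}_{Y_p}(x) = \min\{a,b\}+1$ (with the exceptional value $2$ at $(p-1)r_-$, which lies above $K$ so is irrelevant here). Meanwhile, writing $x < N_{Y_p} - cr_- - dr_+ < \operatorname{suc}_+(x)$ as in Lemma \ref{lem:57}(3), that lemma gives $\tilde{\Delta}_{Y_p}(\operatorname{suc}_-(x)) \le -\min\{c,d\}-1$. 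So the needed inequality reduces to showing $\min\{a,b\} \le \min\{c,d\}$, where Lemma \ref{lem:56}(1) pins down $c = p - a - 1$, $d = p - b - 3$ for $x \le 2r_- + (p-3)r_+$. Since $x < K = (\xi-1)(r_-+r_+)$ with $p = 2\xi+1$, one has $a + b \le 2\xi - 2 = p - 3$, hence $a \le p - 3 - b = d + (\text{slack})$ and similarly; a short case analysis comparing $\min\{a,b\}$ with $\min\{p-a-1, p-b-3\}$ under the constraint $a+b \le p-3$ gives the claim. I also need to handle $x$ in the narrow range $2r_- + (p-3)r_+ < x < K$ if nonempty, but in fact $K < (p-3)r_+ < 2r_- + (p-3)r_+$ by \eqref{eq:nypk}, so no such $x$ exists and this case is vacuous.

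Third, the condition $\tilde{\Delta}(\operatorname{pre}_+(x_{\max})) < |\tilde{\Delta}(x_{\max})|$: here $\operatorname{pre}_+(x_{\max})$ is the largest element of $\tilde{S}_{Y_p}$ below $K$, which by Lemma \ref{lem:54} and Lemma \ref{lem:57}(1) is of the form $ar_- + br_+$ with $a+b = p-3$ (the last row of \eqref{eq:rminplussemi} that stays below $K$), so its $\tilde{\Delta}$-value is $\min\{a,b\}+1 \le \xi - 1$ by Lemma \ref{lem:57}(2); and $x_{\max} = \operatorname{suc}_-$ of that element has $|\tilde{\Delta}(x_{\max})| \ge \min\{c,d\}+1$ with $c = p-a-1, d = p-b-3$ summing to $p-1 > p-3$, so $|\tilde{\Delta}(x_{\max})| > \tilde{\Delta}(\operatorname{pre}_+(x_{\max}))$ follows from the same arithmetic.

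The main obstacle is bookkeeping: correctly translating between the semigroup coordinates $(a,b)$ of a point and the coordinates $(c,d)$ of its "mirror" $N_{Y_p} - cr_- - dr_+$, and making sure the exceptional elements $(p-2)r_+$ and $(p-1)r_-$ of Lemmas \ref{lem:56} and \ref{lem:57} either lie above $K$ or are otherwise handled. Once the coordinates are tracked, the three inequalities all reduce to elementary arithmetic with the constraint $a + b \le p - 3$ coming from $x < K$. This is why, as remarked in the text, the lemma follows essentially immediately from the structural Lemmas \ref{lem:56}--\ref{lem:57}, exactly as in the $p$ even case of \cite{HLK}.
\end{proof}
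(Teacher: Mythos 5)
Your proposal follows the same route as the paper's proof: verify the three conditions of Definition \ref{def:sink} directly from Lemmas \ref{lem:54}, \ref{lem:56}, and \ref{lem:57}, using that $x<K$ forces $a+b\leq p-3$ in the coordinates $x=ar_-+br_+$, and translating to the ``mirror'' coordinates $c=p-a-1$, $d=p-b-3$. Conditions (\lowerromannumeral{1}) and (\lowerromannumeral{2}) are handled essentially as the paper does them.

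There is, however, a real gap in your argument for condition (\lowerromannumeral{3}). You claim the strict inequality $\tilde{\Delta}(\mathrm{pre}_+(x_{\max}))<|\tilde{\Delta}(x_{\max})|$ ``follows from the same arithmetic'' because $c+d=p-1>p-3=a+b$. But for a general $(a,b)$ with $a+b=p-3$ this arithmetic only yields $\min\{a,b\}\leq\min\{c,d\}$, and equality genuinely occurs: if $a\leq b$ then $d=p-b-3=a$ and $c=b+2>a$, so $\min\{c,d\}=a=\min\{a,b\}$. The strict inequality therefore does \emph{not} follow from the row-sum comparison alone, and if $\widetilde{\mathrm{pre}}_+(K)$ happened to have $a\leq b$ your argument would fail. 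The paper closes this exactly at the point you left vague: it identifies $\widetilde{\mathrm{pre}}_+(K)=\xi r_-+(\xi-2)r_+$ (the element of row $p-3$ immediately preceding the midpoint $K=(\xi-1)(r_-+r_+)$), so that $a=\xi>\xi-2=b$, computes $\tilde{\Delta}_{Y_p}(\widetilde{\mathrm{pre}}_+(K))=\xi-1$ via Lemma \ref{lem:57}(\lowerromannumeral{2}), and then notes $c=d=\xi$ so $|\tilde{\Delta}_{Y_p}(x_{\max})|\geq\xi+1=p-\xi$ by Lemma \ref{lem:57}(\lowerromannumeral{3}), giving the strict inequality $\xi-1<\xi+1$. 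You should pin down the element $\widetilde{\mathrm{pre}}_+(K)$ and its coordinates explicitly, since the asymmetry $a>b$ is what actually produces the strictness.
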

\begin{proof}
We check (\lowerromannumeral{1})-(\lowerromannumeral{3}) of Definition \ref{def:sink}.  For (\lowerromannumeral{1}), we recall that $\dz$ is in reduced form.  We saw above that $K \in \tilde{S}_{Y_p}$, so if the last element of the delta sequence $\dz$ were positive, $\tdyp$ would have two consecutive positive values, contradicting that $\tdyp$ is reduced.  This establishes (\lowerromannumeral{1}) in Definition \ref{def:sink}.  

As in \cite{HLK}, we denote predecessors and successors taken with respect to $\tx$ with a tilde, and those with respect to $X_{Y_p}$ without a tilde.  By the construction of the reduced delta sequence as in Section \ref{subsec:sucpre}, 
\begin{equation}\label{eq:511}
\sucp(x) \leq \widetilde{\mathrm{suc}}_+(x) \; \mathrm{for \; every} \; x \in \tx.
\end{equation}
We will next show:
\begin{equation}\label{eq:512}
\tdyp(x) \leq -\tdyp(\widetilde{\mathrm{suc}}_-(x)) \; \mathrm{for \; all} \; x \in \tilde{S}_{Y_p}\cap[0,K),
\end{equation}
to establish (\lowerromannumeral{2}) of Definition \ref{def:sink}.  
Let $x \in \tilde{S}_{Y_p}\cap[0,K)$.  Then $x \in S(r_-,r_+)\cap[0,(p-3)r_+]$ by (\ref{eq:nypk}) and Lemma \ref{lem:57}(\lowerromannumeral{1}).  Writing $x=ar_-+br_+$, Lemma \ref{lem:57}(\lowerromannumeral{2}) gives $\tdyp(x)=\mathrm{min}\{a,b\}+1$.  Set 
\[y=(p-a-1)r_-+(p-b-3)r_+.\]
Lemma \ref{lem:56} and (\ref{eq:511}) give:
\[x<\ny-y<\sucp(x)\leq \widetilde{\mathrm{suc}}_+(x).\]
By $x\in S(r_-,r_+)\cap [0,(p-3)r_+]$, we see that $a+b\leq p-3$.  Thus $p-a-1\geq 0$ and $p-b-3\geq 0$.  Then, by the definition of $Q_{Y_p}$, $\ny -y \in Q_{Y_p}$.  Lemma \ref{lem:57}(\lowerromannumeral{3}) gives 
\[\tdyp(\widetilde{\mathrm{suc}}_-(x))\leq -\mathrm{min}\{p-a-1,p-b-3\}-1. \]
Then, to prove (\ref{eq:512}) it is sufficient to show 
\begin{equation}\label{eq:513}
\mathrm{min}\{a,b\} \leq \mathrm{min}\{p-a-1,p-b-3\}.
\end{equation}
But $a+b\leq p-3$, so $a\leq p-b-3$ and $b\leq p-a-3$, showing (\ref{eq:513}).  

We must check that Definition \ref{def:sink}(\lowerromannumeral{3}) holds for $\dz$.  The last positive value of $\dz$ occurs at $\widetilde{\mathrm{pre}}_+(K)=\xi r_-+(\xi-2)r_+$ by Lemma \ref{lem:54} and Lemma \ref{lem:57}(\lowerromannumeral{1}).  Thus $\widetilde{\mathrm{suc}}_-(\xi r_-+(\xi-2)r_+)$ is the largest element of $Z_p$.  Then to show Definition \ref{def:sink}(\lowerromannumeral{3}) holds for $\dz$, we need to show:
\begin{equation}\label{eq:514}
\tdyp(\xi r_-+(\xi-2)r_+)< -\tdyp (\widetilde{\mathrm{suc}}_-(\xi r_-+(\xi-2)r_+)).
\end{equation}
By Lemma \ref{lem:57}(\lowerromannumeral{2}), $\tdyp(\xi r_-+(\xi-2)r_+)=\xi -1$.  However, Lemma \ref{lem:56}(\lowerromannumeral{1}) gives:
\[\xi r_-+(\xi-2)r_+ < \ny -(p-\xi-1)r_--(p-\xi-1)r_+<\sucp(\xi r_- + (\xi -2)r_+)\leq K.\]
Then from Lemma \ref{lem:57}(\lowerromannumeral{3}):
\[-\tdyp(\widetilde{\mathrm{suc}}_-(\xi r_-+(\xi-2)r_+))\geq \mathrm{min} \{ p-\xi-1,p-\xi-1\}+1=p-\xi.\]
Then to show (\ref{eq:514}), we need only show $\xi -1 < p -\xi$, which is clear since $p=2\xi+1$.
\end{proof}   
\begin{lem}[cf. Lemma 5.9 of \cite{HLK}]\label{lem:59}
As abstract delta sequences $\dw \cong \Delta_{C_p}$ where $\Delta_{C_p}$ is as in Definition \ref{def:crea}.  
\end{lem}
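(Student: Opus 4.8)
The plan is to describe $\tx \cap [K,\ny/2]$ explicitly as an ordered set, compute $\tdyp$ on it, and match the answer termwise with $\dcp$ of (\ref{eq:creadef}); this follows the proof of Lemma 5.9 of \cite{HLK}, with the parity changed. Write $p=2\xi+1$, and recall $K=(\xi-1)(r_-+r_+)$ from (\ref{eq:kdef}). The first step is to pin down the range. Inequality (\ref{eq:nypk}) already gives $K<(p-3)r_+<\ny/2$; and inserting the explicit value $\ny=pqr-pq-pr-qr=4p^3-8p^2-p+1$ into (\ref{eq:53})--(\ref{eq:54}) one gets the elementary estimate $\ny/2<2r_-+(p-3)r_+$, so Lemmas \ref{lem:54}, \ref{lem:56} and \ref{lem:57} apply throughout $[K,\ny/2]$. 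In particular the element $(p-2)r_+$ removed in Lemma \ref{lem:57}(\lowerromannumeral{1}), and also the exceptional element $(p-1)r_-$, both exceed $\ny/2$, so $\syp\cap[K,\ny/2]=S(r_-,r_+)\cap[K,\ny/2]$ as sets.

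Next I would read off the positive part. By the triangular ordering of Lemma \ref{lem:54}, the semigroup elements in $[K,\ny/2]$ form the tail $K,\ (\xi-2)r_-+\xi r_+,\ \dots,\ (p-3)r_+$ of row $p-3$ followed by the head $(p-2)r_-,\ (p-3)r_-+r_+,\ \dots,\ \xi r_-+(\xi-1)r_+$ of row $p-2$; the cut-off at $\ny/2$ --- that a row-$(p-2)$ element lies below $\ny/2$ exactly when its $r_+$-coefficient is at most $\xi-1$ --- is again an elementary inequality in $p$. By Lemma \ref{lem:57}(\lowerromannumeral{2}), $\tdyp(ar_-+br_+)=\min\{a,b\}+1$ on all of these, so the row-$(p-3)$ tail carries the positive values $\xi,\xi-1,\dots,2,1$ and the row-$(p-2)$ head carries $1,2,\dots,\xi$, each in the listed order.

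For the gaps I would combine Lemma \ref{lem:56} with the symmetry of the reduced sequence. By Lemma \ref{lem:56}(\lowerromannumeral{1}), the unique element of $\tilde{Q}_{Y_p}$ between $x=ar_-+br_+$ and $\sucp(x)$ is $z_x:=\ny-\big((p-1-a)r_-+(p-3-b)r_+\big)$; and since $\dyp(z)=-\dyp(\ny-z)$ and (\ref{eq:redbriess}) holds, one has $\tdyp(\ny-z)=-\tdyp(z)$, so $\tdyp(z_x)=-\tdyp\big((p-1-a)r_-+(p-3-b)r_+\big)$. For $x$ in the row-$(p-3)$ tail the partner $(p-1-a)r_-+ar_+$ lies in row $p-1$ with $r_+$-coefficient $a\le p-3$, and Lemma \ref{lem:57}(\lowerromannumeral{2}) evaluates $\tdyp$ there as $\min\{p-1-a,a\}+1$; running down the tail this gives successive gaps $-\xi,-(\xi-1),\dots,-2$. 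For $x$ in the row-$(p-2)$ head the partner lies in row $p-2$ and one gets successive gaps $-2,-3,\dots,-\xi$, while Lemma \ref{lem:56}(\lowerromannumeral{1}) places $z_x$ for the final element $x=\xi r_-+(\xi-1)r_+$ above $\ny/2$, so that gap does not appear. Interleaving the positive values and the gaps gives
\[\dw=\langle \xi,-\xi,\xi-1,-(\xi-1),\dots,2,-2,1,-2,\,1,-2,2,-3,3,\dots,\xi-1,-\xi,\xi\rangle,\]
which is precisely $\dcp$. The degenerate case $p=3$ (both rows collapse) is checked directly: $\dw=\dcp=\langle 1,-2,1\rangle$.

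The step I expect to be the main obstacle is the transition between the two rows. The positive value just before the central block of $\dw$ is $\tdyp((p-3)r_+)=1$, and one must verify that the gap $z_{(p-3)r_+}$ immediately after it is the symmetric partner of the \emph{exceptional} semigroup element $(p-1)r_-$, whose $\tdyp$-value is $2$ rather than $1$ by Lemma \ref{lem:57}(\lowerromannumeral{2}) (equivalently by Lemma \ref{lem:56}(\lowerromannumeral{2})). This is exactly what turns the naively expected central block $\dots,1,-1,1,-1,\dots$ into the block $\dots,1,-2,1,-2,\dots$ that characterizes the creature. Everything else --- the triangular ordering of $S(r_-,r_+)\cap[0,(p-1)r_+]$, identifying which row-$(p-3)$ and row-$(p-2)$ elements lie below $\ny/2$, and checking that the partner elements remain inside the range covered by Lemma \ref{lem:57} --- reduces to elementary inequalities in $p$, uniform for $p\ge5$ and handled separately for $p=3$, with (\ref{eq:53})--(\ref{eq:nypk}) and the explicit formula for $\ny$ doing the work.
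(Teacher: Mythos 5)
Your proposal is correct and follows essentially the same route as the paper, with one genuine streamlining. Both arguments first list $\tilde{S}_{Y_p}\cap[K,\ny/2]$ using Lemma \ref{lem:54} and Lemma \ref{lem:57}(\lowerromannumeral{1}),(\lowerromannumeral{2}), and both read off the negative values from the $\ny$-complement symmetry of the reduced sequence. Where they differ: the paper separately computes $\tilde{Q}_{Y_p}\cap[K,\ny/2]$ as (\ref{eq:518}), then establishes the full interleaved order (\ref{eq:527}) via the list of ad hoc inequalities (\ref{eq:522})--(\ref{eq:526p2}); you instead invoke Lemma \ref{lem:56}(\lowerromannumeral{1}) to place the negative entry after each $x\in\tilde{S}_{Y_p}$, which collapses that battery of comparisons. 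The one point to be careful about: Lemma \ref{lem:56}(\lowerromannumeral{1}) only shows that $z_x\in Q_{Y_p}$ lies strictly between $x$ and $\sucp(x)$, not that $z_x\in\tilde{Q}_{Y_p}$. To conclude $z_x$ is the unique reduced negative entry there you need $\ny-z_x=(p-1-a)r_-+(p-3-b)r_+\in\tilde{S}_{Y_p}$, i.e.\ that the partner is in $S(r_-,r_+)$, is not the excluded element $(p-2)r_+$, and lies in the range $[0,2r_-+(p-3)r_+]$ where Lemma \ref{lem:57}(\lowerromannumeral{1}) applies --- the paper's version of this range check is (\ref{eq:nyp2k}). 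You fold this into "elementary inequalities in $p$," which is fair, but it is worth flagging since it is the substantive content behind the interleaving. Your identification of the structural point is also right: the exceptional evaluation $\tdyp\big((p-1)r_-\big)=2$ from Lemma \ref{lem:57}(\lowerromannumeral{2}) is precisely what turns the central block into $1,-2,1,-2$ rather than $1,-1,1,-1$, matching the creature of Definition \ref{def:crea}.
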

\begin{proof}
We must explicitly compute $\dw$.  We begin by describing $\tilde{S}_{Y_p}\cap [K,\ny /2]$.  By (\ref{eq:nypk}), $K<\ny$, and by (\ref{eq:54}), $\ny/2 < (p-2)r_+$.  By Lemma \ref{lem:57}, we see $\tilde{S}_{Y_{p}} \cap[K,\ny/2]=S(r_-,r_+)\cap[K,\ny/2]$.  Then Lemma \ref{lem:54} gives:
\begin{multline}\label{eq:515}
\tilde{S}_{Y_p}\cap [K,\ny/2]=\{ (\xi-1)r_-+(\xi-1)r_+, (\xi-2)r_-+\xi r_+, \dots, r_-+(2\xi-3)r_+,\\(2\xi-2)r_+, (2\xi-1)r_-, (2\xi -2 )r_-+r_+,\dots, \xi r_-+(\xi-1)r_+ \}.
\end{multline}
To check that the last term of the sequence (\ref{eq:515}) is as written, we need to show 
\begin{equation}\label{eq:516}
\xi r_-+(\xi -1 )r_+ < \ny/2,
\end{equation}
and
\begin{equation}\label{eq:517}
(\xi-1)r_-+\xi r_+> \ny/2.
\end{equation}
To see (\ref{eq:516}), note that (\ref{eq:53}) gives $2 \xi r_-+(2\xi -2)r_+< \ny$, so $\xi r_-+(\xi -1)r_+ < \ny/2$.  

To see (\ref{eq:517}), note (\ref{eq:54}) gives $(2\xi-1)r_-+(2\xi -1 )r_+ > \ny$, so $(\xi -\frac{1}{2})r_-+(\xi -\frac{1}{2})r_+ > \ny/2$, and observe $(\xi-1)r_-+\xi r_+ > (\xi -\frac{1}{2})r_-+(\xi -\frac{1}{2}) r_+$.  Thus, (\ref{eq:515}) holds.  

We also find $\tilde{Q}_{Y_p} \cap [K,\ny/2]$, which is the same as to find $\tilde{S}_{Y_{p}} \cap [\ny/2, \ny -K]$.  

By (\ref{eq:54}) and (\ref{eq:nypk}),
\begin{equation}\label{eq:nyp2k}
\ny/2 < \ny -K < 2r_-+(p-3)r_+.
\end{equation}
By Lemma \ref{lem:57}(\lowerromannumeral{1}), $\tilde{S}_{Y_p}\cap [\ny/2,\ny-K]=S(r_-,r_+)\cap [\ny/2,\ny-K] \backslash \{ (2\xi-1)r_+\}$.  

Then, by Lemma \ref{lem:54}, 
\begin{multline}\label{eq:518}
\tilde{S}_{Y_p}\cap [\ny/2,\ny-K]=\{ (\xi-1)r_-+\xi r_+, (\xi-2)r_-+(\xi+1)r_+,\dots,r_-+(2\xi-2)r_+, \\ 2\xi r_-, (2\xi-1)r_-+r_+,\dots, (\xi +1)r_-+(\xi-1)r_+\}.  
\end{multline}
Note that $(2\xi-1)r_+$ is not present in (\ref{eq:518}).  To verify that $(\xi+1)r_-+(\xi -1)r_+$ is the last element in $\syp \cap [\ny/2,\ny-K]$, we must show 
\begin{align}\label{eq:519}
(\xi+1)r_-+(\xi-1)r_+ &< \ny-K, \; \mathrm{and}\; \\ \label{eq:520}
\xi r_-+\xi r_+ &> \ny-K.
\end{align}
Inequality (\ref{eq:519}) follows from (\ref{eq:53}) and the definition of $K$, while (\ref{eq:520}) follows from (\ref{eq:54}).  Thus (\ref{eq:518}) holds.  

We find the positions of elements of $\tilde{Q}_{Y_p}\cap [K,\ny/2]$ relative to the elements of $\syp \cap [K,\ny/2]$.  To do so, we use the following inequalities all obtained from (\ref{eq:53}) and (\ref{eq:54}).  
\begin{align}\label{eq:522}
(\xi -1-j)r_-+(\xi -1+j)r_+ &< \ny -(\xi+1+j)r_--(\xi-1-j)r_+, \; j=0,\dots, \xi-1,\\ \label{eq:523}
\ny-(\xi +1+j)r_--(\xi-1-j)r_+ &< (\xi -2-j)r_-+(\xi+j)r_+, \; j=0,\dots,\xi-2,\\ \label{eq:524}
jr_++(2\xi-1-j)r_- &< \ny-(j+1)r_--(2\xi-2-j)r_+, \; j=0,\dots,\xi-2, \\ \label{eq:525}
\ny-(j+1)r_--(2\xi-2-j)r_+ &< (j+1)r_++(2\xi-2-j)r_-, \; j=0,\dots,\xi-2.
\end{align}
We observe 
\begin{equation}\label{eq:526p1}
\ny-2\xi r_- < (2\xi-1)r_-
\end{equation}
directly from the definitions, and 
\begin{equation}\label{eq:526p2}
\ny-(\xi-1)r_--\xi r_+< \xi r_-+(\xi-1)r_+
\end{equation}
from (\ref{eq:54}).  

It follows from (\ref{eq:515}), (\ref{eq:518}), (\ref{eq:522})-(\ref{eq:525}), (\ref{eq:526p1}), and (\ref{eq:526p2}) that $\tx \cap [K,\ny/2]$ is:
\begin{multline}\label{eq:527}
\tx \cap [K,\ny/2]= \{ (\xi-1)r_-+(\xi-1)r_+, \ny -(\xi+1)r_--(\xi-1)r_+,(\xi-2)r_-+\xi r_+,\\ \ny - (\xi+2)r_--(\xi-2)r_+, \dots, r_-+(2\xi-3)r_+,\\ \ny -(2\xi-1)r_--r_+,(2\xi-2)r_+,\ny -2\xi r_-,(2\xi-1)r_-,\\ \ny-r_--(2\xi-2)r_+,(2\xi-2)r_-+r_+, \ny -2r_--(2\xi-3)r_+,\dots,(\xi+2)r_-+(\xi-3)r_+,\\ \ny-(\xi-2)r_--(\xi+1)r_+,(\xi+1)r_-+(\xi-2)r_+, \ny -(\xi-1)r_--\xi r_+,\xi r_-+(\xi-1)r_+\}.
\end{multline}
Now we need to calculate $\tdyp$ on $\tx \cap [K,\ny/2]$, and verify that it agrees with $\Delta_{C_p}$.  By Lemma \ref{lem:57}(\lowerromannumeral{2}) and $\ny/2<(p-2)r_+$,
\begin{equation}\label{eq:528}
\tdyp (cr_-+dr_+)=\mathrm{min} \{c,d\}+1 \; \mathrm{for} \; cr_-+dr_+ \in \syp \cap [K,\ny/2].
\end{equation}
Similarly, for $\ny-cr_--dr_+ \in \tilde{Q}_{Y_p}\cap [K,\ny/2]$ such that $cr_-+dr_+ \neq 2\xi r_-$:
\begin{equation}\label{eq:529}\tdyp(\ny-cr_--dr_+)=-\tdyp(cr_-+dr_+)=-\mathrm{min}\{c,d\}-1 \end{equation}
by Lemma \ref{lem:57}(\lowerromannumeral{2}), using (\ref{eq:nyp2k}) to obtain $cr_-+dr_+ < \ny -K < 2r_-+(p-3)r_+$.  Also, Lemma \ref{lem:57} gives 
\begin{equation}\label{eq:530} -2= -\tdyp (2 \xi r_-)=\tdyp(\ny-2\xi  r_-). \end{equation}
Computing $\tdyp$ using (\ref{eq:528}),(\ref{eq:529}), and (\ref{eq:530}), we see that $\Delta_{W_p}$ agrees with $\Delta_{C_p}$ from Definition \ref{def:crea}.  This completes the proof of Lemma \ref{lem:main}.
\end{proof} 

\begin{fact}[\cite{betaseifert}]\label{fct:proje} 
Let $Y=\Sigma(b,(b_1,a_1),\dots,(b_k,a_k))$ be a negative Seifert rational homology sphere with spin structure $\s$.  Let $(\Gamma_{Y},\chi)$ be the (symmetric) graded root associated to $(-Y,\mathfrak{s})$, and let $\iota$ be the associated involution of $\Gamma_{Y}$.  Let $v \in \Gamma_Y$ be the vertex of minimal grading which is invariant under $\iota$.  The space $(Y,\s)$ is of projective type if and only if there exists a vertex $w$, and a path from $v$ to $w$ in $\Gamma_Y$ which is grading-decreasing at each step, with $\chi(w)=\mathrm{min}_{x\in \Gamma_Y} \chi(x)$.  Moreover, $\delta(Y,\s)-\beta(Y,\s)=\chi(v)-\chi(w)$.  
\end{fact}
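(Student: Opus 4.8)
The plan is to translate the statement into the combinatorics of the graded root $(\Gamma_Y,\chi)$ of $-Y$ and its involution $\iota$, via the dictionary between graded roots and the $\mathrm{Pin}(2)$--equivariant Seiberg--Witten Floer homotopy type. First I would set up that dictionary: extending Theorem~\ref{thm:grdroo} from Brieskorn spheres to all negative Seifert rational homology spheres by the plumbing calculus of N\'{e}methi and Ozsv\'{a}th--Szab\'{o} (\cite{Nemethigr},\cite{OzSzplumb}), one has $\mathbb{H}(\Gamma_Y,\chi)\cong\mathit{HF}^+(-Y,\s)$ as relatively graded $\mathbb{F}[U]$--modules with $\iota$ realizing the conjugation symmetry; combining this with the identifications of Heegaard Floer, monopole Floer, and $S^1$--equivariant Seiberg--Witten Floer homology (\cite{CGH1},\cite{Taubes},\cite{KLT1},\cite{LM}) and with the description of $\mathit{SWF}(Y,\s)$ for negative Seifert spaces (\cite{MOY},\cite{betaseifert}) used to prove Theorem~\ref{thm:loctyp}, one gets a model of $[\mathit{SWF}(Y,\s)]_{cl}$ directly from $\Gamma_Y$, in which the $\iota$--fixed subtree records the $S^1$--fixed set, the subtrees of $\Gamma_Y$ hanging off it correspond to free $G$--cells, and $j$ permutes these subtrees exactly as $\iota$ does. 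Since $\Gamma_Y$ comes from a symmetric delta sequence (equations (\ref{eq:briessym}),(\ref{eq:redbriess})), the $\iota$--fixed subtree is connected, contains the stem, and descends to a lowest vertex --- namely $v$ --- while $w$ attains $\min_x\chi(x)$, the level to which the infinite tower of $\mathbb{H}(\Gamma_Y)$ descends.

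Granting the dictionary, the numerical assertion is the easier half. One has $\delta(Y,\s)=d(Y,\s)/2=-d(-Y,\s)/2=-\chi(w)$, using $d(-Y,\s)=2\chi(w)$ (the grading of the bottom of $\mathcal{T}^+\subseteq\mathbb{H}(\Gamma_Y)\cong\mathit{HF}^+(-Y)$) together with $\delta=d/2$ as in Theorem~\ref{thm:main}. On the other hand $\beta(Y,\s)=-\chi(v)$: for the $j$--split model, $b(\mathit{SWF}(Y,\s))$ is governed by the $S^1$--fixed spine, whose fundamental class sits in level $2\chi(v)$; equivalently $\chi(v)$ is the Neumann--Siebenmann invariant $\bar{\mu}(Y)$ of \cite{NeumannPlumbings},\cite{Siebenmann}, which in the integral homology sphere case recovers $\beta(Y)=\gamma(Y)=-\bar{\mu}(Y)$ from Theorem~\ref{thm:loctyp}. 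Subtracting gives $\delta(Y,\s)-\beta(Y,\s)=\chi(v)-\chi(w)$, with no hypothesis of projective type needed.

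For the equivalence I would use that, by the discussion following Theorem~\ref{thm:loctyp}, $(Y,\s)$ is of projective type exactly when $\mathit{SWF}(Y,\s)$ is chain locally equivalent to a $j$--split space whose quotient $(\mathit{SWF}(Y,\s)/\mathit{SWF}(Y,\s)^{S^1})/G$ is a copy of $\mathbb{CP}^N$; dually, $\mathit{HF}^+(-Y)=\mathbb{H}(\Gamma_Y)$ must split as $\mathcal{T}^+$ plus a single (possibly zero) $\iota$--invariant torsion tower plus $\iota$--swapped pairs of torsion towers, matching the normal form (\ref{eq:1proj}) of Definition~\ref{def:projtype}. Working over $\mathbb{F}=\mathbb{Z}/2$, an $\iota$--swapped pair of subtrees hanging off a fixed vertex contributes, besides $\iota$--paired internal torsion, a single "diagonal" $\mathbb{F}[U]$--tower, and I would identify the $\iota$--invariant torsion of $\mathbb{H}(\Gamma_Y)$ with the diagonal tower running along a grading--decreasing descent from $v$. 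The core claim to establish is then: for a symmetric negative Seifert graded root, $X_+$ is a $\mathbb{CP}^N$ (so that $\mathbb{H}(\Gamma_Y)$ takes the projective form) if and only if some grading--decreasing descent from $v$ terminates at a vertex $w$ with $\chi(w)=\min\chi$; conversely, if every descent from $v$ halts strictly above $\min\chi$ --- so that the minimum of $\Gamma_Y$ is realized only in a subtree hanging off the fixed spine above $v$ --- then $X_+$ is a nontrivial wedge and $\mathit{HF}^+(Y)$ fails (\ref{eq:1proj}). Together with the previous paragraph this proves the Fact.

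The main obstacle I anticipate is precisely this core claim: carrying out, over $\mathbb{Z}/2$ and in full generality, the bookkeeping that pins down which $\mathbb{F}[U]$--towers of $\mathbb{H}(\Gamma_Y)$ are $\iota$--invariant and exactly when the diagonal tower is absorbed into the infinite tower, i.e. translating the projective normal form into the descent condition on $v$ and $w$. This needs a careful analysis of the symmetric reduced delta sequence of a negative Seifert space in the style of Section~\ref{subsec:sym}, matched against the $\mathbb{Z}/2$--reduction of $\iota$ on the $G$--CW chain complex. A related subtlety is the identity $\chi(v)=\bar{\mu}(Y)$ used in the numerical step: it is genuinely a $\mathrm{Pin}(2)$ phenomenon, not detectable from $S^1$--equivariant data alone, and is cleanest to verify through the MOY description of a neighborhood of the reducible (or the plumbing formula for $\bar{\mu}$).
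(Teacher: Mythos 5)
This Fact is cited from \cite{betaseifert} and not proved in the present paper, so there is no in-paper argument to compare against; I'm evaluating your sketch on its own merits. The overall structure you lay out is the right one: pass to the graded root $(\Gamma_Y,\chi)$ for $-Y$ via Theorem~\ref{thm:grdroo} (extended to all negative Seifert rational homology spheres), read $\delta(Y,\s)=-\chi(w)$ directly off the bottom of the infinite tower, identify $-\chi(v)$ with $\beta(Y,\s)$ via the $j$-split model of $\mathit{SWF}(Y,\s)$, and then translate the projective normal form (\ref{eq:1proj}) into the descent condition on $v$. You also correctly note that the numerical identity needs no projective-type hypothesis. This is consistent with what the Fact asserts, and I spot-checked the numbers against $\Sigma(2,3,7)$ (where $\chi(v)=1$, $\chi(w)=0$, $\delta-\beta=1$), where it comes out right.

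That said, both of the places you flag as "obstacles" are genuine gaps in the argument as written, and neither is a formality. First, $\chi(v)=-\beta(Y,\s)$ (equivalently $\chi(v)=s$ in Theorem~\ref{thm:loctyp}, which is $\bar{\mu}(Y)$ in the integral case) is not something you can extract from Theorems~\ref{thm:grdroo} or~\ref{thm:loctyp} as stated: Theorem~\ref{thm:grdroo} is a statement about $S^1$-equivariant (Heegaard) Floer homology, and the lowest $\iota$-invariant vertex $v$ is invisible to that package; you need the actual $G$-CW model of $\mathit{SWF}(Y,\s)$ coming from the \cite{MOY} description of the Seifert moduli space to see that the reducible sits at level $2\chi(v)$. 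Second, your phrase "an $\iota$-swapped pair of subtrees contributes \dots a single diagonal $\mathbb{F}[U]$-tower" papers over the real combinatorics: decomposing $\mathbb{H}(\Gamma_Y)$ into $\mathcal{T}^+$-summands is not canonical, the $\iota$-fixed subtree need not be a bare ray, and one must also run a Heegaard Floer duality to go from $\mathbb{H}(\Gamma_Y)\cong\mathit{HF}^+(-Y)$ back to the normal form on $\mathit{HF}^+(Y)$ in Definition~\ref{def:projtype}. So what you have is a correct roadmap with two load-bearing claims left as promissory notes; making good on them requires roughly the same amount of work as the source you are reproving, rather than being deducible from the results quoted in this paper.
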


\begin{thm}\label{thm:maingrad} 
The Seifert spaces $Y_p=\Sigma(p,2p-1,2p+1)$, for $p$ odd, are of projective type, with $d(Y_p)=p-1$ and $\beta(Y_p)=0$.
\end{thm}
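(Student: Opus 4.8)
The plan is to extract the graded root $\Gamma_{Y_p}$ of $Y_p$ from Lemma~\ref{lem:main} and feed it into the projective-type criterion of Fact~\ref{fct:proje}. By Lemma~\ref{lem:main} the reduced delta sequence is $\tdyp = (\dz \ast \dcp)^{\sym}$ with $\dz$ sinking; since passing to the reduced form does not change the associated graded root (\S\ref{subsec:refmerg}), this sequence determines $\mathbb{H}(\Gamma_{Y_p}) \cong \mathit{HF}^+(-Y_p)$ (Theorem~\ref{thm:grdroo}) together with its symmetrization involution $\iota$ (\S\ref{subsec:sym}), which is the involution appearing in Fact~\ref{fct:proje}. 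Write $\tau = \tau_{\tdyp}$, let $L$ be the central index about which $\iota$ reflects, set $\xi = (p-1)/2$ as in \S\ref{subsec:crea}, and let $\tau_0$ be the value of $\tau$ at the index separating the $\dz$ block from the $\dcp$ block; note that $\tau(L) = \tau_0$ since $\dcp$ has total sum zero, and that the central chunk of $\tdyp$ running from the end of the left $\dz$ block to the start of the right $\dz$ block is precisely the creature $\dcp^{\sym}$. Because $\dz$ is sinking, Proposition~\ref{prop:47} shows that $\tau$ restricted to a $\dz$ block attains its minimum only at the separating index; since $\tau = 0$ at the first index this gives $\tau_0 < 0$, and by symmetry the same holds on the reflected $\dz$ block.

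Next I would pin down the central part. On the central chunk $\dcp^{\sym}$ the global involution $\iota$ restricts to the creature's own involution. Inspecting the explicit form (\ref{eq:creadef}) of $\dcp$ --- an alternating (reduced) sequence --- one sees that, measured from the level $\tau_0$, $\tau$ first rises to $\tau_0 + \xi$, then oscillates through bumps of decreasing height, and in the terminal segment $\langle\dots,-2,1,-2,1,-2,2,\dots,-(\xi-1),\xi-1,-\xi,\xi\rangle$ dips to $\tau_0 - \xi$ (attained once, at the penultimate entry $-\xi$); hence the minimum of $\tau$ over the whole creature block is $\tau_0 - \xi$, and since the $\dz$ blocks stay $\ge \tau_0$, this is the global minimum of $\tau$. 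Now a vertex of $\Gamma_{Y_p}$ is $\iota$-invariant exactly when its defining interval of indices (the component of $\{n:\tau(n)\le h\}$ at grading $h$) is symmetric about $L$; such an interval must contain $L$, so its grading is $\ge \tau(L) = \tau_0$, while at grading exactly $\tau_0$ the component of $L$ is $\iota$-symmetric because $\tau$ is. Thus the minimal $\iota$-invariant vertex $v$ has $\chi(v) = \tau_0$. Take $w$ to be the bottom leaf of the creature, so $\chi(w) = \tau_0 - \xi = \min_{x}\chi(x)$. The terminal ``$\dots,-\xi,\xi$'' of $\dcp$ means that the index immediately preceding $L$ sits at level $\tau_0 - \xi$ while both its neighbours sit at level $\tau_0$, so the vertex it determines persists unchanged through gradings $\tau_0 - \xi,\,\tau_0-\xi+1,\dots,\tau_0 - 1$ and only then merges into $v$; this gives a path from $v$ to $w$ that decreases the grading by one at each step. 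By Fact~\ref{fct:proje}, $Y_p$ is of projective type and $\delta(Y_p) - \beta(Y_p) = \chi(v) - \chi(w) = \xi = (p-1)/2$.

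It remains to compute $d(Y_p)$. Since $\delta(Y_p) = d(Y_p)/2$, this is equivalent to $\beta(Y_p) = 0$, or, by Theorem~\ref{thm:loctyp}, to $\bar\mu(Y_p) = 0$. I would obtain it by computing the bottom grading of the nontorsion tower of the absolutely graded $\mathit{HF}^+(-Y_p) = \mathbb{H}(\Gamma_{Y_p})$ --- equivalently, the $d$-invariant of $\Sigma(p,2p-1,2p+1)$ --- directly from the semigroup $S(r_-,r_+,w)$ using Lemmas~\ref{lem:54}--\ref{lem:57}, exactly as in \cite{HLK} for $p$ even, obtaining $d(Y_p) = p-1$. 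Then $\delta(Y_p) = (p-1)/2 = \xi$, so $\beta(Y_p) = \delta(Y_p) - \xi = 0$, completing the proof.

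The main obstacle is the combinatorial bookkeeping in the second paragraph: checking, from (\ref{eq:creadef}), that the creature's $\tau$-profile dips exactly $\xi$ below $\tau_0$ and no further, that the strand joining $v$ to $w$ really is grading-decreasing at each step, and that the sinking $\dz$ blocks (controlled via Proposition~\ref{prop:47}) produce neither a lower leaf nor an $\iota$-invariant vertex below $\tau_0$. The $d$-computation of the third paragraph is routine given Lemmas~\ref{lem:54}--\ref{lem:57}, but requires attention to the absolute grading normalization of the graded root.
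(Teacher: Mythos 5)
Your proposal matches the paper's approach: both combine the decomposition $\tdyp=(\Delta_{Z_p}\ast\Delta_{C_p})^{\sym}$ from Lemma~\ref{lem:main}, Proposition~\ref{prop:47} (to keep the $\Delta_{Z_p}$-blocks above the juncture level $\tau_0$), and Fact~\ref{fct:proje} applied to the creature to read off projective type and $\beta$; your explicit $\tau$-bookkeeping is the content the paper compresses into a reference to Figure~\ref{fig:creature}. The only divergence is that you would re-derive $d(Y_p)=p-1$ from the semigroup (a step you sketch but do not carry out), whereas the paper simply cites Remark~3.3 of~\cite{HLK}.
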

\begin{proof}
By Remark 3.3 of \cite{HLK}, $d(Y_p)=p-1$, so we need only show that $Y_p$ is of projective type, and that $\beta(Y_p)=0$.

Let $\Gamma_{Y_p}$ have its grading shifted so that it agrees with the grading of $\mathit{HF}^+(-Y_p)$ (using Theorem \ref{thm:grdroo}).  The decomposition in Lemma \ref{lem:main} implies that $\gcp$ embeds into $\Gamma_{Y_p}$ as a subgraph.  Since $d(-Y_p)=1-p$, we see that the embedding of $\gcp$ is degree-preserving.  Since $\Delta_{Z_p}$ is sinking, by Proposition \ref{prop:47} the minimal value of $\tau_{Z_p}$ is $0$.  Thus \[\mathbb{H}_{\leq 0}(\Gamma_{C_p})=\mathbb{H}_{\leq 0}(\Gamma_{Y_p}).\]  By Fact \ref{fct:proje} applied to the graded root $\Gamma_{\Delta_{Y_p}}$ (see Figure \ref{fig:creature}), we have that $Y_p$ is of projective type.  It is clear from Figure \ref{fig:creature} that the vertex of minimal grading which is invariant under $\iota$ is in degree $0$, from which we obtain $\beta(Y_p)=0$.  
\end{proof} 

\bibliography{15325bocs.bib}

\begin{thebibliography}{10}

\bibitem{BN}
Maciej Borodzik and Andr{\'a}s N{\'e}methi.
\newblock Heegaard-{F}loer homologies of {$(+1)$} surgeries on torus knots.
\newblock {\em Acta Math. Hungar.}, 139(4):303--319, 2013.

\bibitem{Brown}
Kenneth~S. Brown.
\newblock {\em Cohomology of groups}, volume~87 of {\em Graduate Texts in
  Mathematics}.
\newblock Springer-Verlag, New York, 1994.
\newblock Corrected reprint of the 1982 original.

\bibitem{CanKarakurt}
M.~B. Can and {\c{C}}.~Karakurt.
\newblock Calculating {H}eegaard-{F}loer homology by counting lattice points in
  tetrahedra.
\newblock {\em Acta Math. Hungar.}, 144(1):43--75, 2014.

\bibitem{CGH1}
Vincent Colin, Paolo Ghiggini, and Ko~Honda.
\newblock ${HF} = {ECH}$ via open book decompositions: A summary.
\newblock \url{http://arxiv.org/abs/1103.1290}, 2011.

\bibitem{Conley}
Charles Conley.
\newblock {\em Isolated invariant sets and the {M}orse index}, volume~38 of
  {\em CBMS Regional Conference Series in Mathematics}.
\newblock American Mathematical Society, Providence, R.I., 1978.

\bibitem{Endo}
Hisaaki Endo.
\newblock Linear independence of topologically slice knots in the smooth
  cobordism group.
\newblock {\em Topology Appl.}, 63(3):257--262, 1995.

\bibitem{FintushelStern85}
Ronald Fintushel and Ronald~J. Stern.
\newblock Pseudofree orbifolds.
\newblock {\em Ann. of Math. (2)}, 122(2):335--364, 1985.

\bibitem{FSZinfty}
Ronald Fintushel and Ronald~J. Stern.
\newblock Instanton homology of {S}eifert fibred homology three spheres.
\newblock {\em Proc. London Math. Soc. (3)}, 61(1):109--137, 1990.

\bibitem{Freedman}
Michael~H. Freedman.
\newblock The disk theorem for four-dimensional manifolds.
\newblock In {\em Proceedings of the {I}nternational {C}ongress of
  {M}athematicians, {V}ol.\ 1, 2 ({W}arsaw, 1983)}, pages 647--663. PWN,
  Warsaw, 1984.

\bibitem{Froyshov4mflds}
Kim~A. Fr{\o}yshov.
\newblock The {S}eiberg-{W}itten equations and four-manifolds with boundary.
\newblock {\em Math. Res. Lett.}, 3(3):373--390, 1996.

\bibitem{Furuta}
Mikio Furuta.
\newblock Homology cobordism group of homology {$3$}-spheres.
\newblock {\em Invent. Math.}, 100(2):339--355, 1990.

\bibitem{GalewskiStern}
David~E. Galewski and Ronald~J. Stern.
\newblock Classification of simplicial triangulations of topological manifolds.
\newblock {\em Ann. of Math. (2)}, 111(1):1--34, 1980.

\bibitem{HendricksManolescu}
Kristen Hendricks and Ciprian Manolescu.
\newblock Involutive {H}eegaard-{F}loer homology.
\newblock \url{http://arxiv.org/pdf/1507.00383}, 2015.

\bibitem{HLK}
Jen Hom, Cagri Karakurt, and Tye Lidman.
\newblock Surgery obstructions and {H}eegaard-{F}loer homology.
\newblock \url{http://arxiv.org/abs/1408.1508}, 2014.

\bibitem{Hom12}
Jennifer Hom.
\newblock Bordered {H}eegaard {F}loer homology and the tau-invariant of cable
  knots.
\newblock {\em J. Topol.}, 7(2):287--326, 2014.

\bibitem{Hom}
Jennifer Hom.
\newblock An infinite-rank summand of topologically slice knots.
\newblock {\em Geom. Topol.}, 19(2):1063--1110, 2015.

\bibitem{KarakurtLidman}
{\c{C}}a{\u{g}}r{\i} Karakurt and Tye Lidman.
\newblock Rank inequalities for the {H}eegaard {F}loer homology of {S}eifert
  homology spheres.
\newblock {\em Trans. Amer. Math. Soc.}, 367(10):7291--7322, 2015.

\bibitem{KM}
Peter Kronheimer and Tomasz Mrowka.
\newblock {\em Monopoles and three-manifolds}, volume~10 of {\em New
  Mathematical Monographs}.
\newblock Cambridge University Press, Cambridge, 2007.

\bibitem{KLT1}
Cagatay Kutluhan, Yi-Jen Lee, and Clifford~Henry Taubes.
\newblock {HF} = {HM} {I}: Heegaard {F}loer homology and {S}eiberg-{W}itten
  {F}loer homology.
\newblock \url{http://arxiv.org/abs/1007.1979}, 2010.

\bibitem{LM}
Tye Lidman and Ciprian Manolescu.
\newblock The equivalence of two {S}eiberg-{W}itten {F}loer homologies.
\newblock {\em In preparation}, 2013.

\bibitem{flin}
Francesco Lin.
\newblock A {M}orse-{B}ott approach to monopole {F}loer homology and the
  {T}riangulation conjecture.
\newblock \url{http://arxiv.org/abs/1404.4561}, 2014.

\bibitem{flin2}
Francesco Lin.
\newblock The surgery exact triangle in {P}in(2)-monopole {F}loer homology.
\newblock \url{http://arxiv.org/abs/1504.01993}, 2015.

\bibitem{ManolescuS1}
Ciprian Manolescu.
\newblock Seiberg-{W}itten-{F}loer stable homotopy type of three-manifolds with
  {$b\sb 1=0$}.
\newblock {\em Geom. Topol.}, 7:889--932 (electronic), 2003.

\bibitem{ManolescuPin}
Ciprian Manolescu.
\newblock Pin(2)-equivariant {S}eiberg-{W}itten {F}loer homology and the
  triangulation conjecture.
\newblock {\em J. Amer. Math. Soc., to appear}, 2013.

\bibitem{ManolescuK}
Ciprian Manolescu.
\newblock On the intersection forms of spin four-manifolds with boundary.
\newblock {\em Math. Ann.}, 359(3-4):695--728, 2014.

\bibitem{Matumoto}
Takao Matumoto.
\newblock Triangulation of manifolds.
\newblock In {\em Algebraic and geometric topology ({P}roc. {S}ympos. {P}ure
  {M}ath., {S}tanford {U}niv., {S}tanford, {C}alif., 1976), {P}art 2}, Proc.
  Sympos. Pure Math., XXXII, pages 3--6. Amer. Math. Soc., Providence, R.I.,
  1978.

\bibitem{MOY}
Tomasz Mrowka, Peter Ozsv{\'a}th, and Baozhen Yu.
\newblock Seiberg-{W}itten monopoles on {S}eifert fibered spaces.
\newblock {\em Comm. Anal. Geom.}, 5(4):685--791, 1997.

\bibitem{Nemethigr}
Andr{\'a}s N{\'e}methi.
\newblock On the {O}zsv\'ath-{S}zab\'o invariant of negative definite plumbed
  3-manifolds.
\newblock {\em Geom. Topol.}, 9:991--1042, 2005.

\bibitem{Nem07}
Andr{\'a}s N{\'e}methi.
\newblock Graded roots and singularities.
\newblock In {\em Singularities in geometry and topology}, pages 394--463.
  World Sci. Publ., Hackensack, NJ, 2007.

\bibitem{NeumannPlumbings}
Walter~D. Neumann.
\newblock An invariant of plumbed homology spheres.
\newblock In {\em Topology {S}ymposium, {S}iegen 1979 ({P}roc. {S}ympos.,
  {U}niv. {S}iegen, {S}iegen, 1979)}, volume 788 of {\em Lecture Notes in
  Math.}, pages 125--144. Springer, Berlin, 1980.

\bibitem{OSS}
Peter Ozsv{\'a}th, Andr{\'a}s Stipsicz, and Zolt{\'a}n Szab{\'o}.
\newblock Concordance homomorphisms from knot {F}loer homology.
\newblock \url{http://arxiv.org/abs/1407.1795}, 2015.

\bibitem{OzSzgrad}
Peter Ozsv{\'a}th and Zolt{\'a}n Szab{\'o}.
\newblock Absolutely graded {F}loer homologies and intersection forms for
  four-manifolds with boundary.
\newblock {\em Adv. Math.}, 173(2):179--261, 2003.

\bibitem{OzSzplumb}
Peter Ozsv{\'a}th and Zolt{\'a}n Szab{\'o}.
\newblock On the {F}loer homology of plumbed three-manifolds.
\newblock {\em Geom. Topol.}, 7:185--224 (electronic), 2003.

\bibitem{OzSz2}
Peter Ozsv{\'a}th and Zolt{\'a}n Szab{\'o}.
\newblock Holomorphic disks and three-manifold invariants: properties and
  applications.
\newblock {\em Ann. of Math. (2)}, 159(3):1159--1245, 2004.

\bibitem{OzSz1}
Peter Ozsv{\'a}th and Zolt{\'a}n Szab{\'o}.
\newblock Holomorphic disks and topological invariants for closed
  three-manifolds.
\newblock {\em Ann. of Math. (2)}, 159(3):1027--1158, 2004.

\bibitem{Siebenmann}
L.~Siebenmann.
\newblock On vanishing of the {R}ohlin invariant and nonfinitely amphicheiral
  homology {$3$}-spheres.
\newblock In {\em Topology {S}ymposium, {S}iegen 1979 ({P}roc. {S}ympos.,
  {U}niv. {S}iegen, {S}iegen, 1979)}, volume 788 of {\em Lecture Notes in
  Math.}, pages 172--222. Springer, Berlin, 1980.

\bibitem{betaseifert}
Matthew Stoffregen.
\newblock Pin(2)-equivariant {S}eiberg-{W}itten {F}loer homology of {S}eifert
  fiberations.
\newblock \url{http://arxiv.org/abs/1505.03234}, 2015.

\bibitem{Taubes}
Clifford~Henry Taubes.
\newblock The {S}eiberg-{W}itten equations and the {W}einstein conjecture.
\newblock {\em Geom. Topol.}, 11:2117--2202, 2007.

\bibitem{tomDieck}
Tammo tom Dieck.
\newblock {\em Transformation groups}, volume~8 of {\em de Gruyter Studies in
  Mathematics}.
\newblock Walter de Gruyter \& Co., Berlin, 1987.

\bibitem{Tweedy}
Eamonn Tweedy.
\newblock Heegaard {F}loer homology and several families of {B}rieskorn
  spheres.
\newblock {\em Topology Appl.}, 160(4):620--632, 2013.

\end{thebibliography}
\bibliographystyle{plain}

\end{document}